\newtheorem{thm}{Theorem}[section]
\newtheorem{prop}[thm]{Proposition}
\newtheorem{definition}[thm]{Definition}
\newtheorem{lemma}[thm]{Lemma}
\newtheorem{cor}[thm]{Corollary}
\DeclareMathOperator{\tr}{tr}
\DeclareMathOperator{\sym}{Sym}
\date{}
\begin{document}

\title{Beyond Endoscopy via the trace formula - II\\
 Asymptotic expansions of Fourier transforms and bounds towards the Ramanujan conjecture}

\author{S. Ali Altu\u{g}}

\maketitle

\begin{abstract}
We continue the analysis of the elliptic part of the trace formula initiated in \cite{Altug:2015aa}.  In that reference Poisson summation was applied to the elliptic part and the dominant term was analyzed. The main aim of this paper is to study the remaining terms after Poisson summation. We analyze the the Fourier transforms of (smoothed) orbital integrals and obtain exact asymptotic expansions. As an application we recover, using the Arthur-Selberg trace formula, Kuznetsov's result (cf. \cite{Kuznetsov:1980aa}) that the trace of the $p$th Hecke operator on cuspidal automorphic representations is bounded by $p^{\frac14}$.

\end{abstract}

\tableofcontents

\begin{section}{Introduction}

In \cite{Altug:2015aa}, a study of the elliptic part of the trace formula was initiated. By using an appropriate approximate functional equation the elliptic part was rewritten. After an application of Poisson summation, the dominant term (i.e. the term corresponding to the integral of the function) was analyzed, and contributions of certain representations (including the trivial representation) were isolated in this dominant term (cf. theorem 1.1 of \cite{Altug:2015aa}).

The aim of the current paper is to pursue the analysis of \cite{Altug:2015aa} further by studying the rest of the terms that appear after Poisson summation. In the main results of the paper (theorems \ref{thmasymp} and \ref{thmasymp2}) we develop asymptotic expansions, uniform on every variable, of the Fourier transforms of the smoothed orbital integrals that appear in theorem 1.1 of \cite{Altug:2015aa}. These expansions then allow us to control the remaining terms in the elliptic part after the removal of the dominant term.

We also recall that one of the initial goals of \cite{Altug:2015aa} was to get an explicit form of the Arthur-Selberg trace formula that can be used directly in analytic problems involving Hecke eigenvalues. The asymptotic expansions, in particular, establishes this goal for $GL(2)$ and allows us to have good control over the elliptic part.

To demonstrate how the techniques of the paper can be used in practice (and to keep the technical parts of the paper motivated) we use our results and recover the following theorem of Kuznetsov (cf. \cite{Kuznetsov:1980aa}) giving a bound Ramanujan conjecture.  

\begin{thm}\label{thmselberg}Let $f^p$ be as in \S\ref{sectest} and $R_0(f^p)$ denote the right regular representation acting on $L^2_0(Z_+\backslash G(\mathbb{A}))$ (see \S\ref{secautreps} for notation). Then,
\[\tr(R_0(f^p))=O(p^{\frac14}),\]
where the implied constant depends only on the archimedian component of the test function $f^p$.

\end{thm}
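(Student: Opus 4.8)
The plan is to feed the test function $f^p$ into the Arthur–Selberg trace formula for $GL(2)$ and isolate the cuspidal contribution $\tr(R_0(f^p))$ by subtracting off every other term. The geometric side splits into the central, elliptic, hyperbolic, and unipotent contributions, while the spectral side splits into $\tr(R_0(f^p))$, the contribution of the one-dimensional (character) representations, the residual spectrum, and the continuous/Eisenstein part given by an integral of intertwining operators against the weight $f^p$. The non-elliptic geometric terms (unipotent, hyperbolic, central) and the non-cuspidal spectral terms are all essentially explicit for $GL(2)$: the central and one-dimensional pieces contribute the ``trivial representation'' type terms, and — crucially — these are exactly the pieces that were already identified and matched inside the \emph{dominant} term after Poisson summation in Theorem 1.1 of \cite{Altug:2015aa}. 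So after using that identification the problem reduces to bounding what remains of the elliptic part once the dominant term has been removed, against the residual/continuous spectral contributions and the explicit hyperbolic and unipotent terms, all as $p\to\infty$.

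Next I would invoke the main technical results of the present paper. By Theorem 1.1 of \cite{Altug:2015aa} the elliptic part, after the approximate functional equation and Poisson summation, is a sum over a dual lattice variable $m$ of Fourier transforms of the smoothed orbital integrals; the $m=0$ term is the dominant term already handled. For $m\neq 0$ I would substitute the asymptotic expansions of Theorems \ref{thmasymp} and \ref{thmasymp2}, which are uniform in all parameters. Each term in such an expansion is (up to explicitly controlled factors) an oscillatory integral in the remaining variables, and the uniformity lets me sum over $m\neq 0$ and integrate over the smoothing/spectral parameters while keeping track of the exact power of $p$ produced. The archimedean test function enters only through fixed smooth weights and an overall constant, which is why the implied constant depends only on the archimedean component. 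Carrying the powers of $p$ through, the leading surviving contribution is of size $p^{1/4}$, matching the size of the hyperbolic and continuous-spectrum terms; everything else is lower order.

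Concretely, the steps in order are: (1) write down the $GL(2)$ trace formula applied to $f^p$ and collect terms; (2) quote Theorem 1.1 of \cite{Altug:2015aa} to replace the elliptic part and to cancel the trivial/one-dimensional and central contributions against the dominant term; (3) for the surviving $m\neq 0$ terms, insert the asymptotic expansions of Theorems \ref{thmasymp} and \ref{thmasymp2}; (4) bound the resulting sums of oscillatory integrals over $m$ and the spectral parameter, extracting the power of $p$ term by term; (5) separately bound the hyperbolic, unipotent, residual, and Eisenstein contributions for $GL(2)$ by the (standard) $O(p^{1/4})$ estimates, using known local computations of orbital integrals and intertwining operators for the unramified Hecke operator; (6) combine to get $\tr(R_0(f^p)) = O(p^{1/4})$, with the constant depending only on $f_\infty$.

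The main obstacle is step (4): making the bound on the $m\neq 0$ tail genuinely uniform and summable. The asymptotic expansions are uniform in each variable individually, but to sum over the dual lattice and integrate over the smoothing parameter one needs the error terms and the oscillatory-integral estimates to decay fast enough in $m$ and to be integrable in the spectral variable simultaneously — in particular one must ensure that no term in the expansion, when summed and integrated, exceeds $p^{1/4}$. This requires carefully pairing the stationary-phase gain in the oscillatory integrals against the growth of the combinatorial factors in the expansion, and checking that the archimedean smoothing truncates the ranges enough to make everything converge; this bookkeeping, rather than any single hard estimate, is where the real work lies.
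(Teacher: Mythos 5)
Your structural skeleton matches the paper: feed $f^p$ into the Jacquet--Langlands trace formula, bound the non-elliptic terms, use Theorem 1.1 of \cite{Altug:2015aa} (Theorem \ref{recallthm} here) to cancel the trivial representation and the special representation $\xi_0$ against the dominant Poisson term and against term $(vi)$, and then control the remaining $\xi\neq 0$ sum via the uniform asymptotic expansions of Theorems \ref{thmasymp} and \ref{thmasymp2}. (Two small inaccuracies: in the paper the hyperbolic/unipotent/Eisenstein pieces are $O_{f_\infty}(1)$ or cancel exactly, not of size $p^{1/4}$; and the expansions in the appendix are already uniform in all parameters jointly, so the ``make it jointly uniform'' obstacle you flag is not where the difficulty lies.)

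The genuine gap is in your step (4): you treat the surviving $\xi\neq 0$ part as a purely archimedean problem, to be closed by stationary-phase gains in the oscillatory integrals, and you never mention the arithmetic factor. After Poisson summation each term of $\Sigma(\xi\neq 0)$ carries the character sum $Kl_{l,f}(\xi,\pm p)$, a Kloosterman-type sum over $a \bmod 4lf^2$, and the exponent $\tfrac14$ comes precisely from square-root cancellation in these sums via Weil's bound (Corollary \ref{charsumcor3}, fed into Lemma \ref{lem3.18} and Propositions \ref{propfirstest}, \ref{propsecondest}). The archimedean expansions alone only tell you that in the relevant range $\tfrac{lf^2\xi}{\sqrt p}\ll 1$ each term is $\ll \tfrac{|Kl_{l,f}(\xi,p)|}{p^{1/4}\sqrt{l\xi}}$; if you then bound $Kl_{l,f}(\xi,p)$ trivially by its length $\sim lf^2$, the sum over the region $lf^2\xi\ll\sqrt p$ is of size $p^{3/4}$, giving $O(p^{1/2})$ overall --- no better than leaving the trivial representation in place. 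So no amount of refinement of the oscillatory-integral analysis, nor of the bookkeeping you describe, can reach $p^{1/4}$ without the number-theoretic input: one must exploit the multiplicativity of $Kl_{l,f}$ (Lemma \ref{estlem1}), the local evaluations reducing them to classical Kloosterman and Gauss sums, and Weil's estimate. This ingredient is absent from your proposal, and with it absent the proof as outlined does not yield the stated bound.
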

We remark that the Ramanujan conjecture would imply that the same bound holds with $O(p^{\epsilon})$ for any $\epsilon>0$. Although theorem \ref{thmselberg} establishes the best bound towards the Ramanujan conjecture proved using \emph{only} the Arthur-Selberg trace formula\footnote{The only prior result seems to be in \cite{Moreno:1977aa}, where the author gets $O(p^{\frac12})$ (the contribution of the trivial representation).}, the importance of it is the method of proof rather than the particular bound. In fact, as mentioned above, the bound itself goes back\footnote{Kuznetsov's article considers Maass forms. For holomorphic forms the same bound follows from the combination of \cite{Kloosterman:1927aa} and Weil's bound on Kloosterman sums in \cite{Weil:1948aa}. We also recall that for holomorphic forms the Ramanujan conjecture is proved in \cite{Deligne:1974aa}.} to Kuznetsov (\cite{Kuznetsov:1980aa}), where he proved his celebrated (relative) trace formula and deduced the bound from this formula, and since then better bounds have been obtained by using different methods\footnote{See \cite{Sarnak:2005aa} for an excellent survey of these type of results.}. We also would like to note that the analysis we present here is very similar to the one of Kuznetsov's. After the Poisson sum, the elliptic part becomes a sum of ``Kloosterman-like" sums (cf. theorem \ref{recallthm}) weighted by certain Fourier transforms (This can be thought  of as a Kuznetsov formula without the weights). Our analysis of these archimedean factors combined with Weil's estimate then gives the bound.

\subsection{Outline of the paper}

In order to separate the technical part of the paper from the proof of \ref{thmselberg} we have divided the paper into two. Sections \S\ref{secnonell} and \S\ref{secell} are dedicated to the proof of theorem \ref{thmselberg}. These sections assume the analysis of Fourier transforms and character sums given in the appendix, and use them to establish the bound. In \S\ref{secasympfourier} we also included a heuristic discussion on how the Fourier transforms behave and what kind of asymptotic behavior one should expect. 

The appendices carry the technical burden of the paper. In appendix \ref{apa} we develop the asymptotic expansions of the smoothed orbital integrals. Most of the work of this appendix goes in to get the expansions uniformly on all of the parameters $\xi,l,f,$ and $p$. The expansions are given in theorems \ref{thmasymp} and \ref{thmasymp2} of \S\ref{singularfourier}, which are fundamental to the whole paper. Finally, in appendix \ref{sectioncharsum} we calculate the local character sums, $Kl_{l,f}(\xi,n)$, and bound them using Weil's estimate on Kloosterman sums. We emphasize that, although presented as an appendix, appendix \ref{apa} forms the backbone of the paper.

\subsection{Notation.}

\begin{itemize}
\item $\mathbb{Z},\mathbb{R}$ and $\mathbb{C}$ as usual will denote the sets of integers, real, and complex numbers respectively. $\mathbb{N}=\{0,1,2,\cdots\}$ will denote the set of natural numbers. 
\item The Mellin transform of a function $\Phi$ is defined as usual, $\tilde{\Phi}(u)=\int_0^{\infty}\Phi(x)x^{u-1}dx$. 

\item $\mathcal{S}(\mathbb{R})$ will denote the Schwartz space, $\mathcal{S}(\mathbb{R})=\begin{setdef}{\Phi\in C^{\infty}(\mathbb{R})}{\sup|x^{\alpha}\Phi^{(\beta)}(x)|<\infty\,\,\,\,\forall \alpha,\beta\in\mathbb{N}}\end{setdef}$. We also remark that for functions $\Phi$ which are only defined on $\mathbb{R}^+$, by abuse of notation, we will use $\Phi\in\mathcal{S}(\mathbb{R})$ to mean that $\Phi(x)$ and all of its derivatives decay faster than any polynomial for $x>0$.
\item For a domain $D\subset \mathbb{C}$, $f=O_h(g)$ means that there exists a constant $K$, depending only on $h$, such that $|f(x)|\leq K |g(x)|$ for every $x\in D$. Most of the times $D$ will be clear from the context and we will not be specified. $f\ll_h g$ means $f=O_h(g)$.
\item $\sqrt{\cdot}$ denotes the branch of the square-root function that is positive on $\mathbb{R}^+$, $\left(\tfrac{D}{\cdot}\right)$ denotes the Kronecker symbol, and $e(x):=e^{2\pi i x}$.

\end{itemize}

\end{section}

\section{Preliminaries and normalizations}\label{secprelim}

This section is a quick review of the first two sections of
\cite{Altug:2015aa}. We first introduce the space of automorphic
representations that will be of interest to us in this paper,
and then review the elliptic part of the trace formula
expressing the trace of the $p$th Hecke operator on this space.
The setup is the one in \S2.1 of \cite{Altug:2015aa}  and we repeat it to keep the text is self-contained. The reader familiar with the normalizations of that reference can skip this section.

\subsection{Automorphic representations}\label{secautreps}
Let $G:=GL(2)$
and $\mathbb{A}=\mathbb{A}_{\mathbb{Q}}$ denote the ring of
ad\`{e}les of $\mathbb{Q}$. Let $Z=Z_G$ denote the center of $G$ and let us denote those matrices in $Z(\mathbb{R})$ having positive entries by $Z_+$. We will be interested in automorphic representations, $\pi$, of $G(\mathbb{A})$  whose central character is trivial on $Z_+$ (identified with $Z_+\simeq \mathbb{R}^+\hookrightarrow \mathbb{A}^{\times}$) and which are unmarried at every \emph{finite} place $v$. We remark that
since we are insisting $\pi_v$ to be unramified at every finite
place and the central character to be trivial on $Z_+$, by
strong approximation, the central character of the
representation $\pi$ is necessarily trivial.

\subsection{Measure normalizations}

On $G$, at a non-archimedian prime $v$ we choose the Haar measure on $G(\mathbb{Q}_v)$ giving measure $1$ to $G(\mathbb{Z}_v)$, and at $\infty$ we choose any Haar measure (the explicit choice is not important for us). For an element $\gamma\in G(\mathbb{Q})$ let the us denote the centralizer of $\gamma$ in $G$ by $G_{\gamma}$. On $G_{\gamma}$ we normalize the measures in a similar manner: 
\begin{itemize}
\item At a non-archimedian prime $v$ we choose the Haar measure giving measure $1$ to $G_{\gamma}(\mathbb{Z}_v)$.
\item At $\infty$, any $\delta\in G(\mathbb{R})$ can be decomposed as $\delta=z_{\delta}\bar{\delta}u_{\delta}$, where $z_{\delta}\in Z_+$ is the central matrix with entries $\sqrt{|\det(\delta)|}$, $u_{\delta}=\left(\begin{smallmatrix}sign(\det(\gamma))&\\&1\end{smallmatrix}\right)$, and $\bar{\delta}\in SL_2(\mathbb{R})$. 

\begin{itemize}

\item If $\gamma\in G(\mathbb{Q})$ that is elliptic over  $\mathbb{R}$ (i.e. has two \emph{non-real} eigenvalues), and let the eigenvalues of $\bar{\delta}\in G_{\gamma}(\mathbb{R})$ be $e^{i\theta},e^{-i\theta}$. We take the measure to be $d\theta$.

\item If $\gamma\in G(\mathbb{Q})$ that is split over $\mathbb{R}$ (i.e. has two distinct \emph{real} eigenvalues), and let the eigenvalues of $\bar{\delta}\in G_{\gamma}(\mathbb{R})$ be $\lambda, \lambda^{-1}$. We take the measure to be $\frac{d \lambda}{\lambda}$.

\end{itemize}
\end{itemize}

\subsection{Test functions}\label{sectest}

Let $p$ be an odd prime. For each finite place $v$ and integer $k\geq0$, let $f_v^{(k)}$ be defined by
\[f_v^{(k)}:=q_v^{-\frac k2}\,\text{characteristic function of }\begin{setdef}{X\in Mat_{2\times2}(\mathbb{Z}_v)}{|\det(X)|_v=q_v^{-k}}\end{setdef},\]
where $q_v$ denotes the cardinality of the residue field. Note that $f_v^{(0)}$ is the unit element of the Hecke algebra and for any admissible representation $\pi_v,$
\[\tr(\pi_v(f_v^{(k)}))=\begin{cases}\tr(\sym^k(A(\pi_v)))& \pi_v\text{ is unramified}\\ 0&\text{otherwise}\end{cases},\]
where $A(\pi_v)\subset {^L}G$ denotes the semi simple conjugacy class parametrizing $\pi_v$ (i.e. the Stake parameter of $\pi_v$). Let $f_{\infty}\in C^{\infty}(Z_+\backslash G(\mathbb{R}))$ be such that its orbital integrals are compactly supported (other than this condition $f_{\infty}$ is arbitrary). We then form the function, $f^p$, by
\[f^p:=f_{\infty}f_p^{(1)}\prod_{\substack{v-finite\\ v\neq p}}f_v^{(0)}.\]
Let $R,R_{disc},$ and $R_0$ denote the right regular representation acting on $L^2$ $(=L^2(Z_+\backslash G(\mathbb{A})))$, $L^2_{disc}, $ and $L_0^2$ respectively, where, as usual, the subscript $``disc"$ denotes the discrete part of the spectrum and the subscript $``0"$ denotes the cuspidal part. For the rest of the paper we will be interested in the trace of the operator $R_0(f^p)$, which is the trace of the $p$th Hecke operator acting on the space of cuspidal automorphic representations satisfying the properties of \S\ref{secautreps} and with the archimedian constraint depending on the function $f_{\infty}$.

\subsection{The trace formula}\label{sectf}

Following \cite{Langlands:2004aa} (pg. 23-34), to calculate the trace of the operator $R_{0}(f^{p})$ we will use the trace formula given on pg. 261-262 of \cite{Jacquet:1970aa}. This formula gives an expression for the trace of the operator $R_{disc}(f^{p})$ as a sum of seven terms
\[\tr(R_{disc}(f))=(i)+(ii)+(iv)+(v)+(vi)+(vii)+(viii).\]
Terms $(i)$ to $(iv)$ are the contribution of the geometric side of the trace formula and the rest are contributions of the continuous part of the spectrum. The term $(iii)$ is omitted because we are working over a number field.

We remark that the above formula gives the trace of the operator \emph{over the whole discrete spectrum} rather than just on the cuspidal part of the spectrum. In other words, in order to get $\tr(R_0(f^{p}))$ we need to subtract from the above formula the contribution of those automoprhic representations that occur discretely but are not cuspidal. For $G=GL(2)$ these are all one dimensional, and in our situation (i.e. over $\mathbb{Q}$, unramified at every finite place, central cahracter trivial on $Z_+$) the only such representation that is contributing to the trace formula is the trivial representation, $\textbf{1}$. Therefore the trace of the operator on the cuspidal part of the spectrum is
\begin{equation}\label{trace}
\tr(R_0(f^{p}))=(ii)-\tr(\textbf{1}(f^{p}))+(i)+(iv)+(v)+(vi)+(vii)+(viii)
\end{equation}
The term $(ii)$ in the above formula is the so-called elliptic term, and its analysis is the most complicated among all. The reason we have grouped the first two terms together in \eqref{trace} is because the term $\tr(\textbf{1}(f^{(p)}))$ appears in the contribution of the elliptic term, and we will show that it actually gives the dominant contribution to the elliptic term (It is of size $\sim p^{\frac12}$ whereas the difference is of size $\sim p^{\frac14}$. cf. theorem \ref{ellipticcont}).

\section{Contribution of the non-elliptic part}\label{secnonell}

In this section we will analyze all but the elliptic term (i.e. $(ii)-\tr(\textbf{1}(f^{p}))$) that appear in \eqref{trace}. The bounds are straightforward and follow from the considerations of \cite{Langlands:2004aa} (pg. 24-28).

\begin{lemma}\label{kuzlem1}
\[(i)=0.\]
\end{lemma}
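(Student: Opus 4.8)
The plan is to identify the term $(i)$ in the Jacquet–Langlands trace formula on pg. 261–262 of \cite{Jacquet:1970aa} and show it vanishes for our particular test function $f^p$. The term $(i)$ is the contribution of the central (scalar) conjugacy classes $\gamma = \left(\begin{smallmatrix} z & \\ & z \end{smallmatrix}\right)$ with $z \in \mathbb{Q}^\times$; up to volume factors it is a sum of the values $f^p(\gamma)$ (possibly integrated against a truncation parameter). Since $\gamma$ must be integral and have the prescribed determinant at every finite place, I would first argue that the only candidates come from scalars $z$ that are $p$-adic units at every prime $v \neq p$ and have $|z^2|_p = q_p^{-1} = p^{-1}$; but $|z^2|_p$ is an even power of $p$ for any $z \in \mathbb{Q}_p^\times$, so the support condition $|\det X|_p = p^{-1}$ forces $f_p^{(1)}(\gamma) = 0$. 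Hence every summand vanishes and $(i) = 0$.

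Concretely, I would carry out the steps in this order. First, recall from \S\ref{sectest} that $f^p = f_\infty\, f_p^{(1)} \prod_{v \neq p,\ \mathrm{finite}} f_v^{(0)}$, and that $f_p^{(1)}$ is supported on matrices $X \in \mathrm{Mat}_{2\times 2}(\mathbb{Z}_p)$ with $|\det X|_p = p^{-1}$. Second, observe that for a scalar matrix $z I$ one has $\det(zI) = z^2$, and for any $z \in \mathbb{Q}^\times$ the $p$-adic valuation $v_p(z^2) = 2 v_p(z)$ is even, so $|\det(zI)|_p = p^{-2 v_p(z)} \neq p^{-1}$. Third, conclude that $f_p^{(1)}(zI) = 0$ for every scalar $zI \in G(\mathbb{Q})$, so every term in the sum defining $(i)$ is zero, whence $(i) = 0$. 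I would also note, following \cite{Langlands:2004aa} (pg. 24), that this is exactly the same bookkeeping used there for the other terms, which is why the excerpt flags these bounds as ``straightforward.''

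The only mild subtlety — and the place I would be most careful — is making sure I have the right identification of the term labeled $(i)$ in the seven-term decomposition, since the labeling convention of \cite{Jacquet:1970aa} (reproduced via \cite{Langlands:2004aa}) groups the scalar/central contribution separately from the honest elliptic term $(ii)$; once that is pinned down, the parity obstruction $v_p(z^2) \in 2\mathbb{Z}$ versus the required valuation $1$ does all the work and there is no real analytic content. If instead $(i)$ in that reference happens to denote some degenerate boundary contribution rather than the scalar term, the same style of argument applies: any such term is again a finite sum of values of $f^p$ at rational points whose determinant has even $p$-adic valuation, contradicting the support of $f_p^{(1)}$. So the main obstacle is purely one of correctly matching notation to \cite{Jacquet:1970aa}, not of proving anything deep.
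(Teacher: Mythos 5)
Your proof is correct and takes essentially the same route as the paper: the term $(i)$ is the sum of $f^{p}$ over central elements $z\in Z(\mathbb{Q})$, and in both arguments the obstruction is that $\det(z)=z^2$ is a square in $\mathbb{Q}^\times$, which is incompatible with the support of $f_p^{(1)}$ requiring determinant of $p$-adic valuation $1$ (the paper phrases this globally via the product formula as $|\det(z)|_{\infty}=p$ being impossible for a square, you phrase it locally at $p$ through the parity of $v_p(z^2)$). Either way every summand vanishes and $(i)=0$.
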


\begin{proof}By equation $(TF.1)$ of \cite{Langlands:2004aa} we have
\begin{equation*}
(i)=\sum_{z\in Z(\mathbb{Q})}\mu(\mathbb{R}_+G(\mathbb{Q})\backslash G(\mathbb{A}))f^{p}(z)
\end{equation*}
Since $f^{(p)}(z)$ is supported only on those $z\in Z(\mathbb{Q})$ which satisfy $|\det(z)|_{\infty}=p$ and any $z\in Z(\mathbb{Q})$ has determinant that is a square in $\mathbb{Q}$ the sum over $z$ is empty.
\end{proof}

\begin{lemma}\label{kuzlem2}
\[(iv)=O_{f_{\infty}}(1).\]
\end{lemma}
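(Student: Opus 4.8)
The plan is to identify the term $(iv)$ with the ``split'' (hyperbolic) contribution to the geometric side of the trace formula, as given by $(TF.4)$ (or the analogous labelled equation) in \cite{Langlands:2004aa}, and to bound it directly using the local structure of the test function $f^p$ at $p$ together with the compact-support hypothesis on the orbital integrals of $f_\infty$. Concretely, $(iv)$ is a finite sum over split (rational, non-central) conjugacy classes $\gamma$ with eigenvalues $\alpha,\beta\in\mathbb{Q}^\times$, weighted by a global volume factor and a product of local orbital integrals $\prod_v O_{\gamma}(f_v^{(k_v)})$, where $k_v=0$ for $v\neq p$, $k_p=1$, and at $\infty$ one has the archimedean orbital integral of $f_\infty$ (possibly dressed with a weight factor coming from the truncation, but this weight is harmless for an $O(1)$ bound once the support is controlled).

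First I would use the local condition at $p$: $f_p^{(1)}$ is supported on integral matrices of determinant of valuation $1$, so $O_\gamma(f_p^{(1)})$ is nonzero only when the eigenvalues $\alpha,\beta$ satisfy $v_p(\alpha)+v_p(\beta)=1$ and $v_p(\alpha),v_p(\beta)\ge 0$, i.e. (up to order) $v_p(\alpha)=1,\,v_p(\beta)=0$ or vice versa. Combined with $f_v^{(0)}$ forcing $\gamma\in G(\mathbb{Z}_v)$ for all other finite $v$, this pins down the denominators of $\alpha,\beta$ and in particular forces $\alpha,\beta$ to be (up to sign and the determinant normalization) integers with $\alpha\beta=\pm p$ times a unit; the point is that the set of such rational classes contributing is controlled, and the individual non-archimedean orbital integrals $O_\gamma(f_v^{(k_v)})$ are each $O(1)$ — indeed bounded by a fixed constant uniformly, and equal to $1$ for almost all $v$ — so the non-archimedean product is uniformly bounded. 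Second, the compact-support hypothesis on the orbital integrals of $f_\infty$ means $O_\gamma(f_\infty)$ is nonzero only for $\gamma$ whose archimedean parameter $\lambda$ (in the notation of \S\ref{sectf}, eigenvalue ratio) lies in a fixed compact subset of $\mathbb{R}^\times$; together with the arithmetic constraints from the finite places this leaves only \emph{finitely many} rational split classes $\gamma$ contributing to $(iv)$, with the number of them bounded in terms of $f_\infty$ alone (not $p$, since the $p$-adic constraint only rescales eigenvalues by a bounded $p$-power which is then cut off by the fixed archimedean support). Summing finitely many terms each $O_{f_\infty}(1)$ gives $(iv)=O_{f_\infty}(1)$.

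The main obstacle — and the only genuine subtlety — is the interplay between the $p$-adic scaling and the archimedean support: a priori, multiplying eigenvalues by $p$ could move $\lambda$ into or out of the support of $O_\gamma(f_\infty)$, so one must check that the archimedean orbital integral is taken with respect to the \emph{same} normalization (the decomposition $\delta=z_\delta\bar\delta u_\delta$ of \S\ref{sectf}, which strips off the determinant) used to state the compact-support condition, so that the relevant archimedean parameter is the eigenvalue \emph{ratio} $\lambda/\lambda^{-1}$ and is insensitive to the overall $p$-power. Once this normalization bookkeeping is done the bound is immediate; I do not expect to need any estimate beyond ``finitely many bounded terms,'' and in particular no input from the appendices is required here, matching the paper's remark that the non-elliptic bounds are straightforward.
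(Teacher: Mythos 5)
Your identification of the contributing split classes (eigenvalues that are units at every finite place away from $p$, valuations $(1,0)$ at $p$, hence $\{\alpha,\beta\}=\{\pm p,\pm1\}$ up to order) is fine, but your resolution of what you yourself single out as the only genuine subtlety is backwards, and as written the argument does not give the bound. Stripping off the central factor $z_\gamma$ with entries $\sqrt{|\det\gamma|}=\sqrt{p}$ does \emph{not} make the archimedean parameter insensitive to the $p$-power: only one of the two eigenvalues carries the $p$, so $\bar\gamma$ has eigenvalues $\pm\sqrt{p},\pm1/\sqrt{p}$ and the eigenvalue ratio is $\pm p$, which grows with $p$. The correct mechanism --- and the one the paper's proof of lemma \ref{kuzlem2} invokes, via the first paragraph of \S2.3 of \cite{Langlands:2004aa} --- is precisely this growth: because the orbital integrals of $f_{\infty}$ are compactly supported, the archimedean factor vanishes at these classes once $p$ is large, so $(iv)$ is nonzero for only finitely many $p$ (how many depending only on the support attached to $f_{\infty}$), and is therefore trivially $O_{f_{\infty}}(1)$.

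This is not a cosmetic point. If the archimedean parameter really were insensitive to $p$, you would face infinitely many primes $p$ with a nonvanishing archimedean factor, and then your blanket claims that the finite-place factors are uniformly $O(1)$ and that the weight in term $(iv)$ is ``harmless'' would have to carry the whole burden --- and they are not justified. Term $(iv)$ is the weighted (hyperbolic) term, and at primes $\ell$ dividing $p\mp1$ the local contributions at an element whose eigenvalue difference is $\pm p\mp1$ depend on $v_{\ell}(p\mp1)$; their accumulation over such $\ell$ need not be bounded uniformly in $p$ (one expects growth of size roughly $\log p$). So ``finitely many classes, each term bounded'' alone does not yield a bound uniform in $p$; you need the vanishing for all but finitely many $p$ supplied by the compact support of the archimedean orbital integrals, which is exactly the paper's one-line argument.
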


\begin{proof}By the first paragraph of \S2.3 of \cite{Langlands:2004aa} we know that for fixed $f_{\infty}$ this term only contributes for finitely many $p$ (depending on the support of $f_{\infty}$). The lemma follows.

\end{proof}

\begin{lemma}\label{kuzlem3},
\[(v)=0.\]

\end{lemma}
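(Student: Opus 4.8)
The plan is to follow the template of Lemmas \ref{kuzlem1} and \ref{kuzlem2} and locate the explicit formula for the term $(v)$ among the continuous-spectrum contributions written down by Langlands in \cite{Langlands:2004aa} (pg.~24--28). The term $(v)$ is, in the Jacquet--Langlands bookkeeping, one of the contributions coming from the Eisenstein/continuous part; concretely it is a sum over (or integral against) the characters appearing in the constant term, evaluated on the test function $f^p$ — in particular it will be expressible as a value (or integral of values) of the form $\sum_{\chi} c(\chi)\, \widehat{f^p_p}(\chi)$, where the local factor at $p$ is the Satake/Mellin transform of $f_p^{(1)}$ against an unramified character of $\mathbb{Q}_p^\times$ or $G(\mathbb{Q}_p)$.

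First I would write down that explicit expression verbatim from \cite{Langlands:2004aa}, identifying which Hecke/central-character data index the sum. Then I would compute the local contribution at the prime $p$: since $f_p^{(1)}=p^{-1/2}\cdot(\text{char.\ function of }X\in \mathrm{Mat}_{2\times2}(\mathbb{Z}_p)\text{ with }|\det X|_p=p^{-1})$, its relevant transform is governed by $\tr(\pi_p(f_p^{(1)}))=\tr(\mathrm{Sym}^1(A(\pi_p)))$ on unramified $\pi_p$, i.e.\ by the first Hecke eigenvalue. The key arithmetic point — exactly as in Lemma \ref{kuzlem1} — is a parity/determinant obstruction: the term $(v)$ only receives contributions from data whose associated determinant (or conductor, or the character twist) forces the nonarchimedean component to be supported on matrices of \emph{square} determinant, or forces a one-dimensional (Hecke character) contribution that $f_p^{(1)}$ annihilates because $1$ is odd. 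In either case the local factor at $p$ vanishes identically, so every summand is $0$, giving $(v)=0$.

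The main obstacle I expect is purely bookkeeping rather than analytic: matching the label ``$(v)$'' in \eqref{trace} (inherited from the seven-term decomposition on pg.~261--262 of \cite{Jacquet:1970aa} via \cite{Langlands:2004aa}) to the correct explicit formula, and making sure the vanishing is genuinely forced by the structure of $f_p^{(1)}$ at a \emph{single} prime $p$ with exponent $k=1$ and not merely by the choice of $f_\infty$. Once the formula is in hand, the vanishing should be immediate: it is the same mechanism (odd power of the uniformizer against a class of representations requiring even power, equivalently a square-determinant constraint) that killed term $(i)$, so no estimate is needed and the implied constant question does not even arise. I would therefore present the proof as a one- or two-line citation-plus-observation, in the same style as the two preceding lemmas.
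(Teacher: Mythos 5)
The paper's entire proof of this lemma is exactly the citation you anticipate: it simply states that the vanishing follows from the second paragraph of \S 2.3 of \cite{Langlands:2004aa} (the same pages 24--28 treatment used for the neighboring lemmas), so your plan of writing down the explicit expression for $(v)$ from that reference and observing that the test function kills every contribution is essentially the paper's approach. One caveat: of your two hedged mechanisms, the operative one is the determinant/parity obstruction at $p$ forced by $f_p^{(1)}$ with the odd exponent $k=1$ (the same mechanism as in Lemma \ref{kuzlem1}); the alternative branch, that $f_p^{(1)}$ annihilates a one-dimensional (Hecke-character) contribution ``because $1$ is odd,'' is not a valid vanishing mechanism on its own (for instance $\tr(\textbf{1}(f^p))$ is of size $p^{\frac12}$, not zero), so that branch should be discarded once the formula from \cite{Langlands:2004aa} is in hand.
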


\begin{proof}Follows from the second paragraph of \S2.3 of  \cite{Langlands:2004aa}.

\end{proof}

\begin{lemma}\label{kuzlem4}
\begin{equation*}
(vi)=\int_{|x|>1}\tfrac{\theta_{\infty}(x)}{\sqrt{x^2-1}}dx+\int_{\mathbb{R}}\tfrac{\theta_{\infty}^{-}(x)}{\sqrt{x^2+1}}dx.
\end{equation*}
\end{lemma}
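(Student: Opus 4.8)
The plan is to read off term $(vi)$ from the explicit shape of the trace formula in \cite{Langlands:2004aa} and then evaluate the resulting adelic orbital integrals place by place, using the measure normalizations fixed in \S\ref{secprelim}. The term $(vi)$ is, up to normalization, the contribution of the (regular) hyperbolic/split conjugacy classes, written as an integral over the torus of products of local orbital integrals of $f^p$; since at every finite place $v\neq p$ the local component is $f_v^{(0)}$, the unit of the Hecke algebra, and at $p$ it is $f_p^{(1)}$, the finite part of each local orbital integral is an essentially elementary lattice-point count. The global volume factors are absorbed by our normalization giving $G_\gamma(\mathbb{Z}_v)$ measure $1$, so after collapsing the non-archimedean places what survives is an integral over $Z_+\backslash T(\mathbb{R})$ of the archimedean orbital integral of $f_\infty$ against the natural measure $\tfrac{d\lambda}{\lambda}$ on the split torus. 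Parametrizing a split class over $\mathbb{R}$ by its trace $x$ via $\lambda+\lambda^{-1}=x$ (so $x\in\mathbb{R}$, $|x|>1$ on the identity component and $x\in\mathbb{R}$ on the component twisted by $u_\delta$), one has $\tfrac{d\lambda}{\lambda}=\tfrac{dx}{\sqrt{x^2-1}}$ in the first case and $\tfrac{dx}{\sqrt{x^2+1}}$ in the second; writing $\theta_\infty$ and $\theta_\infty^-$ for the weighted archimedean orbital integrals of $f_\infty$ on the two components yields exactly the claimed expression.

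Concretely, first I would quote the relevant line of \S2.3 (and equation $(TF.\,\cdot)$) of \cite{Langlands:2004aa} to isolate $(vi)$ as a sum over $\mathbb{Q}$-conjugacy classes of split regular elements of $\prod_v \mathrm{O}_{\gamma}(f_v^{p})$ times the torus volume. Second, I would check that for $v\neq p$ finite the local orbital integral of $f_v^{(0)}$ at a split regular $\gamma$ equals $1$ (the class meets $G(\mathbb{Z}_v)$ in a single $G_\gamma(\mathbb{Z}_v)$-orbit under our measure normalization), and that at $p$ the factor $f_p^{(1)}$ forces $\det\gamma$ to have valuation $1$ at $p$ and contributes the appropriate $p^{-1/2}$-normalized count; crucially these finite contributions are organized so that summing over the $\mathbb{Q}$-classes and the finite data turns the discrete sum into the continuous integral over $T(\mathbb{R})/Z_+$ — this is the standard ``unfolding'' of the split-term in the $GL(2)$ trace formula and is carried out in \cite{Langlands:2004aa}. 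Third, I would perform the change of variables $\lambda\mapsto x=\lambda+\lambda^{-1}$ on each of the two connected components of $Z_+\backslash T(\mathbb{R})$ (distinguished by $\mathrm{sign}(\det)$, i.e.\ by $u_\delta$), which produces the Jacobians $\tfrac{1}{\sqrt{x^2-1}}$ and $\tfrac{1}{\sqrt{x^2+1}}$ respectively and the two ranges $|x|>1$ and $x\in\mathbb{R}$. Packaging the archimedean orbital integrals into the notation $\theta_\infty(x)$, $\theta_\infty^-(x)$ completes the identification.

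The main obstacle is bookkeeping rather than depth: one must track the normalizing constants (the global torus volume, the local Tamagawa-type factors at the finite places, the $q_v^{-k/2}$ in the definition of $f_v^{(k)}$, and the archimedean measure $\tfrac{d\lambda}{\lambda}$) carefully enough to see that they all cancel and leave precisely the two displayed integrals with no extra constant. Since \cite{Altug:2015aa} and \cite{Langlands:2004aa} use exactly these normalizations, I expect this to amount to citing the appropriate passage and recording the change of variables; the only genuine care needed is in separating the two real components of the split torus and matching each to the correct sign of $x^2\mp 1$.
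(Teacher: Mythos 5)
There is a genuine gap here, and it is structural: you have misidentified which term $(vi)$ is. In the numbering of the trace formula used in this paper (following Jacquet--Langlands, pg.~261--262, and \cite{Langlands:2004aa}), only the terms $(i)$--$(iv)$ are geometric; the split regular (hyperbolic) contribution is term $(iv)$, which is treated separately in lemma \ref{kuzlem2} and is $O_{f_{\infty}}(1)$ because for fixed $f_{\infty}$ it vanishes for all but finitely many $p$. The term $(vi)$ is one of the continuous-spectrum contributions. The paper's proof is correspondingly spectral and very short: $(vi)$ is given by (31) and (32) of \cite{Langlands:2004aa} (with $m=1$), where it is identified as $\tr(\xi_{0}(f_{\infty}))$, and the displayed identity is then exactly the second equality of lemma 6.2 of \cite{Altug:2015aa} with $k=1$, which expresses $\tr(\xi_0(f_\infty))$ as the sum of the two weighted integrals of archimedean orbital integrals. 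So the two integrals arise from a character/trace identity for $\xi_0$, not from unfolding adelic orbital integrals of $f^{p}$.

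Beyond the misidentification, the mechanism you propose would not work even for a geometric term: a sum over $\mathbb{Q}$-conjugacy classes of split regular elements, with the integrality and determinant constraints imposed by $f_p^{(1)}\prod_{v\neq p}f_v^{(0)}$, remains a \emph{discrete} sum over integer traces $m$ with $\det=\pm p$ (this is precisely the kind of sum that shows up elsewhere, e.g.\ in the $\Sigma(\square)$ term of theorem \ref{recallthm}); no normalization bookkeeping converts it into a continuous integral over $Z_+\backslash T(\mathbb{R})$ with measure $\tfrac{dx}{\sqrt{x^2\mp1}}$. In particular the claimed ``unfolding'' step, which is the heart of your argument, has no counterpart in \cite{Langlands:2004aa} for this term. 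The correct repair is simply to quote (31)--(32) of \cite{Langlands:2004aa} to identify $(vi)$ with $\tr(\xi_0(f_{\infty}))$ (the dependence on $p$ drops out, up to the factor already accounted for there with $m=1$), and then invoke lemma 6.2 of \cite{Altug:2015aa} with $k=1$ to rewrite that trace as $\int_{|x|>1}\tfrac{\theta_{\infty}(x)}{\sqrt{x^2-1}}\,dx+\int_{\mathbb{R}}\tfrac{\theta_{\infty}^{-}(x)}{\sqrt{x^2+1}}\,dx$.
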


\begin{proof} The contribution of $(vi)$ is given by (31) and (32) of \cite{Langlands:2004aa} ($m=1$ for our case), which is denoted by $\tr(\xi_{0}(f_{\infty}))$ in the same reference. The fact that $\tr(\xi_0(f_{\infty}))$ is the sum of the integrals given above is just a restatement of the second equality of lemma 6.2 of \cite{Altug:2015aa}, where we take $k=1$.

\end{proof}

\begin{lemma}\label{kuzlem5}
\[(vii)=O_{f_{\infty}}(1).\]

\end{lemma}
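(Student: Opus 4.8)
The term $(vii)$ is one of the contributions of the continuous spectrum, and following the pattern of the preceding lemmas, the plan is to locate its explicit description in Langlands's manuscript \cite{Langlands:2004aa} (the relevant formulas should be in \S2.3, in the same range of equations that produced $(vi)$ in Lemma~\ref{kuzlem4}) and then read off the dependence on $p$. Concretely, $(vii)$ is built from the intertwining operators / the logarithmic derivative of the constant term, integrated against the test function $f^p$ along the unitary axis; the non-archimedean part of $f^p$ at the place $p$ is $f_p^{(1)}$, whose Satake transform is an explicit Laurent polynomial in $q_p^{it}$ of bounded degree, and the archimedean integral is governed only by $f_\infty$. So the first step is simply to make this expression explicit, isolating the $p$-dependent factor.

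The second step is the estimate. One writes the $p$-dependent factor as (a bounded number of) terms of the shape $\int_{-T}^{T} c(t)\, q_p^{\pm it}\, dt$ plus a possible residual/diagonal contribution, where $c(t)$ is a fixed (independent of $p$) smooth, rapidly decaying function coming from $f_\infty$ and from the $\Gamma$-factors in the scattering matrix for $GL(2)/\mathbb{Q}$. Since $c$ is independent of $p$, each such integral is trivially $O_{f_\infty}(1)$ uniformly in $p$ — in fact it decays in $p$, but boundedness is all we need. The only subtlety is whether the integrand has a pole on or near the contour (coming from the trivial zero/pole of the completed zeta function); if so one shifts the contour slightly or extracts the residue, which again contributes a term controlled purely by $f_\infty$. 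This is exactly the kind of routine bound the author advertises when saying "the bounds are straightforward and follow from the considerations of \cite{Langlands:2004aa}."

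The main obstacle — such as it is — is bookkeeping rather than mathematics: one must match the term labeled $(vii)$ here with the correct block of formulas in \cite{Langlands:2004aa} and confirm that the normalization of $f_p^{(1)}$ (the factor $q_p^{-k/2}$ in \S\ref{sectest}) is already built in, so that the Satake side is genuinely $O(1)$ and not $O(p^{1/2})$. Once the explicit formula is in hand, no real analysis is required beyond the trivial triangle-inequality bound on a fixed rapidly-decaying integrand, so I expect the proof to be as short as the author's one-line proofs of Lemmas~\ref{kuzlem3} and~\ref{kuzlem4}.
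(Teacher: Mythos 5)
Your proposal is correct and follows essentially the same route as the paper: the author simply quotes the explicit formula for $(vii)$ from (33)--(35) of \cite{Langlands:2004aa}, in which the $p$-dependence enters only through the factor $p^{\frac s2}+p^{-\frac s2}$ (exactly the Satake transform of the normalized $f_p^{(1)}$ you describe), bounded by $2$ on the unitary axis, and then bounds the remaining integral, which depends only on $f_\infty$ and the $\Gamma$- and $\zeta$-log-derivative factors, trivially. The only cosmetic difference is that your contingency about a pole on the contour is unnecessary: the singularities of $\zeta'/\zeta(1\pm s)$ at $s=0$ cancel, so the paper bounds the integrand directly with no contour shift.
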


\begin{proof}By $(33)$, $(34)$ and $(35)$ of \cite{Langlands:2004aa},
\begin{align*}
(vii)&=\tfrac{1}{4\pi}\int_{-i\infty}^{i\infty}\tr(\xi_s(f_{\infty}))(p^{\frac{s}{2}}+p^{\frac{-s}{2}})\left\{-\tfrac{1}{2}\tfrac{\Gamma'\left(\frac{1-s}{2}\right)}{\Gamma\left(\frac{1-s}{2}\right)}-\tfrac{1}{2}\tfrac{\Gamma'\left(\frac{1+s}{2}\right)}{\Gamma\left(\frac{1+s}{2}\right)}-\tfrac{\zeta'(1-s)}{\zeta(1-s)}-\tfrac{\zeta'(1+s)}{\zeta(1+s)}\right\}d|s|\\
&\leq \int_{-i\infty}^{i\infty}|\tr(\xi_s(f_{\infty}))|\left|-\tfrac{1}{2}\tfrac{\Gamma'\left(\frac{1-s}{2}\right)}{\Gamma\left(\frac{1-s}{2}\right)}-\tfrac{1}{2}\tfrac{\Gamma'\left(\frac{1+s}{2}\right)}{\Gamma\left(\frac{1+s}{2}\right)}-\tfrac{\zeta'(1-s)}{\zeta(1-s)}-\tfrac{\zeta'(1+s)}{\zeta(1+s)}\right|d|s|\\
&=O_{f_{\infty}}(1).
\end{align*}

\end{proof}

\begin{lemma}\label{kuzlem6}
\[(viii)=O_{f_{\infty}}(1).\]

\end{lemma}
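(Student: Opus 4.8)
The plan is to treat term $(viii)$ exactly as term $(vii)$ was treated in Lemma \ref{kuzlem5}, since in the Jacquet--Langlands trace formula for $GL(2)$ these two terms have the same structural shape: a contour integral along $\mathrm{Re}(s)=0$ of the weighted character $\mathrm{tr}(\xi_s(f_\infty))$ against an arithmetic factor built out of $\Gamma'/\Gamma$ and $\zeta'/\zeta$, together with a factor $p^{s/2}+p^{-s/2}$ whose absolute value is $O(1)$ on the imaginary axis. Concretely, I would first invoke the relevant formulas for $(viii)$ from \cite{Langlands:2004aa} (the analogue of $(33)$--$(35)$) to write $(viii)$ as such an integral; the only $p$-dependence sits in $p^{\pm s/2}$, and on $\mathrm{Re}(s)=0$ we have $|p^{s/2}+p^{-s/2}|\le 2$, so this contributes a bounded factor independent of $p$.

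Next I would bound the arithmetic weight. The functions $\Gamma'/\Gamma$ and $\zeta'/\zeta$ evaluated at $\tfrac{1\pm s}{2}$ and $1\pm s$ with $\mathrm{Re}(s)=0$ grow at most polynomially (indeed like $\log|s|$ for the zeta factors and like $\log|s|$ for the digamma factors) as $|s|\to\infty$, and they have no poles on the contour, so the weight is $O((\log(2+|s|))^{A})$ for some constant. The decisive input is the decay of $\mathrm{tr}(\xi_s(f_\infty))$: since $f_\infty$ is a fixed smooth function with compactly supported orbital integrals, its spherical/principal-series transform $\mathrm{tr}(\xi_s(f_\infty))$ is a Paley--Wiener function of $s$, hence decays faster than any polynomial along vertical lines. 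This makes the integrand absolutely integrable with a bound depending only on $f_\infty$, so $(viii)=O_{f_\infty}(1)$, with the implied constant independent of $p$ as required.

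The only genuine point of care — and the step I expect to be the mild obstacle — is checking that the contour for $(viii)$ really can be taken on $\mathrm{Re}(s)=0$ without crossing poles of the arithmetic factor, in particular the pole of $\zeta'(1\pm s)/\zeta(1\pm s)$ coming from the pole of $\zeta$ at $1$ (which would sit at $s=0$). In the $GL(2)$ formula this is handled exactly as in \cite{Langlands:2004aa}: the potential pole at $s=0$ is cancelled against the corresponding contribution from the other intertwining-operator terms, or is a removable singularity of the combined integrand, so the integral is well defined and finite. Granting the contour manipulations of \cite{Langlands:2004aa} (which we are entitled to assume, as this lemma merely records their consequence), the estimate is then immediate by the same three-line computation as in the proof of Lemma \ref{kuzlem5}. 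I would therefore present the proof as: ``By the formula for $(viii)$ in \cite{Langlands:2004aa}, arguing exactly as in Lemma \ref{kuzlem5} — using $|p^{\pm s/2}|=1$ on $\mathrm{Re}(s)=0$, the at-most-logarithmic growth of the $\Gamma'/\Gamma$ and $\zeta'/\zeta$ factors, and the rapid decay of $\mathrm{tr}(\xi_s(f_\infty))$ — the integral converges to a quantity bounded solely in terms of $f_\infty$.''
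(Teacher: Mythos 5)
Your proposal misidentifies what term $(viii)$ actually is: the integral you describe, with the scalar weight built from $\Gamma'/\Gamma$ and $\zeta'/\zeta$ against $\tr(\xi_s(f_\infty))$ and a factor $p^{\pm s/2}$, is term $(vii)$, which is exactly what Lemma \ref{kuzlem5} handles. Term $(viii)$ is the intertwining-operator term: by the last paragraph of \S2.6 of \cite{Langlands:2004aa} it equals $\tfrac{1}{4\pi}\int_{-i\infty}^{i\infty}\tr\bigl(R^{-1}(s)R'(s)\xi_s(f_{\infty})\bigr)(p^{s}+p^{-s})\,d|s|$, where $R(s)$ is the normalized intertwining operator and its logarithmic derivative sits \emph{inside} the trace. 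Consequently the integrand does not factor as a scalar arithmetic weight times $\tr(\xi_s(f_\infty))$, and your two key inputs do not apply as stated: the Paley--Wiener rapid decay of $\tr(\xi_s(f_\infty))$ does not by itself control $\tr(R^{-1}(s)R'(s)\xi_s(f_\infty))$ (one needs a bound on the operator $R^{-1}(s)R'(s)$ on the relevant induced space, or an estimate for the full trace), and the worry about the pole of $\zeta'(1\pm s)/\zeta(1\pm s)$ at $s=0$ is moot here because no such scalar factor appears in $(viii)$.

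The high-level strategy you propose — note $|p^{s}+p^{-s}|\le 2$ on $\Re(s)=0$, then show the remaining $s$-integrand is absolutely integrable with a bound depending only on $f_\infty$ — is indeed the paper's strategy. But the decisive input the paper uses is Langlands' estimate $|\tr(R^{-1}(s)R'(s)\xi_s(f_{\infty}))|=O_{f_{\infty}}(s^{-2})$, quoted from the same paragraph of \cite{Langlands:2004aa}, which makes the integral $O_{f_\infty}\bigl(\int(1+s^2)^{-1}ds\bigr)=O_{f_\infty}(1)$. Your write-up never supplies (or cites) a decay estimate for this trace; it bounds a different integrand. To repair the proof you should replace your description of $(viii)$ by the formula above and invoke the $O_{f_\infty}(s^{-2})$ bound (or prove an equivalent operator-norm/trace-norm estimate for $R^{-1}(s)R'(s)$ composed with $\xi_s(f_\infty)$); as written, the argument proves the bound for $(vii)$ a second time rather than for $(viii)$.
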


\begin{proof}From the last paragraph of \S2.6 of \cite{Langlands:2004aa} we see that the contribution of $(viii)$ is given by
\[\tfrac{1}{4\pi}\int_{-i\infty}^{i\infty}\tr(R^{-1}(s)R'(s)\xi_s(f_{\infty}))(p^{s}+p^{-s})d|s|.\]
Again by the same paragraph we also know that,
\[|\tr(R^{-1}(s)R'(s)\xi_s(f_{\infty}))|=O_{f_{\infty}}(s^{-2})\]
We therefore get,
\begin{align*}
(viii)&=\tfrac{1}{4\pi}\int_{-i\infty}^{i\infty}\tr(R^{-1}(s)R'(s)\xi_s(f_{\infty}))(p^{s}+p^{-s})d|s|\\
&=O_{f_{\infty}}\left(\int_{-\infty}^{\infty}(1+s^2)^{-1}ds\right)\\
&=O_{f_{\infty}}(1).
\end{align*}
\end{proof}

Next corollary combines the results of lemmas \ref{kuzlem1} to \ref{kuzlem6} and expresses the contribution of the non-elliptic part of the trace formula. We remark that the integrals below can be absorbed in the $O_{f_{\infty}}(1)$ term, however we are keeping them because they cancel exactly with a part of the contribution of the elliptic term (cf. theorem \ref{recallthm}).

\begin{cor}\label{nonell}The contribution of the non-elliptic part of the trace formula is given by,
\[(i)+(iv)+(v)+(vi)+(vii)+(viii)=\int_{|x|>1}\tfrac{\theta_{\infty}(x)}{\sqrt{x^2-1}}dx+\int_{\mathbb{R}}\tfrac{\theta_{\infty}^{-}(x)}{\sqrt{x^2+1}}dx+O_{f_{\infty}}(1).\]
\end{cor}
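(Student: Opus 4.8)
The statement to prove, Corollary~\ref{nonell}, is a bookkeeping assembly of the six preceding lemmas, so the plan is essentially to combine them and simplify. I would proceed as follows.

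\textbf{Step 1: Collect the individual contributions.} By Lemma~\ref{kuzlem1}, $(i)=0$; by Lemma~\ref{kuzlem3}, $(v)=0$; by Lemmas~\ref{kuzlem2}, \ref{kuzlem5}, and \ref{kuzlem6}, each of $(iv)$, $(vii)$, $(viii)$ is $O_{f_\infty}(1)$; and by Lemma~\ref{kuzlem4},
\[
(vi)=\int_{|x|>1}\tfrac{\theta_{\infty}(x)}{\sqrt{x^2-1}}\,dx+\int_{\mathbb{R}}\tfrac{\theta_{\infty}^{-}(x)}{\sqrt{x^2+1}}\,dx.
\]
Adding these seven terms, the two zero terms drop out, the three $O_{f_\infty}(1)$ terms merge into a single $O_{f_\infty}(1)$ (a finite sum of quantities bounded by constants depending only on $f_\infty$ is again bounded by such a constant), and the explicit integrals from $(vi)$ survive unchanged. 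This immediately yields
\[
(i)+(iv)+(v)+(vi)+(vii)+(viii)=\int_{|x|>1}\tfrac{\theta_{\infty}(x)}{\sqrt{x^2-1}}\,dx+\int_{\mathbb{R}}\tfrac{\theta_{\infty}^{-}(x)}{\sqrt{x^2+1}}\,dx+O_{f_\infty}(1),
\]
which is exactly the claimed identity.

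\textbf{Step 2: Justify that the explicit integrals are well-defined.} The only point requiring a word of care is that the two integrals appearing on the right-hand side converge, so that it is legitimate to single them out rather than absorb them into the error term. This follows from the hypotheses on $f_\infty$ in \S\ref{sectest}: the orbital integrals of $f_\infty$ are compactly supported, so $\theta_\infty$ and $\theta_\infty^{-}$ have compact support (and are smooth), whence the integrands are integrable despite the apparent singularities of $1/\sqrt{x^2-1}$ at $x=\pm1$ — these are mild ($L^1$) singularities against a bounded compactly supported weight. This is already implicit in Lemma~\ref{kuzlem4} and in lemma~6.2 of \cite{Altug:2015aa}, so no new argument is needed; one simply remarks it.

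There is essentially no obstacle here: the corollary is a direct sum of Lemmas~\ref{kuzlem1}--\ref{kuzlem6}. The only mild subtlety — more expository than mathematical — is the remark already made before the statement: the two integrals \emph{could} be folded into $O_{f_\infty}(1)$, but they are retained because they cancel against a matching piece of the elliptic contribution in theorem~\ref{recallthm}, so keeping them explicit is what makes the later cancellation visible. The proof is therefore a one-line combination of the six lemmas, and I would present it as such.
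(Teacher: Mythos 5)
Your proposal is correct and matches the paper's proof, which is simply the observation that the corollary follows by summing the conclusions of lemmas \ref{kuzlem1} through \ref{kuzlem6}. Your additional remarks on convergence and on why the integrals are kept explicit are fine but not needed beyond what the paper already states.
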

\begin{proof}
Follows from lemmas \ref{kuzlem1} to \ref{kuzlem6}.
\end{proof}

\section{Contribution of the elliptic part}\label{secell}

The rest of the paper is dedicated to estimating the contribution of the elliptic part. This section constitutes the main part of the proof and is based on several different components. First, we will use a variant of the main result of \cite{Altug:2015aa} (theorem \ref{recallthm} below) to isolate the contribution of the trivial representation and the contribution of the special representation $\xi_0(f_{\infty})$. We will then use the local analysis given in the appendix (theorems \ref{thmasymp}, \ref{thmasymp2} and corollary \ref{corasymp15}) together with a variant of the Weil bound (corollary \ref{charsumcor3}) to analyze the remaining sums.

\subsection{Poisson summation and special representations}
Let us first recollect the notation and results of \cite{Altug:2015aa} that we will need later on. The main ingredient we will need is theorem \ref{recallthm} below, which is a slight modification of theorem 1.1 of \cite{Altug:2015aa}.

Let $\theta_{\infty}^{pos}(x)$ and $\theta_{\infty}^{neg}(x)$ be defined\footnote{Note that our $\theta_{\infty}^{pos}(x)$ and  $\theta_{\infty}^{neg}(x)$ were respectively denoted by $\theta_{\infty}^+(x)$ and $\theta_{\infty}^-(x)$ in \cite{Altug:2015aa}. The reason for us to choose this notation is to avoid possible confusion with the $\pm$ signs appearing in the asymptotic expansions of theorems \ref{thmasymp} and \ref{thmasymp2}.}  by
\begin{align*}
\theta_{\infty}^{pos}\left(\tfrac{m}{2p^{k/2}}\right)&:=\tfrac{\sqrt{|m^2-4p^k|}}{p^{k/2}}Orb(f_{\infty};\gamma(m,p^k)),\\
\theta_{\infty}^{neg}\left(\tfrac{m}{2p^{k/2}}\right)&:=\tfrac{\sqrt{m^2+4p^k}}{p^{k/2}}Orb(f_{\infty};\gamma(m,-p^k)).
\end{align*}
Where, $\gamma(\alpha,\beta)$ denotes the $GL(2)$ conjugacy class of elements with $\tr=\alpha$ and $\det=\beta$, and $Orb(f_{\infty};\gamma(\alpha,\beta))$ denotes the local orbital integral of the function $f_{\infty}$ at the conjugacy class of $\gamma(\alpha,\beta)$. We remark that the functions are well-defined (i.e. they only depend on the ratio $\frac{m}{\pm2\sqrt{p}}$) because of our choices in \S\ref{secautreps}. For more on this we refer to \S2.2.4 of \cite{Altug:2015aa}.

\subsubsection{Poisson summation.} Theorem \ref{recallthm} below is the starting point of the analysios of the elliptic part. It is essentially theorem 1.1 of \cite{Altug:2015aa} with terms in that theorem explicitly written out. The only difference is in the term $\Sigma(0)$, where instead of moving the contour to $\mathcal{C}_v$ (as is the case in theorem 1.1 of \cite{Altug:2015aa}) we push it to $\Re(u)=0$ (which brings the principal values that appear in the statement).

\begin{thm}[\cite{Altug:2015aa} Theorem 1.1]\label{recallthm} Let $p\neq2$ be a prime, and $f^p$ be defined as in \S\ref{neredeyse}. Then,
\[(ii)=\tr(\textbf{1}(f^{p}))-\tr(\xi_0(f^{p}))-\Sigma(\square)+\Sigma(0)+\Sigma(\xi\neq0).\]
Where, $\tr(\textbf{1}(f^p))$ denotes the contribution of the trivial representation, $\tr(\xi_0(f^p))$ is the contribution of term (vi) to the trace formula (see pg. 25 of \cite{Langlands:2004aa} for notation),
\begin{align*}
\Sigma(\square):&=\left(\theta_{\infty}^{pos}\left(\tfrac{p+1}{2\sqrt{p}}\right)+\theta_{\infty}^{pos}\left(\tfrac{-(p+1)}{2\sqrt{p}}\right)\right)\sum_{f\mid (p-1)}\tfrac{1}{f}\sum_{\substack{l=1\\ \gcd\left(l,\frac{p-1}{f}\right)=1}}^{\infty}\tfrac{1}{l}\left[F\left(\tfrac{lf^2}{p-1}\right)+\tfrac{lf^2}{p-1}H_0\left(\tfrac{lf^2}{p-1}\right)\right]\\
&+\left(\theta_{\infty}^{neg}\left(\tfrac{p-1}{2\sqrt{p}}\right)+\theta_{\infty}^{neg}\left(\tfrac{1-p}{2\sqrt{p}}\right)\right)\sum_{f\mid (p+1)}\tfrac{1}{f}\sum_{\substack{l=1\\ \gcd\left(l,\frac{p+1}{f}\right)=1}}^{\infty}\tfrac{1}{l}\left[F\left(\tfrac{lf^2}{p+1}\right)+\tfrac{lf^2}{p+1}H_0\left(\tfrac{lf^2}{p+1}\right)\right],\\
\Sigma(0):&=\tfrac{1}{2\pi i}\int_{\mathbb{R}}\theta_{\infty}^{pos}(x)\left[\int _{(-1)}\tfrac{\tilde{F}(u)(4p)^{\frac{1+u}{2}}\zeta(2u+2)(1-p^{-(u+1)})}{\zeta(u+2)}|x^2-1|^{\frac u2}du\right]dx\\
&+\tfrac{\sqrt{\pi}}{2\pi i}\int _{|x|>1}\tfrac{\theta_{\infty}^{pos}(x)}{\sqrt{x^2-1}}\left[\mathcal{P}\int_{(0)}\tfrac{\tilde{F}(u)\Gamma\left(\frac{u}{2}\right)(4p)^{\frac u2}\zeta(2u)(1+p^{-u})}{\pi^u\Gamma\left(\frac{1-u}{2}\right)\zeta(u+1)}(x^2-1)^{\frac u2}du\right]dx\\
&+\tfrac{\sqrt{\pi}}{2\pi i}\int _{-1}^1\tfrac{\theta_{\infty}^{pos}(x)}{\sqrt{1-x^2}}\left[\int_{(0)}\tfrac{\tilde{F}(u)\Gamma\left(\frac{1+u}{2}\right)(4p)^{\frac u2}\zeta(2u)(1+p^{-u})}{\pi^u\Gamma\left(\frac{2-u}{2}\right)\zeta(u+1)}(1-x^2)^{\frac u2}du\right]dx\\
&+\tfrac{1}{2\pi i}\int_{\mathbb{R}}\theta_{\infty}^{neg}(x)\left[\int _{(-1)}\tfrac{\tilde{F}(u)(4p)^{\frac{1+u}{2}}\zeta(2u+2)(1-p^{-(u+1)})}{\zeta(u+2)}(x^2+1)^{\frac u2}du\right]dx\\
&+\tfrac{\sqrt{\pi}}{2\pi i}\int _{\mathbb{R}}\tfrac{\theta_{\infty}^{neg}(x)}{\sqrt{x^2+1}}\left[\mathcal{P}\int_{(0)}\tfrac{\tilde{F}(u)\Gamma\left(\frac{u}{2}\right)(4p)^{\frac u2}\zeta(2u)(1+p^{-u})}{\pi^u\Gamma\left(\frac{1-u}{2}\right)\zeta(u+1)}(x^2+1)^{\frac u2}du\right]dx,\\
\Sigma(\xi\neq0):&=\tfrac{\sqrt{p}}{2}\sum_{l,f=1}^{\infty}\tfrac{1}{(lf^2)^{\frac32}}\sum_{\substack{\xi\in\mathbb{Z}\\ \xi\neq0 }}\tfrac{Kl_{l,f}(\xi,p)}{\sqrt{l}}\left[\int_{\mathbb{R}}\theta_{\infty}^{pos}(x)F\left(\tfrac{lf^2(4p)^{-1/2}}{\sqrt{|x^2-1|}}\right)e\left(\tfrac{-x\xi\sqrt{p}}{2lf^2}\right)dx\right]\\
&+\tfrac{1}{4}\sum_{l,f=1}^{\infty}\tfrac{1}{(lf^2)^{\frac12}}\sum_{\substack{\xi\in\mathbb{Z}\\ \xi\neq0 }}\tfrac{Kl_{l,f}(\xi,p)}{\sqrt{l}}\left[\int_{-1}^1\tfrac{\theta_{\infty}^{pos}(x)}{\sqrt{1-x^2}}H_1\left(\tfrac{lf^2(4p)^{-1/2}}{\sqrt{1-x^2}}\right)e\left(\tfrac{-x\xi\sqrt{p}}{2lf^2}\right)dx\right]\\
&+\tfrac{1}{4}\sum_{l,f=1}^{\infty}\tfrac{1}{(lf^2)^{\frac12}}\sum_{\substack{\xi\in\mathbb{Z}\\ \xi\neq0 }}\tfrac{Kl_{l,f}(\xi,p)}{\sqrt{l}}\left[\int_{|x|>1}\tfrac{\theta_{\infty}^{pos}(x)}{\sqrt{x^2-1}}H_0\left(\tfrac{lf^2(4p)^{-1/2}}{\sqrt{x^2-1}}\right)e\left(\tfrac{-x\xi\sqrt{p}}{2lf^2}\right)dx\right]\\
&+\tfrac{\sqrt{p}}{2}\sum_{l,f=1}^{\infty}\tfrac{1}{(lf^2)^{\frac32}}\sum_{\substack{\xi\in\mathbb{Z}\\ \xi\neq0 }}\tfrac{Kl_{l,f}(\xi,-p)}{\sqrt{l}}\left[\int_{\mathbb{R}}\theta_{\infty}^{neg}(x)F\left(\tfrac{lf^2(4p)^{-1/2}}{\sqrt{x^2+1}}\right)e\left(\tfrac{-x\xi\sqrt{p}}{2lf^2}\right)dx\right]\\
&+\tfrac{1}{4}\sum_{l,f=1}^{\infty}\tfrac{1}{(lf^2)^{\frac12}}\sum_{\substack{\xi\in\mathbb{Z}\\ \xi\neq0 }}\tfrac{Kl_{l,f}(\xi,p)}{\sqrt{l}}\left[\int_{\mathbb{R}}\tfrac{\theta_{\infty}^{neg}(x)}{\sqrt{x^2+1}}H_0\left(\tfrac{lf^2(4p)^{-1/2}}{\sqrt{x^2+1}}\right)e\left(\tfrac{-x\xi\sqrt{p}}{2lf^2}\right)dx\right].
\end{align*}
The functions $F,H_0,H_1$ are defined by,
\begin{align*}
F(x):&=\tfrac{1}{2K_0(2)}\int_x^{\infty}e^{-y-\frac1y}\tfrac{dy}{y},\\
H_0(y):&=\tfrac{\sqrt{\pi}}{2 \pi i}\int_{(1)}\tfrac{\Gamma\left(\frac{u}{2}\right)\tilde{F}(u)}{\Gamma\left(\frac{1-u}{2}\right)}(\pi y)^{-u}du,\\
H_1(y):&=\tfrac{\sqrt{\pi}}{2 \pi i}\int_{(1)}\tfrac{\Gamma\left(\frac{1+u}{2}\right)\tilde{F}(u)}{\Gamma\left(\frac{2-u}{2}\right)}(\pi y)^{-u}du,
\end{align*}
where $K_0(z)$ denotes the $0$'th modified Bessel function of the second type\footnote{We actually do not need the specific choice of $F(x)$. Generally, we can take any $F\in \mathcal{S}^{\infty}(\mathbb{R})$ for which the approximate functional equation in proposition 3.4 of \cite{Altug:2015aa} holds.}. The character sums, $K_{l,f}(\xi,\pm p)$, are given by,
\begin{align*}
Kl_{l,f}(\xi,\mp p):&=\sum_{\substack{a\bmod 4lf^2\\ a^2\pm4p\equiv 0\bmod f^2\\ \tfrac{a^2\pm4p}{f^2}\equiv0,1\bmod 4}}\left(\tfrac{(a^2\pm4p)/f^2}{l}\right)e\left(\tfrac{a\xi}{4lf^2}\right).\\
\end{align*}
Finally, $``\mathcal{P}"$ denotes the principal value of an integral (i.e. $\mathcal{P}\int h(u)du=\lim _{\epsilon\rightarrow 0}(\int_\epsilon^{\infty}h(u)du+\int_{-\infty}^{-\epsilon}h(u)du)$).

\end{thm}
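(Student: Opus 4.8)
The plan is to view Theorem \ref{recallthm} not as a statement to be reproved from scratch, but as a bookkeeping exercise: it is Theorem 1.1 of \cite{Altug:2015aa} with every term written out explicitly, together with one genuine modification in the term $\Sigma(0)$. So the proof proceeds in two stages. First I would recall the elliptic term $(ii)$ of the trace formula as the sum over $\mathbb{Q}$-elliptic conjugacy classes $\gamma$ of $\text{vol}\cdot Orb(f^p;\gamma)$, parametrized by $(\text{tr},\det)=(m,\pm p^k)$; since the only nonzero contribution at the finite places comes from $f_p^{(1)}$ (forcing $\det$ to have $p$-adic valuation $1$) and the $f_v^{(0)}$'s, the local orbital integrals at finite places are lattice-point counts that can be packaged as the character sums $Kl_{l,f}(\xi,\pm p)$ after an application of the approximate functional equation (Proposition 3.4 of \cite{Altug:2015aa}) to smooth the sum via $F$, followed by Poisson summation in the variable $m$. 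This is exactly the content of Theorem 1.1 of \cite{Altug:2015aa}, and the dominant ($\xi=0$) frequency splits off the square-discriminant contribution $\Sigma(\square)$ and the trivial-plus-special representation pieces $\tr(\mathbf{1}(f^p))-\tr(\xi_0(f^p))$, while the nonzero frequencies assemble into $\Sigma(\xi\neq 0)$.

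Second, I would carry out the one new computation, which is the reorganization of $\Sigma(0)$. In \cite{Altug:2015aa} the $\xi = 0$ term is expressed as a contour integral in the Mellin variable $u$ along a contour $\mathcal{C}_v$; here the claim is that one may instead push the contour to $\Re(u)=0$ (and to $\Re(u)=-1$ for the first and fourth summands, matching the powers of $(4p)$ that appear). The mechanism is a standard residue/contour-shift argument: the integrand is $\tilde F(u)$ — which is entire and rapidly decaying in vertical strips since $F\in\mathcal{S}^\infty(\mathbb{R})$ — times a ratio of $\Gamma$- and $\zeta$-factors. Shifting from $\mathcal{C}_v$ to $\Re(u)=0$ one crosses the poles coming from $\Gamma(u/2)$ (or $\Gamma((1+u)/2)$), from $\zeta(2u)$, and from the trivial zero structure of $\zeta(u+1)$ in the denominator; I would check that the pole at $u=0$ from $\Gamma(u/2)$ is precisely what forces the principal value $\mathcal{P}$ in the statement (the contour at $\Re(u)=0$ passes through the pole, so the integral only exists as a principal value), and that any residues picked up along the way are either absorbed into $\tr(\mathbf{1}(f^p))$, $\tr(\xi_0(f^p))$, or cancel against the integral terms coming out of Corollary \ref{nonell}. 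The powers $(4p)^{(1+u)/2}$ versus $(4p)^{u/2}$ track whether one is on the $\Re(u)=-1$ or $\Re(u)=0$ line, which is just the functional equation of $\zeta$ applied to rewrite $\zeta(2u+2)/\zeta(u+2)$ against $\zeta(2u)/\zeta(u+1)$.

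The main obstacle I anticipate is purely technical rather than conceptual: verifying that the contour shift in $\Sigma(0)$ is legitimate uniformly enough to interchange the $u$-integral with the outer $x$-integral against $\theta_\infty^{pos}/\theta_\infty^{neg}$, and correctly identifying which residue corresponds to which spectral term so that the final identity has exactly the stated shape with no leftover pieces. One must be careful that $\theta_\infty^{pos}(x)$ has a mild singularity structure near $|x|=1$ (it is $Orb(f_\infty;\gamma(m,p^k))$ times $\sqrt{|m^2-4p^k|}/p^{k/2}$, so the factors $|x^2-1|^{u/2}$ and $1/\sqrt{x^2-1}$ interact at the branch point), and that the split into the $|x|>1$, $-1<x<1$, and $x\in\mathbb{R}$ ranges in the five summands of $\Sigma(0)$ reflects exactly the split between split-over-$\mathbb{R}$, elliptic-over-$\mathbb{R}$, and the negative-determinant cases. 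Since all of this is bookkeeping on top of \cite{Altug:2015aa}, I would present the proof as: (a) cite Theorem 1.1 of \cite{Altug:2015aa} verbatim; (b) observe that $\Sigma(\square)$ and $\Sigma(\xi\neq0)$ are that theorem's terms with the definitions of $\theta_\infty^{pos/neg}$, $F$, $H_0$, $H_1$, and $Kl_{l,f}$ substituted in; and (c) perform the contour shift in the $\Sigma(0)$-term, tracking residues, to get the stated principal-value expression. The bulk of the write-up is step (c), and even there the only subtlety is the emergence of the $\mathcal{P}$ symbol from the pole of $\Gamma(u/2)$ at $u=0$.
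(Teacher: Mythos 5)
Your overall plan is the same as the paper's: cite Theorem 1.1 of \cite{Altug:2015aa}, match $\Sigma(\square)$, $\Sigma(\xi\neq0)$, $H_0$, $H_1$, $Kl_{l,f}$ by substitution, and treat the only new content as a contour move from $\mathcal{C}_v$ to $\Re(u)=0$ in the $\Sigma(0)$ term. However, your description of that contour move contains concrete errors that would derail the bookkeeping. The curve $\mathcal{C}_v$ is nothing but the line $\Re(u)=0$ indented to the \emph{left} around $u=0$ by a semicircle of radius $v$ (with $v$ small enough that $\zeta(u+1)$ has no zeros in $|u|<v$); letting $v\to0$ one crosses no poles of $\zeta(2u)$ and no zeros of $\zeta(u+1)$ at all --- the only effect is that the indentation contributes minus one half of the residue at $u=0$, i.e.\ $\frac{1}{2\pi i}\int_{\mathcal{C}_v}h\,du \to -\tfrac12\mathrm{Res}_{u=0}h+\frac{1}{2\pi i}\mathcal{P}\int_{(0)}h\,du$. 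Moreover the simple pole at $u=0$ is not ``the pole of $\Gamma(u/2)$'': it is the net effect of the double pole of $\tilde F(u)\Gamma(u/2)$ against the simple pole of $\zeta(u+1)$ in the denominator, and one must actually compute the residue ($-2/\sqrt{\pi}$, so after the $\sqrt{\pi}$ prefactor the half-residue is exactly $1$), because this constant cancels the explicit term $-\tfrac{k+1}{2}\sum_{\pm}\{\cdots\}$ that is \emph{internal to Theorem 1.1 itself}. Your claim that the residues are ``absorbed into $\tr(\mathbf{1}(f^p))$, $\tr(\xi_0(f^p))$, or cancel against the integral terms coming out of Corollary \ref{nonell}'' is not where the cancellation happens, and if you tried to close the identity that way you would be left with an unexplained constant. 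You should also note that in the $|x|<1$ (i.e.\ $(1-x^2)^{u/2}$) branch the integrand is regular at $u=0$, so no principal value and no residue appear there; and the $(-1)$-line terms with $(4p)^{(1+u)/2}\zeta(2u+2)/\zeta(u+2)$ are already in that form in Theorem 1.1 --- no functional-equation manipulation is needed or performed.

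A smaller but real omission: identifying $\Sigma(\square)$ is not pure substitution. The square term of Theorem 1.1 is a sum over $m$ with $m^2\pm4p=\alpha^2$, and one must solve this (for $p\neq2$ prime the only solutions are $m=\pm(p+1)$ with $\alpha^2=(p-1)^2$, resp.\ $m=\pm(p-1)$ with $\alpha^2=(p+1)^2$), then check that the condition $f^2\mid(m^2\pm4p)$ with $\frac{m^2\pm4p}{f^2}\equiv0,1\bmod4$ reduces to $f\mid(p\pm1)$ (the congruence being vacuous for a square), and that the Kronecker symbol $\left(\frac{(p\pm1)^2/f^2}{l}\right)$ becomes the condition $\gcd(l,(p\pm1)/f)=1$. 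These verifications are short but they are the content of the matching, and your step (b) passes over them.
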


\begin{proof}This is, essentially, theorem 1.1 of \cite{Altug:2015aa} with terms explicitly written out. We only need to keep in mind that our functions $\theta_{\infty}^{pos}$ and $\theta_{\infty}^{neg}$ are the functions $\theta_{\infty}^+$ and $\theta_{\infty}^-$  of that theorem, respectively, and in our case the integer denoted by ``$k$'' in the same theorem is $1$ (i.e. $p^k=p$). We now explain each of the terms above. 

\textbf{Note.} \textit{For the rest of the proof, ``theorem 1.1" we will always mean ``theorem 1.1 of  \cite{Altug:2015aa}".}

The expressions for $Kl_{l,f}(\xi,\pm p)$, $H_0$ and $H_1$ are the ones given in theorem 1.1. The term $\Sigma(\xi\neq0)$ is the last sum in the same theorem. The only remaining terms to explain are $\Sigma(\square)$ and $\Sigma(0)$.
\begin{itemize}
\item $\Sigma(\square)$. The expression for $\Sigma(\square)$ given in theorem 1.1 involves a sum over $m\in\mathbb{Z}$ such that $m^2\pm 4p=\alpha^2$ for some $\alpha\in\mathbb{Z}$. Since $p$ is prime and $p\neq2$ it is straightforward to see that the solutions to $m^2-4p=\alpha^2$ are $m^2=(p+1)^2,\alpha^2=(p-1)^2$, and the solutions to $m^2+4p=\alpha^2$ are $m^2=(p-1)^2,\alpha^2=(p+1)^2$. The sum over $f$ in theorem 1.1 is over $f^2\mid (m^2\pm 4p)$ such that $\frac{m^2\pm4p}{f^2}\equiv 0,1\bmod 4$. Since  $m^2\pm4p=(p\pm1)^2$, $f^2\mid (m^2\pm 4p)\Leftrightarrow f\mid (p\pm1)$, moreover since $\frac{(p\pm1)^2}{f^2}$ is a square and any square is congruent to $0,1\bmod 4$, the extra condition on the $f$-sum is vacuous. Finally, the Kronecker symbol, $\left(\frac{(m^2\pm4p)^2/f^2}{l}\right)=\left(\frac{(p\pm 1)^2/f^2}{l}\right)=1$ or $0$ depending on $\gcd(l,(p\pm1)^2/f^2)=1$ or not respectively, which is the same condition as $\gcd(l,(p\pm1)/f)=1$ or not. 

\item $\Sigma(0)$. The expression for this term actually follows from the proof of theorem 6.1 of \cite{Altug:2015aa}, where, by a contour shift the special representations were isolated (see the part of the proof that handles the case $x^2\pm1>0$). There, the curve $\mathcal{C}_v$ was introduced to push the contour of integration to the left of the pole of the integrand at $u=0$, however if one instead pushes the contour to $\Re(u)=0$ one gets the formula for $\Sigma(0)$ given in our theorem. The only point one needs to be careful about is the pole at $u=0$, which, in this case, is on the contour of integration (This is where we get the principal part, $``\mathcal{P}"$, of the integralds.). To keep the paper more self-contained we include the proof of this identity below.

\begin{lemma}\label{lem3.9} Using the notation of theorem 1.1 of \cite{Altug:2015aa}, for any $x\in\mathbb{R}$ we have
\begin{align*}
\tfrac{\sqrt{\pi}}{2\pi i}\int_{\mathcal{C}_v}\left\{\tfrac{\tilde{F}(u)\Gamma\left(\frac{u}{2}\right)(4p)^{\frac u2}\zeta(2u)(1+p^{-u})}{\pi^u\Gamma\left(\frac{1-u}{2}\right)\zeta(u+1)}\right\}(x^2\pm1)^{\frac u2}du&=1+\tfrac{\sqrt{\pi}}{2\pi i}\mathcal{P}\int_{(0)}\left\{\cdots\right\}(x^2\pm1)^{\frac u2}du,\\
\tfrac{\sqrt{\pi}}{2\pi i}\int_{\mathcal{C}_v}\left\{\tfrac{\tilde{F}(u)\Gamma\left(\frac{u}{2}\right)(4p)^{\frac u2}\zeta(2u)(1+p^{-u})}{\pi^u\Gamma\left(\frac{1-u}{2}\right)\zeta(u+1)}\right\}(1-x^2)^{\frac u2}du&=\tfrac{\sqrt{\pi}}{2\pi i }\int_{(0)}\left\{\cdots\right\}(1-x^2)^{\frac u2}du,\\
\end{align*}
where $\{\cdots\}$ on the right of the equality denotes the same function inside the brackets on the left.
\end{lemma}

\begin{proof}

The number $v\in\mathbb{R}$ is chosen so that $\zeta(u+1)$ has no zeros in $|u|<v$ (this, in particular, implies that $v<\frac12$), and $\mathcal{C}_{v}=(-i\infty,iv)\cup(iv,i\infty)\cup \begin{setdef}{ve^{it}}{\frac \pi 2\leq t \leq \frac{3\pi}{2}}\end{setdef}$. Let $D_v$ denote the half-disc $\begin{setdef}{u\in\mathbb{C}}{|u|\leq v, \Re(u)\leq 0}\end{setdef}$. It is straightforward to verify that for any function $h(u)$ that is holomorphic in $D_v$ with at most a simple pole at $u=0$ satisfies
\[\lim_{v\rightarrow 0}\tfrac{1}{2\pi i}\int _{\mathcal{C}_v}h(u)du=\frac{-Res_{u=0}h(u)}{2}+\tfrac{1}{2\pi i}\mathcal{P}\int _{(0)}h(u)du.\tag{*}\label{thm3.8*}\]
By lemma 3.3 of \cite{Altug:2015aa}, $\tilde{F}(u)$ has a simple pole with residue $1$ at $u=0$ (and is holomorphic elsewhere). Note also that $\Gamma(u/2)$ has a simple pole at $u=0$ with residue $2$, and $\zeta(u+1)$ has a pole at $u=0$ with residue $1$, and both are holomorphic in $\begin{setdef}{|u|\leq v}{u\neq0}\end{setdef}$. Therefore around $0$ we have,
\begin{align*}
\tfrac{\tilde{F}(u)\Gamma\left(\frac{u}{2}\right)(4p)^{\frac u2}\zeta(2u)(1+p^{-u})}{\pi^u\Gamma\left(\frac{1-u}{2}\right)\zeta(u+1)}(x^2\pm1)^{\frac u2}&=\tfrac{-2}{\sqrt{\pi}}\tfrac{1}{u}+O(1),\tag{**}\label{thm3.8**}\\
\tfrac{\tilde{F}(u)\Gamma\left(\frac{u}{2}\right)(4p)^{\frac u2}\zeta(2u)(1+p^{-u})}{\pi^u\Gamma\left(\frac{1-u}{2}\right)\zeta(u+1)}(x^2-1)^{\frac u2}&=O(1)\tag{***}\label{thm3.8***}.
\end{align*}
Both of the functions in \eqref{thm3.8**} and \eqref{thm3.8***} are holomorphic in $D_v\backslash \{0\}$. The holomorphy of \eqref{thm3.8***} immediately implies the second equality. Finally, using \eqref{thm3.8*} and \eqref{thm3.8**} we get the first.

\end{proof}
Coming back to the proof of theorem \ref{recallthm}, we substitute the expressions of lemma \ref{lem3.9} into the term with the $\mathcal{C}_v$-integrals in theorem 1.1. The term coming from the $1$ in the first formula of lemma \ref{lem3.9} cancels the the term $-\tfrac{k+1}{2}\sum_{\pm}\{\cdots\}$ of theorem 1.1. This finishes the proof.
\end{itemize}

\end{proof}

As an immediate corollary to theorem \ref{recallthm} we get the following estimate.

\begin{cor}\label{recallcor} Let $p\neq2$ be a prime, $f^p$ be defined as in \S\ref{sectf}. Then, 
\[\tr(R_0(f^p))=\Sigma(\xi\neq0)+O_{f_{\infty}}(\log^2(p)),\]
where $\Sigma(\xi\neq0)$ is as defined in theorem \ref{recallthm}. 
\end{cor}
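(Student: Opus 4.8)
The plan is to combine the two main structural results already assembled in the paper. First, Corollary \ref{nonell} evaluates the entire non-elliptic contribution $(i)+(iv)+(v)+(vi)+(vii)+(viii)$ as $\int_{|x|>1}\tfrac{\theta_\infty(x)}{\sqrt{x^2-1}}\,dx+\int_{\mathbb{R}}\tfrac{\theta_\infty^{-}(x)}{\sqrt{x^2+1}}\,dx+O_{f_\infty}(1)$. Second, Theorem \ref{recallthm} expands the elliptic term $(ii)$ as $\tr(\mathbf{1}(f^p))-\tr(\xi_0(f^p))-\Sigma(\square)+\Sigma(0)+\Sigma(\xi\neq0)$. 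Substituting both into the identity \eqref{trace} for $\tr(R_0(f^p))$ causes the two copies of $\tr(\mathbf{1}(f^p))$ to cancel, and the term $-\tr(\xi_0(f^p))$ (which is term $(vi)$ by the discussion on pg.~25 of \cite{Langlands:2004aa}, i.e.\ exactly the two integrals produced in Lemma \ref{kuzlem4}) cancels against the explicit integrals surviving from Corollary \ref{nonell}. Hence one is left with $\tr(R_0(f^p)) = -\Sigma(\square)+\Sigma(0)+\Sigma(\xi\neq0)+O_{f_\infty}(1)$, and the task reduces to showing $\Sigma(\square)$ and $\Sigma(0)$ are both $O_{f_\infty}(\log^2 p)$.

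Next I would bound $\Sigma(\square)$. The prefactors $\theta_\infty^{pos}\!\left(\tfrac{\pm(p+1)}{2\sqrt p}\right)$ and $\theta_\infty^{neg}\!\left(\tfrac{\pm(p-1)}{2\sqrt p}\right)$ are bounded: since $\tfrac{p\pm1}{2\sqrt p}$ is bounded away from $0$ and the orbital integrals of $f_\infty$ are compactly supported, these values are $O_{f_\infty}(1)$ (in fact they vanish once $p$ exceeds the support). The remaining double sum $\sum_{f\mid(p\mp1)}\tfrac1f\sum_{\gcd(l,(p\mp1)/f)=1}\tfrac1l\big[F(\tfrac{lf^2}{p\mp1})+\tfrac{lf^2}{p\mp1}H_0(\tfrac{lf^2}{p\mp1})\big]$ must be shown to be $O(\log^2 p)$. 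Using that $F$ and $H_0$ (or rather $yH_0(y)$) are bounded on $(0,\infty)$ and decay rapidly at $\infty$ — this follows from the definitions of $F$ via the Bessel integral and of $H_0$ via its Mellin–Barnes integral, together with the rapid decay of $\tilde F$ on vertical lines — the $l$-sum effectively runs over $l\ll (p\mp1)/f^2$ with each term $O(1/l)$, contributing $O(\log p)$; the $f$-sum over divisors of $p\mp1$ then contributes another $\sum_{f\mid(p\mp1)}\tfrac1f=O(\log p)$ (or, more crudely, $O(d(p\mp1)) = O(p^\epsilon)$, but the $\log$ bound is what matches the statement and is obtained from $\sum_{f\mid N}\tfrac1f\le\prod_{q\mid N}(1-1/q)^{-1}\ll\log\log N$ combined with the $l$-sum). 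So $\Sigma(\square)=O_{f_\infty}(\log^2 p)$.

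Then I would bound $\Sigma(0)$. Each of its five terms is an $x$-integral of $\theta_\infty^{pos}$ or $\theta_\infty^{neg}$ (times a fixed integrable kernel like $1/\sqrt{|x^2-1|}$) against an inner Mellin–Barnes integral in $u$ along $\Re(u)=-1$ or $\Re(u)=0$. The key point is the $p$-dependence: the inner integrands carry a factor $(4p)^{(1+u)/2}$ or $(4p)^{u/2}$, which on the line $\Re(u)=-1$ has modulus $(4p)^0=1$ and on $\Re(u)=0$ has modulus $(4p)^0=1$ as well — so there is no positive power of $p$. The remaining $u$-dependence, $\tilde F(u)$ times ratios of $\Gamma$ and $\zeta$ factors, together with the factor $(1+p^{-u})$ (bounded by $2$ on these lines away from $u=0$; near $u=0$ on $\Re(u)=0$ one has $1+p^{-u}=2+O(|u|\log p)$, absorbed by the principal value against the simple pole), is absolutely integrable in $u$ uniformly in $p$ by the rapid decay of $\tilde F$; any factor $\log p$ arising from the $p^{-u}$ expansion near the pole is what produces the $\log^2 p$. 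The outer $x$-integral is then $O_{f_\infty}(1)$ because $\theta_\infty^{pos},\theta_\infty^{neg}$ are compactly supported smooth (hence the pieces $\int_{|x|>1}\tfrac{\theta_\infty^{pos}}{\sqrt{x^2-1}}$ etc.\ converge, the mild endpoint singularities at $x=\pm1$ being integrable). Collecting, $\Sigma(0)=O_{f_\infty}(\log^2 p)$, and the corollary follows.

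The main obstacle is the careful bookkeeping of the cancellation of $\tr(\xi_0(f^p))$ against the explicit integrals from the non-elliptic side — one must check that term $(vi)$, Lemma \ref{kuzlem4}'s output, and $\tr(\xi_0(f^p))$ in Theorem \ref{recallthm} all refer to the same quantity with matching signs — and, on the analytic side, the uniform-in-$p$ control of the inner Mellin–Barnes integrals in $\Sigma(0)$, in particular handling the pole at $u=0$ on the contour $\Re(u)=0$ via the principal value and extracting the correct power of $\log p$ from $(1+p^{-u})$ there; everything else is routine estimation using the rapid decay of $\tilde F$ and the compact support of the archimedean orbital integrals.
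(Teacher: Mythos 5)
Your reduction is exactly the paper's: substitute Corollary \ref{nonell} and Theorem \ref{recallthm} into \eqref{trace}, cancel the two copies of $\tr(\textbf{1}(f^p))$, and cancel $\tr(\xi_0(f^p))$ against the two explicit integrals via lemma 6.2 of \cite{Altug:2015aa}, leaving $-\Sigma(\square)+\Sigma(0)+\Sigma(\xi\neq0)+O_{f_\infty}(1)$. Where you genuinely diverge is in bounding $\Sigma(\square)$: you estimate the $l,f$-sums directly, using that $F$ and $yH_0(y)$ are bounded and rapidly decaying (so the $l$-sum effectively truncates at $l\ll(p\mp1)/f^2$ and gives $O(\log p)$, and $\sum_{f\mid(p\mp1)}1/f\ll\log\log p$), together with the observation that the prefactors $\theta_{\infty}^{pos}(\tfrac{\pm(p+1)}{2\sqrt p})$, $\theta_{\infty}^{neg}(\tfrac{\pm(p-1)}{2\sqrt p})$ are $O_{f_\infty}(1)$ — indeed, since these arguments grow like $\sqrt p/2$ and the $\theta$'s are compactly supported, they vanish for large $p$, which would make $\Sigma(\square)$ eventually zero, a simplification the paper does not exploit. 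The paper instead reverses the Mellin-inversion step of proposition 3.1 of \cite{Altug:2015aa}, rewrites the double sum as a difference of contour integrals of $\tilde F(u)(p\pm1)^uL(u+1,(p\pm1)^2)$ on $\Re(u)=\pm2$, and extracts the $\log(p)\sum_{f\mid(p\pm1)}f^{-1}\ll\log^2 p$ bound from the double pole at $u=0$ via Cauchy's formula; your route is more elementary, the paper's is more structural (and reusable if one wanted the actual residue). For $\Sigma(0)$ the two arguments rest on the same key point — the $p$-powers $(4p)^{(1+u)/2}$ and $(4p)^{u/2}$ have modulus $1$ on the contours $\Re(u)=-1$ and $\Re(u)=0$, and $|1-p^{-(u+1)}|,|1+p^{-u}|\leq2$ there — so the paper gets $\Sigma(0)=O_{f_\infty}(1)$ outright; your more cautious $O_{f_\infty}(\log^2 p)$ from the behavior near the (simple) pole at $u=0$ is a harmless overestimate and still yields the corollary.
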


\begin{proof}By \eqref{trace} and corollary \ref{nonell} we have
\begin{align*}
\tr(R_0(f^p))&= (ii)-\tr(\textbf{1}(f^p))+(i)+(iv)+(v)+(vi)+(vii)+(viii)\\
&=(ii)-\tr(\textbf{1}(f^p))+\int_{|x|>1}\tfrac{\theta_{\infty}(x)}{\sqrt{x^2-1}}dx+\int_{\mathbb{R}}\tfrac{\theta_{\infty}^{-}(x)}{\sqrt{x^2+1}}dx+O_{f_{\infty}}(1).
\end{align*}
By lemma 6.2 of \cite{Altug:2015aa},
\[\tr(\xi_0(f^p))=\int_{|x|>1}\tfrac{\theta_{\infty}(x)}{\sqrt{x^2-1}}dx+\int_{\mathbb{R}}\tfrac{\theta_{\infty}^{-}(x)}{\sqrt{x^2+1}}dx.\]
Substituting the result of theorem \ref{recallthm} into the expression for the contribution of $(ii)$ and using the above equation for $\tr(\xi_0(f^p))$ gives,
\[\tr(R_0(f^p))=-\Sigma(\square)+\Sigma(0)+\Sigma(\xi\neq0)+O_{f_{\infty}}(1).\tag{$\circ$}\label{cor3.10circ}\]
For the contribution of $\Sigma(0)$, note that
\begin{align*}
\int _{(-1)}\tfrac{\tilde{F}(u)(4p)^{\frac{1+u}{2}}\zeta(2u+2)(1-p^{-(u+1)})}{\zeta(u+2)}|x^2-1|^{\frac u2}du&\ll\int_{(-1)}|\tilde{F}(u)||x^2-1|^{\frac u2}du,\\
\mathcal{P}\int_{(0)}\tfrac{\tilde{F}(u)\Gamma\left(\frac{u}{2}\right)(4p)^{\frac u2}\zeta(2u)(1+p^{-u})}{\pi^u\Gamma\left(\frac{1-u}{2}\right)\zeta(u+1)}(x^2-1)^{\frac u2}du&\ll\mathcal{P}\int_{(0)}|\tilde{F}(u)||x^2-1|^{\frac u2}du.
\end{align*}
Therefore, 
\[\Sigma(0)=O_{f_{\infty}}(1).\tag{$\circ\circ$}\label{cor3.10*}\]

Finally, to handle the contribution of $\Sigma(\square)$ we rewrite it using Mellin inversion. Following the steps of proposition 3.1 of \cite{Altug:2015aa} in reverse order we get,
\begin{align*}
&\sum_{f\mid (p\pm1)}\tfrac{1}{f}\sum_{\substack{l=1\\ \gcd\left(l,\frac{p\pm1}{f}\right)=1}}^{\infty}\tfrac{1}{l}\left[F\left(\tfrac{lf^2}{p\pm1}\right)+\tfrac{lf^2}{p\pm1}H_0\left(\tfrac{lf^2}{p\pm1}\right)\right]\\
&=\tfrac{1}{2\pi i}\int_{(2)}\tilde{F}(u)(p\pm1)^uL(u+1,(p\pm1)^2)du-\tfrac{1}{2\pi i}\int_{(-2)}\tilde{F}(u)(p\pm1)^{u}L(u+1,(p\pm1)^2)du,\tag{$\dagger$}\label{cor3.10dagger}
\end{align*}
where 
\[L(u,(p\pm1)^2)=\zeta(u)\sum_{f\mid (p\pm1)}\tfrac{1}{f^{2u-1}}\prod_{q\mid \frac{p\pm1}{f}}\left(1-\tfrac{1}{q^u}\right).\]
Note that $L(u+1,(p\pm1)^2)$ has a simple pole at $u=0$. Furthermore, by lemma 3.3 of \cite{Altug:2015aa}, $\tilde{F}(u)$ also has a simple pole at at $u=0$, therefore the integrand of \eqref{cor3.10dagger} has a double pole. Hence, by the Cauchy integral formula, 
\begin{align*}
\eqref{cor3.10dagger}&\ll  \log(p)\sum_{f\mid (p\pm1)}\tfrac1f\\
&\ll \log^2(p).\tag{$\dagger\dagger$}\label{cor3.10daggerdagger}
\end{align*}
Note that the implied constant above depends only on $F(u)$. Substituting \eqref{cor3.10daggerdagger} in $\Sigma(\square)$ finally gives
\[\Sigma(\square)=O_{f_{\infty}}(\log^2(p)).\tag{$\circ\circ\circ$}\label{cor3.10**}\]
Combining \eqref{cor3.10circ}, \eqref{cor3.10*}, and \eqref{cor3.10**} finishes the proof of the corollary.

\end{proof}

\subsection{The contribution of $\Sigma(\xi\neq0)$}

By corollary \ref{recallcor}, proving theorem \ref{thmselberg} is reduced to bounding $\Sigma(\xi\neq0)$. For the rest of the section we will prove the following theorem:

\begin{thm}\label{ellipticcont}
\[\Sigma(\xi\neq0)\ll p^{\frac14},\]
where the implied constant depends only on $f_{\infty}$.

\end{thm}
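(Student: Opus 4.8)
The plan is to bound each of the five families of terms constituting $\Sigma(\xi\neq0)$ separately, and in each case to extract a factor of $\sqrt{p}$ from the explicit prefactor, a cancellation gain from the character sum $Kl_{l,f}(\xi,\pm p)$ via the Weil bound (corollary \ref{charsumcor3}), and decay in the summation variables $l,f,\xi$ from the asymptotic expansions of the oscillatory integrals (theorems \ref{thmasymp}, \ref{thmasymp2} and corollary \ref{corasymp15}). Write each term schematically as
\[
\sqrt{p}^{\,\epsilon_1}\sum_{l,f}\frac{1}{(lf^2)^{\alpha}\sqrt{l}}\sum_{\xi\neq0}\frac{Kl_{l,f}(\xi,\pm p)}{\sqrt{l}}\,I_{l,f,\xi}(p),
\]
where $\alpha\in\{\tfrac12,\tfrac32\}$, $\epsilon_1\in\{0,1\}$, and $I_{l,f,\xi}(p)$ is the corresponding archimedean integral $\int \theta_\infty^{pos/neg}(x)\,(\text{Bessel-type kernel})\,e(-x\xi\sqrt p/2lf^2)\,dx$. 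First I would invoke corollary \ref{charsumcor3} to get $|Kl_{l,f}(\xi,\pm p)|\ll_\varepsilon (lf^2)^{1/2+\varepsilon}\gcd(\xi,\cdot)^{1/2}\cdots$ (the precise shape being whatever the appendix delivers from Weil's bound on Kloosterman sums), which beats the $(lf^2)^{-\alpha-1/2}$ in the $\xi\neq0$ terms with $\alpha=3/2$ outright, and needs help from the integral decay in the $\alpha=1/2$ terms.

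The heart of the matter is the archimedean integral $I_{l,f,\xi}(p)$. Here I would feed in the asymptotic expansions of theorems \ref{thmasymp}, \ref{thmasymp2}: because $\theta_\infty^{pos},\theta_\infty^{neg}$ have compactly supported orbital integrals and the Bessel kernels $F,H_0,H_1$ are Schwartz on $\mathbb{R}^+$, the integral is rapidly decaying in the argument $lf^2(4p)^{-1/2}/\sqrt{|x^2\mp1|}$, hence effectively restricts $lf^2\ll\sqrt p$; and repeated integration by parts in $x$ (the phase being $x\xi\sqrt p/2lf^2$) produces arbitrary polynomial decay in $\xi\sqrt p/lf^2$, so that $I_{l,f,\xi}(p)\ll_N (lf^2/\xi\sqrt p)^N$ once $\xi\sqrt p\gg lf^2$. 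Combining: the $\xi$-sum converges geometrically once one is past the transition range $|\xi|\lesssim lf^2/\sqrt p$, the $Kl$ bound supplies the cancellation needed to sum over $l,f$ with $lf^2\lesssim\sqrt p$, and the count of $(l,f,\xi)$ in the transition region together with the $\sqrt p$ (or, in the $\alpha=1/2$ terms, the extra $p^{1/2}$ deficit compensated by $1/(lf^2)^{1/2}$ summed up to $\sqrt p$) yields the bound $p^{1/4}$. The key point is that $lf^2$ ranges up to $\sim\sqrt p$ and the square-root cancellation in $Kl_{l,f}$ converts the naive size $p^{1/2}$ into $p^{1/4}$.

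The main obstacle I expect is uniformity: one must apply the asymptotic expansions with error terms controlled simultaneously in $\xi$, $l$, $f$ and $p$ — in particular near the boundary points $x=\pm1$ where $\sqrt{x^2-1}$ and the kernels $H_0,H_1$ are singular, and across the transition region $|\xi|\sim lf^2/\sqrt p$ where neither "trivial bound" nor "integration by parts" alone suffices. This is precisely why theorems \ref{thmasymp} and \ref{thmasymp2} are stated with error terms uniform in every parameter; the proof of theorem \ref{ellipticcont} consists of carefully partitioning the $(l,f,\xi)$-lattice according to the size of $lf^2/\sqrt p$ relative to $1$ and to $|\xi|$, applying on each piece the appropriate term of the asymptotic expansion, and checking that the resulting geometric series in each region sum to $O_{f_\infty}(p^{1/4})$ with an absolutely convergent outer sum over $f$ (costing at most a harmless $\log$ or $d(\cdot)$ factor absorbed by $\varepsilon$'s in the Weil bound). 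A secondary technical point is handling the five different kernels ($F$ with prefactor $\sqrt p/(lf^2)^{3/2}$ versus $H_0,H_1$ with prefactor $1/(lf^2)^{1/2}$) on the same footing; I would treat the $F$-terms first, as they are the ones genuinely of size $p^{1/4}$, and observe the $H_0,H_1$-terms are of the same size or smaller by the same argument.
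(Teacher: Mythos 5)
Your overall architecture matches the paper's: split the $(l,f,\xi)$-sum according to the size of $lf^2|\xi|/\sqrt p$, feed in the uniform asymptotics for the archimedean integrals (theorems \ref{thmasymp}, \ref{thmasymp2}, corollary \ref{corasymp15}) on each piece, control $Kl_{l,f}(\xi,\pm p)$ by the Weil-type bound of corollary \ref{charsumcor3}, and sum. But the quantitative claim at the heart of your sketch is wrong for the singular terms, and it is precisely the point the appendix exists to address. You assert that repeated integration by parts in $x$ gives $I_{l,f,\xi}(p)\ll_N (lf^2/(\xi\sqrt p))^N$ once $\xi\sqrt p\gg lf^2$, so that the $\xi$-sum converges geometrically past $|\xi|\lesssim lf^2/\sqrt p$. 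That is correct only for the $\theta_{\infty}^{neg}$ integrals, where the amplitude is in $C_c^{\infty}$ (proposition \ref{smoothpropasymp}). For the $\theta_{\infty}^{pos}$ integrals the amplitude has $|x\mp1|^{\pm\frac12}$-type singularities at $x=\pm1$, smoothed only at the scale $|x\mp1|\sim C^2=(lf^2)^2/4p$ by the decay of $F,H_0,H_1$; each integration by parts near $x=\pm1$ therefore costs a factor of order $C^{-2}$, and what one actually obtains is rapid decay only in the combined quantity $C^2D\sim lf^2|\xi|/\sqrt p$ (corollary \ref{corregion1}), i.e. the transition sits at $|\xi|\sim\sqrt p/(lf^2)$, not at $|\xi|\sim lf^2/\sqrt p$ as you wrote. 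Below that threshold the expansions of theorems \ref{thmasymp} and \ref{thmasymp2} produce genuine main terms of size $(lf^2/(\sqrt p\,|\xi|))^{3/2}$, $lf^2/(\sqrt p\,|\xi|)$, or $(lf^2/(\sqrt p\,|\xi|))^{1/2}$ (depending on the kernel and the exponent $a$), so the decay in $\xi$ alone is only $\xi^{-1/2}$ to $\xi^{-3/2}$ — not geometric (corollary \ref{corregion2}).

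This is not a cosmetic slip: if your claimed decay held uniformly, the whole $\xi\neq0$ contribution would be negligible and no $p^{\frac14}$ would appear at all; conversely, the actual $p^{\frac14}$ arises exactly from summing these slowly decaying main terms over the long range $1\le|\xi|\lesssim\sqrt p/(lf^2)$, $lf^2\lesssim\sqrt p$, against the bound $Kl_{l,f}(\xi,p)\ll\log(lf^2)\sqrt{l}\sqrt{\gcd(\xi,l)}$ and the attendant $\gcd$ bookkeeping (lemma \ref{lem3.18}). The paper carries this out in corollary \ref{finalanalyticest} and propositions \ref{propfirstest} and \ref{propsecondest}; with the inverted transition point and the wrong decay rates, the bookkeeping sketched in your proposal cannot reproduce that computation. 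Your closing heuristic — square-root cancellation in a character sum of modulus about $lf^2\lesssim\sqrt p$ converts the naive $p^{\frac12}$ into $p^{\frac14}$ — is the right picture, but it only becomes a proof after the correct region decomposition in $lf^2|\xi|/\sqrt p$, the $\xi^{-1/2}$-type tails from the singularity expansions, and the $\gcd(\xi,l)$ analysis are in place.
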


To keep the reader oriented, before giving the details of the estimates we present the proof assuming the necessary estimates.

\begin{proof} By corollary \ref{finalanalyticest} to estimate $\Sigma(\xi\neq0)$ it is enough to estimate the two sums that are in that corollary. The theorem then follows from the bounds in propositions \ref{propfirstest} and \ref{propsecondest}.

\end{proof}

The results referred to in the above proof are are obtained from a detailed study of the asymptotic behavior of the Fourier transforms of orbital integrals appearing in $\Sigma(\xi\neq0)$ (cf. theorems \ref{thmasymp} and \ref{thmasymp2}) together with a variant of Weil's estimate on the character sums of $Kl_{l.f}(\xi,\pm p)$ (cf. corollary \ref{charsumcor3}). We present the details of these in appendices to separate the main argument from the technical discussions. We note, however, that the discussions and results of the appendix are central to the whole paper.

We remark that the analysis of the Fourier transforms is delicate because the functions $|1-x^2|^{\frac{a}{2}}\theta_{\infty}^{pos}(x)$ (for $a=0,\pm1$) \emph{are not smooth}. In order to describe the asymptotic behavior we first need to discuss these singularities.

\subsubsection{Singularities of orbital integrals}\label{secsingularities}

Our main references for this section are \S2.2.3, \S2.2.4 of \cite{Altug:2015aa} and \S2.1 of \cite{Langlands:2013aa} (\S2.1). More can be found in \cite{Langlands:2004aa} (pg.21), \cite{Knapp:2001aa} (chapter XI), and \cite{Shelstad:1979aa}.

By equation $(\star\star\star)$ in \S2.2.4 of \cite{Altug:2015aa}\footnote{In \cite{Altug:2015aa} the functions $\theta_{\infty}^{pos},\theta_{\infty}^{neg},g_1^{pos},g_2^{pos}$ were denoted by $\theta_{\infty}^+, \theta_{\infty}^-g_1^+$, and $g_2^+$ respectively.} we know that $\theta_{\infty}^{neg}(x)\in C_c^{\infty}(\mathbb{R})$, and 
\begin{equation}\label{sectriv2}
\theta_{\infty}^{pos}(x)=2\sqrt{|x^2-1|}g_{1}^{pos}(x)+g_2^{pos}(x),
\end{equation}
where $g_1^{pos}(x)$ is supported in $[-1,1]$ and is smooth inside and up to the boundary, $g_2^{\text{ pos}}(x)\in C_c^{\infty}(\mathbb{R})$. Since we will be using the functions $2\sqrt{|x^2-1|}g_1^{pos}(x)$ rather than the function $g_1(x)$ for the rest of the section, following lemma 2.1.4 of \cite{Langlands:2013aa}, let us introduce the notation
\begin{equation}\label{thetajnot}
\theta_{\infty,1}^{pos}(x):=2\sqrt{|x^2-1|}g_1^{pos}(x)\hspace{0.5in},\hspace{0.5in}\theta_{\infty,2}^{pos}(x):=g_2^{pos}(x).
\end{equation}
With this notation,
\begin{equation}\label{thetajnot2}
\theta_{\infty}^{pos}(x)=\theta_{\infty,1}^{pos}(x)+\theta_{\infty,2}^{pos}(x).
\end{equation}
By lemma 2.1.4 of \cite{Langlands:2013aa}\footnote{Strictly speaking, the asymptotic expansions given in lemma 2.1.4 of \cite{Langlands:2013aa} are for the stable orbital integrals on $SL_2(\mathbb{R})$, however the same considerations and expansions go through verbatim to our case too. The relevant part is the decomposition of $\theta_{\infty}^{pos}$ given in \eqref{sectriv2}. We also note that our parameter $``x"$ is the negative of the parameter used in emma 2.1.4 of \cite{Langlands:2013aa}.} we have the following asymptotic expansions\footnote{For a brief discussion of asymptotic expansions see \S\ref{apsecasympexp}.} around $|x\mp1|<\kappa$ 
\begin{equation}\label{series}
\theta_{\infty,1}^{pos}((\pm(1-x))\sim|x|^{\frac12}\sum_{j=0}^{\infty}a_{j}^{\pm}x^j\hspace{0.3in},\hspace{0.3in}\theta_{\infty,2}^{pos}(\pm(1-x))\sim \sum_{j=0}^{\infty}b_j^{\pm}x^j,
\end{equation}
where $``a_j^+,b_j^+"$ are the coefficients for the expansion around $x=1$ and $``a_j^-,b_j^-"$ are the ones for the expansion around $x=-1$. 
\subsubsection{Asymptotic behavior of Fourier transforms}\label{secasympfourier}

In this section we will discuss the asymptotic behavior of the Fourier transforms that appear in $\Sigma(\xi\neq0)$. Since the results are fundamental for the arguments to follow, and may not be very transparent on a first read, before stating them we will discuss the problems, what kind of asymptotic behavior one should expect, and how these will be useful in our application. 

The first and most basic point to note is that we need the asymptotic behavior of these Fourier transforms \emph{uniformly} in all of the variables $\xi,l,f$, and $p$. In other words no implied constant or error bound should not depend on these variables since our aim is to use these results in the sums over $l,f$, and $\xi$, and to understand the size of these sums in terms of $p$. Almost all of the work in appendix \ref{apa} is goes in to get this uniformity in the estimates.

To describe the other aspect of the problem let us begin by remarking that although \emph{the product of the functions} appearing in the Fourier transforms is smooth (cf. proposition 4.1 of \cite{Altug:2015aa}), $\theta_{\infty}^{pos}(x)$ has singularities at $x=\pm1$ of the type described in \eqref{sectriv2}. What makes the product smooth is the rapid decay of the smoothing functions $F$, $H_0$, and $H_1$. Since each has $|1-x^2|^{-\frac12}$ in their arguments, as $x\rightarrow \pm1$ the arguments go to $\infty$ and the functions (and their derivatives) decay to make the product smooth. The exact rate at which this smoothing occurs is reflected in the asymptotic expansions.

\paragraph{A heuristic discussion:}Since the integrals are problematic only around the singularities (outside the singularities an integration by parts argument gives the desired estimate (cf. lemma \ref{aplem4})) we can consider these integrals only around the singularities. Say $x\sim1$, and change the variables to $u=x-1$ then the integrands that appear in $\Sigma(\xi\neq0)$ are all of the form
\[e\left(\tfrac{\xi\sqrt{p}}{2lf^2}\right)\int_{u\sim 0}|u|^{\frac{a}{2}}\Phi\left(\tfrac{lf^2(4p)^{-1/2}}{\sqrt{|u|}}\right)e\left(\tfrac{u\xi\sqrt{p}}{2lf^2}\right)du,\]
where $a\in \{\pm1,0\}$ and $\Phi$ and all its derivatives are rapidly decreasing (cf. the equation right before (*) of theorem \ref{thmasymp}). Ignoring the sign of $\xi$ for the moment, using the change of variables $u\mapsto \frac{2lf^2 u}{\xi\sqrt{p}}$, we (up to a constant) get
\begin{equation}\label{heuristic2}
e\left(\tfrac{\xi\sqrt{p}}{2lf^2}\right)\left(\tfrac{lf^2}{\xi\sqrt{p}}\right)^{1+\frac a2}\int_{u\sim 0}|u|^{\frac{a}{2}}\Phi\left(\sqrt{\tfrac{lf^2\xi p^{-1/2}}{|u|}}\right)e\left(-u\right)du.
\end{equation}
Now, the integral above tells us what to expect from the asymptotic behavior. There is a distinction between the cases for which $\frac{lf^2\xi}{\sqrt{p}}\gg1$ and $\ll1$ (We remark that $\frac{lf^2\xi}{\sqrt{p}}$ is the quantity that is denoted by $C^2D$ in theorems \ref{thmasymp} and \ref{thmasymp2}.). In the former case the rapid decay of $\Phi$ makes the integral very small (cf. propositions \ref{propregion1} and \ref{lemasymp3}), however in the latter case $\Phi$ behaves like a constant (cf. proposition \ref{lemasymp4}) and the integral decays only up to the power of $1+\frac a2$ (cf. theorems \ref{thmasymp}, \ref{thmasymp2}, and corollary \ref{corasymp15}), but oscillates with a very high frequency of $\frac{\xi\sqrt{p}}{lf^2}$. For proving theorem \ref{ellipticcont} we will not need this oscillation and will only be using the leading term $(\frac{lf^2}{\xi\sqrt{p}})^{1+\frac a2}$ (cf. proposition \ref{propregion2}). We hope that this heuristic explanation provides some intuition about the bounds to follow.

The next result is just a simple consequence of the choice of the smoothing function $F\in\mathcal{S}(\mathbb{R})$. We are stating it as a separate lemma for reference in the proofs of the upcoming propositions.

\begin{lemma}\label{asymplem3.11}The functions $F,H_0,H_1$ that appear in the integrals of $\Sigma(\xi\neq0)$ are all in $\mathcal{S}(\mathbb{R})$.
\end{lemma}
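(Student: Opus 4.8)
The plan is to prove that $F$, $H_0$, and $H_1$ all lie in $\mathcal{S}(\mathbb{R})$ (in the extended sense defined in the notation section, i.e.\ the function and all its derivatives decay faster than any polynomial on $\mathbb{R}^+$). The argument is a routine computation, but it has three distinct pieces, one for each function, and the last two are essentially the same.

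\begin{proof}
We verify the claim for each of the three functions in turn.

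\textbf{The function $F$.} Recall $F(x)=\tfrac{1}{2K_0(2)}\int_x^{\infty}e^{-y-\frac1y}\tfrac{dy}{y}$. For $x>0$ the integrand is positive and smooth, so $F$ is smooth on $\mathbb{R}^+$ and $F'(x)=-\tfrac{1}{2K_0(2)}e^{-x-\frac1x}\tfrac1x$. By induction every derivative $F^{(k)}(x)$ for $k\geq1$ is $e^{-x-1/x}$ times a rational function of $x$, hence decays faster than any power of $x$ as $x\to\infty$ (indeed faster than $e^{-x/2}$). For $F$ itself, for $x\geq1$ we have $F(x)\leq\tfrac{1}{2K_0(2)}\int_x^{\infty}e^{-y}\,dy=\tfrac{1}{2K_0(2)}e^{-x}$, which again decays faster than any polynomial. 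Thus $F\in\mathcal{S}(\mathbb{R})$ in the extended sense. (We remark, as the footnote after Theorem \ref{recallthm} points out, that the specific form of $F$ is not essential; any $F\in\mathcal{S}(\mathbb{R})$ for which the approximate functional equation of proposition~3.4 of \cite{Altug:2015aa} holds would do, and such $F$ lies in $\mathcal{S}(\mathbb{R})$ by hypothesis.)

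\textbf{The functions $H_0$ and $H_1$.} These are given by the Mellin--Barnes integrals
\[
H_0(y)=\tfrac{\sqrt{\pi}}{2\pi i}\int_{(1)}\tfrac{\Gamma\left(\frac{u}{2}\right)\tilde{F}(u)}{\Gamma\left(\frac{1-u}{2}\right)}(\pi y)^{-u}\,du,\qquad
H_1(y)=\tfrac{\sqrt{\pi}}{2\pi i}\int_{(1)}\tfrac{\Gamma\left(\frac{1+u}{2}\right)\tilde{F}(u)}{\Gamma\left(\frac{2-u}{2}\right)}(\pi y)^{-u}\,du.
\]
We treat $H_0$; the argument for $H_1$ is identical after shifting indices in the Gamma factors. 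First, by lemma~3.3 of \cite{Altug:2015aa}, $\tilde{F}(u)$ is meromorphic with a single simple pole at $u=0$ (residue $1$) and decays rapidly on vertical strips; more precisely, since $F\in\mathcal{S}(\mathbb{R})$ in the extended sense, repeated integration by parts in its Mellin transform shows that for any $N$ and any fixed $\sigma$ in a compact real interval avoiding the pole, $\tilde{F}(\sigma+it)=O_{N,\sigma}((1+|t|)^{-N})$. Combined with Stirling's formula for the ratio $\Gamma(u/2)/\Gamma((1-u)/2)$, which grows only polynomially in $|\mathrm{Im}(u)|$ on vertical lines, the integrand is absolutely integrable on every vertical line $\Re(u)=c$ with $c>0$, and the integral defines a holomorphic function of $y$ for $\Re(y)>0$; smoothness on $\mathbb{R}^+$ follows by differentiation under the integral sign, since $\tfrac{d^k}{dy^k}(\pi y)^{-u}$ introduces only a polynomial factor in $u$ that does not disturb absolute convergence. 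To get the decay of $H_0(y)$ as $y\to\infty$, shift the contour $\Re(u)=1$ to the right to $\Re(u)=M$ for arbitrarily large $M$; the integrand is holomorphic in the half-plane $\Re(u)>0$ (the pole of $\tilde F$ at $u=0$ and the trivial zeros of $1/\Gamma((1-u)/2)$ at $u=1,3,5,\dots$ are not crossed, the latter being zeros rather than poles), and the rapid decay of $\tilde F$ justifies discarding the horizontal segments. On $\Re(u)=M$ we have $|(\pi y)^{-u}|=(\pi y)^{-M}$, and the remaining integral over $t$ converges to a constant $C_M$, so $|H_0(y)|\leq C_M(\pi y)^{-M}$. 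Since $M$ is arbitrary, $H_0(y)$ decays faster than any polynomial. The same contour shift applied after differentiating $k$ times under the integral sign (which only multiplies the integrand by $\prod_{j}(u+j)$ up to sign) shows each derivative $H_0^{(k)}(y)$ decays faster than any polynomial as well. Hence $H_0\in\mathcal{S}(\mathbb{R})$, and the identical argument gives $H_1\in\mathcal{S}(\mathbb{R})$.
\end{proof}

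The only mild subtlety — and the step I would flag as requiring the most care — is justifying the contour shift and the vanishing of the horizontal segments uniformly, which rests entirely on the rapid vertical decay of $\tilde F$ afforded by lemma~3.3 of \cite{Altug:2015aa} together with Stirling for the Gamma ratio; everything else is bookkeeping. Nothing here depends on the parameters $\xi, l, f, p$, so the lemma is exactly the clean, parameter-free input needed to feed into the asymptotic analysis of the Fourier transforms that follows.
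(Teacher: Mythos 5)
Your proof is correct and follows essentially the same route as the paper: the decay of $F$ and its derivatives is read off directly from the explicit integral, and for $H_0,H_1$ one uses absolute convergence, differentiation under the integral sign, and the freedom to shift the contour to the right in $\Re(u)>0$ (where the integrands are holomorphic) to obtain superpolynomial decay. Your version simply fills in the details (Stirling for the Gamma ratio, rapid vertical decay of $\tilde F$) that the paper leaves implicit.
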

\begin{proof}  It is clear from the definition (cf. theorem \ref{recallthm}) that $F$ and all its derivatives decay faster than any polynomial. For $H_0$ and $H_1$ we just need to note that the integral transforms that define $H_0(y)$ and $H_1(y)$ that are given in theorem \ref{recallthm} converge absolutely and define $C^{\infty}$ functions of $y$. Moreover the integrands are holomorphic for $\Re(u)>0$ so we can move the contour freely in the right half-plane. The result then follows from differentiating under the integral sign.

\end{proof}

The following proposition handles the Fourier transforms of $\theta_{\infty}^{neg}$.

\begin{prop}\label{smoothpropasymp}Let $l,f,p,\xi\in\mathbb{Z}\backslash \{0\}$. Then for any $M,N_1\in \mathbb{Z}_{\geq0}$,
\begin{align*}
\int_{\mathbb{R}}\theta_{\infty}^{neg}(x)F\left(\tfrac{lf^2(4p)^{-1/2}}{\sqrt{x^2+1}}\right)e\left(\tfrac{-x\xi\sqrt{p}}{2lf^2}\right)dx&\ll \tfrac{1}{\xi^M}\left(\tfrac{lf^2}{\sqrt{p}}\right)^{M-N_1}\\
\int_{\mathbb{R}}\tfrac{\theta_{\infty}^{neg}(x)}{\sqrt{x^2+1}}H_0\left(\tfrac{lf^2(4p)^{-1/2}}{\sqrt{x^2+1}}\right)e\left(\tfrac{-x\xi\sqrt{p}}{2lf^2}\right)dx&\ll\tfrac{1}{\xi^M}\left(\tfrac{lf^2}{\sqrt{p}}\right)^{M-N_1},
\end{align*}
where the implied constants depend only on $\theta_{\infty}^{neg},F, M$, and $N_1$.
\end{prop}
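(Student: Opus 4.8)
The plan is to exploit the fact that $\theta_{\infty}^{neg}\in C_c^{\infty}(\mathbb{R})$ — unlike $\theta_{\infty}^{pos}$, it has no singularities at $x=\pm1$ — so that the whole integrand is a genuine Schwartz-class bump function and the estimate is obtained purely by repeated integration by parts. First I would package the argument of the smoothing function: write $\Phi$ for either $F$ or $H_0$ (both lie in $\mathcal{S}(\mathbb{R})$ by Lemma \ref{asymplem3.11}), set $C := \tfrac{lf^2}{(4p)^{1/2}}$, and consider the function $g(x) := \tfrac{\theta_{\infty}^{neg}(x)}{(x^2+1)^{\delta/2}}\,\Phi\!\left(\tfrac{C}{\sqrt{x^2+1}}\right)$, with $\delta=0$ in the first line and $\delta=1$ in the second. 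Because $x^2+1\geq 1$ is smooth and bounded below, the chain rule shows every derivative $g^{(k)}(x)$ is a finite sum of terms of the form $(\text{bounded smooth function})\cdot C^{j}\,\Phi^{(i)}\!\left(\tfrac{C}{\sqrt{x^2+1}}\right)$ with $0\le i\le k$ and $0\le j\le k$, all supported in the fixed compact support of $\theta_{\infty}^{neg}$.

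The key step is then to bound these derivatives in the two regimes of $C$ separately. When $C\le 1$, the factors $C^j$ are harmless and I use that $\Phi$ and its derivatives are bounded; when $C\ge 1$, I use rapid decay: for any $N_1\ge 0$ one has $|\Phi^{(i)}(t)|\ll_{\Phi,i,N_1} t^{-2N_1-2j}$ for $t\ge 1$, so on the support (where $x^2+1$ is bounded) $C^j|\Phi^{(i)}(C/\sqrt{x^2+1})|\ll C^{j}\cdot C^{-2N_1-2j}\ll C^{-2N_1}$ (absorbing the remaining negative powers of $C$). Combining, $\|g^{(k)}\|_\infty \ll_{\theta_{\infty}^{neg},\Phi,k,N_1} C^{-2N_1}$ uniformly, and since the frequency is $\nu := \tfrac{\xi\sqrt{p}}{2lf^2} = \tfrac{\xi}{8C^2}$, integrating by parts $M$ times gives
\[
\int_{\mathbb{R}} g(x)\, e(-\nu x)\, dx \ll \nu^{-M}\,\|g^{(M)}\|_\infty \ll \xi^{-M} C^{2M}\cdot C^{-2N_1} = \xi^{-M}\left(\tfrac{lf^2}{\sqrt p}\right)^{M-N_1}\cdot 4^{-(M-N_1)},
\]
which is the claimed bound (the constant $4^{-(M-N_1)}$ is absorbed into the implied constant depending on $M,N_1$).

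I expect the only real subtlety to be bookkeeping the chain-rule expansion uniformly: one must check that differentiating $\Phi(C/\sqrt{x^2+1})$ in $x$ brings down a factor of $C$ (not of $C^2$ or worse) each time, which is exactly what makes the final exponent $M-N_1$ rather than something larger, and that in the regime $C\ge 1$ the surplus powers of $C^{-1}$ really do suffice to kill every intermediate $C^{j}$. One should also note the degenerate case $C\le 1$ is only needed to know the implied constant is uniform down to small $C$; there the rapid decay is not used, boundedness of $\Phi$ suffices, and $C^{-2N_1}\ge 1$ so the stated bound still holds. No compactness beyond that of $\operatorname{supp}\theta_{\infty}^{neg}$ and no oscillation of $\Phi$ enters, so the argument is genuinely the elementary ``smooth bump against a high-frequency exponential'' estimate.
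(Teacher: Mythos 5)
Your route is genuinely different from the paper's. The paper never estimates derivatives of the integrand: it reduces the proposition in one line to corollary \ref{cortoaplem4} (with $a=0$ for the $F$-integral, $a=-1$ for the $H_0$-integral, $C=lf^2/\sqrt{4p}$, $D=-\xi\sqrt p/(2lf^2)$), whose proof (lemma \ref{aplem4}) Mellin-inverts $\Phi$, integrates by parts $M$ times in $x$ to produce $D^{-M}$, and then shifts the $u$-contour to $\Re(u)=N_1$ to produce $C^{-N_1}$. You instead integrate by parts directly and extract the $C$-decay from pointwise rapid decay of $\Phi$ and its derivatives; since $\theta_{\infty}^{neg}\in C_c^{\infty}(\mathbb{R})$ the integrand is a smooth compactly supported bump, and your Fa\`a di Bruno bookkeeping (each $\Phi^{(i)}$ accompanied by exactly $C^{i}$) is sound, so for $\Phi=F$ this is a correct and more elementary argument, at the cost of not being reusable for the singular $\theta_{\infty}^{pos}$-integrals the way the paper's Mellin set-up is.

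Two points need repair. First, the frequency algebra: with $C=lf^2/(2\sqrt p)$ one has $\nu=\xi\sqrt p/(2lf^2)=\xi/(4C)$, not $\xi/(8C^2)$, so the displayed chain $\nu^{-M}\|g^{(M)}\|_{\infty}\ll \xi^{-M}C^{2M}C^{-2N_1}=\xi^{-M}(lf^2/\sqrt p)^{M-N_1}4^{-(M-N_1)}$ is wrong as written; with the correct $\nu$ one gets $\nu^{-M}\asymp\xi^{-M}(lf^2/\sqrt p)^{M}$ and the argument actually closes more cleanly, needing only $\|g^{(M)}\|_{\infty}\ll C^{-N_1}$ rather than $C^{-2N_1}$. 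Second, and more substantively, your small-$C$ step ``boundedness of $\Phi$ suffices'' is false for $\Phi=H_0$: $H_0$ is defined by a contour integral whose integrand has a double pole at $u=0$, so $H_0(t)$ grows like $\log(1/t)$ as $t\to0^{+}$ and $H_0^{(i)}(t)$ blows up like $t^{-i}$ (this double pole is exactly what produces the log factors in propositions \ref{lemasymp4} and \ref{propregion2}). Your chain-rule structure saves the derivative terms, since $C^{i}H_0^{(i)}(C/\sqrt{x^2+1})\ll (x^2+1)^{i/2}$ on the support, but the $i=0$ term leaves a residual $\log(1/C)$, so for the $H_0$-integral your method yields the stated bound only for $N_1\geq1$; this covers all uses of the proposition in corollary \ref{finalanalyticest} with room to spare (and the paper's own proof meets the same obstruction at $N_1=0$, since the double pole of $\tilde H_0$ blocks the shift to $\Re(u)=0$), but you must either note the restriction or verify the behaviour of $H_0$ near $0$ before asserting boundedness. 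A smaller point: the decay bound for $\Phi^{(i)}(t)$ with $t\geq1$ applies only once $C$ exceeds $R=\max\{\sqrt{x^2+1}:x\in\operatorname{supp}\theta_{\infty}^{neg}\}$, so split the regimes at $C\geq R$ rather than $C\geq1$.
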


\begin{proof} By lemma \ref{asymplem3.11} $F$ and $H_0$, satisfy the conditions of \ref{cortoaplem4}. By \S\ref{secsingularities} we know that $\theta_{\infty}^{neg}(x)\in C_c^{\infty}(\mathbb{R})$, therefore the hypotheses of corollary \ref{cortoaplem4} are satisfied (with $a=0$ for the first integral and $a=-1$ for the second,  $C=\frac{lf^2}{\sqrt{4p}}$, and $D=\frac{-\xi\sqrt{p}}{2lf^2}$). The proposition follows from the same corollary.

\end{proof}

We move on to $\theta_{\infty}^{pos}$. The next proposition and its corollary, although is valid for every $l,f$, and $\xi$, will be used for estimating the sums in the range $\frac{lf^2\xi}{\sqrt{p}}\gg1$.
\begin{prop}\label{propregion1}Let $l,f,\xi\in\mathbb{Z}\backslash\{0\}$. Then, for every $N_1\geq 0$,
\begin{align*}
\int_{-1}^1\theta_{\infty,1}^{pos}(x)F\left(\tfrac{lf^2(4p)^{-1/2}}{\sqrt{|x^2-1|}}\right)e\left(\tfrac{-x\xi\sqrt{p}}{2lf^2}\right)dx&\ll\left(\tfrac{lf^2}{\sqrt{p}}\right)^{\frac32-N_1}\tfrac{1}{\xi^{\frac32+N_1}}\tag{i}\label{prop3.13i}\\
\int_{\mathbb{R}}\theta_{\infty,2}^{pos}(x)F\left(\tfrac{lf^2(4p)^{-1/2}}{\sqrt{|x^2-1|}}\right)e\left(\tfrac{-x\xi\sqrt{p}}{2lf^2}\right)dx&\ll\left(\tfrac{lf^2}{\sqrt{p}}\right)^{1-N_1}\tfrac{1}{\xi^{1+N_1}}\tag{ii}\label{prop3.13ii}\\
\int_{-1}^1\tfrac{\theta_{\infty,1}^{pos}(x)}{\sqrt{1-x^2}}H_1\left(\tfrac{lf^2(4p)^{-1/2}}{\sqrt{1-x^2}}\right)e\left(\tfrac{-x\xi\sqrt{p}}{2lf^2}\right)dx&\ll\left(\tfrac{lf^2}{\sqrt{p}}\right)^{1-N_1}\tfrac{1}{\xi^{1+N_1}}\tag{iii}\label{prop3.13iii}\\
\int_{-1}^1\tfrac{\theta_{\infty,2}^{pos}(x)}{\sqrt{1-x^2}}H_1\left(\tfrac{lf^2(4p)^{-1/2}}{\sqrt{1-x^2}}\right)e\left(\tfrac{-x\xi\sqrt{p}}{2lf^2}\right)dx&\ll\left(\tfrac{lf^2}{\sqrt{p}}\right)^{\frac12-N_1}\tfrac{1}{\xi^{\frac12+N_1}}\tag{iv}\label{prop3.13iv}\\
\int_{|x|>1}\tfrac{\theta_{\infty,2}^{pos}(x)}{\sqrt{x^2-1}}H_0\left(\tfrac{lf^2(4p)^{-1/2}}{\sqrt{x^2-1}}\right)e\left(\tfrac{-x\xi\sqrt{p}}{2lf^2}\right)dx&\ll\left(\tfrac{lf^2}{\sqrt{p}}\right)^{\frac12-N_1}\tfrac{1}{\xi^{\frac12+N_1}}\tag{v}\label{prop3.13vi},
\end{align*}
where the implied constants depend only on $\theta_{\infty}^{pos},F$, and $N_1$.
\end{prop}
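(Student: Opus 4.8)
The plan is to reduce all five estimates to a single one-dimensional oscillatory-integral lemma, namely the one that must underlie Proposition \ref{smoothpropasymp} (the "$\ll \xi^{-M}(lf^2/\sqrt p)^{M-N_1}$'' bound), applied \emph{after} one removes or absorbs the branch-point singularities at $x=\pm 1$. Concretely, write $C=\tfrac{lf^2}{\sqrt{4p}}$ and $D=\tfrac{-\xi\sqrt p}{2lf^2}$, so that the phase in every integral is $e(Dx)$ with $|D|=\tfrac{|\xi|\sqrt p}{2lf^2}$, and the common feature is that each amplitude is of the form $|1-x^2|^{a/2}\,g(x)\,\Phi(C/\sqrt{|1-x^2|})$ with $a\in\{0,\pm1\}$, $g$ smooth and compactly supported (either $g_1^{pos}$, which vanishes to infinite order outside $[-1,1]$, or $g_2^{pos}\in C_c^\infty(\mathbb R)$), and $\Phi\in\{F,H_0,H_1\}\subset\mathcal S(\mathbb R)$ by Lemma \ref{asymplem3.11}. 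Away from a fixed neighbourhood $|x\mp1|\geq\kappa$ the amplitude is $C_c^\infty$ with derivatives bounded independently of $l,f,p$ (this is the content of the integration-by-parts argument alluded to as Lemma \ref{aplem4}/Corollary \ref{cortoaplem4}), so repeated integration by parts in $x$ against $e(Dx)$ gives the claimed saving $|D|^{-N_1}\asymp (lf^2/\sqrt p)^{N_1}\xi^{-N_1}$ with no extra cost; hence the entire matter is the contribution of the two half-neighbourhoods of $x=\pm1$.

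Next I would localize near $x=1$ (the point $x=-1$ is symmetric), set $u=1-x$, so $1-x^2=u(2-u)$ behaves like $2u$, and use the asymptotic expansions \eqref{series}: for (i) and (iii) the amplitude is $u^{1/2}\sum_j a_j^+ u^j\cdot\Phi(\text{const}\cdot C/\sqrt u)$ up to smooth factors, while for (ii), (iv), (v) it is $\sum_j b_j^+ u^j\cdot\Phi(\text{const}\cdot C/\sqrt u)$; the half-integer power $u^{1/2}$ in the first case is exactly what upgrades the exponent $1-N_1$ in (ii) to $\tfrac32-N_1$ in (i), and similarly $\tfrac12-N_1$ in (iv),(v) to $1-N_1$ in (iii). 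With the substitution $u\mapsto \tfrac{u}{|D|}$ (the scaling used in the heuristic leading to \eqref{heuristic2}) the integral becomes $|D|^{-1-a/2}$ times $\int_{u\sim0}|u|^{a/2}\Phi(\sqrt{C^2|D|/|u|}\,)\,e(\pm u)\,du$, and since $C^2|D|=\tfrac{lf^2|\xi|}{4\sqrt p}\cdot\tfrac{\sqrt p}{2\sqrt p}$—more precisely $C^2 D \asymp \tfrac{lf^2\xi}{\sqrt p}$—the remaining integral over $u$ near $0$ is $O(1)$ uniformly (the $u^{a/2}$ singularity is integrable for $a\geq-1$, and $\Phi$ is bounded). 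That already yields the exponent $1+\tfrac a2$ in $|D|^{-1}$, i.e. the cases $N_1=0$; to get arbitrary $N_1$ one instead keeps the Schwartz decay of $\Phi$ in play and performs $N_1$ further integrations by parts in $u$ before rescaling, each producing a factor $|D|^{-1}$ and derivatives of $\Phi$ (still Schwartz) and at worst lowering the power of $u$ by one — which is why the $u^{1/2}$-head survives as the genuinely better bound. Translating $|D|^{-(1+a/2)-N_1}=\bigl(\tfrac{2lf^2}{|\xi|\sqrt p}\bigr)^{1+a/2+N_1}$ back gives precisely the five right-hand sides, with $a=1$ for (i), $a=0$ for (ii),(iii), $a=-1$ for (iv),(v).

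The main obstacle is \emph{uniformity}: the expansions \eqref{series} are only asymptotic, so one must truncate $\sum_j a_j^+ u^j$ at a finite order with an explicit remainder, and — more delicately — control the interaction between the high-frequency phase $e(Du)$ (frequency $|D|$ possibly enormous), the rescaling $u\mapsto u/|D|$, and the argument $C/\sqrt u\mapsto \sqrt{C^2|D|}/\sqrt u$ of $\Phi$, all while integrating by parts the prescribed number $N_1$ of times. One has to check that each derivative hitting $\Phi(\text{const}/\sqrt u)$ costs only a bounded power of $u^{-1}$ that is absorbed by the $u^{a/2}$ or $u^{1/2+a/2}$ weight and by the Schwartz decay of $\Phi$, and that no hidden dependence on $C^2|D|$ leaks into the constants when $C^2|D|\gg1$ versus $\ll1$. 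I expect this to be exactly the bookkeeping that Lemma \ref{aplem4}, Corollary \ref{cortoaplem4}, and the appendix machinery (Theorems \ref{thmasymp}, \ref{thmasymp2}) are set up to handle; here one only needs their \emph{upper-bound} consequences, applied termwise to the finitely many terms of the truncated expansion plus its remainder, so Proposition \ref{propregion1} follows without invoking the full strength of the asymptotic expansions.
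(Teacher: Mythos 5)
Your overall setup (localize at $x=\pm1$, insert the expansions \eqref{series}, rescale $u\mapsto u/|D|$) is the same route the paper takes through theorems \ref{thmasymp}, \ref{thmasymp2} and corollary \ref{corasymp15}, and your $N_1=0$ case is essentially correct. But the mechanism you propose for general $N_1$ does not prove the stated bounds, and your final translation is an algebra error. In your notation the right-hand sides of (i)--(v) are $|D|^{-(1+\frac a2)}\,(C^2|D|)^{-N_1}$ with $C^2|D|\asymp lf^2|\xi|/\sqrt p$: the extra $N_1$-gain is a power of the \emph{combined} parameter $C^2|D|$, so the exponent of $lf^2/\sqrt p$ is $1+\tfrac a2-N_1$ and the bound improves as $lf^2/\sqrt p$ grows. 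Each of your integrations by parts in $u$, however, only produces $|D|^{-1}=2lf^2/(|\xi|\sqrt p)$, and your claimed final bound $|D|^{-(1+\frac a2)-N_1}=\bigl(\tfrac{2lf^2}{|\xi|\sqrt p}\bigr)^{1+\frac a2+N_1}$ has the exponent of $lf^2/\sqrt p$ with the wrong sign; it coincides with the proposition only at $N_1=0$. Moreover $|D|^{-(1+\frac a2)-N_1}$ is not even a true estimate: when $C^2|D|\asymp1$ but $|D|$ is large (e.g.\ $lf^2\asymp\sqrt p/\xi$) the integral is genuinely of size $|D|^{-(1+\frac a2)}$, since the leading coefficient $\mathcal{A}_{h_a,0}^{\tau,\pm}(\Phi)(\mp C^2D)$ is $O(1)$ and generically nonzero, so no amount of integration by parts against $e(Du)$ can yield an extra $|D|^{-N_1}$. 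Concretely, repeated integration by parts is blocked at the endpoint: after a couple of steps the power of $u$ drops below $-1$, and you would need $\Phi(C/\sqrt u)$ to vanish to high order at $u=0$ to kill boundary terms and divergences, which fails when $C$ is small (e.g.\ $l=f=1$, $p$ large).

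The missing ingredient is the quantitative use of the decay of $\Phi$ at large argument: on the critical scale $u\sim1/|D|$ the argument of $\Phi$ is $\sqrt{C^2|D|/u}$, and it is the Schwartz decay of $\Phi$ (in the paper: $\tilde\Phi$ holomorphic in $\Re(u)>0$, Mellin contour shifted to $\Re(u)=2N_1$, proposition \ref{lemasymp3}) that produces the factor $(C^2|D|)^{-N_1}$. The same point affects your treatment of the region $|x\mp1|\ge\kappa$: plain integration by parts there gives only $|D|^{-M}=(lf^2/\sqrt p)^{M}\xi^{-M}$, again with the wrong sign in $lf^2/\sqrt p$; one must also harvest a factor $C^{-N}$ from the decay of $\Phi$, which is exactly what lemma \ref{aplem4} does by moving the contour to $\Re(u)=N$. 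Your argument can be repaired along these lines (split at $u\asymp1/|D|$, use $|\Phi(C/\sqrt u)|\ll(u/C^2)^{N_1}$ on $u\le1/|D|$, and integration by parts together with the same decay on $u\ge1/|D|$), but as written the proposal does not establish (i)--(v); the paper's own proof simply applies corollary \ref{corasymp15} with the exponents $a=1,0,0,-1,-1$ and the choices $C=\tfrac{lf^2}{\sqrt{4p}}$, $D=\tfrac{-\xi\sqrt p}{2lf^2}$.
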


\begin{proof} The result is a direct application of corollary \ref{corasymp15} where we take $C=\frac{lf^2}{\sqrt{4p}}$ and $D=\frac{-\xi\sqrt{p}}{2lf^2}$. We just need to verify that the functions satisfy the conditions given in the corollary. By lemma \ref{asymplem3.11} $F,H_0$, and $H_1$ satisfy the conditions of corollary. For \eqref{prop3.13i} take the function $h(x)$ of corollary \ref{corasymp15} to be $\theta_{\infty,1}^{pos}(x)$. By \eqref{series} we see that this function satisfies the necessary asymptotic expansion of corollary \ref{corasymp15} with $a=1$. This implies \eqref{prop3.13i}. 

For \eqref{prop3.13ii} take the function $h(x)$ of corollary \ref{corasymp15} to be $\theta_{\infty,2}^{pos}(x)$. Again by \eqref{series} we see that this function satisfies the necessary asymptotic expansion of corollary \ref{corasymp15}, this time with $a=0$. This implies \eqref{prop3.13ii}. 

For the remaining cases just note that \eqref{series} combined with lemma \ref{aplema5} we know $\theta_{\infty,1}^{pos}(x)|x^2-1|^{\frac{-1}{2}}$ satisfies the series expansion of corollary \ref{corasymp15} with $a=1$, and $\theta_{\infty,2}^{pos}(x)|x^2-1|^{\frac{-1}{2}}$ satisfies it with $a=\frac{-1}{2}$. The proposition follows.

\end{proof}

\begin{cor}\label{corregion1}Let $l,f,\xi\in\mathbb{Z}\backslash\{0\}$. Then, for every $N\geq 0$,
\begin{align*}
\int_{\mathbb{R}}\theta_{\infty}^{pos}(x)F\left(\tfrac{lf^2(4p)^{-1/2}}{\sqrt{|x^2-1|}}\right)e\left(\tfrac{-x\xi\sqrt{p}}{2lf^2}\right)dx&\ll\left(\tfrac{\sqrt{p}}{lf^2\xi}\right)^{N}\tfrac{1}{\xi^{2}}\\
\tfrac{lf^2}{\sqrt{p}}\int_{|x|<1}\tfrac{\theta_{\infty}^{pos}(x)}{\sqrt{x^2-1}}H_1\left(\tfrac{lf^2(4p)^{-1/2}}{\sqrt{x^2-1}}\right)e\left(\tfrac{-x\xi\sqrt{p}}{2lf^2}\right)dx&\ll\left(\tfrac{\sqrt{p}}{lf^2\xi}\right)^{N}\tfrac{1}{\xi^{2}}\\
\tfrac{lf^2}{\sqrt{p}}\int_{|x|>1}\tfrac{\theta_{\infty}^{pos}(x)}{\sqrt{x^2-1}}H_0\left(\tfrac{lf^2(4p)^{-1/2}}{\sqrt{x^2-1}}\right)e\left(\tfrac{-x\xi\sqrt{p}}{2lf^2}\right)dx&\ll\left(\tfrac{\sqrt{p}}{lf^2\xi}\right)^{N}\tfrac{1}{\xi^{2}},
\end{align*}
where the implied constants depend only on $\theta_{\infty}^{pos},F$, and $N$.
\end{cor}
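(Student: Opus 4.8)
The plan is to derive Corollary~\ref{corregion1} from Proposition~\ref{propregion1} by reassembling the pieces $\theta_{\infty,1}^{pos}$ and $\theta_{\infty,2}^{pos}$ of the decomposition \eqref{thetajnot2} and then choosing the free parameter $N_1$ in each bound of the proposition optimally in terms of the target exponent $N$. The starting point is that each integral in the corollary, after substituting $\theta_{\infty}^{pos}=\theta_{\infty,1}^{pos}+\theta_{\infty,2}^{pos}$, splits into a sum of integrals of exactly the shape treated in Proposition~\ref{propregion1}. Concretely, the first integral of the corollary is the sum of \eqref{prop3.13i}-type and \eqref{prop3.13ii}-type integrals once one notes that on $|x|>1$ the function $\theta_{\infty,1}^{pos}$ vanishes (it is supported in $[-1,1]$), so only the $\theta_{\infty,2}^{pos}$-piece survives there, while on $|x|<1$ both pieces contribute; the second integral reduces to \eqref{prop3.13iii} and \eqref{prop3.13iv}; and the third integral reduces to \eqref{prop3.13vi} together with the observation that the $\theta_{\infty,1}^{pos}$-piece again drops out on $|x|>1$.

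Next I would bound each surviving piece using the corresponding line of Proposition~\ref{propregion1}, keeping $N_1$ as a free nonnegative integer, and then rewrite the right-hand sides in the common form $\left(\tfrac{lf^2}{\sqrt p}\right)^{b-N_1}\tfrac{1}{\xi^{b+N_1}}$ with $b\in\{\tfrac32,1,\tfrac12\}$. One then factors this as $\left(\tfrac{\sqrt p}{lf^2\xi}\right)^{N_1}\cdot\left(\tfrac{lf^2}{\sqrt p}\right)^{b}\tfrac{1}{\xi^{b}}$, and since the leftover factor $\left(\tfrac{lf^2}{\sqrt p}\right)^{b}$ with $b>0$ is \emph{not} bounded by a constant uniformly in $p$, the point is that in the corollary there is an extra factor of $\tfrac{lf^2}{\sqrt p}$ multiplying the second and third integrals precisely so that one obtains $\left(\tfrac{lf^2}{\sqrt p}\right)^{b+1}$ there; but this still does not immediately give the clean exponent $-2$ on $\xi$ that the corollary claims. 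The resolution is that these bounds are only invoked in the regime $\tfrac{lf^2\xi}{\sqrt p}\gg 1$ (as the text flagging the corollary makes explicit), where $\tfrac{lf^2}{\sqrt p}\le \tfrac{lf^2\xi}{\sqrt p}$ and more importantly $1\le \tfrac{lf^2\xi}{\sqrt p}$, so one may absorb any bounded power of $\tfrac{lf^2}{\sqrt p}$ against a compensating power of $\left(\tfrac{\sqrt p}{lf^2\xi}\right)$ coming from choosing $N_1$ large. Choosing $N_1 = N + \lceil b\rceil + 2$ (or any sufficiently large value) converts each piece's bound into $\left(\tfrac{\sqrt p}{lf^2\xi}\right)^{N}\tfrac{1}{\xi^{2}}$ up to a constant depending only on $\theta_{\infty}^{pos}$, $F$, and $N$; summing the finitely many pieces preserves this form.

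The main obstacle is bookkeeping the exponents correctly: one must verify that after pulling out the $N_1$-dependent factor $\left(\tfrac{\sqrt p}{lf^2\xi}\right)^{N_1}$ from each of the five estimates of Proposition~\ref{propregion1}, the residual factors $\left(\tfrac{lf^2}{\sqrt p}\right)^{b}\xi^{-b}$ (together with the extra $\tfrac{lf^2}{\sqrt p}$ attached to the last two integrals of the corollary) are all dominated, in the working range $\tfrac{lf^2\xi}{\sqrt p}\gg 1$, by $\xi^{-2}$ times a constant. The delicate line is \eqref{prop3.13i}, which has the largest positive exponent $b=\tfrac32$; one should check that even there, decreasing $N_1$ by the amount needed to swallow $\left(\tfrac{lf^2}{\sqrt p}\right)^{3/2}$ still leaves a power of $\left(\tfrac{\sqrt p}{lf^2\xi}\right)$ at least $N$ and a power of $\xi$ at least $2$. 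Since $N$ is arbitrary this is immediate once one is willing to increase $N_1$ accordingly, and no analytic difficulty remains beyond Proposition~\ref{propregion1} itself; the corollary is essentially a uniformization-and-repackaging statement.
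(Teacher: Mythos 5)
Your overall route is the paper's: decompose $\theta_{\infty}^{pos}=\theta_{\infty,1}^{pos}+\theta_{\infty,2}^{pos}$ via \eqref{thetajnot2}, use that $\theta_{\infty,1}^{pos}$ is supported in $[-1,1]$ so that only the $\theta_{\infty,2}^{pos}$-piece survives on $|x|>1$, and then feed each piece into proposition \ref{propregion1} with a suitable $N_1$. The gap is in how you close the bookkeeping. The corollary is asserted for \emph{all} $l,f,\xi\in\mathbb{Z}\setminus\{0\}$ (the paper says explicitly, just before proposition \ref{propregion1}, that these statements are valid for every $l,f,\xi$ and are merely \emph{used} in the range $\tfrac{lf^2\xi}{\sqrt p}\gg1$), yet your ``resolution'' imports the hypothesis $\tfrac{lf^2\xi}{\sqrt p}\gg1$ into the proof and then takes $N_1$ only ``sufficiently large''. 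With an overshooting $N_1$ the comparison with the target leaves a surplus positive power of $\tfrac{\sqrt p}{lf^2\xi}$, which is bounded only in that regime; so what you actually prove is a statement weaker than the corollary (sufficient for corollary \ref{finalanalyticest}, but not what is claimed), and your assertion that the restriction is ``the resolution'' is itself incorrect.

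The restriction is unnecessary, and this is the idea you miss: each unit increase of $N_1$ in proposition \ref{propregion1} multiplies the bound by exactly $\tfrac{\sqrt p}{lf^2}\cdot\tfrac1{\xi}$, so if $N_1$ is raised by \emph{exactly} the exponent needed to cancel the positive powers of $\tfrac{lf^2}{\sqrt p}$ (including the prefactor $\tfrac{lf^2}{\sqrt p}$ on the second and third integrals, whose role is not to help but simply to be absorbed as well), the surplus shows up only as additional negative powers of $\xi$, which are harmless since $|\xi|\geq1$. Concretely, $N_1=\tfrac32+N$ for \eqref{prop3.13i}, \eqref{prop3.13iv}, \eqref{prop3.13vi}, $N_1=1+N$ for \eqref{prop3.13ii}, and $N_1=2+N$ for \eqref{prop3.13iii} turn each piece (with its prefactor) into a bound of the form $\left(\tfrac{\sqrt p}{lf^2\xi}\right)^{N}\xi^{-2-\epsilon}$ with $\epsilon\geq0$, valid for all nonzero $l,f,\xi$; this exact choice of $N_1$ is precisely the paper's one-line proof (the paper lists $2+N$ also for \eqref{prop3.13ii}, where $1+N$ is the exact value). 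Your factorization $\left(\tfrac{\sqrt p}{lf^2\xi}\right)^{N_1}\left(\tfrac{lf^2}{\sqrt p}\right)^{b}\xi^{-b}$ is correct, but the middle paragraph's reading of the prefactor (``precisely so that one obtains $\left(\tfrac{lf^2}{\sqrt p}\right)^{b+1}$ \ldots but this still does not immediately give the clean exponent $-2$'') reflects the same confusion: with the exact choice of $N_1$ it does, with no regime assumption.
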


\begin{proof} Recall from \eqref{thetajnot2} that $\theta_{\infty}^{pos}=\theta_{\infty,1}^{pos}+\theta_{\infty,2}^{pos}$ and $\theta_{\infty,1}^{pos}(x)$ is supported in $|x|\leq1$. The corollary follows from the following choices of $N_1$ in proposition \ref{propregion1}:
\[\eqref{prop3.13i},\,\eqref{prop3.13iv},\,\eqref{prop3.13vi}:N_1=\tfrac32+N\hspace{0.4in},\hspace{0.4in}\eqref{prop3.13ii},\,\eqref{prop3.13iii}:N_1=2+N.\]
\end{proof}

The following proposition and its corollary handles the remaining case of $\tfrac{lf^2\xi}{\sqrt{p}}\ll1$.

\begin{prop}\label{propregion2}Let $l,f,\xi,n\in\mathbb{Z}\backslash\{0\}$ such that $\tfrac{lf^2\xi}{\sqrt{p}}\ll1$. Then, 
\begin{align*}
\int_{-1}^1\theta_{\infty,1}^{pos}(x)F\left(\tfrac{lf^2(4p)^{-1/2}}{\sqrt{|x^2-1|}}\right)e\left(\tfrac{-x\xi\sqrt{p}}{2lf^2}\right)dx&\ll\left(\tfrac{lf^2}{\sqrt{p}\xi}\right)^{\frac32}\tag{i}\label{prop3.15i}\\
\int_{\mathbb{R}}\theta_{\infty,2}^{pos}(x)F\left(\tfrac{lf^2(4p)^{-1/2}}{\sqrt{|x^2-1|}}\right)e\left(\tfrac{-x\xi\sqrt{p}}{2lf^2}\right)dx&\ll\left(\tfrac{lf^2}{\sqrt{p}}\right)^2\tag{ii}\label{prop3.15ii}\\
\int_{-1}^1\tfrac{\theta_{\infty,1}^{pos}(x)}{\sqrt{1-x^2}}H_1\left(\tfrac{lf^2(4p)^{-1/2}}{\sqrt{1-x^2}}\right)e\left(\tfrac{-x\xi\sqrt{p}}{2lf^2}\right)dx&\ll\tfrac{lf^2}{\sqrt{p}\xi}\tag{iii}\label{prop3.15iii}\\
\int_{-1}^1\tfrac{\theta_{\infty,2}^{pos}(x)}{\sqrt{1-x^2}}H_1\left(\tfrac{lf^2(4p)^{-1/2}}{\sqrt{1-x^2}}\right)e\left(\tfrac{-x\xi\sqrt{p}}{2lf^2}\right)dx&\ll\left(\tfrac{lf^2}{\sqrt{p}\xi}\right)^{\frac12}\tag{iv}\label{prop3.15iv}\\
\int_{|x|>1}\tfrac{\theta_{\infty,2}^{pos}(x)}{\sqrt{x^2-1}}H_0\left(\tfrac{lf^2(4p)^{-1/2}}{\sqrt{x^2-1}}\right)e\left(\tfrac{-x\xi\sqrt{p}}{2lf^2}\right)dx&\ll\left(\tfrac{lf^2}{\sqrt{p}\xi}\right)^{\frac12}\log\left(\tfrac{lf^2\xi}{\sqrt{p}}\right),\tag{v}\label{prop3.15vi}
\end{align*}
where the implied constants depend only on $\theta_{\infty}^{pos}$ and $F$.
\end{prop}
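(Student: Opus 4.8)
The plan is to reduce everything to the single oscillatory integral identified in the heuristic discussion, namely
\[
I_a(C,D):=\int_{u\sim 0}|u|^{\frac a2}\Phi\!\left(\tfrac{C}{\sqrt{|u|}}\right)e(Du)\,du,
\]
with $C=\tfrac{lf^2}{\sqrt{4p}}$, $D=-\tfrac{\xi\sqrt p}{2lf^2}$, so that $C^2D\asymp \tfrac{lf^2\xi}{\sqrt p}\ll 1$, and then apply the uniform asymptotic expansions of Theorem~\ref{thmasymp}, Theorem~\ref{thmasymp2} and especially Corollary~\ref{corasymp15} in the regime $C^2D\ll 1$. First I would localize: away from $x=\pm1$ an integration-by-parts argument (Lemma~\ref{aplem4}) shows the contribution is negligible and in particular absorbed in the stated bounds, so it suffices to work on a fixed neighborhood $|x\mp1|<\kappa$. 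On such a neighborhood I substitute the asymptotic series~\eqref{series}: for (i) and (iii) the relevant amplitude is $\theta_{\infty,1}^{pos}$, which near $x=\pm1$ behaves like $|x\mp1|^{1/2}$ times a smooth function (and dividing by $\sqrt{1-x^2}$ as in (iii) lowers the exponent by $1/2$, giving an amplitude $\asymp|x\mp1|^0$ times smooth — here I would invoke Lemma~\ref{aplema5}); for (ii), (iv), (v) the amplitude is $\theta_{\infty,2}^{pos}$, which is smooth, so after dividing by the relevant square root the effective exponent is $a=0$ in (ii) and $a=-\tfrac12$ in (iv), (v).

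Next I would read off the leading behavior of $I_a(C,D)$ when $C^2D\ll1$ from Corollary~\ref{corasymp15}: in that range $\Phi(C/\sqrt{|u|})$ is essentially constant (equal to $\Phi$ near $0$, which is finite) over the region where $e(Du)$ has not yet oscillated, i.e. $|u|\lesssim 1/|D|$, while for $|u|\gtrsim 1/|D|$ the oscillation provides cancellation; rescaling $u\mapsto u/|D|$ as in~\eqref{heuristic2} produces the factor $|D|^{-(1+a/2)}$ times a bounded (in the $a=-\tfrac12$ case of (v), logarithmically growing) integral. Translating $|D|^{-(1+a/2)}=\bigl(\tfrac{2lf^2}{\xi\sqrt p}\bigr)^{1+a/2}$ back gives exactly the claimed bounds: $a=\tfrac12\Rightarrow(\tfrac{lf^2}{\sqrt p\,\xi})^{3/2}$ for (i); $a=0\Rightarrow(\tfrac{lf^2}{\sqrt p})^{2}$ for (ii), where the extra factor comes from also having a factor $\tfrac{lf^2}{\sqrt p}$ sitting in the test-function argument forcing $\Phi$-decay (this is why (ii) does not pick up a $\xi^{-2}$ but a $(lf^2/\sqrt p)^2$); $a=0$ after the $\tfrac12$-shift, i.e. effective exponent $1$, $\Rightarrow\tfrac{lf^2}{\sqrt p\,\xi}$ for (iii); and $a=-\tfrac12\Rightarrow(\tfrac{lf^2}{\sqrt p\,\xi})^{1/2}$ for (iv) and, with the boundary logarithm of Corollary~\ref{corasymp15}, for (v). Throughout, the key is that Theorems~\ref{thmasymp}, \ref{thmasymp2} and Corollary~\ref{corasymp15} were proved \emph{uniformly} in $C,D$ (equivalently in $l,f,\xi,p$), so no implied constant here depends on anything but $\theta_\infty^{pos}$ and $F$.

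The main obstacle I expect is bookkeeping the exponent in each case correctly: one must track (a) the half-integer singularity exponent of $\theta_{\infty,1}^{pos}$ vs.\ the smoothness of $\theta_{\infty,2}^{pos}$, (b) the additional $-\tfrac12$ coming from the $(1-x^2)^{-1/2}$ or $(x^2-1)^{-1/2}$ weight, and (c) the difference between the ``$F$-type'' integrals (which carry an extra $C$-dependence in the amplitude, hence the $(lf^2/\sqrt p)$-powers in (i), (ii)) and the ``$H_0,H_1$-type'' integrals; combining (a)--(c) with the $|D|^{-(1+a/2)}$ scaling has to land on the precise powers of $\tfrac{lf^2}{\sqrt p\,\xi}$ stated. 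A secondary subtlety is case (v), where the relevant exponent $a=-\tfrac12$ sits exactly at the boundary of absolute convergence of the rescaled integral, producing the logarithmic factor $\log(lf^2\xi/\sqrt p)$; here I would not rescale naively but split the $u$-integral at $|u|\asymp 1/|D|$ and estimate the two pieces separately, the near piece giving $|D|^{-3/4}\cdot$(something) and the oscillatory tail contributing the logarithm — exactly as packaged in Corollary~\ref{corasymp15}. Once the exponents are matched, each bound is immediate from that corollary.
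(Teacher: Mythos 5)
Your overall strategy --- localize at $x=\pm1$ via Lemma \ref{aplem4}, read off the effective singularity exponent from \eqref{series} and Lemma \ref{aplema5}, and feed the result into the uniform expansions of Theorems \ref{thmasymp}, \ref{thmasymp2} with $C=\tfrac{lf^2}{2\sqrt p}$, $D=\tfrac{-\xi\sqrt p}{2lf^2}$, so that the leading term scales like $|D|^{-(1+a/2)}$ --- is the paper's strategy, and it does deliver \eqref{prop3.15i}, \eqref{prop3.15iii}, \eqref{prop3.15iv} once the exponents are matched (your stated values of $a$ are internally inconsistent with your own normalization $|u|^{a/2}$, but the final powers you assert are the right ones). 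The genuine problems are in cases \eqref{prop3.15ii} and \eqref{prop3.15vi}, and in both the missing ingredient is the one piece of local information you never use: the order of the pole of the Mellin transforms $\tilde F$, $\tilde H_1$, $\tilde H_0$ at $u=0$, which enters through Proposition \ref{lemasymp4} and controls the size of the coefficient $\mathcal{A}^{\tau,\pm}_{h_a,m}(\Phi)(C^2D)$ at small argument.

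For \eqref{prop3.15ii}, your own leading-term principle with effective exponent $0$ predicts $|D|^{-1}\asymp \tfrac{lf^2}{\xi\sqrt p}$, which in the regime $\tfrac{lf^2\xi}{\sqrt p}\ll1$ is \emph{larger} than the claimed $(\tfrac{lf^2}{\sqrt p})^2$; the appeal to ``$\Phi$-decay forced by the $C$ in the argument'' does not close this gap. What actually saves \eqref{prop3.15ii} is the cancellation of the main term in \eqref{apcor1516} of Corollary \ref{corasymp15}: since $\theta_{\infty,2}^{pos}$ is smooth across $\pm1$ one has $a=0$ and the leading coefficient carries the factor $1+(-1)^{1-a/2}=0$, so only the error $O(C^2D^{-a/2}+D^{-(2+a/2)})=O(C^2+D^{-2})\ll(\tfrac{lf^2}{\sqrt p})^2$ survives; without noticing this cancellation the bound is simply not proved. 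For \eqref{prop3.15vi}, you attribute the logarithm to the exponent $-\tfrac12$ ``sitting at the boundary of absolute convergence'' and to the oscillatory tail; that mechanism cannot be right, because \eqref{prop3.15iv} has the same exponent and the same oscillation yet no logarithm. The log comes from $H_0$ alone: $\tilde H_0(u)$ has a \emph{double} pole at $u=0$ (equivalently $H_0(y)$ grows like $\log(1/y)$ as $y\to0$, so it is not ``essentially constant'' near $0$), and Proposition \ref{lemasymp4} with $k=2$ then gives $\mathcal{A}^{\tau,\pm}_{h_a,0}(H_0)(C^2D)=O(\log(C^2D))$, whereas $\tilde F$ and $\tilde H_1$ have simple poles and give $O(1)$ via $k=1$ --- this is also why the refined estimate \eqref{apcor1516} of Corollary \ref{corasymp15}, on which you lean, is not available for $H_0$. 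As written, your argument neither establishes \eqref{prop3.15ii} nor explains why \eqref{prop3.15iv} has no logarithm while \eqref{prop3.15vi} does.
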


\begin{proof}Let us first recollect the properties of the integrands we will be using.
\begin{enumerate}
\item By the asymptotic expansions of \eqref{series} and lemma \ref{aplema5} we have 
\begin{equation*}
\tfrac{\theta_{\infty,1}^{pos}((\pm(1-x))}{\sqrt{|x^2-2x|}}\sim\sum_{j=0}^{\infty}c_{j}^{\pm}x^j\hspace{0.3in},\hspace{0.3in}\tfrac{\theta_{\infty,2}^{pos}(\pm(1-x))}{\sqrt{|x^2-2x|}}\sim |x|^{-\frac12}\sum_{j=0}^{\infty}d_j^{\pm}x^j,
\end{equation*}
for some constants $c_j^{\pm}$ and $d_j^{\pm}$. 
\item $F, H_0$, and $H_1$ are all in $\mathcal{S}(\mathbb{R})$ (cf. lemma \ref{asymplem3.11}). 
\item $\tilde{F}(u)$ is holomorphic except for a simple pole at $u=0$ (cf. lemma 3.3 of \cite{Altug:2015aa}).
\item $\tilde{H}_1(u)$ is holomorphic in $\Re(u)>-1$ with only a simple pole at $u=0$, whereas $\tilde{H}_0(u)$ is holomorphic in $\Re(u)>-2$ with a double pole at $u=0$ (This follows from the definition of $H_0$ and $H_1$ together with the holomorphy of $\tilde{F}$.).
\end{enumerate}
The proof is now a direct application of theorems \ref{thmasymp}, \ref{thmasymp2}, and corollary \ref{corasymp15}. In all the applications $C=\tfrac{lf^2}{2\sqrt{p}}$ and $D=\tfrac{-\xi\sqrt{p}}{2lf^2}$. We will give the details for \eqref{prop3.15i} and describe the modifications for the other cases.
\begin{itemize}
\item \eqref{prop3.15i}. By \eqref{series}, we can apply theorem \ref{thmasymp} with $a=1$, $M=0$, $\tau_1=0$, $\Phi=F$, and $h_1=\theta_{\infty,1}^{pos}$. Since $\tfrac{lf^2\xi}{\sqrt{p}}\ll1$ this gives,
\[\eqref{prop3.15i}\ll \left(\tfrac{lf^2}{\xi\sqrt{p}}\right)^{\frac32}|\mathcal{A}_{\theta_{\infty,1}^{pos},0}^{\tau,\pm}(F)\left(\tfrac{lf^2\xi}{\sqrt{p}}\right)|.\]
Since $\tilde{F}(u)$ is holomorphic everywhere except for a simple pole at $u=0$, by proposition \ref{lemasymp4}  (with $k=1$) $|\mathcal{A}_{\theta_{\infty,1}^{pos},0}^{\tau,\pm}(F)\left(\tfrac{lf^2\xi}{\sqrt{p}}\right)|=O(1)$. The claim follows.
\item \eqref{prop3.15ii}. Since $\theta_{\infty,2}^{pos}\in C_c^{\infty}(\mathbb{R})$ (cf. the paragraph before \eqref{thetajnot}) we can use the estimate in \eqref{apcor1516} of corollary \ref{corasymp15} with $a=0$,$h_1=\theta_{\infty,2}^{pos}$, $\Phi=F$. Noting that $a\equiv 0\bmod 4$, which implies that the main term vanishes, the estimate follows from
\[\eqref{prop3.15ii}\ll \left(\tfrac{lf^2}{\sqrt{p}}\right)^2\left(1+\tfrac{1}{\xi^2}\right).\]
\item \eqref{prop3.15iii} and \eqref{prop3.15iv}. The same proof in the case of \eqref{prop3.15i} applies with $F(u)$ changed to $H_1(u)$ (note that it still is holomorphic in $\Re(u)>-1$ with a simple pole at $u=0$ so we can apply proposition \ref{lemasymp4} with $k=1$.). The only difference is for \eqref{prop3.15iii} we need to take $a=0$ (which reduces the exponents in \eqref{prop3.15i} by $\tfrac12$) and for \eqref{prop3.15iv} we need to take $a=-1$ (which reduces the exponents in \eqref{prop3.15i} by $1$).
\item $\eqref{prop3.15vi}$. The argument is exactly the same as in the case of \eqref{prop3.15iii} and \eqref{prop3.15iv}. The only difference is that one needs to take into account the double pole of $H_0(u)$ at $u=0$ when applying proposition \ref{lemasymp4} (this time one needs to take $k=2$ which brings the $\log$-factor). 

\end{itemize}

\end{proof}

\begin{cor}\label{corregion2}Let $l,f,\xi\in\mathbb{Z}\backslash\{0\}$ such that $\tfrac{lf^2\xi}{\sqrt{p}}\ll 1$. Then,\begin{align*}
\int_{\mathbb{R}}\theta_{\infty}^{pos}(x)F\left(\tfrac{lf^2(4p)^{-1/2}}{\sqrt{|x^2-1|}}\right)e\left(\tfrac{-x\xi\sqrt{p}}{2lf^2}\right)dx&\ll\left(\tfrac{lf^2}{\sqrt{p}}\right)^{\frac32}\tfrac{1}{\sqrt{\xi}}\\
\tfrac{lf^2}{\sqrt{p}}\int_{|x|<1}\tfrac{\theta_{\infty}^{pos}(x)}{\sqrt{x^2-1}}H_1\left(\tfrac{lf^2(4p)^{-1/2}}{\sqrt{x^2-1}}\right)e\left(\tfrac{-x\xi\sqrt{p}}{2lf^2}\right)dx&\ll\left(\tfrac{lf^2}{\sqrt{p}}\right)^{\frac32}\tfrac{1}{\sqrt{\xi}}\\
\tfrac{lf^2}{\sqrt{p}}\int_{|x|>1}\tfrac{\theta_{\infty}^{pos}(x)}{\sqrt{x^2-1}}H_0\left(\tfrac{lf^2(4p)^{-1/2}}{\sqrt{x^2-1}}\right)e\left(\tfrac{-x\xi\sqrt{p}}{2lf^2}\right)dx&\ll\left(\tfrac{lf^2}{\sqrt{p}}\right)^{\frac32}\tfrac{\log\left(\frac{lf^2\xi}{\sqrt{p}}\right)}{\sqrt{\xi}},
\end{align*}
where the implied constants depend only on $\theta_{\infty}^{pos}$, and $F$.
\end{cor}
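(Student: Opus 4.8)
\textbf{Proof proposal for Corollary \ref{corregion2}.}

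The plan is to reduce the three estimates to the corresponding bounds in Proposition \ref{propregion2} by the decomposition $\theta_{\infty}^{pos}=\theta_{\infty,1}^{pos}+\theta_{\infty,2}^{pos}$ recorded in \eqref{thetajnot2}, exactly as in the proof of Corollary \ref{corregion1}. Since $\theta_{\infty,1}^{pos}$ is supported in $[-1,1]$, the integral of $\theta_{\infty}^{pos}$ against $F(\cdots)$ over $\mathbb{R}$ splits as the $[-1,1]$-integral of $\theta_{\infty,1}^{pos}$ (bounded by \eqref{prop3.15i}) plus the $\mathbb{R}$-integral of $\theta_{\infty,2}^{pos}$ (bounded by \eqref{prop3.15ii}); similarly the $|x|<1$ integral against $H_1$ splits using \eqref{prop3.15iii} and \eqref{prop3.15iv}, while the $|x|>1$ integral against $H_0$ only sees $\theta_{\infty,2}^{pos}$ and is handled by \eqref{prop3.15vi}. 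So the whole task is to check that, under the hypothesis $\tfrac{lf^2\xi}{\sqrt p}\ll1$, each of the right-hand sides produced by Proposition \ref{propregion2} is dominated by the single target term $\left(\tfrac{lf^2}{\sqrt p}\right)^{\frac32}\tfrac{1}{\sqrt\xi}$ (the last line carrying an extra $\log\left(\tfrac{lf^2\xi}{\sqrt p}\right)$).

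First I would do the first estimate. From Proposition \ref{propregion2} the two contributions are $\ll\left(\tfrac{lf^2}{\sqrt p\,\xi}\right)^{\frac32}$ and $\ll\left(\tfrac{lf^2}{\sqrt p}\right)^{2}$. The first is already $\left(\tfrac{lf^2}{\sqrt p}\right)^{\frac32}\xi^{-\frac32}\le\left(\tfrac{lf^2}{\sqrt p}\right)^{\frac32}\xi^{-\frac12}$ since $\xi\ge1$. For the second, write $\left(\tfrac{lf^2}{\sqrt p}\right)^{2}=\left(\tfrac{lf^2}{\sqrt p}\right)^{\frac32}\cdot\tfrac{lf^2}{\sqrt p}$ and bound $\tfrac{lf^2}{\sqrt p}\le \tfrac{lf^2\xi}{\sqrt p}\ll1$, so this contribution is $\ll\left(\tfrac{lf^2}{\sqrt p}\right)^{\frac32}\le\left(\tfrac{lf^2}{\sqrt p}\right)^{\frac32}\xi^{-\frac12}$ again using $\xi\ge1$. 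For the second displayed estimate, the multiplier is $\tfrac{lf^2}{\sqrt p}$ and the two inner contributions from Proposition \ref{propregion2} are $\ll\tfrac{lf^2}{\sqrt p\,\xi}$ and $\ll\left(\tfrac{lf^2}{\sqrt p\,\xi}\right)^{\frac12}$; multiplying by $\tfrac{lf^2}{\sqrt p}$ gives $\left(\tfrac{lf^2}{\sqrt p}\right)^{2}\xi^{-1}$ and $\left(\tfrac{lf^2}{\sqrt p}\right)^{\frac32}\xi^{-\frac12}$. The second is already the target, and the first is $\le\left(\tfrac{lf^2}{\sqrt p}\right)^{\frac32}\xi^{-\frac12}$ by the same trick $\tfrac{lf^2}{\sqrt p}\le\tfrac{lf^2\xi}{\sqrt p}\ll1$ together with $\xi^{-1}\le\xi^{-\frac12}$. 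The third displayed estimate is identical: only \eqref{prop3.15vi} contributes, giving $\tfrac{lf^2}{\sqrt p}\cdot\left(\tfrac{lf^2}{\sqrt p\,\xi}\right)^{\frac12}\log\left(\tfrac{lf^2\xi}{\sqrt p}\right)=\left(\tfrac{lf^2}{\sqrt p}\right)^{\frac32}\xi^{-\frac12}\log\left(\tfrac{lf^2\xi}{\sqrt p}\right)$, which is exactly the claimed bound. In all three cases the implied constants depend only on $\theta_{\infty}^{pos}$ and $F$, since those in Proposition \ref{propregion2} do.

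This is a bookkeeping argument and I do not anticipate a genuine obstacle; the only point requiring a little care is the repeated use of the hypothesis $\tfrac{lf^2\xi}{\sqrt p}\ll1$ to absorb the "too large" powers of $\tfrac{lf^2}{\sqrt p}$ — one must note that $\xi\ge1$ lets one replace $\tfrac{lf^2}{\sqrt p}$ by $\tfrac{lf^2\xi}{\sqrt p}$ before applying the hypothesis, and that $\xi^{-\frac32}$ and $\xi^{-1}$ are both $\le\xi^{-\frac12}$. A harmless subtlety is that the $\log\left(\tfrac{lf^2\xi}{\sqrt p}\right)$ factor in \eqref{prop3.15vi} is negative in the regime $\tfrac{lf^2\xi}{\sqrt p}<1$; since it appears inside an $O(\cdot)$ one should read it as $\bigl|\log\left(\tfrac{lf^2\xi}{\sqrt p}\right)\bigr|$, and the statement of the corollary is to be understood the same way, so nothing changes.
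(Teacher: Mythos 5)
Your reduction is exactly the paper's: decompose $\theta_{\infty}^{pos}=\theta_{\infty,1}^{pos}+\theta_{\infty,2}^{pos}$ via \eqref{thetajnot2}, feed each piece into Proposition \ref{propregion2} (using \eqref{prop3.15i}--\eqref{prop3.15ii} for the first line, \eqref{prop3.15iii}--\eqref{prop3.15iv} for the second, \eqref{prop3.15vi} for the third), and absorb the resulting bounds using $\tfrac{lf^2\xi}{\sqrt p}\ll1$. One step, however, is false as written: when absorbing the contribution of \eqref{prop3.15ii} into the first estimate you bound $\tfrac{lf^2}{\sqrt p}\ll1$ and then assert $\left(\tfrac{lf^2}{\sqrt p}\right)^{\frac32}\le\left(\tfrac{lf^2}{\sqrt p}\right)^{\frac32}\xi^{-\frac12}$ ``using $\xi\ge1$''; since $\xi\ge1$ gives $\xi^{-\frac12}\le1$, this inequality runs the wrong way, and the trick you mention at the end (replacing $\tfrac{lf^2}{\sqrt p}$ by $\tfrac{lf^2\xi}{\sqrt p}$) cannot manufacture the missing $\xi^{-\frac12}$ here because \eqref{prop3.15ii} contains no power of $\xi$ to begin with. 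The repair is the one line the paper actually records: the hypothesis yields $\tfrac{lf^2}{\sqrt p}\ll\tfrac1\xi\le\tfrac1{\sqrt\xi}$, hence $\left(\tfrac{lf^2}{\sqrt p}\right)^{2}\ll\left(\tfrac{lf^2}{\sqrt p}\right)^{\frac32}\tfrac1{\sqrt\xi}$ directly. (Your analogous absorption in the second estimate, $\left(\tfrac{lf^2}{\sqrt p}\right)^{2}\xi^{-1}\ll\left(\tfrac{lf^2}{\sqrt p}\right)^{\frac32}\xi^{-\frac12}$, is fine, since there the extra $\xi^{-1}$ is already available.) With that single line corrected, your argument coincides with the paper's proof, including the reading of the logarithmic factor as $\bigl|\log\bigl(\tfrac{lf^2\xi}{\sqrt p}\bigr)\bigr|$.
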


\begin{proof} Recall from \eqref{thetajnot2} that $\theta_{\infty}^{pos}=\theta_{\infty,1}^{pos}+\theta_{\infty,2}^{pos}$ and $\theta_{\infty,1}^{pos}(x)$ is supported in $|x|\leq1$. Then the first line follows from the combination of \eqref{prop3.15i} and \eqref{prop3.15ii}. We just need to note that since we are assuming that $\frac{lf^2\xi}{\sqrt{p}}\ll1$,
\[\tfrac{lf^2}{\sqrt{p}}\ll \tfrac{1}{\sqrt{\xi}} \hspace{0.3in}\Rightarrow \hspace{0.3in}\left(\tfrac{lf^2}{\sqrt{p}}\right)^{2}\ll \left(\tfrac{lf^2}{\sqrt{p}}\right)^{\frac32}\tfrac{1}{\sqrt{\xi}}.\]
The second line directly follows from the combination of \eqref{prop3.15ii} and \eqref{prop3.15iv}, and the last line follows from \eqref{prop3.15vi}.

\end{proof}

The following corollary puts together the results above and reduces the estimation of $\Sigma(\xi\neq0)$ to bounding the sums below.

\begin{cor}\label{finalanalyticest}For every $N>0$,
\begin{equation}\label{finalanalyticesteqn}
\Sigma(\xi\neq0)\ll \sqrt{p}\sum_{\substack{l,f,|\xi|\geq1\\ \frac{lf^2\xi}{\sqrt{p}}\gg 1}}^{\infty}\tfrac{Kl_{l,f}(\xi,p)}{\sqrt{l}}\tfrac{1}{(lf^2)^{\frac32}}\left(\tfrac{\sqrt{p}}{lf^2\xi}\right)^{N}\tfrac{1}{\xi^{2}}+\tfrac{1}{p^{\frac14}}\sum_{\substack{l,f,\xi\\ \frac{lf^2\xi}{\sqrt{p}}\ll1}}^{\infty}\tfrac{Kl_{l,f}(\xi,p)}{\sqrt{l\xi}}\left\{1+\log\left(\tfrac{lf^2\xi}{\sqrt{p}}\right)\right\},
\end{equation}
where the implied constants depend only on $\theta_{\infty}^{pos},\theta_{\infty}^{neg},F$, and $N$.
\end{cor}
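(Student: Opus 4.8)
The plan is to collect the five integrals appearing in each of the five summands of $\Sigma(\xi\neq0)$ from theorem \ref{recallthm}, split the $(l,f,\xi)$-sum into the two ranges $\tfrac{lf^2\xi}{\sqrt p}\gg1$ and $\tfrac{lf^2\xi}{\sqrt p}\ll1$, and substitute the bounds already established in corollaries \ref{recallcor}, \ref{corregion1}, \ref{corregion2} and proposition \ref{smoothpropasymp}. The $\theta_{\infty}^{neg}$-terms (the fourth and fifth summands of $\Sigma(\xi\neq0)$) are dealt with first: by proposition \ref{smoothpropasymp}, with $M$ and $N_1$ chosen large (say $M=2$ and $N_1$ arbitrarily large), each of those Fourier integrals is $\ll \tfrac{1}{\xi^2}(\tfrac{lf^2}{\sqrt p})^{2-N_1}$, which after multiplication by the prefactors $\tfrac{\sqrt p}{2}(lf^2)^{-3/2}l^{-1/2}$ (resp.\ $\tfrac14(lf^2)^{-1/2}l^{-1/2}$) and summation over $l,f,\xi$ converges and is absorbed into — in fact dominated by — the first sum on the right-hand side of \eqref{finalanalyticesteqn}; so these terms need not be displayed separately.

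Next I would treat the three $\theta_{\infty}^{pos}$-summands, which carry the prefactors $\tfrac{\sqrt p}{2}(lf^2)^{-3/2}$, $\tfrac14(lf^2)^{-1/2}$ and $\tfrac14(lf^2)^{-1/2}$ respectively, each divided by $\sqrt l$ and weighted by $Kl_{l,f}(\xi,\pm p)$. In the range $\tfrac{lf^2\xi}{\sqrt p}\gg1$ I apply corollary \ref{corregion1}: the first $\theta_{\infty}^{pos}$-integral is $\ll (\tfrac{\sqrt p}{lf^2\xi})^N\xi^{-2}$, and the two $H_1$/$H_0$-integrals, after pulling out the factor $\tfrac{lf^2}{\sqrt p}$ exactly as written in corollary \ref{corregion1}, satisfy the same bound; multiplying by $\sqrt p\,(lf^2)^{-3/2}$ (the worst of the three prefactors, the others being smaller by a factor $\tfrac{lf^2}{\sqrt p}$ which is harmless in this range since it can be compensated by enlarging $N$) produces exactly the first sum in \eqref{finalanalyticesteqn}. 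In the range $\tfrac{lf^2\xi}{\sqrt p}\ll1$ I apply corollary \ref{corregion2}: each of the three $\theta_{\infty}^{pos}$-contributions is $\ll (\tfrac{lf^2}{\sqrt p})^{3/2}\xi^{-1/2}\{1+\log(\tfrac{lf^2\xi}{\sqrt p})\}$; multiplying by the prefactor $\sqrt p\,(lf^2)^{-3/2}l^{-1/2}$ for the first summand gives $p^{-1/4}\,Kl_{l,f}(\xi,p)(l\xi)^{-1/2}\{1+\log(\cdots)\}$, and for the $H_0$, $H_1$ summands the prefactor is $(lf^2)^{-1/2}l^{-1/2}$ but one has pulled out an extra $\tfrac{lf^2}{\sqrt p}$, so the net weight is again $p^{-1/2}(lf^2)^{1/2}(lf^2)^{-1/2}l^{-1/2}=p^{-1/2}l^{-1/2}$ times $(\tfrac{lf^2}{\sqrt p})^{3/2}$; rewriting $p^{-1/2}(lf^2/\sqrt p)^{3/2}= p^{-1/4}\cdot(lf^2)^{3/2}p^{-3/4}\cdot p^{1/2}\cdot\ldots$ — the bookkeeping shows it collapses to the same $p^{-1/4}(l\xi)^{-1/2}\{1+\log(\tfrac{lf^2\xi}{\sqrt p})\}$ shape up to the $Kl$-weight, which is exactly the second sum in \eqref{finalanalyticesteqn}. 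Finally I add in the $O_{f_\infty}(\log^2 p)$ error from corollary \ref{recallcor}, noting that $\log^2 p$ is dominated by the stated right-hand side (indeed the second sum already contains a term of size $\gg \log^2 p$ once one sums the diagonal-like contributions), or more conservatively absorb it into the implied constant.

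The main obstacle I expect is purely the \emph{bookkeeping of exponents of $p$, $l$, $f$}: one must check that in the range $\tfrac{lf^2\xi}{\sqrt p}\ll1$ the three different prefactors $\sqrt p\,(lf^2)^{-3/2}$, $(lf^2)^{-1/2}$, $(lf^2)^{-1/2}$ — the latter two each accompanied by an explicit factor $\tfrac{lf^2}{\sqrt p}$ that I have chosen to display in corollary \ref{corregion2} precisely to make this matching transparent — all produce, after multiplying by the respective integral bounds, the \emph{same} clean weight $p^{-1/4}(l\xi)^{-1/2}$ (up to the logarithm and the $Kl$-factor). The cleanest way to organize this is to observe that in that range $\tfrac{lf^2}{\sqrt p}\ll \tfrac{1}{\xi}\le 1$, so any surplus power of $\tfrac{lf^2}{\sqrt p}$ only improves the bound; hence it suffices to verify the identity of exponents for the \emph{leading} (slowest-decaying) summand, namely the $F$-term with prefactor $\sqrt p(lf^2)^{-3/2}l^{-1/2}$, which gives $\sqrt p\cdot(lf^2)^{-3/2}\cdot l^{-1/2}\cdot(lf^2/\sqrt p)^{3/2}\cdot\xi^{-1/2}= p^{-1/4}\,(l\xi)^{-1/2}$, as claimed. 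A secondary, entirely routine point is that in the range $\tfrac{lf^2\xi}{\sqrt p}\gg 1$ one is free to trade the extra factor $\tfrac{lf^2}{\sqrt p}\gg \tfrac1\xi$ appearing in two of the prefactors against one unit of $N$, since $N$ is arbitrary; I would simply state this once and then quote corollary \ref{corregion1} for all three $\theta_{\infty}^{pos}$-terms with $N$ replaced by $N+1$ where needed. Neither step involves any genuine analytic difficulty — all the hard work has already been done in propositions \ref{propregion1}, \ref{propregion2} and their corollaries — so the proof of corollary \ref{finalanalyticest} is essentially a substitution-and-sorting argument.
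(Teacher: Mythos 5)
Your treatment of the three $\theta_{\infty}^{pos}$-terms is essentially the paper's own argument (quote corollary \ref{corregion1} in the range $\tfrac{lf^2\xi}{\sqrt p}\gg1$ and corollary \ref{corregion2} in the range $\tfrac{lf^2\xi}{\sqrt p}\ll1$, the factor $\tfrac{lf^2}{\sqrt p}$ having been pulled out in those corollaries precisely so that the three different prefactors produce the two displayed sums), and that part is fine. The genuine gap is in your disposal of the two $\theta_{\infty}^{neg}$-terms. Proposition \ref{smoothpropasymp} gives the bound $\xi^{-M}\left(\tfrac{lf^2}{\sqrt p}\right)^{M-N_1}$, so your choice ``$M=2$, $N_1$ arbitrarily large'' yields $\xi^{-2}\left(\tfrac{\sqrt p}{lf^2}\right)^{N_1-2}$, which for $lf^2\ll\sqrt p$ (a range that certainly occurs, e.g.\ $l=f=1$) carries an arbitrarily large positive power of $p$; after multiplying by the prefactors and summing, this is not an admissible quantity and is certainly not dominated by the first sum in \eqref{finalanalyticesteqn}. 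Even conceptually the absorption claim cannot work as stated: the first sum has \emph{no} terms indexed by $(l,f,\xi)$ with $\tfrac{lf^2\xi}{\sqrt p}\ll1$, so the $\theta_{\infty}^{neg}$-contribution from that range cannot be absorbed into it termwise, and with $M$ fixed at $2$ you lack the extra $\xi$-decay needed to convert $\left(\tfrac{\sqrt p}{lf^2}\right)^{N_1-2}$ into $\left(\tfrac{\sqrt p}{lf^2\xi}\right)^{N}$ even in the range $\gg1$. The repair is exactly the paper's: make region-dependent choices so that the $\theta_{\infty}^{neg}$-integrals obey the \emph{same} bounds as the corresponding $\theta_{\infty}^{pos}$-integrals, namely $M=2+N$, $N_1=2+2N$ in the range $\tfrac{lf^2\xi}{\sqrt p}\gg1$ (giving $\left(\tfrac{\sqrt p}{lf^2\xi}\right)^{N}\xi^{-2}$), and $M=2$, $N_1=0$ in the range $\tfrac{lf^2\xi}{\sqrt p}\ll1$ (giving $\left(\tfrac{lf^2}{\sqrt p}\right)^{2}\xi^{-2}$, which is $\ll\left(\tfrac{lf^2}{\sqrt p}\right)^{3/2}\xi^{-1/2}$ in that range); the two $\theta_{\infty}^{neg}$-terms are then absorbed into \emph{both} sums on the right of \eqref{finalanalyticesteqn}, not only the first.

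A secondary slip: the $O_{f_\infty}(\log^2 p)$ of corollary \ref{recallcor} has no place here. That corollary relates $\tr(R_0(f^p))$ to $\Sigma(\xi\neq0)$; the present statement bounds $\Sigma(\xi\neq0)$ itself, so no such error term needs to be added, and your suggestion that the second sum ``already contains a term of size $\gg\log^2 p$'' is unsupported and unnecessary. Dropping that sentence and fixing the $\theta_{\infty}^{neg}$ parameter choices as above turns your substitution-and-sorting argument into the paper's proof.
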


\begin{proof}Recall from theorem \ref{recallthm} that $\Sigma(\xi\neq0)$ is the sum of five terms. The first three of these terms have integrals involving $\theta_{\infty}^{pos}$ and last two have integrals involving $\theta_{\infty}^{neg}$. Corollaries \ref{corregion1} and \ref{corregion2} immediately imply that each of the first three lines are bounded by the right hand side of \eqref{finalanalyticesteqn}. 

For the last two terms note that if we can show that the integrals involving $\theta_{\infty}^{neg}$ satisfy the same bounds as the corresponding integrals of $\theta_{\infty}^{pos}$ in corollaries \ref{corregion1} and \ref{corregion2} then we are done. To do so, we use proposition \ref{smoothpropasymp} with suitable choices of $M$ and $N_1$. 

For the region $\tfrac{lf^2\xi}{\sqrt{p}}\gg1$ we choose $M=2+N$ and $N_1=2+2N$. This gives,
\begin{align*}
\int_{\mathbb{R}}\theta_{\infty}^{neg}(x)F\left(\tfrac{lf^2(4p)^{-1/2}}{\sqrt{x^2+1}}\right)e\left(\tfrac{-x\xi\sqrt{p}}{2lf^2}\right)dx&\ll \left(\tfrac{\sqrt{p}}{lf^2\xi}\right)^N\tfrac{1}{\xi^2}\\
\int_{\mathbb{R}}\tfrac{\theta_{\infty}^{neg}(x)}{\sqrt{x^2+1}}H_0\left(\tfrac{lf^2(4p)^{-1/2}}{\sqrt{x^2+1}}\right)e\left(\tfrac{-x\xi\sqrt{p}}{2lf^2}\right)dx&\ll\left(\tfrac{\sqrt{p}}{lf^2\xi}\right)^{N}\tfrac{1}{\xi^2}.
\end{align*}
For the region $\tfrac{lf^2\xi}{\sqrt{p}}\ll1$ we choose $M=2$ and $N_1=0$. This gives,
\begin{align*}
\int_{\mathbb{R}}\theta_{\infty}^{neg}(x)F\left(\tfrac{lf^2(4p)^{-1/2}}{\sqrt{x^2+1}}\right)e\left(\tfrac{-x\xi\sqrt{p}}{2lf^2}\right)dx&\ll \left(\tfrac{lf^2}{\sqrt{p}}\right)^2\tfrac{1}{\xi^2}\\
\int_{\mathbb{R}}\tfrac{\theta_{\infty}^{neg}(x)}{\sqrt{x^2+1}}H_0\left(\tfrac{lf^2(4p)^{-1/2}}{\sqrt{x^2+1}}\right)e\left(\tfrac{-x\xi\sqrt{p}}{2lf^2}\right)dx&\ll\left(\tfrac{lf^2}{\sqrt{p}}\right)^2\tfrac{1}{\xi^2}.
\end{align*}
Finally note that $\tfrac{lf^2\xi}{\sqrt{p}}\ll1$ implies $\left(\tfrac{lf^2}{\sqrt{p}}\right)^2\tfrac{1}{\xi^2}\ll \left(\tfrac{lf^2}{\sqrt{p}}\right)^{\frac32}\tfrac{1}{\sqrt{\xi}}$. This finishes the proof.
\end{proof}

\subsubsection{Estimating $\Sigma(\xi\neq0)$} We are now ready to finish the proof of theorem \ref{finalanalyticest} by estimating the sums in corollary \ref{finalanalyticest}. The only remaining part is to bound the character sums, $Kl_{l,f}(\xi,p)$, which the following lemma handles.

\begin{lemma}\label{lem3.18} Let $p$ be an odd prime, $l,f,\xi\in\mathbb{Z}\backslash \{0\}$, and $Kl_{l,f}(\xi,p)$ be as in theorem \ref{recallthm}. Then, for any function, $F(l,f,\xi)$, we have,
\begin{align*}
\sum_{\substack{l,f,\xi\\ \frac{lf^2\xi}{\sqrt{p}}\ll1}}F(l,f,\xi)Kl_{l,f}(\xi,p)&\ll \sum_{\substack{\kappa,l,f,\xi\\ \frac{\kappa^2 lf^2\xi}{\sqrt{p}}\ll1}}\log(\kappa lf^2)\kappa\sqrt{l}F(l\kappa,f,\xi\kappa).
\end{align*}
Moreover, the same bound also holds for the sum with boundary $\frac{lf^2\xi}{\sqrt{p}}\gg1$.
\end{lemma}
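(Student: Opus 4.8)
The content of this lemma is simply that the Weil-type estimate of corollary \ref{charsumcor3} for the local character sums $Kl_{l,f}(\xi,p)$ can be inserted into the sums over $l,f,\xi$; the only subtlety is that $Kl_{L,f}(\Xi,p)$ is not literally a primitive Kloosterman sum. The Kronecker symbol $n\mapsto\left(\tfrac{n}{L}\right)$ is a character to the modulus given by the squarefree kernel of $L$ rather than to $L$ itself, it vanishes as soon as $\gcd(n,L)>1$, and the exponential $e\!\left(\tfrac{a\Xi}{4Lf^{2}}\right)$ degenerates on the part of $L$ common to $\Xi$. The plan is therefore: (i) decompose $Kl_{L,f}(\Xi,p)$ into genuinely primitive pieces; (ii) apply corollary \ref{charsumcor3} to each piece; (iii) re-sum. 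The auxiliary index $\kappa$ is precisely the imprimitive part that gets extracted, which is why $(l,\xi)$ appears on the right-hand side as $(l\kappa,\xi\kappa)$.

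In more detail I would proceed as follows. Fix $L,f,\Xi$ (the summation variables of the left-hand side). Since $\left(\tfrac{\cdot}{L}\right)$ depends only on the squarefree kernel of $L$, write $L=\kappa l$ where $\kappa$ is chosen so that $\left(\tfrac{\cdot}{L}\right)$ is (a primitive character mod $l$) times (a principal character killing the classes not coprime to $\kappa$); detecting that coprimality by M\"obius inversion and splitting $a\bmod 4Lf^{2}$ into its coordinate modulo $4lf^{2}$ and its coordinate modulo $\kappa$ by the Chinese Remainder Theorem (treating the $2$- and $f$-parts separately, using the congruences $a^{2}-4p\equiv0\bmod f^{2}$ and $\tfrac{a^{2}-4p}{f^{2}}\equiv0,1\bmod 4$), the $a$-sum unwinds into an inner sum over the $\kappa$-coordinate of $e\!\left(\tfrac{a\Xi}{4Lf^{2}}\right)$ — which forces $\kappa\mid\Xi$ — multiplied by a sum of exactly the shape $Kl_{l,f}(\Xi/\kappa,p)$. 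Corollary \ref{charsumcor3} bounds the latter by $\ll\sqrt{l}$ up to a divisor factor; absorbing the number of admissible $\kappa$ together with the M\"obius/divisor factors into a single $\log(\kappa l f^{2})$, and the count of the $\kappa$-coordinate into the extra factor $\kappa$, one gets the pointwise bound
\[
\bigl|Kl_{L,f}(\Xi,p)\bigr|\ \ll\ \sum_{\substack{\kappa l=L\\ \kappa\mid\Xi}}\log(\kappa l f^{2})\,\kappa\sqrt{l}.
\]
Multiplying by $F(L,f,\Xi)$ (non-negative in the applications, or $|F|$ in general), summing over $L,f,\Xi$ with $Lf^{2}\Xi/\sqrt p\ll1$, interchanging summation so that $\kappa$ is outermost, and relabelling the pair $(l,\Xi/\kappa)$ as $(l,\xi)$ — so that $L=l\kappa$, $\Xi=\xi\kappa$, and the constraint becomes $\kappa^{2}lf^{2}\xi/\sqrt p\ll1$ — yields the asserted inequality. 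Nothing in this argument uses the inequality $Lf^{2}\Xi/\sqrt p\ll1$, so the statement for the complementary range $lf^{2}\xi/\sqrt p\gg1$ follows identically.

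The main obstacle is step (i): producing a decomposition of $Kl_{L,f}(\Xi,p)$ into primitive Kloosterman-type sums that is valid uniformly for \emph{all} $L$ — not merely squarefree $L$ — while simultaneously respecting the quadratic congruence conditions modulo $f^{2}$ and modulo $4$ and the $2$-adic behaviour of the Kronecker symbol. This is exactly the local calculation carried out in appendix \ref{sectioncharsum}; once corollary \ref{charsumcor3} is in hand, the present lemma is only the bookkeeping described above. A secondary technical point is to verify that the rescaling $\xi\mapsto\xi/\kappa$ is \emph{forced} (rather than merely allowed) by the exponential, which is where the hypothesis $\xi\neq0$ and the coprimality extracted along with $\kappa$ enter.
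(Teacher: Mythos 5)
Your proposal is correct and in substance the same as the paper's own argument: the lemma follows from corollary \ref{charsumcor3} (where the factor $\delta(p;f^2)$ and $p$ being an odd prime force $\gcd(p,f)=1$, so the bound reads $Kl_{l,f}(\xi,p)\ll\log(lf^2)\sqrt{l}\sqrt{\gcd(\xi,l)}$), followed by the reparametrization $(l,\xi)\mapsto(l\kappa,\xi\kappa)$, with the remark that nothing in the argument uses the particular summation region, exactly as you say. The paper gets there more directly by simply setting $\kappa=\gcd(\xi,l)$ (rather than summing over all divisors $\kappa$ of the gcd), so the M\"obius/CRT decomposition into primitive pieces that you sketch is superfluous scaffolding already subsumed in corollary \ref{charsumcor3}.
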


\begin{proof} By corollary \ref{charsumcor3},
\[Kl_{l,f}(\xi,p)\ll\delta(p;f^2)\begin{cases}\log(lf^2)\sqrt{l\gcd(p,f^2)}\sqrt{\gcd\left(\tfrac{\xi}{\sqrt{\gcd(p,f^2)}},l\right)}&\tfrac{l\sqrt{\gcd(p,f^2)}}{rad(l)}\mid \xi\\ 0&otherwise\end{cases}.\]
Because of the factor, $\delta(p,f^2)$, the the sum does not vanish only if $\gcd(p,f^2)=1$ and $p$ is a square modulo $f$. Since $p$ is a prime this forces $\gcd(p,f)=1$ and implies
\[Kl_{l,f}(\xi,p)\ll\log(lf^2)\sqrt{l}\sqrt{\gcd\left(\xi,l\right)}\tag{$*$}\label{lem3.18*}.\]
Let $\kappa=\gcd(\xi,l)$. Then, 
\begin{align*}
\sum_{\substack{l,f,\xi\\ \frac{lf^2\xi}{\sqrt{p}}\ll1}}F(l,f,\xi)Kl_{l,f}(\xi,p)&\ll \sum_{\substack{\kappa,l,f,\xi\\ \gcd(l,\xi)=1\\ \frac{\kappa^2lf^2\xi}{\sqrt{p}}\ll1}}\log\left(\kappa lf^2\right)\kappa\sqrt{l}F\left(l\kappa,f,\xi\kappa\right).
\end{align*}
This proves the inequality of the lemma. Since the argument does not use anything about the particular boundary of summation the statement about the region $\frac{lf^2\xi}{\sqrt{p}}\gg1$ is obvious.
\end{proof}

\begin{prop}\label{propfirstest}
\begin{equation}\label{first}
\sqrt{p}\sum_{l,f=1}^{\infty}\tfrac{1}{(lf^2)^{\frac32}}\sum_{\substack{\xi\in\mathbb{Z}\backslash\{0\}\\ \frac{lf^2\xi}{\sqrt{p}}\gg 1}}\tfrac{Kl_{l,f}(\xi,p)}{\sqrt{l}}\left(\tfrac{\sqrt{p}}{lf^2\xi}\right)^{N}\tfrac{1}{\xi^{2}}\ll p^{\frac14}.
\end{equation}
\end{prop}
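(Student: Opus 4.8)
The plan is to reduce the left-hand side of \eqref{first} to an absolutely convergent multiple sum by first invoking the character sum bound, then choosing $N$ large enough to force convergence in each variable. First I would apply Lemma \ref{lem3.18} to the sum, which replaces $Kl_{l,f}(\xi,p)$ by the arithmetic factor $\log(\kappa lf^2)\kappa\sqrt{l}$ at the cost of rescaling $l\mapsto l\kappa$, $\xi\mapsto\xi\kappa$ and introducing an extra sum over $\kappa$ (the lemma explicitly covers the range $\frac{lf^2\xi}{\sqrt p}\gg1$). Writing $F(l,f,\xi)=\sqrt p\,(lf^2)^{-3/2}l^{-1/2}\left(\tfrac{\sqrt p}{lf^2\xi}\right)^N\xi^{-2}$, the substitution $l\mapsto l\kappa,\ \xi\mapsto\xi\kappa$ produces a clean power of $\kappa$ in the denominator together with the $\log$-weights, so the resulting bound is a sum over $\kappa,l,f,\xi\geq1$ (now unconstrained, since dropping the condition $\frac{\kappa^2lf^2\xi}{\sqrt p}\gg1$ only enlarges the sum) of $\sqrt p\cdot p^{N/2}$ times an explicit monomial in $\kappa,l,f,\xi$ with logarithmic corrections.

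Next I would collect the powers of $p$ and of the summation variables. After the substitution the total $p$-power is $p^{1/2+N/2}$, and the variables carry a negative power that is $\big(\tfrac{\sqrt p}{lf^2\xi}\big)^N$-heavy; but crucially the constraint $\frac{lf^2\xi}{\sqrt p}\gg1$ (before rescaling) means that on the region of summation $\sqrt p \ll lf^2\xi$, so one power of the factor $\big(\tfrac{\sqrt p}{lf^2\xi}\big)$ can be spent to turn $p^{1/2+N/2}$ into $p^{1/4}$ times $(lf^2\xi)^{-1/2}\cdot$(something), leaving the residual $\big(\tfrac{\sqrt p}{lf^2\xi}\big)^{N-1}\leq 1$ to be bounded trivially by $1$. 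Concretely: $\sqrt p \big(\tfrac{\sqrt p}{lf^2\xi}\big)^N = p^{1/4}\cdot \big(\tfrac{\sqrt p}{lf^2\xi}\big)^{1/2}\big(\tfrac{\sqrt p}{lf^2\xi}\big)^{N-1}\le p^{1/4}\big(\tfrac{\sqrt p}{lf^2\xi}\big)^{1/2}$ on this region, for $N\geq1$. This reduces matters to showing that
\[
\sum_{\kappa,l,f,\xi\geq1}\frac{\log(\kappa lf^2)\,\kappa\,\sqrt l}{(l\kappa)^{3/2}(f^2)^{3/2}\sqrt{l\kappa}}\cdot\frac{1}{(l\kappa f^2\xi\kappa)^{1/2}}\cdot\frac{1}{(\xi\kappa)^2}
\]
converges, i.e. that after cancelling the $\sqrt l$ against $\sqrt{l\kappa}$-type factors every variable appears with exponent strictly greater than $1$ in the denominator (the $\xi^{-2}$ alone handles $\xi$, the accumulated powers $l^{-3/2-\cdots}$ handle $l$, the $f^{-3}$ handles $f$, and $\kappa^{-1-\cdots}$ combined with a logarithm handles $\kappa$), and the remaining logarithmic factors $\log(\kappa lf^2)$ are absorbed since $\sum n^{-1-\delta}\log n<\infty$.

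The only genuine subtlety — and the step I expect to be the main obstacle — is bookkeeping the exponents after the rescaling $l\mapsto l\kappa$, $\xi\mapsto\xi\kappa$ to be certain that \emph{every} variable, and in particular $\kappa$ and $l$, ends up with an exponent $>1$ even after the extra $\sqrt l$ weight coming from Weil's bound; a careless accounting could leave $l$ or $\kappa$ at exponent exactly $1$, which would fail. I would therefore track the $\kappa$- and $l$-exponents explicitly: the factor $(lf^2)^{-3/2}$ becomes $(l\kappa)^{-3/2}f^{-3}$, the factor $l^{-1/2}$ (the extra $\sqrt l$ in the denominator from $\tfrac{Kl_{l,f}}{\sqrt l}$ after the crude bound $Kl\ll\sqrt l\log$) becomes $(l\kappa)^{-1/2}\cdot\sqrt l$ which is $l^{0}\kappa^{-1/2}$ after cancelling against the Weil $\sqrt l$, the $\xi^{-2}$ becomes $(\xi\kappa)^{-2}$, and one half-power of $\tfrac{\sqrt p}{l\kappa f^2\xi\kappa}$ is distributed as $l^{-1/2}\kappa^{-1}f^{-1}\xi^{-1/2}p^{1/4}$; summing exponents shows $l$ appears as $l^{-2}$, $f$ as $f^{-4}$, $\xi$ as $\xi^{-5/2}$, $\kappa$ as $\kappa^{-5/2}$, all safely $>1$, so the quadruple sum converges and the whole expression is $O(p^{1/4})$ with implied constant depending only on $F$ (through the value of $N$ and the constants in Lemma \ref{lem3.18} and Corollary \ref{charsumcor3}). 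A final remark: any fixed $N\geq1$ works, so one may simply take $N=1$ throughout.
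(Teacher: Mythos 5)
Your first step (invoking Lemma \ref{lem3.18} to replace $Kl_{l,f}(\xi,p)$ and pass to the rescaled sum over $\kappa,l,f,\xi$) coincides with the paper's, but the pivotal inequality you use afterwards is false, and the gap is fatal. You claim that on the region $\frac{lf^2\xi}{\sqrt p}\gg1$ one has $\sqrt p\,\bigl(\tfrac{\sqrt p}{lf^2\xi}\bigr)^{N}=p^{1/4}\bigl(\tfrac{\sqrt p}{lf^2\xi}\bigr)^{1/2}\bigl(\tfrac{\sqrt p}{lf^2\xi}\bigr)^{N-1}\le p^{1/4}\bigl(\tfrac{\sqrt p}{lf^2\xi}\bigr)^{1/2}$. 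The first equality is not an identity: the left-hand side equals $p^{(N+1)/2}(lf^2\xi)^{-N}$ while the middle expression equals $p^{N/2}(lf^2\xi)^{-(N-\frac12)}$, so they agree only when $lf^2\xi=p$. Worse, the asserted bound fails exactly where the sum lives: at the boundary $lf^2\xi\asymp\sqrt p$ (say $\kappa=f=\xi=1$, $l\asymp\sqrt p$) the left side is $\asymp\sqrt p$ whereas your majorant is $\asymp p^{1/4}$, off by $p^{1/4}$. If the manipulation is done correctly, spending $t\ge0$ powers of the decaying factor via $\bigl(\tfrac{\sqrt p}{\kappa^2lf^2\xi}\bigr)^{t}= p^{t/2}(\kappa^2lf^2\xi)^{-t}$ leaves a prefactor $p^{\frac12+\frac t2}$ in front of an unconstrained, $p$-independent convergent sum; with your choice of half a power this gives $O(p^{3/4})$, and larger $t$ only makes the power of $p$ worse. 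So your scheme, repaired, proves $O(p^{3/4})$, not $O(p^{1/4})$ (your exponent bookkeeping in the final display also miscounts the $\kappa$-power, but that is secondary).

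The deeper issue is that no pointwise replacement of the constraint by a uniform factor can succeed: the sum is genuinely of size about $p^{1/4}$, and that $p^{1/4}$ is produced by the boundary of the region, where $l$ runs up to about $\sqrt p/(\kappa^2f^2)$ and the Weil loss $\sqrt l$ leaves a truncated sum $\sum_{l\ll\sqrt p/(\kappa^2f^2)}l^{-1/2}\asymp\bigl(\sqrt p/(\kappa^2f^2)\bigr)^{1/2}$. Discarding the summation limits throws away precisely this structure. The paper's proof keeps them: it splits the region $\frac{\kappa^2lf^2\xi}{\sqrt p}\gg1$ into four subregions according to which variable is forced to be large ($\kappa^2\gg\sqrt p$; then $f^2\gg\sqrt p/\kappa^2$; then $l\gg\sqrt p/(\kappa^2f^2)$; then $\xi\gg\sqrt p/(\kappa^2lf^2)$) and evaluates the nested sums with their actual lower and upper limits, using the arbitrary power $N$ only to control tails such as $\sum_{\xi\gg X}\xi^{-2-N}\ll X^{-1-N}$ and $\sum_{l\gg Y}l^{-\frac32-N}\ll Y^{-\frac12-N}$; the $p^{1/4}$ emerges from the truncated $\sum l^{-1/2}$ in the last two subregions. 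To fix your argument you would have to redo the estimate region by region in this manner; the shortcut of factoring out $p^{1/4}$ and summing all variables freely cannot be repaired.
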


\begin{proof}
By lemma \ref{lem3.18}, 
\begin{equation*}\tag{$\ref{first}'$}\label{braketheregion}
\eqref{first}\ll\sqrt{p}\sum_{\substack{\kappa,l,f,\xi\\ \frac{\kappa^2lf^2\xi}{\sqrt{p}}\gg1}}\tfrac{\log(\kappa lf^2)}{(\kappa lf^2)^{\frac32}}\tfrac{\sqrt{\kappa}}{(\kappa\xi)^{2}}\left(\tfrac{\sqrt{p}}{\kappa^2 lf^2\xi}\right)^{N}
\end{equation*}
We break the region of summation $\frac{\kappa^2lf^2\xi}{\sqrt{p}}\gg1$ into sub-regions,
\begin{align*}
\eqref{braketheregion}&= \sum_{\substack{\kappa^2\gg \sqrt{p}\\ l,f,|\xi|\geq1}}\cdots+\sum_{\substack{\kappa^2\ll \sqrt{p}\\ lf^2\xi\gg \frac{\sqrt{p}}{\kappa^2}}}\cdots= \sum_{\substack{\kappa^2\gg \sqrt{p}\\ l,f,|\xi|\geq1}}+\sum_{\substack{\kappa\ll \sqrt{p}\\ f^2\gg \frac{\sqrt{p}}{\kappa^2}\\ l,\xi\geq1 }}+\sum_{\substack{\kappa^2\ll \sqrt{p}\\ f^2\ll \frac{\sqrt{p}}{\kappa^2}\\l\xi\gg \frac{\sqrt{p}}{\kappa^2f^2}}}\\
&= \sum_{\substack{\kappa^2\gg \sqrt{p}\\ l,f,|\xi|\geq1}}+\sum_{\substack{\kappa\ll \sqrt{p}\\ f^2\gg \frac{\sqrt{p}}{\kappa^2}\\ l,|\xi|\geq1 }}+\sum_{\substack{\kappa^2\ll \sqrt{p}\\ f^2\ll \frac{\sqrt{p}}{\kappa^2}\\ l\gg \frac{\sqrt{p}}{\kappa^2f^2}\\  |\xi|\geq1}}+\sum_{\substack{\kappa^2\ll \sqrt{p}\\ f^2\ll \frac{\sqrt{p}}{\kappa^2}\\ l\ll \frac{\sqrt{p}}{\kappa^2f^2}\\  \xi\gg\frac{\sqrt{p}}{\kappa^2lf^2}}}\\
&=\eqref{braketheregion}_{a}+\eqref{braketheregion}_b+\eqref{braketheregion}_c+\eqref{braketheregion}_d.
\end{align*}
We now bound $\eqref{braketheregion}_a$ to $\eqref{braketheregion}_d$.
\begin{itemize}
\item $\eqref{braketheregion}_a$.
\begin{align*}
\eqref{braketheregion}_a&\ll p^{\frac{N+1}{2}}\sum_{\kappa^2\gg \sqrt{p}}\tfrac{1}{\kappa^{3+2N}}\sum_{l,f,|\xi|\geq1}\tfrac{\log\left(\frac{\kappa^2lf^2\xi}{\sqrt{p}}\right)}{l^{\frac32+N}f^{3+2N}\xi^{2+N}}\ll \log(p).
\end{align*}
\item $\eqref{braketheregion}_b$.
\begin{align*}
\eqref{braketheregion}_b&\ll p^{\frac{N+1}{2}}\sum_{\kappa^2\ll \sqrt{p}}\tfrac{1}{\kappa^{3+2N}}\sum_{f^2\gg \frac{\sqrt{p}}{\kappa^2}}\tfrac{1}{f^{3+2N}}\sum_{l,|\xi|\geq1}\tfrac{\log\left(\frac{\kappa^2lf^2\xi}{\sqrt{p}}\right)}{l^{\frac32+N}\xi^{2+N}}\\
&\ll p^{\frac{N+1}{2}}\sum_{\kappa^2\ll \sqrt{p}}\tfrac{1}{\kappa^{3+2N}} \left(\tfrac{\kappa^2}{\sqrt{p}}\right)^{2+2N}\ll \log(p).
\end{align*}
\item $\eqref{braketheregion}_c$.
\begin{align*}
\eqref{braketheregion}_c&\ll p^{\frac{N+1}{2}}\sum_{\kappa^2\ll \sqrt{p}}\tfrac{1}{\kappa^{3+2N}}\sum_{f^2\ll \frac{\sqrt{p}}{\kappa^2}}\tfrac{1}{f^{3+2N}}\sum_{l\gg \frac{\sqrt{p}}{\kappa^2f^2}} \tfrac{1}{l^{\frac32+N}}\sum_{|\xi|\geq1}\tfrac{\log\left(\frac{\kappa^2lf^2\xi}{\sqrt{p}}\right)}{\xi^{2+N}}\\
&\ll p^{\frac{N+1}{2}}\sum_{\kappa^2\ll \sqrt{p}}\tfrac{1}{\kappa^{3+2N}}\sum_{f^2\ll \frac{\sqrt{p}}{\kappa^2}}\tfrac{1}{f^{3+2N}}\sum_{l\gg \frac{\sqrt{p}}{\kappa^2f^2}} \tfrac{\log\left(\frac{\kappa^2lf^2}{\sqrt{p}}\right)}{l^{\frac32+N}}\\
&\ll p^{\frac{N+1}{2}}\sum_{\kappa^2\ll \sqrt{p}}\tfrac{1}{\kappa^{3+2N}}\sum_{f^2\ll \frac{\sqrt{p}}{\kappa^2}}\tfrac{1}{f^{3+2N}}\left(\tfrac{\kappa^2f^2}{\sqrt{p}}\right)^{\frac12+N}\ll p^{\frac14}.
\end{align*}
\item $\eqref{braketheregion}_d$.
\begin{align*}
\eqref{braketheregion}_d&\ll p^{\frac{N+1}{2}}\sum_{\kappa^2\ll \sqrt{p}}\tfrac{1}{\kappa^{3+2N}}\sum_{f^2\ll \frac{\sqrt{p}}{\kappa^2}}\tfrac{1}{f^{3+2N}}\sum_{l\ll \frac{\sqrt{p}}{\kappa^2f^2}} \tfrac{1}{l^{\frac32+N}}\sum_{\xi\gg \frac{\sqrt{p}}{\kappa^2lf^2}}\tfrac{\log\left(\frac{\kappa^2lf^2\xi}{\sqrt{p}}\right)}{\xi^{2+N}}\\
&\ll p^{\frac{N+1}{2}}\sum_{\kappa^2\ll \sqrt{p}}\tfrac{1}{\kappa^{3+2N}}\sum_{f^2\ll \frac{\sqrt{p}}{\kappa^2}}\tfrac{1}{f^{3+2N}}\sum_{l\ll \frac{\sqrt{p}}{\kappa^2f^2}} \tfrac{1}{l^{\frac32+N}}\left(\tfrac{\kappa^2lf^2}{\sqrt{p}}\right)^{1+N}\\
&\ll \sum_{\kappa^2\ll \sqrt{p}}\tfrac{1}{\kappa}\sum_{f^2\ll \frac{\sqrt{p}}{\kappa^2}}\tfrac{1}{f}\left(1+\left(\tfrac{\sqrt{p}}{\kappa^2f^2}\right)^{\frac12}\right)\\
&\ll \sum_{\kappa^2\ll \sqrt{p}}\tfrac{1}{\kappa}\left(1+\log\left(\tfrac{p^{\frac14}}{\kappa}\right)+\tfrac{p^{\frac14}}{\kappa}\right)\ll p^{\frac14}.
\end{align*}
\end{itemize}
Combining the above estimates for $\eqref{braketheregion}_a$ to $\eqref{braketheregion}_d$ finishes the proof.
\end{proof}

\begin{prop}\label{propsecondest}
\begin{equation}\label{second}
\tfrac{1}{p^{\frac14}}\sum_{\substack{l,f,\xi\\ \frac{lf^2\xi}{\sqrt{p}}\ll1}}^{\infty}\tfrac{Kl_{l,f}(\xi,p)}{\sqrt{l\xi}}\left\{1+\log\left(\tfrac{lf^2\xi}{\sqrt{p}}\right)\right\}\ll p^{\frac14}.
\end{equation}
\end{prop}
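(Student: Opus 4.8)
The plan is to bound the sum in \eqref{second} by applying Lemma \ref{lem3.18} to replace the character sum $Kl_{l,f}(\xi,p)$ by the essentially elementary weight $\log(\kappa lf^2)\kappa\sqrt{l}$ (evaluated at $l\mapsto l\kappa$, $\xi\mapsto\xi\kappa$), and then carrying out the resulting multiple Dirichlet-type sum over $\kappa,l,f,\xi$ directly. Concretely, writing $F(l,f,\xi)=\tfrac{1}{p^{1/4}}\tfrac{1}{\sqrt{l\xi}}\{1+\log(lf^2\xi/\sqrt p)\}$ and using $\tfrac{lf^2\xi}{\sqrt p}\ll1$ (so the logarithm is negative and $1+\log(lf^2\xi/\sqrt p)$ is $O(1)$ on the relevant range, dominated by $1$ near the boundary and decaying toward the origin), Lemma \ref{lem3.18} gives
\[
\eqref{second}\ll \tfrac{1}{p^{\frac14}}\sum_{\substack{\kappa,l,f,\xi\\ \frac{\kappa^2 lf^2\xi}{\sqrt p}\ll1}}\log(\kappa lf^2)\,\kappa\,\sqrt{l}\cdot\tfrac{1}{\sqrt{l\kappa}\sqrt{\xi\kappa}}\Bigl\{1+\log\Bigl(\tfrac{\kappa^2 lf^2\xi}{\sqrt p}\Bigr)\Bigr\}\ll \tfrac{1}{p^{\frac14}}\sum_{\substack{\kappa,l,f,\xi\\ \frac{\kappa^2 lf^2\xi}{\sqrt p}\ll1}}\tfrac{\log(\kappa lf^2 \xi)}{\sqrt{\xi}f^2}\,.
\]
I would absorb all the $\log$ and $1+\log(\cdots)$ factors into a single $\log(\kappa lf^2\xi)$, which is harmless since each variable contributes a convergent geometric-type factor against any fixed power.

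Next I would execute the sum by peeling off variables from the constraint $\kappa^2 lf^2\xi\ll\sqrt p$, just as in the proof of Proposition \ref{propfirstest}. Note the critical feature: the summand $\tfrac{1}{\sqrt\xi f^2}$ (with a $\log$) has a $\xi$-sum $\sum_{\xi\le X}\xi^{-1/2}\ll X^{1/2}$ that \emph{grows}, while the $\kappa$-, $l$-, and $f$-sums are (for $f$) absolutely convergent or (for $\kappa,l$) at worst logarithmically divergent and hence truncated by the constraint. So I would sum $\xi$ first against its ceiling $\xi\ll\tfrac{\sqrt p}{\kappa^2 lf^2}$, producing a factor $\ll\bigl(\tfrac{\sqrt p}{\kappa^2 lf^2}\bigr)^{1/2}\log(\cdots)=\tfrac{p^{1/4}}{\kappa l^{1/2}f}\log(\cdots)$. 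The $p^{1/4}$ exactly cancels the $p^{-1/4}$ out front, leaving
\[
\eqref{second}\ll \sum_{\kappa,l,f:\ \kappa^2 l f^2\ll\sqrt p}\tfrac{\log(\kappa lf^2)}{\kappa\, l^{1/2}\, f^{3}}\,,
\]
where I have used $\tfrac{1}{f^2}\cdot\tfrac{1}{f}=\tfrac{1}{f^3}$. Now the $f$-sum converges absolutely ($\sum f^{-3+\epsilon}<\infty$), the $l$-sum is $\sum_{l}l^{-1/2}\ll$ (its truncation)$^{1/2}$, but crucially the truncation $l\ll\sqrt p/(\kappa^2 f^2)$ together with $\sum_l l^{-1/2}\ll (\sqrt p/\kappa^2 f^2)^{1/2}=p^{1/4}/(\kappa f)$ would reintroduce a $p^{1/4}$ — so instead I would be slightly more careful and keep the $\xi$-sum's growth in check by \emph{not} summing $l$ to its full range freely; rather I split on whether $l\xi$ is large.

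Here is where the main obstacle lies, and it is the same bookkeeping issue that makes Proposition \ref{propfirstest} require a four-way case split: the summand in \eqref{second} is \emph{not} absolutely summable in all variables, and one must arrange the order of summation so that whenever a growing partial sum appears (in $\xi$, and potentially in $l$), its range is cut off by the hyperbolic constraint $\kappa^2 lf^2\xi\ll\sqrt p$ in such a way that the accumulated powers of $p$ never exceed $p^{1/4}$ after the $p^{-1/4}$ prefactor. Concretely I would mimic Proposition \ref{propfirstest}'s decomposition: split according to $\kappa^2\gg\sqrt p$ versus $\kappa^2\ll\sqrt p$, then within the latter split $f^2\gg\sqrt p/\kappa^2$ versus $f^2\ll\sqrt p/\kappa^2$, then $l\gg\sqrt p/(\kappa^2 f^2)$ versus $l\ll\sqrt p/(\kappa^2 f^2)$, and in the last case sum $\xi$ against its remaining range $\xi\gg\sqrt p/(\kappa^2 lf^2)$. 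In each region every variable is either summed over a geometrically convergent tail (giving an $O(1)$ or $O(\log p)$ contribution) or summed up to a ceiling that contributes a matching negative power of $p$; the arithmetic of the exponents is set up precisely so that the worst region contributes $O(p^{1/4})$ and the rest contribute $O(p^{1/4}\log p)$ or smaller — in fact, because \eqref{second} already carries $1/\sqrt{l\xi}$ rather than $1/(l^{3/2}\xi^2)$, the $\xi$- and $l$-sums are the delicate ones and the constraint must be spent on them. Summing the four regional bounds then yields $\eqref{second}\ll p^{1/4}$, which completes the proof. The routine part is the region-by-region power counting; the only genuinely nontrivial input is Lemma \ref{lem3.18} (hence corollary \ref{charsumcor3}, i.e. Weil's bound), which I invoke at the very first step.
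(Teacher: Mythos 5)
Your opening moves are the paper's: apply Lemma \ref{lem3.18} and then sum $\xi$ first against its ceiling. But two things go wrong. First, the second inequality in your first display is unjustified: after the simplification $\kappa\sqrt{l}\cdot(\sqrt{l\kappa}\sqrt{\xi\kappa})^{-1}=\xi^{-1/2}$ the summand is $\log(\kappa lf^2)\,\xi^{-1/2}\{1+\log(\kappa^2lf^2\xi/\sqrt p)\}$, which has \emph{no} decay in $f$ at all; the factor $f^{-2}$ you insert comes from nowhere, and the only control on $f$ is the constraint $\kappa^2lf^2\xi\ll\sqrt p$. (Also, $1+\log(\kappa^2lf^2\xi/\sqrt p)$ is not $O(1)$ on this range --- read as an absolute value it can be of size $\log p$ --- though this only costs logarithms.) Second, and more seriously, the step that is supposed to carry the actual estimate is never executed and could not be executed as described: the four-way split you propose to borrow from Proposition \ref{propfirstest}, with regions $\kappa^2\gg\sqrt p$, $f^2\gg\sqrt p/\kappa^2$, $l\gg\sqrt p/(\kappa^2f^2)$, $\xi\gg\sqrt p/(\kappa^2lf^2)$, is the decomposition of the \emph{complementary} region $\kappa^2lf^2\xi/\sqrt p\gg1$. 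Under the present constraint $\kappa^2lf^2\xi/\sqrt p\ll1$ every one of those subregions is empty (for instance $\xi\gg\sqrt p/(\kappa^2lf^2)$ directly contradicts the constraint), so the proposed case analysis is vacuous, and your concluding ``region-by-region power counting'' is left undone.

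The reason you were driven to it is a miscount of the budget: you worry that the $l$-sum ``reintroduces a $p^{1/4}$'', but that is exactly the allowed amount, not an obstruction. The paper's proof needs no case split: under the constraint each variable is bounded, $\xi\ll\sqrt p/(\kappa^2lf^2)$, $l\ll\sqrt p/(\kappa^2f^2)$, $f^2\ll\sqrt p/\kappa^2$, $\kappa^2\ll\sqrt p$, and one simply sums in nested order. The $\xi$-sum gives $\sum_{\xi\le X}\xi^{-1/2}\{1+|\log(\xi/X)|\}\ll X^{1/2}=p^{1/4}/(\kappa l^{1/2}f)$, cancelling the prefactor $p^{-1/4}$; the $l$-sum up to its ceiling gives $\sum_{l\le Y}l^{-1/2}\log(\kappa lf^2)\ll Y^{1/2}\log(\cdots)$, i.e. one more factor $p^{1/4}/(\kappa f)$, which is precisely the target size; and the remaining sums $\sum_{\kappa,f}(\kappa f)^{-2}$ converge, yielding $\ll p^{1/4}$ (up to logarithmic factors, which the paper absorbs). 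In short, the hyperbolic constraint is spent once on $\xi$ and once on $l$, while $f$ and $\kappa$ need no help; your proposal has the right first step and the right first summation, but the spurious $f^{-2}$, the mistaken belief that the second $p^{1/4}$ overshoots, and the transplanted (and here empty) decomposition constitute a genuine gap.
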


\begin{proof}Using lemma \ref{lem3.18},
\begin{align*}
\eqref{second}&\ll \tfrac{1}{p^{\frac14}}\sum_{\substack{\kappa,l,f,\xi\\ \frac{\kappa^2 lf^2\xi}{\sqrt{p}}\ll1}}\tfrac{\log(\kappa lf^2)}{\sqrt{\xi}}\left\{1+\log\left(\tfrac{\kappa^2 lf^2\xi}{\sqrt{p}}\right)\right\}
\end{align*}
We then bound each sum one-by-one.
\begin{align*}
&=\tfrac{1}{p^{\frac14}}\sum_{\kappa^2\ll \sqrt{p}}\sum_{f^2\ll \frac{\sqrt{p}}{\kappa^2}}\sum_{l\ll \frac{\sqrt{p}}{\kappa^2f^2}}\sum_{\xi\ll \frac{\sqrt{p}}{\kappa^2lf^2}}\tfrac{\log(\kappa lf^2)}{\sqrt{\xi}}\left\{1+\log\left(\tfrac{\kappa^2lf^2\xi}{\sqrt{p}}\right)\right\}\\
&\ll\tfrac{1}{p^{\frac14}}\sum_{\kappa^2\ll \sqrt{p}}\sum_{f^2\ll \frac{\sqrt{p}}{\kappa^2}}\sum_{l\ll \frac{\sqrt{p}}{\kappa^2f^2}}\log(\kappa lf^2)\left\{1+\left(\tfrac{\sqrt{p}}{\kappa^2lf^2}\right)^{\frac12}\right\}\\
&\ll\tfrac{1}{p^{\frac14}}\sum_{\kappa^2\ll \sqrt{p}}\sum_{f^2\ll \frac{\sqrt{p}}{\kappa^2}}\left\{1+\left(\tfrac{\sqrt{p}}{\kappa^2f^2}\right)^{\frac12}\log\left(\tfrac{\sqrt{p}}{\kappa^2f^2}\right)\right\}\\
&\ll\tfrac{1}{p^{\frac14}}\sum_{\kappa^2\ll \sqrt{p}}\left\{1+\left(\tfrac{\sqrt{p}}{\kappa^2}\right)\log\left(\tfrac{\sqrt{p}}{\kappa^2}\right)\right\}\ll p^{\frac14}.
\end{align*}

\end{proof}

\appendix
\section{Asymptotic expansions of
Fourier transforms}\label{apa}

In this appendix we will derive the asymptotic expansions that are used extensively throughout the paper. The main results of the appendix are theorems \ref{thmasymp}, \ref{thmasymp2} and corollary \ref{corasymp15}.

\subsection{Preliminaries}

We begin by going over some basics that will be used in studying the asymptotic behavior of the Fourier transforms.

\subsubsection{Cut-off functions}

Throughout the proofs we will use classical partition of unity arguments. In order to keep track of uniformity in parameters we will do this as explicitly as possible. In this section we give details about the our cut-off functions and their relevant properties.

\begin{definition}[$\mathcal{G}_0$] Let $\mathcal{G}_0$ denote the set of functions 
$\phi_0:[0,\infty)\rightarrow \mathbb{R}$ such that:
\begin{itemize}
\item $supp(\phi_0)= [0,1]$ and $\phi_0(0)=1$,
\item $ \lim_{x\rightarrow 1^-}\phi_0(x)=0$, 
\item $\phi_0$ is smooth, positive and monotonically
decreasing,
\item All the right derivatives of $\phi_0$ at $0$ and the left
derivatives at $1$ vanish,
\end{itemize}
\end{definition}
(To see that $\mathcal{G}_0\neq \varnothing$ one can consider,
for example,
$\phi_0(x)=\iota\int_0^{1-x}e^{-\frac{1}{1-y}-\frac{1}{y}}dy$,
extended by $0$ for $x>1$, and $\iota$ is normalized so that $
\phi_0(0)=1$.) 

\begin{definition}[$\mathcal{G}$]\label{apdef2}Let $\mathcal{G}$ denote the set of functions $\phi(x):[0,\infty]\rightarrow \mathbb{R}$ defined by,
\begin{equation}\label{phi}
\phi(x):=\begin{cases} 1&\text{if $0\leq x\leq 1$,}\\
\phi_0(x-1)&\text{if $x\geq1$,}\end{cases}
\end{equation}
for some $\phi_0\in\mathcal{G}_0$.
\end{definition}

\begin{definition}\label{sing27}Given any $\phi\in \mathcal{G}$ and $\kappa\in\mathbb{R}_{>0}$ we define $\phi_{\kappa}(x)$ by
\begin{equation*}
\phi_{\kappa}(x):=\phi\left(\tfrac{|1-|x||}{\kappa}\right).
\end{equation*}
\end{definition}

The next few lemmas derive elementary properties of certain functions that will be used in the proofs.

\begin{lemma}\label{lemmaa1} For any $\phi\in\mathcal{A}$ the Mellin transform,
$\tilde{\phi}(s)$, is analytic everywhere excepts for a
simple pole at $s=0$ with residue $1$. Furthermore $\tilde{\phi}(s)$ decays faster than any polynomial of $|s|$. 

\end{lemma}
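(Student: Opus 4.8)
\textbf{Proof plan for Lemma \ref{lemmaa1}.}

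The plan is to analyze the Mellin transform $\tilde\phi(s)=\int_0^\infty \phi(x)x^{s-1}\,dx$ by splitting the integral at $x=1$ according to the two-part definition \eqref{phi}: on $[0,1]$ the integrand is simply $x^{s-1}$, and on $[1,\infty)$ it is $\phi_0(x-1)x^{s-1}$, with $\phi_0$ supported in $[0,1]$, so in fact the tail integral runs only over $[1,2]$. First I would compute $\int_0^1 x^{s-1}\,dx = 1/s$ for $\Re(s)>0$, which is the source of the claimed simple pole at $s=0$ with residue $1$. The remaining piece $g(s):=\int_1^2 \phi_0(x-1)x^{s-1}\,dx$ is an integral over a compact interval of a smooth (bounded) function against $x^{s-1}$, hence defines an entire function of $s$ (differentiate under the integral sign; the bound $|x^{s-1}|\le \max(1,2^{\Re(s)-1})$ on $[1,2]$ justifies this on every vertical strip). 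So $\tilde\phi(s)=1/s+g(s)$ extends meromorphically to $\mathbb{C}$ with the only singularity a simple pole at $s=0$ of residue $1$.

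For the rapid decay in $|s|$, the idea is repeated integration by parts, exploiting the vanishing of all derivatives of $\phi_0$ at the endpoints $0$ and $1$ (equivalently, all derivatives of $\phi$ match up at $x=1$ and $\phi$ together with all derivatives vanish as $x\to 2^-$). Write $\phi$ on $[0,2]$: integrating $\int_0^2 \phi(x)x^{s-1}\,dx$ by parts $N$ times, each time integrating the power $x^{s-1}\mapsto x^s/s \mapsto \cdots \mapsto x^{s+N-1}/(s(s+1)\cdots(s+N-1))$ and differentiating $\phi$, the boundary terms at $x=2$ vanish because $\phi^{(k)}(2^-)=0$ for all $k$, and the boundary contributions at $x=1$ from the left and right pieces cancel because all one-sided derivatives agree there; the boundary term at $x=0$ produces, after the first integration by parts, exactly the pole term and contributes nothing further. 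Hence for $\Re(s)$ bounded and $s$ away from $\{0,-1,\dots,-(N-1)\}$,
\[
\tilde\phi(s)-\tfrac1s \;=\; \frac{(-1)^N}{s(s+1)\cdots(s+N-1)}\int_0^2 \phi^{(N)}(x)\,x^{s+N-1}\,dx,
\]
and since $\phi^{(N)}$ is bounded with support in $[0,2]$, the integral is $O_N(1)$ on any vertical strip, giving $|\tilde\phi(s)-1/s|\ll_N |s|^{-N}$ as $|s|\to\infty$ in that strip. As $N$ is arbitrary this is faster-than-polynomial decay.

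One technical point to be careful about: the statement of the lemma (and its phrasing "$\phi\in\mathcal{A}$") presupposes a class on which $\tilde\phi$ is well-defined; I would work with $\phi\in\mathcal{G}$ from Definition \ref{apdef2}, for which the defining properties of $\mathcal{G}_0$ — smoothness, compact support of $\phi_0$ in $[0,1]$, and the vanishing of all one-sided derivatives at $0$ and $1$ — are exactly what the integration-by-parts argument consumes. I do not expect a genuine obstacle here; the only thing requiring mild care is bookkeeping the boundary terms at the junction $x=1$, making sure that the left derivatives of the constant piece (all zero for $k\ge1$, value $1$ for $k=0$) match the right derivatives of $\phi_0(x-1)$ at $x=1$ (value $\phi_0(0)=1$, higher right-derivatives zero) so that no spurious boundary contributions survive, and tracking that the decay bound is uniform on vertical strips (not merely pointwise in $s$), which is what the applications in the paper need.
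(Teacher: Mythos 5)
Your proposal is correct and takes essentially the same route as the paper: you expose the simple pole at $s=0$ with residue $1$ via the constancy of $\phi$ on $[0,1]$ (the paper does this by a single integration by parts, which shifts everything onto $\phi'$ supported in $[1,2]$; your splitting at $x=1$ is the same computation), and you obtain faster-than-polynomial decay on vertical strips by repeated integration by parts, using that all derivatives of $\phi$ vanish at the endpoints. The only slip is cosmetic bookkeeping: since the boundary term at $x=0$ vanishes for $\Re(s)>0$ (it does not produce the pole term), the $N$-fold integration by parts gives $\tilde\phi(s)=\frac{(-1)^N}{s(s+1)\cdots(s+N-1)}\int_0^2\phi^{(N)}(x)\,x^{s+N-1}\,dx$ with $\tilde\phi(s)$ itself on the left rather than $\tilde\phi(s)-\tfrac1s$, and this corrected identity yields exactly the claimed rapid decay of $\tilde\phi$.
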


\begin{proof}Note that since $\phi(x)$ is compactly
supported the integral defining the Mellin transform converges for $\Re(s)>0$ and defines a holomorphic function in that region.
Integrating by parts gives,
\[\tilde{\phi}(x)=-\tfrac{1}{s}\int_{0}^{\infty}\phi'(x)x^sdx.\]
Now note that since $\phi(x)$ is constant on $[0,1]$
$\phi'(x)$ is supported in $[1,2]$ and hence the integral
above converges for every $s\in\mathbb{C}$. This gives the
analytic continuation and the simple pole at $s=0$. The residue
is easily calculated to be $\phi(0)=1$ by the fundamental
theorem of calculus. The last claim follows by repeated integration by parts.

\end{proof}

\subsubsection{Generalities about asymptotic expansions}\label{apsecasympexp}

Recall that smoothness properties of a function is reflected on the asymptotic properties of its Fourier transform. Since the functions we are interested in are, in general, not smooth (cf. \eqref{sectriv2}), in order to understand the behavior of the Fourier transforms we will need to study them around the singularities of the functions. To work locally around singularities we will be using asymptotic expansions. In this section we will describe basic definitions and properties of asymptotic expansions that will be relevant to the paper. For more on asymptotic expansions see, for instance chapter 5 of \cite{Bourbaki:2004aa}

By an \emph{asymptotic expansion} of a function, $f(x)$, defined on a domain $D$, around $\alpha\in D$, we will understand a series expansion 
\begin{equation}
f(\alpha-x)\sim |x|^{\beta}\sum_{m=0}^{\infty}c_m x^m\hspace{0.4in},or\hspace{0.4in}f(\alpha+x)\sim |x|^{\beta}\sum_{m=0}^{\infty}c_m x^m
\end{equation}
with constants $\beta,c_m\in\mathbb{C}$, such that for every $M\in\mathbb{N}$ there exists a neighborhood, $D_M\subset D$, of $\alpha$ such that for all $\alpha-x\in D_M$, $f(\alpha-x)=|x|^\beta\sum_{m=0}^Mc_mx^m+f_{M+1}(\alpha-x)$, where $f_{M+1}(\alpha-x)=O(x^{\beta+M+1})$, and the constant in the big-$O$ depends only on $\alpha, f$ and $M$. 

These expansions will be useful when describing the behavior of the function around a singular point $x=\alpha$, and for us $\alpha$ will be only $\pm1$. Note also that this expansion does not have to be a Taylor expansion (for example the exponent $\beta$ may be non-integral). 

The following statement is an immediate consequence of the definitions however we present it separately because we will be needing the exact expression for the constants (cf. corollary \ref{corasymp15}).

\begin{lemma}\label{aplema5}Let $h(x)$ have the asymptotic expansion
\[h(\pm(1-x))\sim |x|^{\beta}\sum_{m=0}^{\infty}c_m^{\pm}x^m.\]
Then, for any $\delta\in\mathbb{C}$ and $|x|<2$ we have
\[|2x-x^2|^{\delta}h(\pm(1-x))\sim |x|^{\beta+\delta}\sum_{m}d_m^{\pm}x^m\hspace{0.3in},where\hspace{0.3in} d_m^{\pm}=2^{\delta}\sum_{j+k=m}\tfrac{c_k^{\pm}}{(-2)^j}\binom{\delta}{j}.\]

\end{lemma}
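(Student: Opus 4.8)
The plan is to prove Lemma \ref{aplema5} by a direct manipulation: factor out the singular part of $|2x - x^2|^{\delta}$, expand the remaining analytic factor in a (generalized) binomial series, and multiply the two power series together, checking that the error-term bookkeeping required by the definition of asymptotic expansion in \S\ref{apsecasympexp} goes through. Concretely, for $|x| < 2$ one writes $|2x - x^2|^{\delta} = |x|^{\delta}\,|2 - x|^{\delta} = 2^{\delta} |x|^{\delta} \bigl(1 - \tfrac{x}{2}\bigr)^{\delta}$, where the last factor is genuinely analytic in $|x| < 2$ (choosing the branch that is $1$ at $x = 0$, consistent with the $\sqrt{\cdot}$-type conventions fixed in the notation section). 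Then the standard binomial theorem gives $\bigl(1 - \tfrac{x}{2}\bigr)^{\delta} = \sum_{j=0}^{\infty} \binom{\delta}{j} (-\tfrac{x}{2})^j = \sum_{j=0}^{\infty} \tfrac{1}{(-2)^j}\binom{\delta}{j} x^j$, a convergent power series for $|x| < 2$.

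First I would record the two ingredients: the hypothesised asymptotic expansion $h(\pm(1-x)) \sim |x|^{\beta} \sum_{k \geq 0} c_k^{\pm} x^k$, which by definition means $h(\pm(1-x)) = |x|^{\beta}\sum_{k=0}^{M} c_k^{\pm} x^k + O(|x|^{\beta + M + 1})$ on a suitable neighbourhood of $0$, and the exact power series for $2^{\delta}\bigl(1 - \tfrac{x}{2}\bigr)^{\delta}$ just described. Multiplying, $|2x - x^2|^{\delta} h(\pm(1-x)) = |x|^{\beta + \delta} \cdot 2^{\delta}\bigl(1-\tfrac{x}{2}\bigr)^{\delta} \cdot \bigl(\sum_k c_k^{\pm} x^k + O(|x|^{M+1})\bigr)$, and collecting the coefficient of $x^m$ in the Cauchy product of the two series $2^{\delta}\sum_j \tfrac{1}{(-2)^j}\binom{\delta}{j} x^j$ and $\sum_k c_k^{\pm} x^k$ yields exactly $d_m^{\pm} = 2^{\delta} \sum_{j+k=m} \tfrac{c_k^{\pm}}{(-2)^j}\binom{\delta}{j}$, as claimed.

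The only real work is the error estimate, i.e. verifying the tail bound demanded by the definition of asymptotic expansion. Fix $M$. Truncating $h(\pm(1-x))$ at order $M$ introduces an error $|x|^{\beta}\cdot O(|x|^{M+1})$; multiplying by the bounded analytic factor $2^{\delta}|x|^{\delta}(1 - x/2)^{\delta}$ (bounded on, say, $|x| \leq 1$) keeps this at $O(|x|^{\beta + \delta + M + 1})$. Truncating the binomial series $2^{\delta}(1-x/2)^{\delta} = 2^{\delta}\sum_{j=0}^{M} \tfrac{1}{(-2)^j}\binom{\delta}{j} x^j + O(|x|^{M+1})$ (valid since the series converges for $|x|<2$, so its tail is $O(|x|^{M+1})$ on $|x|\le 1$) and multiplying by $|x|^{\beta+\delta}\sum_{k=0}^M c_k^{\pm} x^k = O(|x|^{\beta+\delta})$ again contributes $O(|x|^{\beta+\delta+M+1})$. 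Finally, among the products of the two truncated polynomials, all monomials $x^{j+k}$ with $j + k > M$ are themselves $O(|x|^{M+1})$ on the neighbourhood, so they can be discarded into the error. Hence $|2x - x^2|^{\delta} h(\pm(1-x)) = |x|^{\beta + \delta}\sum_{m=0}^{M} d_m^{\pm} x^m + O(|x|^{\beta + \delta + M + 1})$ with the implied constant depending only on $\delta$, $M$, and the data of $h$, which is precisely the assertion.

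I do not expect a genuine obstacle here; the statement is, as the text says, ``an immediate consequence of the definitions,'' and the point of isolating it is to have the explicit formula for $d_m^{\pm}$ on record for use in corollary \ref{corasymp15}. The one item warranting a sentence of care is the choice of branch and the restriction $|x| < 2$: one should note that $|2 - x| = 2 - x$ for real $x < 2$ (so no absolute-value subtlety enters the analytic factor), that $(1 - x/2)^{\delta}$ is defined via its principal branch and agrees with the convergent binomial series there, and that it suffices to work on an arbitrarily small neighbourhood of $0$ since asymptotic expansions are a local notion.
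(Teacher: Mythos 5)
Your argument is correct and is exactly the computation the paper leaves implicit (its proof is simply ``Obvious''): factoring $|2x-x^2|^{\delta}=2^{\delta}|x|^{\delta}(1-\tfrac{x}{2})^{\delta}$, expanding the binomial series valid for $|x|<2$, and taking the Cauchy product with the expansion of $h$ gives precisely the stated $d_m^{\pm}$, with the tail estimates handled as you describe. Nothing further is needed.
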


\begin{proof}
Obvious.

\end{proof}

\subsubsection{An integral tranform}

In this section we introduce an integral transform that will appear when one considers the asymptotic behavior of Fourier transforms of the types we consider in the text.

For the following definition, let $a\in\mathbb{C}$ with $\Re(a)>-2$ be a constant and let $h_a(x)$ be a function with the following asymptotic expansions around $x=\pm1$,
\[h_a(\pm(1-x))\sim |x|^\frac{a}{2}\sum_{n=0}^\infty c_m^{\pm}x^m,\]
where $c_m^{\pm}\in\mathbb{C}$ are constants.

\begin{definition}\label{inttrans} Let $\Phi\in\mathcal{S}(\mathbb{R})$,  $m\in\mathbb{N}$, and $\tau\in\mathbb{C}$ with $\Re(\tau)>0$. Define $ \mathcal{A}_{h_a,m}^{\tau,\pm}(\Phi)(x)$ by
\begin{align*}
 \mathcal{A}_{h_a,m}^{\tau,\pm}(\Phi)(x)&:=\tfrac{1}{2\pi i }\int_{(\tau)}\tilde{\Phi}(u)c_{m}^{\pm}\left(\tfrac{u}{2}\right)\Gamma\left(m+1+\tfrac{a+u}{2}\right)x^{-\frac{u}{2}}du,
\end{align*}
where $\Gamma$ denotes the usual gamma function and,
\begin{align*}
c_{m}^{\pm}\left(\tfrac{u}{2}\right)&:=\left(\tfrac{i}{2\pi }\right)^{1+m+\frac {u+a}{2}}2^{\frac{u}{2}}\sum_{\substack{j+k=m\\ j,k\geq0}}\tfrac{{c_k^{\pm}}}{(-2)^j}\binom{\frac u2}{j}.
\end{align*}
Note that $c_m^{\pm}\left(\frac u2\right)$ is a holomorphic function of $u$.

\end{definition}

The proposition below establishes the decay properties of $\mathcal{A}_{h_a,m}^{\tau,\pm}(\Phi)(x)$ for large $x$.

\begin{prop}\label{lemasymp3}Let $h_a(x)$ be as above, $m\in\mathbb{N}$, and $\Phi\in \mathcal{S}(\mathbb{R})$. Then for any $\tau>0$ we have
\[\mathcal{A}_{h_a,m}^{\tau,\pm}(\Phi)(x)=O(x^{-\tau}),\]
where the implied constant depends only on $\Phi,h_a,m$ and $\tau$.
\end{prop}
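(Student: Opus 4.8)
The plan is to prove the bound $\mathcal{A}_{h_a,m}^{\tau,\pm}(\Phi)(x)=O(x^{-\tau})$ directly from the integral representation in Definition~\ref{inttrans}, by bounding the integrand on the vertical line $\Re(u)=\tau$ and pulling the factor $x^{-u/2}$ out in absolute value. Writing $u=\tau+it$ with $t\in\mathbb{R}$, we have $|x^{-u/2}|=x^{-\tau/2}$ for $x>0$ (using that $\sqrt{\cdot}$ and hence real powers of positive reals are positive), so
\[
\bigl|\mathcal{A}_{h_a,m}^{\tau,\pm}(\Phi)(x)\bigr|\le \frac{x^{-\tau/2}}{2\pi}\int_{\mathbb{R}}\bigl|\tilde{\Phi}(\tau+it)\bigr|\,\bigl|c_m^{\pm}(\tfrac{\tau+it}{2})\bigr|\,\bigl|\Gamma\bigl(m+1+\tfrac{a+\tau+it}{2}\bigr)\bigr|\,dt.
\]
Wait — note the statement claims $O(x^{-\tau})$, not $O(x^{-\tau/2})$; so one should instead shift the contour to $\Re(u)=2\tau$ (legitimate since, as I explain below, the integrand is holomorphic for $\Re(u)>0$ and the $\tilde\Phi$ factor kills the tails), which gives $|x^{-u/2}|=x^{-\tau}$, and then the bound follows if the remaining $t$-integral converges. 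So the real content is: (1) the integrand is holomorphic in the right half-plane $\Re(u)>0$, and (2) the $t$-integral along any vertical line $\Re(u)=\sigma>0$ converges, with a bound depending only on $\Phi,h_a,m,\tau$.

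For holomorphy, $\tilde{\Phi}(u)$ is holomorphic for $\Re(u)>0$ because $\Phi\in\mathcal{S}(\mathbb{R})$ (or is Schwartz-like on $\mathbb{R}^+$ in the sense fixed in the notation section), so $\int_0^\infty\Phi(x)x^{u-1}dx$ converges absolutely there; the polynomial-in-$u$ factor $c_m^{\pm}(u/2)$ — which is a finite sum of binomial coefficients $\binom{u/2}{j}$ times the constant $(i/2\pi)^{1+m+(u+a)/2}2^{u/2}$, hence an entire function of $u$ as remarked right after the definition; and $\Gamma(m+1+(a+u)/2)$ is holomorphic as long as $m+1+(a+\Re u)/2$ stays away from the non-positive integers, which holds for all $\Re(u)>0$ since $\Re(a)>-2$ gives $m+1+(a+\Re u)/2>m+\Re u/2>0$. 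So the contour shift from $(\tau)$ to $(2\tau)$ is justified once we check the horizontal segments vanish in the limit, which follows from the decay of $\tilde\Phi$.

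For convergence of the $t$-integral on $\Re(u)=\sigma$: by Lemma~\ref{lemmaa1}-type reasoning (or directly, integrating $\int_0^\infty\Phi(x)x^{u-1}dx$ by parts repeatedly using the rapid decay of $\Phi$ and its derivatives), $\tilde{\Phi}(\sigma+it)$ decays faster than any polynomial in $|t|$, uniformly for $\sigma$ in compact subsets of $(0,\infty)$. The factor $|c_m^{\pm}(\tfrac{\sigma+it}{2})|$ grows at most polynomially in $|t|$: the exponential prefactor $(i/2\pi)^{\frac{u+a}{2}}$ has modulus $(2\pi)^{-\sigma/2-\Re(a)/2}e^{-\frac{\pi}{2}\cdot\frac{\Im(u+a)}{2}\cdot 0}$ — more carefully, $|(i/2\pi)^{w}|=(2\pi)^{-\Re w}e^{-(\pi/2)\Im w}$, and here $w=1+m+(u+a)/2$ so $\Im w=(t+\Im a)/2$, giving a factor $e^{-(\pi/4)(t+\Im a)}$; but $a$ is a fixed real constant in all our applications (and in any case $\Re(a)>-2$ with $a$ fixed), so $\Im a$ is harmless, and this is an \emph{exponentially decaying} factor in one direction — paired against $\binom{u/2}{j}$ which is polynomial in $t$. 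Meanwhile $|\Gamma(m+1+\tfrac{a+\sigma+it}{2})|$ decays exponentially in $|t|$ by Stirling's formula, $|\Gamma(\beta+it)|\sim\sqrt{2\pi}|t|^{\beta-1/2}e^{-\pi|t|/2}$. Thus the integrand is bounded by (super-polynomial decay)$\times$(polynomial)$\times e^{-\pi|t|/2}$, which is absolutely integrable, and the resulting integral is finite with a bound depending only on $\Phi,h_a,m,\tau$. Combining with $|x^{-u/2}|=x^{-\tau}$ on the shifted contour $\Re(u)=2\tau$ completes the proof.

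The main obstacle — and the only place demanding care — is the bookkeeping of the exponential factors coming from $(i/2\pi)^{1+m+(u+a)/2}$ and from Stirling on the $\Gamma$-factor, to confirm that they do not conspire to cancel the decay and that the surviving bound is uniform in the stated parameters; once one notes that $\Gamma$ contributes $e^{-\pi|t|/2}$ (decaying in \emph{both} directions) while the $(i/2\pi)^{u/2}$ prefactor contributes at worst $e^{(\pi/4)|t|}$ in one direction, the net $e^{-\pi|t|/4}$ decay (times super-polynomial decay from $\tilde\Phi$) makes convergence and uniformity immediate. Everything else is the routine Mellin-transform/contour-shift argument sketched above.
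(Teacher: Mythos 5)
Your proposal is correct and is essentially the paper's own proof: one observes that $\tilde{\Phi}(u)$, $c_m^{\pm}(u/2)$, and $\Gamma\bigl(m+1+\tfrac{a+u}{2}\bigr)$ are all holomorphic in $\Re(u)>0$ (the last because $\Re(a)>-2$), shifts the contour to $\Re(u)=2\tau$ where $|x^{-u/2}|=x^{-\tau}$, and uses the rapid decay of $\tilde{\Phi}$ on vertical lines to get an absolutely convergent integral with a constant depending only on $\Phi,h_a,m,\tau$. The only slip is cosmetic: the Gamma factor's argument has imaginary part $\approx t/2$, so Stirling gives decay $e^{-\pi|t|/4}$ rather than $e^{-\pi|t|/2}$, which exactly cancels the worst-case growth $e^{\pi|t|/4}$ of $(i/2\pi)^{1+m+(u+a)/2}$ as $t\to-\infty$; the product is then only polynomially bounded in that direction, but the super-polynomial decay of $\tilde{\Phi}$, which you invoke anyway, still yields the required uniform convergence.
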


\begin{proof} We just need to observe that since $\Phi\in\mathcal{S}(\mathbb{R})$ its Mellin transform is holomorphic in the half plane $\Re(u)>0$. With the rest of the functions in the integral transform also being holomorphic in the same half-plane, we are free to move the $u$-contour anywhere in $\Re(u)=2\tau>0$.
\end{proof}

For small values of $x$ we have the following estimate.
\begin{prop}\label{lemasymp4}Let $h_a(x)$ be as above, $m,k\in\mathbb{N}$ such that $2m+2+a>0$. Let $\Phi\in \mathcal{S}(\mathbb{R})$ such that $\tilde{\Phi}(u)$ is holomorphic in $\Re(u)\geq-\epsilon$ for some $\epsilon>0$ and has a pole of order $k$ at $u=0$. Then,
\[\mathcal{A}_{h_a,m}^{\tau,\pm}(\Phi)(x)=O(\log^{k-1}(x)),\]
where the implied constant depends only on $\Phi,h_a,m$ and $\tau$.

\end{prop}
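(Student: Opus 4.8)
The plan is to move the $u$-contour in the defining integral of $\mathcal{A}_{h_a,m}^{\tau,\pm}(\Phi)(x)$ from $\Re(u)=\tau>0$ leftward, past the pole at $u=0$, to a line $\Re(u)=-\epsilon'$ for some small $\epsilon'\in(0,\epsilon]$, and then estimate the resulting contour integral together with the residue contribution at $u=0$. The hypothesis $2m+2+a>0$ guarantees that $\Gamma\!\left(m+1+\tfrac{a+u}{2}\right)$ is holomorphic in a neighborhood of $u=0$ (indeed for $\Re(u)>-(2m+2+a)$), and $c_m^{\pm}(u/2)$ is entire by the remark following Definition \ref{inttrans}. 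Hence the only singularity of the integrand encountered in the strip $-\epsilon\le\Re(u)\le\tau$ is the order-$k$ pole of $\tilde\Phi(u)$ at $u=0$; no other poles intervene, and the horizontal segments at $|\Im(u)|\to\infty$ contribute nothing because $\tilde\Phi$ decays faster than any polynomial (and $\Gamma$, $c_m^\pm$ grow at most polynomially on vertical lines in this bounded strip, using Stirling). So by the residue theorem
\[
\mathcal{A}_{h_a,m}^{\tau,\pm}(\Phi)(x)=\operatorname{Res}_{u=0}\!\left[\tilde\Phi(u)\,c_m^{\pm}\!\left(\tfrac u2\right)\Gamma\!\left(m+1+\tfrac{a+u}{2}\right)x^{-u/2}\right]+\frac{1}{2\pi i}\int_{(-\epsilon')}(\cdots)\,du.
\]

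For the shifted integral, on the line $\Re(u)=-\epsilon'$ we have $|x^{-u/2}|=x^{\epsilon'/2}$, and since $\tilde\Phi$ is holomorphic and rapidly decaying there while $c_m^\pm$ and $\Gamma$ are bounded on that (bounded-real-part) line, the integral converges absolutely and is $O(x^{\epsilon'/2})$; but we can in fact take $\epsilon'$ as small as we like, and a cleaner route is to observe that for the purpose of the stated bound it suffices to keep the contour at $\Re(u)=0$ away from the origin — more precisely, I would indent the contour $\Re(u)=0$ with a small semicircular detour of radius $r$ around $u=0$ and let $r\to 0$; on $\Re(u)=0$ the factor $x^{-u/2}$ has modulus $1$, so that part of the integral is $O(1)$, uniformly in $x$. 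Thus the entire contribution away from $u=0$ is $O(1)$, which is certainly $O(\log^{k-1}(x))$ for $k\ge 1$.

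It remains to analyze the residue at $u=0$. Write $\tilde\Phi(u)=\sum_{j=1}^{k}\beta_j u^{-j}+(\text{holomorphic})$ near $u=0$, and expand the remaining factor $c_m^{\pm}(u/2)\,\Gamma\!\left(m+1+\tfrac{a+u}{2}\right)x^{-u/2}=g(u)\,e^{-(u/2)\log x}$ as a Taylor series in $u$, where $g$ is holomorphic at $0$. Then $\operatorname{Res}_{u=0}$ picks out the coefficient of $u^{-1}$ in the product, which is a linear combination (with coefficients depending only on $\Phi,h_a,m$) of the Taylor coefficients of $g(u)\,e^{-(u/2)\log x}$ up to order $k-1$. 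Since $e^{-(u/2)\log x}=\sum_{i\ge 0}\frac{(-\log x/2)^i}{i!}u^i$, the coefficient of $u^{\ell}$ in this product is a polynomial in $\log x$ of degree at most $\ell$; taking $\ell\le k-1$ gives a polynomial in $\log x$ of degree at most $k-1$, hence $O(\log^{k-1}(x))$ as $x\to\infty$ (and bounded as $x\to 0^+$ only if one restricts $x$ away from $0$, but the statement concerns large $x$, matching Proposition \ref{lemasymp3}). Combining the residue term and the $O(1)$ contour term yields the claim.

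The main obstacle is bookkeeping rather than conceptual: one must check carefully that no \emph{other} pole of $\Gamma\!\left(m+1+\tfrac{a+u}{2}\right)$ lies in the relevant strip — this is exactly what the hypothesis $2m+2+a>0$ ensures — and that the vertical growth of $\Gamma$ and of the binomial-sum factor $c_m^\pm(u/2)$ is genuinely dominated by the rapid decay of $\tilde\Phi$ so that all the contour manipulations are legitimate; the rapid decay of $\tilde\Phi$ in turn rests on $\Phi\in\mathcal{S}(\mathbb{R})$ together with the holomorphy of $\tilde\Phi$ on $\Re(u)\ge-\epsilon$ assumed in the hypothesis. Everything else is the standard residue-calculus extraction of a $\log$-power from a higher-order pole.
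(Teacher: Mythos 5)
Your core argument is the paper's: shift the contour leftwards past $u=0$, note that the hypothesis $2m+2+a>0$ keeps $\Gamma\left(m+1+\tfrac{a+u}{2}\right)$ regular there while $c_m^{\pm}\left(\tfrac u2\right)$ is entire, so only the order-$k$ pole of $\tilde\Phi$ contributes, and the residue is a combination of derivatives of $x^{-u/2}$ against a function holomorphic at $0$, hence a polynomial in $\log x$ of degree at most $k-1$. This is exactly what the paper does (it shifts to $\Re(u)=-\min\{\epsilon,\,m+1+\tfrac a2\}$ and applies the Cauchy integral formula). However, you have the regime backwards: this proposition is the small-$x$ complement of Proposition \ref{lemasymp3} (the paper introduces it with ``for small values of $x$'', and it is applied in Proposition \ref{propregion2} with $x=C^2D=\tfrac{lf^2\xi}{\sqrt p}\ll1$); for large $x$ the claim is trivial from Proposition \ref{lemasymp3}. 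In the relevant regime $x\to0^+$ your \emph{first} estimate already finishes the proof: on $\Re(u)=-\epsilon'$ (with $\epsilon'$ also less than $2m+2+a$) one has $|x^{-u/2}|=x^{\epsilon'/2}=O(1)$ for $x\ll1$, and the residue is $O(|\log x|^{k-1})$, which is the asserted bound.

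The ``cleaner route'' you substitute for this is genuinely flawed when $k\ge2$. With an order-$k$ pole at $u=0$ the integrand on $\Re(u)=0$ has size comparable to $|u|^{-k}$ near the origin, so the portion of the indented contour lying on the imaginary axis is not $O(1)$ as $r\to0$: it diverges like $r^{1-k}$, and the shrinking semicircle diverges at the same rate (the half-residue limit exists only for simple poles); moreover you never estimate the semicircle, on which $|x^{-u/2}|$ can be as large as $x^{r/2}$. Only the unsplit contour integral (at fixed $r$, or at a genuinely negative abscissa) is finite, so the claimed uniform $O(1)$ bound for the ``away from $u=0$'' part does not survive the limit $r\to0$. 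Drop the indentation, keep your first computation, and state the conclusion for $x\ll1$ (equivalently as $O\left((1+|\log x|)^{k-1}\right)$); then your proof coincides with the paper's.
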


\begin{proof}This follows from the Cauchy integral formula. Pushing the $u$-contour to $\Re(u)=-\min\{\epsilon,m+1+\frac{a}{2}\}$ picks up the contribution at $u=0$ (Note that by the assumption $2m+2+a>0$, $\Gamma\left(m+1+\frac{a+u}{2}\right)$ does not contribute to the pole.). Let $\tilde{\Phi}(u)=\sum_{i=-k}^{\infty}\alpha_iu^i$ be the Laurent expansion of $\tilde{\Phi}(u)$ around $u=0$. Then, by the Cauchy integral formula this contribution is 
\[\sum_{j=1}^{k}\tfrac{\alpha_{-j}}{(j-1)!}\tfrac{d^{j-1}}{du^{j-1}}\left.\tfrac{c_m^{\pm}\left(\tfrac u2\right)\Gamma\left(m+1+\frac{a+u}{2}\right)}{\Gamma\left(\frac{1-u}{2}\right)}x^{-\frac u2}\right|_{u=0}=O(\log^{k-1}(x)).\]

\end{proof}

\subsection{Asymptotic properties of Fourier transforms}

In \S\ref{smoothfourier} we study the decay properties of Fourier transforms of smooth functions of a certain type which depend on a parameter $C$. In \S\ref{singularfourier} we discuss the same problem for functions with prescribed singularities. Since the functions considered are not smooth anymore the Fourier transforms decay slower and oscillate. We give explicit asymptotic expansions describing the decay rate and oscillation frequency in terms of the singularities of the function and the relevant parameters. We emphasize that the main point of both sections (cf. corollaries \ref{cortoaplem4}, \ref{corasymp15} and theorems \ref{thmasymp}, \ref{thmasymp2}) is the independence of the implied constants of the parameter $C$ and $D$.

\subsubsection{Fourier transforms of smooth functions}\label{smoothfourier}

\begin{lemma}\label{aplem4}Let $\frac12>\kappa>0$ be a constant, and $\phi_{\kappa}$ be as in definition \ref{sing27}. Let $\Phi\in \mathcal{S}(\mathbb{R})$, $h\in C_c(\mathbb{R})$ that is smooth on the support of $1-\phi_k$, $a\in\mathbb{C}$, and $C,D\in\mathbb{R}\backslash \{0\}$. Then for any $N\in\mathbb{R}_{>0},M\in \mathbb{Z}_{\geq0}$,

\[\int_{\mathbb{R}}(1-\phi_\kappa(x))h(x)|x^2\pm1|^{\frac{a}{2}}\Phi\left(\tfrac{C}{\sqrt{|x^2\pm1|}}\right)e(xD)dx=O\left( C^{-N}D^{-M}\right),\]
where the implied constant depends only on $
h,\Phi,a,\phi_{\kappa},M$ and $N$.

\end{lemma}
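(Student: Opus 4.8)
The plan is to exploit the fact that on the support of $1-\phi_\kappa(x)$ we are bounded away from the singularities $x=\pm1$, so the integrand is genuinely smooth and compactly supported, and then repeatedly integrate by parts to extract both the decay in $D$ and the decay in $C$. First I would observe that $1-\phi_\kappa(x)$ vanishes identically on $\{\,|1-|x||\le\kappa\,\}$, hence $|x^2\pm1|$ is bounded below by a positive constant depending only on $\kappa$ (and by $1$ in the $+1$ case) on the region of integration; combined with $h\in C_c(\mathbb R)$ this shows the integrand is a compactly supported $C^\infty$ function of $x$ away from a neighborhood of $\pm1$, with all the $x$-support contained in a fixed compact set depending only on $h$ and $\kappa$.

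Next I would carry out the integration by parts in $x$ against $e(xD)$. Writing $e(xD)=\frac{1}{2\pi i D}\frac{d}{dx}e(xD)$ and integrating by parts $M$ times (the boundary terms vanish by compact support), the $D^{-M}$ factor comes out and we are left with $\int_{\mathbb R}\frac{d^M}{dx^M}\!\big[(1-\phi_\kappa(x))h(x)|x^2\pm1|^{a/2}\Phi(C/\sqrt{|x^2\pm1|})\big]e(xD)\,dx$. So it suffices to bound this new integral by $O(C^{-N})$, uniformly. For the $C$-decay, the key point is that every $x$-derivative falling on $\Phi(C/\sqrt{|x^2\pm1|})$ produces a factor that is $\Phi^{(j)}$ evaluated at $C/\sqrt{|x^2\pm1|}$ times a bounded rational function of $x$ (bounded because we are away from $x=\pm1$) times a positive power of $C$; since $\Phi\in\mathcal S(\mathbb R)$ and the argument $C/\sqrt{|x^2\pm1|}\ge c_\kappa |C|$ is $\gg|C|$, each such $\Phi^{(j)}$ is $O_{\Phi,L}\big((C/\sqrt{|x^2\pm1|})^{-L}\big)=O(|C|^{-L})$ for every $L$. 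Taking $L$ large enough to absorb the finitely many positive powers of $C$ introduced by the Leibniz expansion (there are at most $M$ of them, each contributing a bounded power) gives the bound $O(C^{-N})$ for any prescribed $N$, with an implied constant depending only on $h,\Phi,a,\phi_\kappa,M,N$. Integrating the resulting bounded, compactly supported function over $x$ is harmless.

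The main technical bookkeeping — and the only place one must be a little careful — is the Leibniz expansion of $\frac{d^M}{dx^M}$ applied to the product, verifying that no term ever sees the singularity at $x=\pm1$ (guaranteed by the cutoff $1-\phi_\kappa$ and the smoothness of $h$ on $\mathrm{supp}(1-\phi_\kappa)$, exactly as hypothesized) and that the worst power of $C$ appearing is bounded in terms of $M$ alone. Once that is checked, choosing $L=N+CM$ for a suitable absolute constant $C$ closes the estimate. I do not expect any genuine obstacle here; this lemma is the ``easy region'' that motivates the localization to $|1-|x||<\kappa$ handled by the harder results (theorems \ref{thmasymp}, \ref{thmasymp2}) later in the appendix.
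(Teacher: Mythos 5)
Your argument is correct, and it differs from the paper's in how the $C$-decay is extracted. You integrate by parts $M$ times directly on the full product (including $\Phi(C/\sqrt{|x^2\pm1|})$) and then control the resulting Leibniz expansion pointwise, using the chain rule to see that each term is a bounded smooth function times $C^{j}\Phi^{(j)}(C/\sqrt{|x^2\pm1|})$ with $j\le M$, and the Schwartz decay of $\Phi^{(j)}$ together with the fact that the argument is $\gg |C|$ on the compact support (bounded away from $\pm1$ by the cutoff, bounded above by $\mathrm{supp}\,h$) to win any power $C^{-N}$. The paper instead first opens $\Phi$ by Mellin inversion, $\Phi(C/\sqrt{|x^2\pm1|})=\frac{1}{2\pi i}\int_{(\tau)}\tilde\Phi(u)C^{-u}|x^2\pm1|^{u/2}\,du$, then integrates by parts $M$ times in $x$ (the integrand no longer involves $\Phi$ itself, only the smooth power $|x^2\pm1|^{(u+a)/2}$), and finally shifts the $u$-contour to $\Re(u)=N$ so that the factor $C^{-u}$ delivers $C^{-N}$ at once; this sidesteps the Leibniz bookkeeping and the powers of $C$ produced by the chain rule, and it sets up the Mellin representation that is reused in the harder results (lemma \ref{lemasymp2}, theorems \ref{thmasymp} and \ref{thmasymp2}). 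Your route is more elementary (no Mellin transform, no contour shift) at the cost of the bookkeeping you flag, which indeed closes with $L=N+M$ since at most $M$ powers of $C$ can appear. Two cosmetic points: the lower bound on the argument of $\Phi$ comes from the upper bound on $|x^2\pm1|$ over $\mathrm{supp}\,h$ (so the constant depends on $h$ rather than on $\kappa$; the $\kappa$-dependence enters through the bounded derivatives of $|x^2\pm1|^{a/2}$ and of $\phi_\kappa$), and your final choice of exponent reuses the letter $C$ for an absolute constant although $C$ is already the parameter; neither affects correctness, and both proofs share the same implicit convention that, when $\Phi$ is only defined on $\mathbb{R}^{+}$ in the paper's abuse of notation, $C$ is taken positive.
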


\begin{proof} Let us denote the integral by $I(C,D)$. First, note that since $1-\phi_{\kappa}$ is identically $0$ for $|x\pm1|<\kappa$ the functions $|x^2\pm1|^{\frac a2}$ are all smooth in the region of integration. Furthermore, since the support of $1-\phi_{\kappa}(x)$ is compact, so is the region of integration. Then, for any $\tau\in\mathbb{R}_{>0}$ by Mellin inversion we can write
\[I(C,D)=\tfrac{1}{2\pi i}\int_{(\tau)}\tfrac{\tilde{\Phi}(u)}{C^{u}}\int_{\mathbb{R}}(1-\phi_\kappa(x))h(x)|x^2\pm1|^{\frac{u+a}{2}}e(xD)dxdu.\]
Note that the interchange of integrals is justified since $\tilde{\Phi}(u)$ decays faster than any polynomial in $|u|$, the $x$-integral is over a compact region, and the integrands are smooth in the region of integration hence the double integral converges. Then,  integration by parts $M$-times in the $x$-integral gives,
\[I(C,D)=\tfrac{(2\pi iD)^{-M}}{2\pi i}\int_{(\tau)}\tfrac{\tilde{\Phi}(u)}{C^{u}}\int_{\mathbb{R}}\tfrac{d^M}{dx^M}\left\{(1-\phi_\kappa(x))h(x)|x^2\pm1|^{\frac{u+a}{2}}\right\}e(xD)dxdu.\]
(Boundary terms vanish because of $(1-\phi_{\kappa}(x))$.) We now interchange the $x$ and $u$ integrals once again and push the $u$-contour to $\Re(u)=N$ and get,
\[I(C,D)=\tfrac{(2\pi iD)^{-M}}{2\pi i}\int_{\mathbb{R}}\int_{(N)}\tfrac{\tilde{\Phi}(u)}{C^{u}}\tfrac{d^M}{dx^M}\left\{(1-\phi_\kappa(x))h(x)|x^2\pm1|^{\frac{u+a}{2}}\right\}e(xD)dudx.\]
Since the integrand of the $x$-integral is smooth in the domain of integration by assumption we get that the resulting integral is bounded. The result follows.
\end{proof}

\begin{cor}\label{cortoaplem4}Let $\Phi\in \mathcal{S}(\mathbb{R})$, $h\in C_c^{\infty}(\mathbb{R})$, $a\in\mathbb{C}$, and $C,D\in\mathbb{R}\backslash \{0\}$. Then for any $M,N\in \mathbb{Z}_{\geq0}$,

\[\int_{\mathbb{R}}h(x)(x^2+1)^{\frac{a}{2}}\Phi\left(\tfrac{C}{\sqrt{x^2+1}}\right)e(xD)dx=O\left( C^{-N}D^{-M}\right),\]
where the implied constant depends only on $
h,\Phi,\delta,M$ and $N$.

\end{cor}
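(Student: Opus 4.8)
The plan is to deduce this directly from Lemma~\ref{aplem4}. The point is that the present statement is exactly the ``$+1$'' instance of that lemma, except that here there is no singularity to avoid: since $x^2+1\geq 1>0$ for every real $x$, the function $(x^2+1)^{a/2}$ and all of its $x$-derivatives are smooth on all of $\mathbb{R}$, so the cut-off $1-\phi_\kappa$ appearing in Lemma~\ref{aplem4} is superfluous in this case. Its only role there was to keep the integration away from the zeros of $|x^2-1|^{a/2}$ at $x=\pm1$; the compact support and smoothness of the remaining integrand, supplied there by $1-\phi_\kappa$ together with $h$, are now supplied by the single hypothesis $h\in C_c^\infty(\mathbb{R})$. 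Concretely I would either write $h=\phi_\kappa h+(1-\phi_\kappa)h$ for some fixed $\kappa<\tfrac12$, apply Lemma~\ref{aplem4} (with the sign ``$+$'') to the second piece, and treat the first piece — which is supported away from $\pm1$, hence smooth and compactly supported — by the same argument; or, more cleanly, rerun the proof of Lemma~\ref{aplem4} verbatim with $1-\phi_\kappa(x)$ replaced by the constant function $1$.

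Carrying out the latter for $N\geq 1$: write $\Phi\!\left(C/\sqrt{x^2+1}\right)=\tfrac{1}{2\pi i}\int_{(\tau)}\tilde{\Phi}(u)\,C^{-u}(x^2+1)^{u/2}\,du$ for a fixed $\tau>0$, which is legitimate since $\Phi\in\mathcal{S}(\mathbb{R})$ makes $\tilde{\Phi}$ holomorphic and rapidly decaying on $\Re(u)=\tau$. Interchanging the $x$- and $u$-integrals (justified by this decay and by $\mathrm{supp}(h)$ being compact), integrating by parts $M$ times in $x$ — the boundary terms vanish because $h$ is compactly supported — interchanging back, and shifting the $u$-contour to $\Re(u)=N>0$ (no pole is crossed, as $\tilde{\Phi}$ is holomorphic for $\Re(u)>0$), one arrives at $\tfrac{(2\pi i D)^{-M}}{2\pi i}\int_{(N)}\tilde{\Phi}(u)\,C^{-u}\big(\int_{\mathbb{R}}\tfrac{d^M}{dx^M}\{h(x)(x^2+1)^{(u+a)/2}\}\,e(xD)\,dx\big)\,du$. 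The inner $x$-integral runs over the compact set $\mathrm{supp}(h)$ with smooth integrand and is $O_{h,a,M}(1)$ uniformly for $\Re(u)=N$, while $|C^{-u}|=C^{-N}$ on that line; combined with the decay of $\tilde{\Phi}$ along the line this gives the bound $O(C^{-N}D^{-M})$. For the leftover case $N=0$ the contour cannot be pushed across the possible pole of $\tilde{\Phi}$ at $u=0$, so there I would argue directly: integrate by parts $M$ times in $x$ and bound the integrand pointwise, using that with $y=C/\sqrt{x^2+1}$ one has $|C^{j}\Phi^{(j)}(y)|\le (x^2+1)^{j/2}\sup_{y}|y^{j}\Phi^{(j)}(y)|$, which is bounded uniformly in $C$ on the compact set $\mathrm{supp}(h)$; hence every $x$-derivative of $h(x)(x^2+1)^{a/2}\Phi(C/\sqrt{x^2+1})$ up to order $M$ is $O_{h,\Phi,a,M}(1)$ uniformly in $C$, which yields $O(D^{-M})=O(C^{0}D^{-M})$.

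I do not expect a genuine obstacle: this is a corollary in the honest sense. The two small points needing care are that differentiating $\Phi(C/\sqrt{x^2+1})$ in $x$ must not spoil the uniformity in $C$ — handled by the rescaling $y=C/\sqrt{x^2+1}$ above, which converts each stray power of $C$ into a bounded power of $(x^2+1)^{1/2}$ on $\mathrm{supp}(h)$ — and the separate treatment of $N=0$, where the line of integration passes through the pole of $\tilde{\Phi}$. Everything else is exactly the bookkeeping already carried out in the proof of Lemma~\ref{aplem4}.
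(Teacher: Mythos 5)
Your proposal is correct and is essentially the paper's own argument: the paper simply notes that the proof of Lemma~\ref{aplem4} goes through verbatim, since $(x^2+1)^{a/2}$ has no singularity and the compactly supported smooth $h$ plays the role of $(1-\phi_\kappa)h$ there. Your separate handling of $N=0$ (where the contour cannot be pushed to $\Re(u)=0$ because of the possible pole of $\tilde{\Phi}$) is a sensible filling-in of a detail the paper leaves implicit, not a different method.
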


\begin{proof}The proof of lemma \ref{aplem4} goes through verbatim.

\end{proof}

\subsubsection{Fourier transforms of functions with prescribed singularities}\label{singularfourier}

\paragraph{Technical lemmas.}

In this section we present the collection of technical lemmas that are used to obtain the asymptotic expansions of the Fourier transforms. These lemmas can be skipped for the first read and the reader can go straight to the end results of theorems \ref{thmasymp} and \ref{thmasymp2}. We note, however, that the analysis given in lemma \ref{lemasymp2} is fundamental for the paper, and although the proofs are technical the arguments are quite elementary. Once again, we emphasize that the most important point to keep in mind, especially about lemma \ref{lemasymp2}, is that all the implied constants are independent of the parameters $C$ and $D$. This is a central issue since in the applications (cf. \S\ref{secasympfourier}) we will have $C$ and $D$ depending on further parameters that will be summed over, and our aim is to use the asymptotic expansions to study these sums.

\begin{lemma}\label{lemasymp} Let $A, B\in\mathbb{C}$ with $\Re(A)>-1$ (for convergence), $\iota\in\{0,1\}$, $\tfrac12>\kappa>0$, and $Z\in\mathbb{R}\backslash\{0\}$. Define $I_{\iota,\kappa}(A,B,Z)$ by,
\[I_{{\iota},\kappa}(A,B,Z):=\int_0^{\frac12}x^{A}(2+(-1)^{\iota} x)^{B}\phi\left(\tfrac{x}{\kappa}\right)e(xZ)dx.\]

Then, for any $\sigma_1,\sigma_2\in\mathbb{R}_{>0}$ we have,
\begin{align*}
I_{\iota,\kappa}(A,B,Z)&=\left(\tfrac{i}{2\pi Z}\right)^{A+1}\int_0^{\infty}x^{A}\left(2+(-1)^{\iota} \tfrac{ix}{2\pi Z}\right)^{B}e^{-x}dx\\
&+\tfrac{1}{2\pi i}\int_{(-\sigma_1)}\tilde{\phi}(s)\kappa^s\left\{\left(\tfrac{i}{2\pi Z}\right)^{A-s+1}\int_0^{\infty}x^{A-s}\left(2+(-1)^{\iota} \tfrac{ix}{2\pi Z}\right)^{B}e^{-x}dx\right\}ds.\\
&-\tfrac{1}{2\pi i}\int_{(\sigma_2)}\tilde{\phi}(s)\kappa^s\int_1^{1+\frac{|Z|}{Z}i\infty}x^{A-s}(2+(-1)^{\iota} x)^{B}e(xZ)dxds.
\end{align*}

\end{lemma}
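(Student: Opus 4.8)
The plan is to treat the integral $I_{\iota,\kappa}(A,B,Z)$ by Mellin inversion in the cut-off function and then evaluate the resulting inner integral by a contour shift that exploits the fast decay of $\tilde\phi$. First I would write $\phi(x/\kappa)$ via its Mellin transform: for a suitable vertical line $\Re(s)=c$ with $c>0$, $\phi(x/\kappa)=\frac{1}{2\pi i}\int_{(c)}\tilde\phi(s)(x/\kappa)^{-s}\,ds$, which is legitimate since $\tilde\phi$ decays faster than any polynomial (Lemma \ref{lemmaa1}) and $\phi$ is smooth and compactly supported. Substituting and interchanging the order of integration (justified by absolute convergence, using $\Re(A)>-1$ near $x=0$ and the superpolynomial decay of $\tilde\phi$ in $|s|$), one gets
\[
I_{\iota,\kappa}(A,B,Z)=\frac{1}{2\pi i}\int_{(c)}\tilde\phi(s)\kappa^{s}\left(\int_0^{1/2}x^{A-s}(2+(-1)^\iota x)^B e(xZ)\,dx\right)ds .
\]

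Next I would analyze the inner $x$-integral $J_s(Z):=\int_0^{1/2}x^{A-s}(2+(-1)^\iota x)^B e(xZ)\,dx$ by rotating the contour of integration in the complex $x$-plane. The idea is to push the ray of integration from $[0,1/2]$ toward the direction where $e(xZ)=e^{2\pi i xZ}$ decays, i.e. toward $x=t\cdot i\,\mathrm{sgn}(Z)$ for $t>0$. Concretely, deform the segment $[0,1/2]$ to the contour consisting of the vertical ray from $1/2$ going to $1/2+i\infty\cdot\mathrm{sgn}(Z)$ together with (after a further change of variables) the ray from $0$ along $i\,\mathrm{sgn}(Z)\cdot\mathbb{R}_{\ge0}$; on the latter, substituting $x=\frac{i}{2\pi Z}y$ turns $e(xZ)$ into $e^{-y}$ and produces the Gamma-type integrals $\int_0^\infty y^{A-s}(2+(-1)^\iota \frac{iy}{2\pi Z})^B e^{-y}\,dy$ with the prefactor $(\frac{i}{2\pi Z})^{A-s+1}$. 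One must be careful about the branch cut of $x^{A-s}$ at $x=0$: the rotation is through angle $\pm\pi/2$, so as long as $\Re(A-s)>-1$ the contribution of the small arc near $0$ vanishes, and as long as $|Z|$ is bounded away from $0$ no singularity of $(2+(-1)^\iota x)^B$ is crossed (that branch point sits at $x=\mp 2$, far from the rotated contour through angle $\pi/2$). This rewrites $J_s(Z)$ as a Gamma-type integral plus the ``vertical ray from $1/2$'' remainder.

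Then I would split the $s$-integral at the two lines $\Re(s)=-\sigma_1$ and $\Re(s)=\sigma_2$. Pushing part of the contour to $\Re(s)=-\sigma_1$ crosses the simple pole of $\tilde\phi(s)$ at $s=0$ (residue $1$, by Lemma \ref{lemmaa1}); evaluating this residue gives precisely the first ``main'' term $\left(\frac{i}{2\pi Z}\right)^{A+1}\int_0^\infty x^A(2+(-1)^\iota \frac{ix}{2\pi Z})^B e^{-x}\,dx$, and the leftover integral over $(-\sigma_1)$ gives the second term of the claimed identity. The remaining piece, coming from the ``vertical ray from $1/2$'' part of $J_s(Z)$, is kept on the line $\Re(s)=\sigma_2$ and yields the third term $-\frac{1}{2\pi i}\int_{(\sigma_2)}\tilde\phi(s)\kappa^s\int_1^{1+\frac{|Z|}{Z}i\infty}x^{A-s}(2+(-1)^\iota x)^B e(xZ)\,dx\,ds$ (after absorbing constants and noting that on $[1/2,1]$ the contribution is holomorphic and can be moved, so the lower limit may be taken as $1$). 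I expect the main obstacle to be the bookkeeping of the contour rotation in the $x$-plane: verifying that the arcs at $0$ and at infinity contribute nothing requires simultaneous control of $\Re(A-s)>-1$ (for the small arc) and of the decay of $e(xZ)$ along the rotated ray (for the large arc), and one has to check that the branch cuts of $x^{A-s}$ and of $(2+(-1)^\iota x)^B$ are not crossed — this is where the hypotheses $\Re(A)>-1$, $Z\neq 0$, and the choice $0<\kappa<\tfrac12$ (keeping $x$ small enough that $2+(-1)^\iota x$ stays in the right half-plane) are all used. The rest is routine manipulation of absolutely convergent integrals.
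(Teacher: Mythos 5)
Your proposal is correct and follows essentially the same route as the paper's proof: Mellin inversion in $\phi(x/\kappa)$, rotation of the $x$-contour toward $i\,\mathrm{sgn}(Z)\cdot\infty$ with the substitution $x\mapsto \tfrac{ix}{2\pi Z}$ to produce the Gamma-type integrals, then a shift of the $s$-contour to $\Re(s)=-\sigma_1$ picking up the residue of $\tilde\phi$ at $s=0$ (the main term) while the boundary-ray piece is kept at $\Re(s)=\sigma_2$. The one wrinkle you gloss over --- whether the vertical ray is based at $\tfrac12$ or at $1$ --- is a bookkeeping discrepancy already present in the paper itself (its proof writes the ray from $\tfrac12$ while the statement has $\int_1^{1+\frac{|Z|}{Z}i\infty}$, reflecting that the lemma is later applied to an integral over $[0,1]$), so your treatment is on par with the original.
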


\begin{proof}

We will deform the contour in a suitable half-plane while keeping track of the residues. Below, we will give the proof in detail for the case $Z>0$. The only difference for $Z<0$ is to deform the contour in the lower half-plane rather than the upper half plane. We will point to appropriate modifications in the proof as we move on.

Let $\sigma_0>0$ be such that $\Re(A+\sigma_0)>-1$ (note that such $\sigma_0$ exists because $\Re(A)>-1$) . By Mellin inversion the integral is
\[\tfrac{1}{2\pi i}\int_{(\sigma_0)}\tilde{\phi}(s)\kappa^s\int_0^{\frac12}x^{A-s}(2+(-1)^{\iota} x)^{B}e(xZ)dxds.\tag{$*$}\label{int1}\]
We note that the interchange of the order of integration is justified by the absolute convergence of the double integral for $\Re(A-s)>-1$. Because of the decay of the exponential it is straightforward to see that the contour in the inner integral can be deformed to
\begin{align*}
\int_0^{\frac12}x^{A-s}(2+(-1)^{\iota} x)^{B}e(xZ)dx&=\int_0^{i\infty}\cdots dx+\int_{\frac12+i\infty}^{\frac12}\cdots dx\\
&\hspace{-1in}=\left(\tfrac{i}{2\pi Z}\right)^{A-s+1}\int_0^{\infty}x^{A-s}\left(2+(-1)^{\iota} \tfrac{ix}{2\pi Z}\right)^{B}e^{-x}dx-\int_{\frac12}^{\frac{1}{2}+i\infty}x^{A-s}(2+(-1)^{\iota} x)^{B}e(xZ)dx,
\end{align*}
(If $Z<0$ we deform the contour to $\int_0^{-i\infty}+\int_{\frac12-i\infty}^{\frac12}$). Where, we have used $x\mapsto \frac{ix}{2\pi Z}$ in the first integral to get the second equality. We substitute this into \eqref{int1} and analyze each term separately. Considering the second term we get
\[\tfrac{1}{2\pi i}\int_{(\sigma_0)}\tilde{\phi}(s)\kappa^s\int_{\frac12}^{1+i\infty}x^{A-s}(2+(-1)^{\iota} x)^{B}e(xZ)dxds.\]
Note that in this integral we can shift the $s$-contour to any $\sigma_2>0$ since the $x$-integral converges for every $s$. This gives the last term of the lemma. 

 We then consider the first term. Shifting the $s$-contour to $\Re(s)=-\sigma_1$ (Recall that by lemma \ref{lemmaa1} $\tilde{\phi}(s)$ has a simple pole with residue $1$ at $s=0$ and is holomorphic everywhere else.) we get,
\begin{align*}
\tfrac{1}{2\pi i}\int_{(\sigma_0)}\tilde{\phi}(s)\kappa^s\left(\tfrac{i}{2\pi Z}\right)^{A-s+1}&\int_0^{\infty}x^{A-s}\left(2+(-1)^{\iota} \tfrac{ix}{2\pi D}\right)^{B}e^{-x}e(xZ)dxds\\
&=\left(\tfrac{i}{2\pi Z}\right)^{A+1}\int_0^{\infty}x^{A}\left(2+(-1)^{\iota} \tfrac{ix}{2\pi Z}\right)^{B}e^{-x}dx\\
&+\tfrac{1}{2\pi i}\int_{(-\sigma_1)}\tilde{\phi}(s)\kappa^s\left\{\left(\tfrac{i}{2\pi Z}\right)^{A-s+1}\int_0^{\infty}x^{A-s}\left(2+(-1)^{\iota} \tfrac{ix}{2\pi Z}\right)^{B}e^{-x}dx\right\}ds.
\end{align*}
The lemma follows.

\end{proof}

Next lemma is purely elementary and is included because it will come up repeatedly in lemma \ref{lemasymp2}.

\begin{lemma}\label{lemextra}For any constants $\alpha,\beta,\gamma\in\mathbb{C}$, and for any $K>1$,
\[\int_{K}^{\infty}x^{\alpha}\left(2+\tfrac{\gamma x}{K}\right)^{\beta}e^{-x}dx=O(e^{-\frac K2}),\]
where the implied constant is independent of $K$. 
\end{lemma}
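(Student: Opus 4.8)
The plan is to make the $K$-dependence of the integral completely explicit by the substitution $x=Ky$, and then trade the resulting polynomial factor in $K$ against a fixed fraction of the exponential decay. First I would substitute $x=Ky$, so that $y$ ranges over $[1,\infty)$ and $dx=K\,dy$; the integral becomes
\[
K^{\alpha+1}\int_{1}^{\infty}y^{\alpha}\,(2+\gamma y)^{\beta}\,e^{-Ky}\,dy .
\]
The whole point of this substitution is that the factor $(2+\gamma y)^{\beta}$ no longer contains $K$, so any bound on its modulus will produce a genuine $K$-free constant.

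Next I would use $K>1$ and $y\ge 1$ to write $e^{-Ky}=e^{-K}e^{-K(y-1)}\le e^{-K}e^{-(y-1)}=e\cdot e^{-K}e^{-y}$. Taking absolute values and using $|y^{\alpha}|=y^{\Re\alpha}$ together with the fact that $|(2+\gamma y)^{\beta}|$ is of at most polynomial growth on $[1,\infty)$, the displayed expression is bounded by
\[
e\,K^{\Re\alpha+1}\,e^{-K}\int_{1}^{\infty}y^{\Re\alpha}\,|2+\gamma y|^{\Re\beta}\,e^{-y}\,dy\;=\;C(\alpha,\beta,\gamma)\,K^{\Re\alpha+1}\,e^{-K},
\]
where $C(\alpha,\beta,\gamma)$ is finite and manifestly independent of $K$, the remaining integral converging because the integrand decays exponentially.

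Finally I would absorb the prefactor $K^{\Re\alpha+1}$ into half of the exponential: the function $t\mapsto t^{\Re\alpha+1}e^{-t/2}$ is bounded on $[1,\infty)$, say by $C'=C'(\alpha)$, so $K^{\Re\alpha+1}e^{-K}=\bigl(K^{\Re\alpha+1}e^{-K/2}\bigr)e^{-K/2}\le C'e^{-K/2}$ for every $K>1$. This gives the claimed bound $O\bigl(e^{-K/2}\bigr)$ with implied constant $C(\alpha,\beta,\gamma)\,C'(\alpha)$, which depends only on $\alpha,\beta,\gamma$. There is no real obstacle here; the only thing to be careful about is exactly the observation above, namely that the change of variables $x=Ky$ strips all $K$-dependence out of the non-exponential part of the integrand, after which "polynomial times $e^{-K}$ is $O(e^{-K/2})$ uniformly for $K>1$" is routine.
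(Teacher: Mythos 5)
Your proof is correct, and it takes a mildly different route from the paper's. The paper substitutes $x\mapsto x+K$ to pull the factor $e^{-K}$ out of the exponential, then splits the resulting integral at $x=K$ and bounds $(x+K)^{\alpha}$ and $\left(2+\gamma+\tfrac{\gamma x}{K}\right)^{\beta}$ separately on the two ranges, ending with the same absorption of a power of $K$ into $e^{-K/2}$. You instead rescale $x=Ky$, which strips all $K$-dependence out of the non-exponential factors at the cost of the explicit prefactor $K^{\alpha+1}$, and then dominate $e^{-Ky}\le e\,e^{-K}e^{-y}$ for $y\ge1$, $K>1$; this avoids any case splitting and isolates the $K$-dependence in a single visible factor, which is arguably cleaner and just as uniform. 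Both arguments reduce to the same final observation that a fixed power of $K$ times $e^{-K}$ is $O(e^{-K/2})$ uniformly for $K>1$, with implied constants depending only on $\alpha,\beta,\gamma$. One shared caveat, not a defect of your write-up relative to the paper: for the polynomial-growth bound on $\left|(2+\gamma y)^{\beta}\right|$ (and indeed for the lemma itself) one implicitly needs $2+\gamma y$ to stay away from $0$ on $y\ge1$; in the paper's application $\gamma$ is purely imaginary, so $|2+\gamma y|\ge 2$ and this is immediate.
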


\begin{proof}First note that the integral converges for every $\alpha,\beta$ and $\gamma$. Changing variables to $x\mapsto x+K$ gives
\begin{align*}
e^{-K}\int_{0}^{\infty}(x+K)^{\alpha}\left(2+\gamma+\tfrac{\gamma x}{K}\right)^{\beta}e^{-x}dx&=\int_0^{K}\cdots dx+\int_{K}^{\infty}\cdots dx\\
&=e^{-K}O\left( K^{\alpha-\beta}\int_0^{K}e^{-x}dx+K^{-\beta}\int_{K}^{\infty}x^{\alpha+\beta}e^{-x}dx\right)\\
&=O(e^{-\frac K2}).
\end{align*}
The implied constant depends only on $\alpha,\beta$ and $\gamma$.
\end{proof}

The following lemma establishes the asymptotic expansion of a certain integral which will come up in the asymptotic expansions of Fourier transforms in theorems \ref{thmasymp} and \ref{thmasymp2}. Its expansion is fundamental for the analysis of the Fourier transforms and the main content of the lemma is the independence of the implied constants of the parameters $C$ and $D$.

\begin{lemma}\label{lemasymp2}Let $Z\in\mathbb{R}\backslash \{0\}$, $\frac12>\kappa>0$, $a,\iota\in\{0,1\}$, and $\mathfrak{R}=\{r_m\}_{m=M_0}^{\infty}$ be a sequence of complex numbers. Then, for any $a\in\mathbb{C}$ with $\Re(a)>-2$, $M_1\geq M_0\in\mathbb{N}$, and $\tau,\tau_1\in\mathbb{R}_{>0}$ we have,
\begin{align*}
\sum_{\substack{m=M_0}}^{M_1}\tfrac{r_m}{2\pi i}\int_{(\tau)}\tfrac{\tilde{\Phi}(u)}{C^u}\int_{0}^1x^{m+\frac{a+u}{2}}(2+(-1)^\iota x)^{\frac{u}{2}}\phi\left(\tfrac{x}{\kappa}\right)e( xZ)dxdu&=\sum_{\substack{m=M_0}}^{M_1}\tfrac{\mathcal{T}_{\mathfrak{R},m,a}^{\iota,\tau}(\Phi)(C^2Z)}{Z^{m+1+\frac{a}{2}}}\\
&\hspace{0.6in}+O((C^2Z)^{-\tau_1}Z^{-(M_1+2+\frac{a}{2})}),
\end{align*}
where
\[\mathcal{T}_{\mathfrak{R},m,a}^{\iota,\tau}(\Phi)(x):=\tfrac{1}{2\pi i}\int_{(\tau)}\tilde{\Phi}(u)r_{\iota,m,a}\left(\tfrac{u}{2}\right)\Gamma\left(m+1+\tfrac{a+u}{2}\right)x^{-\frac{u}{2}}du,\]
and
\[r_{\iota,m,a}(y):=\left(\tfrac{i}{2\pi}\right)^{m+1+y+\frac{a}{2}}2^y\sum_{\substack{j+k=m\\ k\geq M_0\\ j\geq0}}\tfrac{r_k}{((-1)^{\iota}2)^j}\binom{y}{j}.\]
Moreover the implied constant depends only on $\mathfrak{R},\Phi,M_1$, and $\tau_1$, and in case $\tilde{\Phi}(u)$ has at most a simple pole at $u=0$, one can take $\tau_1=0$.
\end{lemma}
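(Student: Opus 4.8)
The strategy is to reduce everything to Lemma~\ref{lemasymp} and then analyze each of the three terms it produces. First I would fix $m$ and apply Mellin inversion so that the inner $x$-integral has the form $I_{\iota,\kappa}(A,B,Z)$ with $A=m+\tfrac{a+u}{2}$ and $B=\tfrac{u}{2}$, with $Z$ in place of the $Z$ of that lemma. The condition $\Re(a)>-2$ together with $m\ge M_0\ge 0$ and $\tau=\Re(u)>0$ guarantees $\Re(A)>-1$, so Lemma~\ref{lemasymp} applies. This rewrites the $x$-integral as a sum of (i) a ``main'' incomplete-gamma-type integral $\bigl(\tfrac{i}{2\pi Z}\bigr)^{A+1}\int_0^\infty x^{A}(2+(-1)^\iota\tfrac{ix}{2\pi Z})^{B}e^{-x}dx$, (ii) a shifted-contour correction with $\tilde\phi(s)\kappa^s$, and (iii) a tail integral along a vertical ray starting at $x=1$.

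The heart of the argument is the treatment of term (i). The plan is to expand $\bigl(2+(-1)^\iota\tfrac{ix}{2\pi Z}\bigr)^{B}$ by the binomial theorem; since $\tfrac{x}{Z}$ need not be small this expansion is only asymptotic, not convergent, so I would use the finite Taylor expansion with remainder, truncating at order $M_1-m$. Each finite term, after integrating $\int_0^\infty x^{m+k+\tfrac{a+u}{2}}e^{-x}dx=\Gamma(m+k+1+\tfrac{a+u}{2})$, reassembles — after re-indexing $m+k$ — into exactly the coefficients $r_{\iota,m',a}(u/2)$ and the gamma factors appearing in $\mathcal T^{\iota,\tau}_{\mathfrak R,m',a}$. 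Summing over $m$ from $M_0$ to $M_1$ and collecting terms with the same total index produces $\sum_{m=M_0}^{M_1}\mathcal T^{\iota,\tau}_{\mathfrak R,m,a}(\Phi)(C^2Z)\,Z^{-(m+1+a/2)}$. The binomial remainder contributes an integral whose $x$-power is $m+(M_1-m)+1+\tfrac{a+u}{2}=M_1+1+\tfrac{a+u}{2}$ and which carries an extra factor $Z^{-(M_1+2+a/2)}$; pushing the $u$-contour to $\Re(u)=2\tau_1$ (legitimate since $\tilde\Phi$ and the gamma factor are holomorphic for $\Re(u)>0$, using $\Re(a)>-2$ so the gamma has no pole there) converts the $C^{-u}$ into the claimed $(C^2Z)^{-\tau_1}$ saving. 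When $\tilde\Phi$ has at most a simple pole at $u=0$ one may slide all the way to $\Re(u)=0$, giving $\tau_1=0$.

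For terms (ii) and (iii) I would show they are absorbed into the error. For the tail (iii): the integrand decays like $e^{-2\pi|Z|\,\Im x}$ along the ray, so the $x$-integral is $O_{\Re(u)}(|Z|^{-1}e^{-2\pi|Z|\cdot 0})$ — more carefully, one integrates by parts in $x$ repeatedly against $e(xZ)$, each step gaining $Z^{-1}$ and never hitting a boundary contribution at $x=1$ because $\phi(x/\kappa)$ and its derivatives vanish there for $x\ge\kappa$... actually the cleaner route, paralleling Lemma~\ref{aplem4}, is: $\tfrac1{2\pi i}\int_{(\sigma_2)}\tilde\phi(s)\kappa^s(\cdots)ds$ with $\sigma_2$ taken arbitrarily large, combined with the rapid decay of $\tilde\phi$ from Lemma~\ref{lemmaa1}, forces this piece to be $O(\kappa^{\sigma_2})$ times something bounded, hence negligible beyond any polynomial in the relevant parameters and in particular dominated by the stated error after the $u$-integration. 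For (ii): writing its inner integral again via Lemma~\ref{lemextra}-type estimates (or directly bounding $\int_0^\infty x^{A-s}(2+\cdots)^B e^{-x}dx$ by a gamma value) shows it is $\bigl(\tfrac1{2\pi|Z|}\bigr)^{A-s+1}$ times a bounded factor; the $s$-contour sits at $\Re(s)=-\sigma_1<0$ so this contributes $|Z|^{-(A+\sigma_1+1)}$, i.e. an extra power $|Z|^{-\sigma_1}$ beyond the main terms, and choosing $\sigma_1$ large enough (together with moving the $u$-contour as above) places it inside $O((C^2Z)^{-\tau_1}Z^{-(M_1+2+a/2)})$. Throughout, uniformity in $C$ and $D$ (here $Z$) is automatic because after the contour shifts every bound is a product of an explicit power of $C$, an explicit power of $Z$, and an absolutely convergent $u$- (resp. $s$-) integral of $\tilde\Phi$, $\tilde\phi$ and gamma factors whose value depends only on $\mathfrak R,\Phi,M_1,\tau_1$.

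The main obstacle I anticipate is bookkeeping: matching the binomial-expansion coefficients $2^{u/2}\sum_{j+k=m}\tfrac{r_k}{((-1)^\iota 2)^j}\binom{u/2}{j}$ against $r_{\iota,m,a}(u/2)$ after the re-indexing $m\mapsto m+j$, and checking that the truncation order in the Taylor-with-remainder step lines up exactly so the remainder carries the power $Z^{-(M_1+2+a/2)}$ and not something weaker. The analytic inputs (contour shifts, rapid decay of $\tilde\phi$ and $\tilde\Phi$, the elementary estimate of Lemma~\ref{lemextra}) are all routine; the delicacy is entirely in the combinatorial alignment and in tracking that no implied constant secretly depends on $C$ or $Z$.
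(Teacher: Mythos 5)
Your overall architecture coincides with the paper's: apply Lemma \ref{lemasymp} to the inner $x$-integral with $A=m+\tfrac{a+u}{2}$, $B=\tfrac u2$, extract the main terms from the incomplete-gamma piece via a truncated binomial expansion and re-index along $j+k=m$ (your coefficient matching with $r_{\iota,m,a}(u/2)$ is exactly the bookkeeping that occurs), push the $s$-contour far to the left for the second piece, and shift the $u$-contour to $\Re(u)=2\tau_1$ (or to $\Re(u)=0$ with a principal value when $\tilde\Phi$ has at most a simple pole) to turn $C^{-u}$ into the $(C^2Z)^{-\tau_1}$ saving. Two small points you omit: one should reduce at the outset to $|Z|>1$ (otherwise the error term already dominates), and one should split $\int_0^\infty=\int_0^{|Z|}+\int_{|Z|}^\infty$ before expanding, since the Taylor remainder of $\left(2+(-1)^\iota\tfrac{ix}{2\pi Z}\right)^{u/2}$ is uniformly controlled only for $x\ll|Z|$; the tail is $O(e^{-|Z|/2})$ by Lemma \ref{lemextra}.

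The genuine gap is your treatment of the third term of Lemma \ref{lemasymp}, the integral along the vertical ray from $x=1$. Your ``cleaner route'' --- take $\sigma_2$ arbitrarily large and invoke $\kappa^{\sigma_2}$ together with the rapid decay of $\tilde\phi$ --- yields only a constant prefactor: $\kappa$ is a fixed number independent of $Z$ and $C$, and the ray integral itself is merely $O(|Z|^{-1})$ no matter how large $\sigma_2$ is (the integrand has size comparable to $1$ near the endpoint and decays like $e^{-2\pi|Z|t}$ along the ray), so this route cannot produce the required $Z^{-(M_1+2+\frac a2)}$ decay. The decay must come from repeated integration by parts in $x$ against $e(xZ)$ --- your first instinct, and the paper's method --- but your justification for discarding the endpoint contributions at $x=1$ is not valid as stated: in this third term the cutoff $\phi(x/\kappa)$ is no longer present in the $x$-integrand, having been Mellin-transformed into $\tilde\phi(s)$, so for fixed $s$ the boundary terms at $x=1$ do \emph{not} vanish. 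They vanish only after carrying out the $s$-integration, via the identity $\tfrac{1}{2\pi i}\int_{(\sigma_2)}\tilde\phi(s)\kappa^s(\beta-s)_k x^{\beta-s-k}ds=(-1)^kx^{\beta-k}\tfrac{d^{k}}{dy^{k}}\left\{\phi(y)y^{k+\beta-1}\right\}\big|_{y=x/\kappa}$, whose right-hand side vanishes at $x=1$ because $1/\kappa>2$ lies outside the support of $\phi$. Supplying this step (or some equivalent way of re-assembling the $s$-integral before integrating by parts) is what actually closes the estimate for this term.
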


\begin{proof}Note that the equality is trivially true if $|Z|<1$, since then the error term dominates, therefore we can assume that $|Z|>1$ for the proof. 

Using lemma \ref{lemasymp} in the $x$-integral we can rewrite our integral as
\begin{align*}
&\sum_{\substack{m=M_0}}^{M_1}\tfrac{r_m}{2\pi i } \int_{(\tau)}\tfrac{\tilde{\Phi}(u)}{C^u}\left\{\left(\tfrac{i}{2\pi Z}\right)^{m+1+\frac{a+u}{2}}\int_0^{\infty}x^{m+\frac{a+u}{2}}\left(2+(-1)^{\iota}\tfrac{ix}{2\pi Z}\right)^{\frac{u}{2}}e^{-x}dx\right.\tag{$i$}\label{aplemi}\\
&+\tfrac{1}{2\pi i}\int_{(-\sigma_1)}\tilde{\phi}(s)\kappa^s\left[\left(\tfrac{i}{2\pi Z }\right)^{m-s+1+\frac{a+u}{2}}\int_0^{\infty}x^{m-s+\frac{a+u}{2}}\left(2+(-1)^{\iota} \tfrac{ix}{2\pi Z}\right)^{\frac{u}{2}}e^{-x}dx\right]ds\tag{$ii$}\label{aplemii}\\
&-\left.\tfrac{1}{2\pi i}\int_{(\sigma_2)}\tilde{\phi}(s)\kappa^s\int_1^{1+\frac{|Z|}{ Z}i\infty}x^{m-s+\frac{a+u}{2}}(2+(-1)^{\iota} x)^{\frac{u}{2}}e( xZ)dxds\right\}du\tag{$iii$}\label{aplemiii}
\end{align*}
Since $m\geq0$ and $\Re(a)>-2$ each integral in \eqref{aplemi}, \eqref{aplemii}, and  \eqref{aplemiii} is convergent and homorphic in the $u$-variable for $\Re(u)\geq 0$. Since the function $\tilde{\Phi}(u)$ is also holomorphic in $\Re(u)>0$ and has a simple pole at $u=0$ (cf. lemma 3.3 of \cite{Altug:2015aa}) we are free to move the $u$-contour in $\Re(u)>0$ (in case $\tilde{\Phi}(u)$ has only a simple pole at $u=0$ we can even move the contour to $\Re(u)=0$ by taking principal value of the integral). Therefore we can shift the $u$-contour to $\Re(u)=\tau$ for any $\tau>0$ for the main terms, and to $\Re(u)=\tau_1$ for the error terms.

We will now analyze each of the terms \eqref{aplemi}, \eqref{aplemii}, and \eqref{aplemiii} separately. The main term will come from $\eqref{aplemi}$ and the rest will contribute to the error.

\begin{itemize}
\item \eqref{aplemi}. This is the most complicated part of the analysis and it gives the main contribution. The analysis will have two separate parts. We will first analyze the $x$-integral, then substitute the result in the $u$-integral and move the $u$-contour to get the result. We remark that in order to shift the $u$-contour we need to make sure that the error terms are holomorphic in the variable $u$. Below we will give explicit formulas for the error terms which will show holomorphy, and then we will give bounds on each, depending on $u$. These bounds will then be used to estimate the error terms.

Let us start by breaking the region of integration into two so that we can use the binomial theorem on the $(2+(-1)^{\iota} ix/2\pi Z)$-factor.
\begin{align*}
\int_0^{\infty}x^{m+\frac{a+u}{2}}\left(2+(-1)^{\iota} \tfrac{ix}{2\pi Z}\right)^{\frac{u}{2}}e^{-x}dx&=\int_0^{|Z|}\cdots dx+\int_{|Z|}^{\infty}\cdots dx.
\end{align*}
By lemma \ref{lemextra} the second integral is exponentially small in $|Z|$, i.e.
\begin{align*}
\int_{|Z|}^{\infty}x^{m+\frac{a+u}{2}}\left(2+(-1)^{\iota} \tfrac{ix}{2\pi Z}\right)^{\frac{u}{2}}e^{-x}dx&=O(e^{-\frac{|Z|}{2}}),\tag{$i$-1}\label{aplemi1}
\end{align*}
where the implied constant depends only on $m,a$, and $u$. This ends the analysis of the second integral. For the first integral we use the binomial theorem,
\begin{multline*}
\int_0^{|Z|}x^{m+\frac{a+u}{2}}\left(2+(-1)^{\iota} \tfrac{ix}{2\pi Z}\right)^{\frac{u}{2}}e^{-x}dx=\sum_{j=0}^{M_1+1}\tfrac{\nu_j^{(\iota)}(u)}{Z^j}\int_0^{|Z|}x^{m+j+\frac{a+u}{2}}e^{-x}dx\\
+\int_0^{|Z|}\tfrac{x^{m+M_1+2+\frac{a+u}{2}}R_{M_1}^{(\iota)}(u,Z,x)}{Z^{M_1+2}}e^{-x}dx,\tag{$i$-2}\label{aplemi2}
\end{multline*}
where
$$R_{M_1}^{(\iota)}(s, Z,x):=\left(\tfrac{Z}{x}\right)^{M_1+2}\left\{\left(2+(-1)^{\iota}\tfrac{ix}{2\pi  Z}\right)^{\frac{s}{2}}-\sum_{j=0}^{M_1+1}\nu_j^{(\iota)}(s)\left(\tfrac{x}{Z}\right)^{j}\right\},$$
 and $\nu_j^{(\iota)}(s):=2^{\frac{s}{2}}\binom{s/2}{j}\left(\frac{(-1)^\iota i}{4\pi }\right)^{j}$. Note that $R_{M_1}^{(\iota)}(s,Z,x)$ is a holomorphic function of $s$, and by the binomial theorem, for $0<x<|Z|$, $|R_{M_1}^{(\iota)}(s,Z,x)|<3^{\Re(s)/2}$. Furthermore, we have,
\begin{align*}
\int_0^{|Z|}x^{m+j+\frac{a+u}{2}}e^{-x}dx&=\Gamma\left(m+j+1+\tfrac{a+u}{2}\right)+\int_{|Z|}^{\infty}x^{m+j+\frac{a+u}{2}}e^{-x}dx\\
&\hspace{-1in}=\Gamma\left(m+j+1+\tfrac{a+u}{2}\right)+O(e^{-\frac{|Z|}{2}}).\tag{$i$-3}\label{aplemi3}
\end{align*}
The implied constant above depends only on $m, u$, and $a $. We now take \eqref{aplemi1}, \eqref{aplemi2}, and \eqref{aplemi3} and substitute them in \eqref{aplemi}.

Note that the sum of \eqref{aplemi1} and the error terms in \eqref{aplemi2} and \eqref{aplemi3} is $O(|Z|^{-(M_1+2+\frac{\delta}{2})})$. Using this bound (and for each $m\geq0$ grouping the terms whose indices satisfy $n+j=m$ together) we finally get 
\begin{align*}
\eqref{aplemi}&=\sum_{\substack{m=M_0}}^{M_1}\tfrac{\mathcal{T}_{\mathfrak{R},m,a}^{\iota,\tau}(\Phi)(C^2Z)}{Z^{m+1+\frac{a}{2}}}+O((C^2Z)^{-\tau_1}Z^{-(M_1+2+\frac{a}{2})}),\tag{$i$-4}\label{aplemi4}
\end{align*}
where $\mathcal{T}_{\mathfrak{R},m,a}^{\iota,\tau}(C^2Z)$ is as defined in the statement of the proposition, $\tau>0,\tau_1\geq0$, and the implied constant depends only on $\tau_1,M_1,\phi,\mathfrak{R},a$, and $\Phi$.

\item \eqref{aplemii}. We bound this term by moving the $s$-contour to $\Re(s)=-M_1$. As before, we break the integral into two parts,
\begin{align*}
\int_0^{\infty}x^{m-s+\frac{a+u}{2}}\left(2+(-1)^{\iota} \tfrac{ix}{2\pi Z}\right)^{\frac{u}{2}}e^{-x}dx=\int_0^{|Z|}\cdots dx+\int_{|Z|}^{\infty}\cdots dx,
\end{align*}
and the second integral is $O(e^{-\frac{|Z|}{2}})$ by lemma \ref{lemextra}. Once again the constant depends only on $m,a$, and $u$. Moving the $s$-contour to $-(M_1+1)$ (recall that we are assuming $|Z|>1$) shows that the first integral is bounded by $\Gamma\left(M_1+2+\frac{u+a}{2}\right)$. Since we can move the $u$-contour to any $\Re(u)=\tau_1\geq0$ this shows that $\eqref{aplemi2}=O((C^2Z)^{-\tau_1}Z^{-(M_1+2+\frac a2)})$, where the implied constant depends only on $\tau_1,M_1,a,\mathfrak{R}$, and $\Phi$.

\item \eqref{aplemiii}. To bound this term we  use integration by parts $\mu(M_1,a+u):=\lfloor M_1+2+\frac{\Re(a+u)}{2}\rfloor$-times on the $x$-integral. This gives,
\begin{align*}
&\tfrac{1}{2\pi i}\int_{(\sigma_2)}\tilde{\phi}(s)\kappa^s\int_1^{1+\frac{|Z|}{ Z}i\infty}x^{m-s+\frac{a+u}{2}}(2+(-1)^{\iota} x)^{\frac{u}{2}}e( xZ)dxds\\
&\hspace{0.4in}=\tfrac{1}{2\pi i( 2\pi i Z)^{\mu(M_1,u+a)}}\int_{(\sigma_2)}\tilde{\phi}(s)\kappa^s\int_1^{1+\frac{|Z|}{ Z}i\infty}\tfrac{d^{\mu(M_1,u+a)}}{dx^{\mu(M_1,u+a)}}\left\{x^{m-s+\frac{u+a}{2}}(2+(-1)^{\iota} x)^{\frac{u}{2}}\right\}e( xZ)dxds.
\end{align*}
Below, we will justify that the boundary terms vanish. Taking this for granted for the moment we bound the integrands trivially. Note that $\mu(M_1,u+a)=\lfloor M_1+2+\Re\left(\frac{u+a}{2}\right)\rfloor=\lfloor M_1+2+\Re\left(\frac{\tau_1+a}{2}\right)\rfloor$. This implies that the contribution of \eqref{aplemiii} is $O(Z^{-\lfloor M_1+2+\Re\left(\frac{\tau_1+a}{2}\right)\rfloor}C^{-\tau_1})=O(Z^{-\left( M_1+1+\Re\left(\frac{a}{2}\right)\right)}(C^2Z)^{-\frac{\tau_1}{2}})$, where the implied constant depends only on $\Phi,\phi,M_1, \tau_1,a$ and $\kappa$.

The only point left to justify is the  vanishing of the boundary terms in the integration by parts. The boundary term at $1+\frac{|Z|}{ Z}i\infty$ vanish because of the exponential factor. The other boundary term vanishes because $\phi^{(k)}(1/\kappa)=0$ for every $k\in\mathbb{N}$. More precisely, let $(s-\beta)_k=(s-\beta)(s-\beta-1)\cdots(s-\beta-k+1)$ be the falling factorial. Then, for any $\beta\in\mathbb{C}$ and $k\geq1$,
\[(s-\beta)_k\tilde{\phi}(s)=\int_0^\infty(\phi(y)y^{k+\beta-1})^{(k)}y^{s-1}dy\]
Therefore,
\[\tfrac{1}{2\pi i}\int_{(\sigma_2)}\tilde{\phi}(s)\kappa^s(\beta-s)_kx^{\beta-s-k}ds=(-1)^kx^{\beta-k}\left.\tfrac{d^{k}}{dy^k}\left\{\phi(y)y^{k+\beta-1}\right\}\right|_{y=\frac x\kappa}.\]
Since $\phi$ and all its derivatives vanish at $x=1/\kappa$ (by definition \ref{apdef2}) all the boundary terms vanish. 
\end{itemize}

\end{proof}

\paragraph{Asymptotic expansions.}

In theorems \ref{thmasymp} and \ref{thmasymp2} we develop the asymptotic expansion of Fourier transforms of functions with certain prescribed singularities. Once again, independence of the implied constants of the parameters $C$ and $D$ is the central issue.

\begin{thm}\label{thmasymp} Let $C,D\in\mathbb{R}\backslash\{0\}$ and $\Phi\in \mathcal{S}(\mathbb{R})$, $a\in\mathbb{C}$ with $\Re(a)>-2$, and $h_a(x)$ be
\[h_a(x)=|1-x^2|^{\frac a2}h_1(x),\]
where $h_1(x)$ is smooth in and up-to the boundary of $(-1,1)$. Assume that around $x=\pm1$ it has an asymptotic expansion
\[h_a(\pm(1-x))\sim |x|^{\frac a2}\sum_{m=0}^{\infty}c_m^{\pm}x^m.\]
Then, for any $\tau,\tau_1\in\mathbb{R}_{>0}$ and  $M\in\mathbb{Z}_{>0}$ we have,
 \begin{align*}
 \int_{-1}^1h_a(x)\Phi\left(\tfrac{C}{\sqrt{1-x^2}}\right)e(xD)dx&=\sum_{\substack{m=0\\ \pm}}^{M}\tfrac{e(\pm D)\mathcal{A}_{h_a,m}^{\tau,\pm}(\Phi)(\mp C^2D)}{(\mp D)^{m+1+\frac{a}{2}}}+O((C^2D)^{-\tau_1}D^{-(M+2+\frac{a}{2})}),
 \end{align*}
where $\mathcal{A}_{h_a,m}^{\tau,\pm}(\Phi)(x)$ is as in definition \ref{inttrans}. Moreover, the implied constant depends only on $h,\Phi,M,\tau_1,\epsilon$, and $a$, and in case $\tilde{\Phi}(u)$ has at most a simple pole at $u=0$, one can take $\tau_1=0$. 
\end{thm}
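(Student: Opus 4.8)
The plan is to localize the integral near the singular points $x=\pm1$ with a smooth partition of unity, to reduce each local piece---after a Mellin inversion in the $\Phi$-variable---to the integral analyzed in Lemma~\ref{lemasymp2}, and to absorb everything else into the error term. Fix $0<\kappa<\tfrac12$ small enough that the asymptotic expansions for $h_a(\pm(1-x))$ are valid for $|x|\le2\kappa$, and write $1=\phi_\kappa+(1-\phi_\kappa)$ with $\phi_\kappa$ as in Definition~\ref{sing27}. On $(-1,1)$ the function $\phi_\kappa$ is supported in the two disjoint intervals $(1-2\kappa,1)$ and $(-1,-1+2\kappa)$, so
\[
\int_{-1}^1 h_a(x)\Phi\!\left(\tfrac{C}{\sqrt{1-x^2}}\right)e(xD)\,dx=I_{+}+I_{-}+I_{\mathrm{mid}},
\]
where $I_{\pm}$ are the contributions of the two intervals and $I_{\mathrm{mid}}$ that of $1-\phi_\kappa$. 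On the support of $1-\phi_\kappa$ the factor $(1-x^2)^{a/2}$ is bounded and smooth, so Lemma~\ref{aplem4} (with the lower sign, and with its ``$h$'' a smooth compactly supported extension of $h_1$) gives $I_{\mathrm{mid}}=O(C^{-N}D^{-M'})$ for every $N,M'$, which is absorbed into the asserted error.

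For $I_{+}$ substitute $x=1-t$: then $1-x^2=t(2-t)$, $\phi_\kappa(1-t)=\phi(t/\kappa)$ and $e(xD)=e(D)e(-tD)$, so
\[
I_{+}=e(D)\int_0^1 h_a(1-t)\,\Phi\!\left(\tfrac{C}{\sqrt{t(2-t)}}\right)\phi\!\left(\tfrac t\kappa\right)e(-tD)\,dt.
\]
Insert $h_a(1-t)=|t|^{a/2}\sum_{m=0}^M c_m^+t^m+R^+_{M+1}(t)$, with $R^+_{M+1}(t)=O(t^{\Re(a)/2+M+1})$ for $|t|\le2\kappa$. For the polynomial part, Mellin inversion $\Phi(y)=\tfrac1{2\pi i}\int_{(\tau)}\tilde\Phi(u)y^{-u}\,du$ (legitimate since $\tilde\Phi$ decays rapidly on $\Re(u)=\tau>0$ and, because $\Re(a)>-2$, the resulting double integral over the bounded $t$-range converges absolutely) turns the polynomial part into $e(D)$ times the left-hand side of Lemma~\ref{lemasymp2} with $\iota=1$, $Z=-D$, $M_0=0$, $M_1=M$ and $\mathfrak R=\{c^+_m\}$. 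A term-by-term comparison of Definition~\ref{inttrans} with the functions $r_{\iota,m,a}$ and $\mathcal T^{\iota,\tau}_{\mathfrak R,m,a}$ of that lemma (for $\iota=1$ one has $(-1)^\iota2=-2$, so the two formulas match) yields $\mathcal T^{1,\tau}_{\{c^+_m\},m,a}(\Phi)=\mathcal A^{\tau,+}_{h_a,m}(\Phi)$. Hence Lemma~\ref{lemasymp2} produces precisely the ``$+$'' part of the claimed main sum, with the power $(-D)^{-(m+1+a/2)}$, together with an error $O\!\left((C^2D)^{-\tau_1}D^{-(M+2+a/2)}\right)$. The treatment of $I_{-}$ is identical with $x=-(1-t)$, $Z=+D$ and $\mathfrak R=\{c^-_m\}$, and produces the ``$-$'' part.

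It remains to estimate the remainder integral $e(D)\int_0^1 R^+_{M+1}(t)\,\Phi(\cdots)\,\phi(t/\kappa)\,e(-tD)\,dt$ and its counterpart at $-1$. By the definition of an asymptotic expansion (\S\ref{apsecasympexp}), $R^+_{M+1}$ itself has the asymptotic expansion $|t|^{a/2}\sum_{m\ge M+1}c^+_mt^m$, so applying Lemma~\ref{lemasymp2} once more with $M_0=M_1=M+1$ rewrites this contribution as the single term $\tfrac{e(D)\,\mathcal A^{\tau,+}_{h_a,M+1}(\Phi)(-C^2D)}{(-D)^{M+2+a/2}}$ plus an error $O\!\left((C^2D)^{-\tau_1}D^{-(M+3+a/2)}\right)$. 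By Proposition~\ref{lemasymp3} (with the contour taken at $\tau_1>0$; or, when $\tilde\Phi$ has only a simple pole at $u=0$ and $\tau_1=0$, by Proposition~\ref{lemasymp4} with $k=1$) the first term is $O\!\left((C^2D)^{-\tau_1}D^{-(M+2+a/2)}\right)$, and the secondary error is smaller once $|D|>1$. Collecting $I_{+},I_{-},I_{\mathrm{mid}}$ and using $|e(\pm D)|=1$ then gives the theorem; as in Lemma~\ref{lemasymp2}, the case $|D|\le1$ is trivial because the asserted error already dominates (bound the left-hand side using the rapid decay of $\Phi$). The genuine difficulty---keeping all implied constants independent of $C$ and $D$---is exactly what Lemma~\ref{lemasymp2} supplies, so here the work reduces to carrying out the localization and the substitutions without reintroducing a dependence on those parameters.
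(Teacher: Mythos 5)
Your overall strategy is the same as the paper's: cut off near $x=\pm 1$ with $\phi_\kappa$, dispose of the middle range by Lemma \ref{aplem4}, change variables $x=\pm(1-t)$, insert the expansion of $h_a$, Mellin-invert $\Phi$, and feed the monomial terms into Lemma \ref{lemasymp2} (your identification $\mathcal T^{1,\tau}_{\{c^+_m\},m,a}(\Phi)=\mathcal A^{\tau,+}_{h_a,m}(\Phi)$ is correct, as is the reduction of the case $|D|\le 1$ to triviality). Up to that point the argument matches the paper's proof.

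The gap is in your treatment of the remainder $R^+_{M+1}(t)=h_a(1-t)-|t|^{a/2}\sum_{m=0}^{M}c^+_m t^m$. Lemma \ref{lemasymp2} is a statement about the specific finite sum $\sum_{m=M_0}^{M_1} r_m \int_{(\tau)}\tilde\Phi(u)C^{-u}\int_0^1 x^{m+\frac{a+u}{2}}(2+(-1)^\iota x)^{\frac u2}\phi(x/\kappa)e(xZ)\,dx\,du$, i.e.\ about integrals of \emph{monomials} $|t|^{a/2}t^m$ against the oscillating kernel; it says nothing about the integral of a general function merely admitting an asymptotic expansion. Applying it ``with $M_0=M_1=M+1$'' to $R^+_{M+1}$ only accounts for the single monomial $c^+_{M+1}|t|^{a/2}t^{M+1}$, not for $R^+_{M+1}$ itself, and peeling off further monomials leaves you in an infinite regress: there is always a genuine non-monomial leftover, vanishing to one order higher, whose integral against $\Phi\bigl(C/\sqrt{t(2-t)}\bigr)e(-tD)$ still has to be bounded by $O\bigl((C^2D)^{-\tau_1}D^{-(M+2+\frac a2)}\bigr)$ \emph{uniformly in $C$ and $D$} --- and uniformity is the whole point of the theorem. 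This is exactly the step the paper supplies and your proposal omits: write the tail as $|t|^{a/2}t^{M+3}h^{\pm}_{a,M+3}(t)$ with $h^{\pm}_{a,M+3}$ smooth up to the boundary (a consequence of the definition of the asymptotic expansion with finitely many more terms), Mellin-invert to extract $C^{-u}$ on the line $\Re(u)=\tau$, and integrate by parts in $t$ roughly $M+2+\Re\bigl(\frac{a+u}{2}\bigr)$ times, using that $\phi(t/\kappa)$ and all its derivatives vanish at the edge of its support so that no boundary terms appear; this yields the required $D$-decay while the Mellin variable carries the $(C^2D)^{-\tau_1}$ factor. Also note that a bound $R^+_{M+1}(t)=O(t^{\Re(a)/2+M+1})$ alone is not enough for this step --- you need the smoothness of the quotient $h^{\pm}_{a,M+3}$ to perform the repeated integration by parts --- so this hypothesis must be invoked explicitly. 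Without this estimate your proof does not close.
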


\begin{proof}The proof is technical but straightforward. Integrand has singularities around $\pm1$ which give the main term and the rest is absorbed in the error.

Let $\phi_{\kappa}(x)$ be a cut-off function as in definition \ref{sing27}. Then,
\[\int_{-1}^1h_a(x)\cdots dx=\int_{-1}^1h_a(x)(1-\phi_{\kappa}(x))\cdots dx+\int_{-1}^{1}h_a(x)\phi_{\kappa}(x)\cdots dx.\]
Since $h_a(x)$ is smooth in $(-1,1)$, by lemma \ref{aplem4} we see that for any $M_0,N_0\geq0$ the first integral is $O(C^{-N_0}D^{-M_0})$ (and the implied constant is independent of $C,D$). Choosing $M_0=M+\tau_1+2+\frac{a}{2}, N_0=2\tau_1$ we get that the first integral is $O((C^2D)^{-\tau_1}D^{-(M+2+\frac{a}{2})})$.

We go on with the analysis of the second term which will give us the main contribution. The main idea is to use the asymptotic expansion and lemma \ref{lemasymp2}. In order to save space we will go through the argument at once for both points $x=\pm1$. Let $\kappa\,(=\kappa_M)$ be such that $\frac12>\kappa>0$ and
\begin{equation*}
h_a(\pm(1-x))=|x|^{\frac a 2}\sum_{m=0}^{M+2}c_m^{\pm}x^{m}+|x|^{\frac{a}{2}}x^{M+3}h_{a,M+3}^{\pm}(x)
\end{equation*}
for all $| x|<\kappa$, where $h_{a,M+3}^{\pm}(x)$ is smooth in and up to the boundary of $[0,\kappa]$.

Substituting this expansion into the integral,
\begin{align*}
\int_{-1}^1h_a(x)(\cdots)dx&=\int_{0}^{1}h_a(x)(\cdots)dx+\int_{-1}^0h_a(x)(\cdots)dx\\
&=\sum_{\pm}e(\pm D)\int_{0}^{1}h_a(\pm(1-x))\phi\left(\tfrac{x}{\kappa}\right)\Phi\left(\tfrac{C}{\sqrt{x(2-x)}}\right)e(\mp xD)dx\\
&=\sum_{\substack{m=0\\ \pm}}^{M+2}e(\pm D)c_m^{\pm}\int_{0}^{1}x^{m+\frac{a}{2}}\phi\left(\tfrac{x}{\kappa}\right)\Phi\left(\tfrac{C}{\sqrt{x(2-x)}}\right)e(\mp xD)dx\\
&\hspace{0.4in}+\sum_{\substack{\pm}}e(\pm D)\int_{0}^{1}h_{a,M+3}^{\pm}(x)x^{M+3+\frac{a}{2}}\phi\left(\tfrac{x}{\kappa}\right)\Phi\left(\tfrac{C}{\sqrt{x(2-x)}}\right)e(\mp xD)dx.
\end{align*}
For any $\tau\in\mathbb{R}_{>0}$, using Mellin inversion on $\Phi$ and the definition of $\phi_{\kappa}$ given in \eqref{sing27} we see that the above integrals can be written as
\begin{align*}
&\sum_{\substack{m=0\\ \pm}}^{M+2}e(\pm D)c_m^{\pm}\int_{0}^1x^{m+\frac{a}{2}}\phi\left(\tfrac{x}{\kappa}\right)\left\{\tfrac{1}{2\pi i}\int_{(\tau)}\tfrac{\tilde{\Phi}(u)(x(2-x))^{\frac{u}{2}}}{C^u}du\right\}e(\mp xD)dx\tag{$*$}\label{aplem*}\\
&\hspace{0.4in}+\sum_{\pm}e(\pm D)\int_{0}^1h_{a,M+3}^{\pm}(x)x^{M+3+\frac{a}{2}}\phi\left(\tfrac{x}{\kappa}\right)\left\{\tfrac{1}{2\pi i}\int_{(\tau)}\tfrac{\tilde{\Phi}(u)(x(2-x))^{\frac{u}{2}}}{C^u}du\right\}e(\mp xD)dx.\tag{$**$}\label{aplem**}
\end{align*}
We then interchange the $u$ and $x$-integrals, which is justified since $\tilde{\Phi}(u)$ decays faster than any polynomial, $m,\tau>0$, and $\Re(a)>-2$ so that we have $m+(a+\tau)/2>-1$, therefore the double integral converges absolutely. This gives,
\begin{multline*}
\sum_{\substack{m=0\\\pm}}^{M+2}\tfrac{e(\pm D)c_m^{\pm}}{2\pi i}\int_{(\tau)}\tfrac{\tilde{\Phi}(u)}{C^u}\int_{0}^1x^{m+\frac{a+u}{2}}(2-x)^{\frac{u}{2}}\phi\left(\tfrac{x}{\kappa}\right)e(\mp xD)dxdu\\
+\sum_{\pm}\tfrac{e(\pm D)}{2\pi i}\int_{(\tau)}\tfrac{\tilde{\Phi}(u)}{C^u}\int_{0}^1h_{a,M+3}^{\pm}( x)x^{M+3+\frac{a+u}{2}}(2-x)^{\frac{u}{2}}\phi\left(\tfrac{x}{\kappa}\right)e(\mp xD)dxdu.
\end{multline*}
For the first line, lemma \ref{lemasymp2} (with $r_m=c_m^{\pm}$, $Z=\mp D$, $M_0=0$, $M_1=M+2$, and $\iota=1$) gives, 
\begin{equation}\label{propaympeq1}
\eqref{aplem*}=\sum_{\substack{m=0\\ \pm}}^{M}\tfrac{e(\pm D)\mathcal{A}_{h_a,m}^{\tau,\pm}(\Phi)(\mp C^2D)}{(\mp D)^{m+1+\frac{a}{2}}}+O((C^2D)^{-\tau_1}D^{-(M+2+\frac{a}{2})}),
\end{equation}
where the implied constant depends only on $h_a,\Phi,M,\tau_1,\epsilon$ and $\delta$. For the second line let $\mu_a(M,u+\delta):=\lfloor M+3+\Re\left(\frac{a+u+\delta}{2}\right)\rfloor$. To bound \eqref{aplem**}, we follow the proof of lemma \ref{lemasymp2} and use integration by parts $\mu_a(M,u+\delta)$-times (For this, recall that $h_{a,M+3}^{\pm}(x)$ is smooth inside and up to the boundary of $(-1,1)$ and $\phi(x/\kappa)$ and all its derivatives vanish at $x=1$.). This gives the following expression for \eqref{aplem**},
\begin{equation}\label{propaympeq2}
\sum_{\pm}\tfrac{e(\pm D)}{2\pi i}\int_{(\tau)}\tfrac{\tilde{\Phi}(u)}{C^u(\mp D)^{\mu_a(M,u+\delta)}}\int_0^{1}\tfrac{d^{\mu_a(M,u+\delta)}}{dx^{\mu_a(M,u+\delta)}}\left\{x^{M+3+\frac{a+u+\delta}{2}}(2-x)^{\frac{u+\delta}{2}}h_{a,M+3}^{\pm}(\mp x)\phi\left(\tfrac{x}{\kappa}\right)\right\}e(\mp xD)dxdu.
\end{equation}
(Note that the boundary terms vanish because $\phi(1/\kappa)=0$ and $\Re(M+3+\frac{1+u+\delta}{2}-\mu(M,u+\delta))>0$.) 

Bounding the $u$ and $x$-integrals trivially and combining \eqref{propaympeq1} with \eqref{propaympeq2} finishes the proof.

\end{proof}

\begin{thm}\label{thmasymp2} Let $C,D\in\mathbb{R}\backslash\{0\}$ and $\Phi\in \mathcal{S}(\mathbb{R})$. Let $a\in\mathbb{C}$ with $\Re(a)>-2$ we have, and $h_a(x)$ be
\[h_a(x)=|1-x^2|^{\frac a2}h_1(x),\]
where $h_1(x)$ is compactly supported and smooth in and up-to the boundary of $\begin{setdef}{x\in Supp(h_a)}{|x|>1}\end{setdef}$. Assume that around $x=\pm1$ it has an asymptotic expansion 
\[h_a(\pm(1-x))\sim |x|^{\frac a2}\sum_{m=0}^{\infty}c_m^{\pm}x^m.\]
Then, for any $\tau,\tau\in\mathbb{R}_{>0}$ and $M\in\mathbb{Z}_{>0}$ we have,
 \begin{align*}
 \int_{|x|>1}h_a(x)\Phi\left(\tfrac{C}{\sqrt{x^2-1}}\right)e(xD)dx&=\sum_{\substack{m=0\\ \pm}}^{M}\tfrac{e(\pm D)(-1)^m\mathcal{A}_{h_a,m}^{\tau,\pm}(\Phi)(\pm C^2D)}{(\pm D)^{m+1+\frac{a}{2}}}+O((C^2D)^{-\tau_1}D^{-(M+2+\frac{a}{2})}),
 \end{align*}
 where $\mathcal{A}_{h_a,m}^{\tau,\pm}(\Phi)(x)$ is in definition \ref{inttrans} and the implied constant depends only on $h,\Phi,M,\tau_1,\epsilon$ and $a$, and in case $\tilde{\Phi}(u)$ has at most a simple pole at $u=0$, one can take $\tau_1=0$. 
\end{thm}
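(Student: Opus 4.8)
The plan is to follow the proof of Theorem~\ref{thmasymp} line by line, the only structural changes being that on $\{|x|>1\}$ the quadratic factor $x^2-1$ is positive — so the relevant value of $\iota$ in Lemma~\ref{lemasymp2} is $0$ rather than $1$ — and that, after localizing at $x=\pm1$ and writing $x=\pm(1+t)$ with $t>0$, the constant and linear parts of the exponential carry the \emph{same} sign, $e(xD)=e(\pm D)\,e(\pm tD)$, rather than opposite signs. The latter is exactly what replaces $\mp C^2D$ by $\pm C^2D$ and $(\mp D)^{\cdots}$ by $(\pm D)^{\cdots}$ in the statement, while the substitution $x'=-t$ needed to feed the hypothesised expansion into the machinery is what produces the factor $(-1)^m$.

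First I would pick a cut-off $\phi_\kappa$ as in Definition~\ref{sing27} and split the integral into its $(1-\phi_\kappa)$-part and its $\phi_\kappa$-part. On $\{|x|>1\}$ the function $(1-\phi_\kappa)h_1$ is supported in $\{|x|\ge 1+\kappa\}$, where $h_1$ is smooth, so after extending it by $0$ to an element of $C_c^\infty(\mathbb{R})$ one may apply Lemma~\ref{aplem4} with the ``$-$'' sign; this bounds the $(1-\phi_\kappa)$-part by $O(C^{-N_0}D^{-M_0})$ for all $N_0,M_0\ge 0$, and taking $N_0=2\tau_1$, $M_0=M+\tau_1+2+\tfrac a2$ yields the claimed error. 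For the $\phi_\kappa$-part I would write it as $\int_1^\infty+\int_{-\infty}^{-1}$, substitute $x=1+t$ and $x=-1-t$ respectively, note that in both cases $x^2-1=t(2+t)$, the cut-off becomes $\phi(t/\kappa)$, and $e(xD)=e(\pm D)\,e(\pm tD)$, and plug $x'=-t$ into the hypothesised expansion to write $h_a(x)=|t|^{a/2}\sum_{m=0}^{M+2}(-1)^m c_m^{\pm}t^m+|t|^{a/2}t^{M+3}h_{a,M+3}^{\pm}(t)$ for $|t|<\kappa$, with $h_{a,M+3}^{\pm}$ smooth up to the boundary. Mellin inversion of $\Phi$ together with interchanging the $u$- and $t$-integrals (justified by the rapid decay of $\tilde\Phi$ and by $\Re(a)>-2$, $m\ge0$) puts the leading sum into exactly the form of Lemma~\ref{lemasymp2} with $r_m=(-1)^m c_m^{\pm}$, $Z=\pm D$, $\iota=0$, $M_0=0$, $M_1=M+2$; the terms $m=M+1,M+2$ of the resulting sum are absorbed into the error, and the remainder integral is handled by integrating by parts $\lfloor M+3+\Re(\tfrac{a+u}{2})\rfloor$ times (the boundary terms vanish because $\phi$ and all its derivatives vanish at $1$) and then bounding trivially — all exactly as at the end of the proof of Theorem~\ref{thmasymp}.

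The one computation special to this theorem is matching the output of Lemma~\ref{lemasymp2} with the transforms of Definition~\ref{inttrans}: comparing the formula for $r_{\iota,m,a}$ (with $\iota=0$ and $r_k=(-1)^k c_k^{\pm}$) against that for $c_m^{\pm}(u/2)$, and using $(-1)^k=(-1)^m(-1)^j$ for $j+k=m$, gives $r_{0,m,a}(u/2)=(-1)^m c_m^{\pm}(u/2)$, hence $\mathcal{T}^{0,\tau}_{\mathfrak{R},m,a}(\Phi)=(-1)^m\mathcal{A}^{\tau,\pm}_{h_a,m}(\Phi)$, so the main term takes precisely the shape in the statement once the $e(\pm D)$ prefactors coming from the substitution are restored. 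As everywhere in this appendix the genuinely delicate point is uniformity of all implied constants in $C$ and $D$, but this is already guaranteed by Lemmas~\ref{aplem4} and~\ref{lemasymp2}; thus the only obstacle I anticipate is bookkeeping — keeping straight the sign $Z=\pm D$, the factor $(-1)^m$ from $x'=-t$, and the sign identity just noted.
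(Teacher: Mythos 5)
Your proposal follows the paper's own proof essentially verbatim: the same $\phi_\kappa$ cut-off decomposition, Lemma \ref{aplem4} for the smooth part with the same choice $N_0=2\tau_1$, $M_0=M+\tau_1+2+\tfrac a2$, the substitution $x=\pm(1+t)$ producing $e(\pm D)$, $(-1)^m c_m^{\pm}$ and $Z=\pm D$, Lemma \ref{lemasymp2} with $\iota=0$, $M_0=0$, $M_1=M+2$, and the same integration-by-parts treatment of the remainder. Your explicit check that $r_{0,m,a}(u/2)=(-1)^m c_m^{\pm}(u/2)$, hence $\mathcal{T}^{0,\tau}_{\mathfrak{R},m,a}(\Phi)=(-1)^m\mathcal{A}^{\tau,\pm}_{h_a,m}(\Phi)$, is a correct bookkeeping detail the paper leaves implicit, and nothing in your outline is missing or wrong.
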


\begin{proof} The proof is identical to the proof of theorem \ref{thmasymp}, we just need to keep track of various signs.

Let $\phi_{\kappa}(x)$ be a cut-off function as in definition \ref{sing27}. Then,
\[\int\cdots dx=\int(1-\phi_{\kappa}(x))\cdots dx+\int\phi_{\kappa}(x)\cdots dx.\]
Lemma \ref{aplem4} implies that for any $M_0,N_0\geq0$ the first integral is $O(C^{-N_0}D^{-M_0})$ (and the implied constant is independent of $C$ and $D$). Choosing $M_0=M+\tau_1+2+\frac{a}{2}, N_0=2\tau_1$ we get that the first integral is $O((C^2D)^{-\tau_1}D^{-(M+2+\frac{a}{2})})$.

Let $\kappa\,(=\kappa_M)$ be such that $\frac12>\kappa>0$ and
\begin{equation*}
h_a(\pm(1-x))=|x|^{\frac a 2}\sum_{m=0}^{M+2}c_m^{\pm}x^{m}+|x|^{\frac a2}x^{M+3}h_{a,M+3}^{\pm}(x)
\end{equation*}
for all $| x|<\kappa$, where $h_{a,M+3}^{\pm}(x)$ is smooth. Substituting this expansion into the integral,
\begin{align*}
\int h_a(x)\phi_{\kappa}(x)(\cdots)dx&=\int_{1}^{2}\cdots dx+\int_{-2}^{-1}\cdots dx\\
&=\sum_{\pm}e(\pm D)\int_{-1}^{0}h_a(\pm(1-x))\phi\left(\tfrac{x}{\kappa}\right)\Phi\left(\tfrac{C}{\sqrt{x^2-2x}}\right)e(\mp xD)dx\\
&=\sum_{\substack{m=0\\ \pm}}^{M+2}e(\pm D)(-1)^{m}c_m^{\pm}\int_{0}^{1}x^{m+\frac{a}{2}}\phi\left(\tfrac{x}{\kappa}\right)\Phi\left(\tfrac{C}{\sqrt{x(2+x)}}\right)e(\pm xD)dx\\
&+\sum_{\substack{\pm}}e(\pm D)(-1)^{\frac a2}\int_{-1}^{0}h_{a,M+3}^{\pm}(x)x^{M+3+\frac a2}\phi\left(\tfrac{x}{\kappa}\right)\Phi\left(\tfrac{C}{\sqrt{x^2-2x}}\right)e(\mp xD)dx.
\end{align*}
For any $\tau\in\mathbb{R}_{>0}$, using Mellin inversion on $\Phi$ and the definition of $\phi_{\kappa}$ given in \eqref{sing27} we see that the above integrals can be written as
\begin{align*}
&\sum_{\substack{m=0\\ \pm}}^{M+2}e(\pm D)(-1)^mc_m^{\pm}\int_{0}^1x^{m+\frac{a}{2}}\phi\left(\tfrac{x}{\kappa}\right)\left\{\tfrac{1}{2\pi i}\int_{(\tau)}\tfrac{\tilde{\Phi}(u)(x(2+x))^{\frac{u}{2}}}{C^u}du\right\}e(\pm xD)dx\tag{$*$}\label{aplem2**}\\
&+\sum_{\pm}e(\pm D)(-1)^{\frac a2}\int_{-1}^0h_{a,M+3}^{\pm}(x)x^{M+3+\frac a2}\phi\left(\tfrac{x}{\kappa}\right)\left\{\tfrac{1}{2\pi i}\int_{(\tau)}\tfrac{\tilde{\Phi}(u)(x^2-2x)^{\frac{u}{2}}}{C^u}du\right\}e(\mp xD)dx.\tag{$**$}\label{aplem2***}
\end{align*}

Next, we interchange the $u$ and $x$-integrals. The interchange is justified since $\tilde{\Phi}(u)$ decays faster than any polynomial, $m,\tau>0$, and $a>-2$ so that we have $m+(\tau+a)/2>-1$, therefore the double integral converges absolutely. This gives,
\begin{align*}
&\sum_{\substack{m=0\\\pm}}^{M+2}\tfrac{e(\pm D)(-1)^mc_m^{\pm}}{2\pi i}\int_{(\tau)}\tfrac{\tilde{\Phi}(u)}{C^u}\int_{0}^1x^{m+\frac{u+a}{2}}(2+x)^{\frac{u}{2}}\phi\left(\tfrac{x}{\kappa}\right)e(\pm xD)dxdu\\
&+\sum_{\pm}\tfrac{e(\pm D)(-1)^{\frac a 2}}{2\pi i}\int_{(\tau)}\tfrac{\tilde{\Phi}(u)}{C^u}\int_{-1}^0h_{a,M+3}^{\pm}( x)x^{M+3+\frac a 2}(x^2-2x)^{\frac{u}{2}}\phi\left(\tfrac{x}{\kappa}\right)e(\mp xD)dxdu.
\end{align*}
As in the proof of theorem \ref{thmasymp}, for the first line lemma \ref{lemasymp2} (with $r_m=(-1)^mc_m^{\pm}$, $Z=\pm D$, $M_0=0$, $M_1=M+2$, and $\iota=0$) gives, 
\begin{equation}\label{prop2aympeq2}
\eqref{aplem2**}=\sum_{\substack{m=0\\ \pm}}^{M}\tfrac{e(\pm D)(-1)^m\mathcal{A}_{h_a,m}^{\tau,\pm}(\Phi)(\pm C^2D)}{(\pm D)^{m+1+\frac{a}{2}}}+O((C^2D)^{-\tau_1}D^{-(M+2+\frac{a}{2})}),
\end{equation}
where the implied constant depends only on $h,\Phi,M,\tau_1,\epsilon$ and $a$. For \eqref{aplem**} line let $\mu(M,u+a):=\lfloor M+3+\Re\left(\frac{u+a}{2}\right)\rfloor$ and use integration by parts $\mu(M,u+a)$-times (Recall that $h_{a,M+3}^{\pm}(x)$ is smooth and $\phi(x/\kappa)$ and all its derivatives vanish at $x=1$.). This gives the following expression for \eqref{aplem2***},
\begin{equation}\label{prop2aympeq3}
\sum_{\pm}\tfrac{e(\pm D)(-a)^{\frac a2}}{2\pi i}\int_{(\tau)}\tfrac{\tilde{\Phi}(u)}{C^u(\mp D)^{\mu(M,u+a)}}\int_{-1}^0\tfrac{d^{\mu(M,u+a)}}{dx^{\mu(M,u+a)}}\left\{x^{M+3+\frac{1+u+a}{2}}(2-x)^{\frac{u}{2}}h_{a,M+3}^{\pm}( x)\phi\left(\tfrac{x}{\kappa}\right)\right\}e(\mp xD)dxdu.
\end{equation}
(Note that the boundary terms vanish because $\phi(1/\kappa)=0$ and $\Re(M+3+\frac{u+a}{2}-\mu(M,u+a))>0$.) Bounding the $u$ and $x$-integrals trivially, and combining \eqref{prop2aympeq2} with \eqref{prop2aympeq3} finishes the proof.

\end{proof}

\begin{cor}\label{corasymp15} Let $C,D\in\mathbb{R}\backslash\{0\}$, $\Phi\in\mathcal{S}(\mathbb{R})$ such that $\tilde{\Phi}(u)$ is holomorphic for $\Re(u)>0$, and $h(x)\in C_c(\mathbb{R})$ be such that $h(x)=|1-x^2|^{\frac{a}{2}}h_1(x)$, where $h_1(x)\in C_c^{\infty}(\mathbb{R})$ and $a\in\mathbb{Z}_{\geq -2}$. Assume further that $h(x)$ has the following asymptotic expansion around $x=\pm1$,
\[h(\pm(1-x))\sim |x|^{\frac a2}\sum_{m=0}^{\infty}c_m^{\pm}x^{m}.\]
Then for any $N>0$,
\begin{equation}\label{apcor1515}
\int_{\mathbb{R}}h(x)\Phi\left(\tfrac{C}{\sqrt{|1-x^2|}}\right)e(xD)dx=O((C^2D)^{-N}D^{-(1+\frac a2)}),
\end{equation}
where the implied constant depend only on $h(x)$ and $\Phi$. If $\tilde{\Phi}(u)$ is also holomorphic for $\Re(u)\geq-2$ with at most simple poles at $u=0$ and $u=-2$, we furthermore have
\begin{equation}\label{apcor1516}
\int_{\mathbb{R}}h(x)\Phi\left(\tfrac{C}{\sqrt{|1-x^2|}}\right)e(xD)dx=\sum_{\pm}\tfrac{e(\pm D)c_0^{\pm,a}(\Phi)(1+(-1)^{1-\frac{a}{2}})}{(\pm D)^{1+\frac a2}}+O(C^2D^{-\frac{a}{2}}+D^{-(2+\frac a2)}),
\end{equation}
where
$$c_0^{\pm,a}(\Phi)=c_0^{\pm}(0)\Gamma\left(1+\frac{a}{2}\right)\text{Res}_{u=0}\tilde{\Phi}(u),$$ 
and the implied constant in the error term depend only on $h(x),\Phi$ and $a$. Note also that if $a\equiv 0\bmod 4$ then the leading term vanishes.
\end{cor}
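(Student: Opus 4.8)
The plan is to read the corollary off theorems \ref{thmasymp} and \ref{thmasymp2} after cutting the integral at its two singular points. First I would write
\[\int_{\mathbb{R}}h(x)\Phi\!\left(\tfrac{C}{\sqrt{|1-x^2|}}\right)e(xD)\,dx=\int_{-1}^{1}h(x)\Phi\!\left(\tfrac{C}{\sqrt{1-x^2}}\right)e(xD)\,dx+\int_{|x|>1}h(x)\Phi\!\left(\tfrac{C}{\sqrt{x^2-1}}\right)e(xD)\,dx,\]
and apply theorem \ref{thmasymp} to the first integral and theorem \ref{thmasymp2} to the second, both with $M=0$; the regularity hypotheses of both theorems hold since $h=|1-x^2|^{a/2}h_1$ with $h_1\in C_c^\infty(\mathbb{R})$, and the prescribed expansion at $x=\pm1$ is assumed. (Note that with $m=0$ the factor $(-1)^m$ in theorem \ref{thmasymp2} is $1$.) For \eqref{apcor1515} the key input is then proposition \ref{lemasymp3}: since $\tilde\Phi$ is holomorphic in $\Re(u)>0$, the $u$-contour in $\mathcal{A}_{h_a,0}^{\tau,\pm}(\Phi)(x)$ may be pushed arbitrarily far right, so $\mathcal{A}_{h_a,0}^{\tau,\pm}(\Phi)(x)=O(x^{-\tau})$ for every $\tau>0$ with constant independent of $C,D$. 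This turns the leading terms of the two theorems into $O\!\left((C^2D)^{-\tau}D^{-(1+a/2)}\right)$; choosing $\tau=N$ and then $\tau_1$ large enough that the error $O\!\left((C^2D)^{-\tau_1}D^{-(2+a/2)}\right)$ is dominated by $(C^2D)^{-N}D^{-(1+a/2)}$ gives \eqref{apcor1515}.

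For \eqref{apcor1516} the hypothesis that $\tilde\Phi$ has at most a simple pole at $u=0$ lets me take $\tau_1=0$ in both theorems, so the remaining task is to isolate in $\mathcal{A}_{h_a,0}^{\tau,\pm}(\Phi)(\pm C^2D)$ its main term together with a remainder of size $C^2D$. By definition \ref{inttrans},
\[\mathcal{A}_{h_a,0}^{\tau,\pm}(\Phi)(x)=\tfrac{1}{2\pi i}\int_{(\tau)}\tilde\Phi(u)\,c_0^{\pm}\!\left(\tfrac{u}{2}\right)\Gamma\!\left(1+\tfrac{a+u}{2}\right)x^{-\frac{u}{2}}\,du,\]
and I would shift the $u$-contour from $\Re(u)=\tau$ to $\Re(u)=-2$, indenting around $u=-2$ if $\tilde\Phi$ or $\Gamma(1+\tfrac{a+u}{2})$ has a pole there. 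For the (even, nonnegative) integer values of $a$ for which the leading-term formula is meaningful, the only pole crossed that contributes more than $O(C^2D)$ after the $D^{-(1+a/2)}$ prefactor is at $u=0$, where the simple pole of $\tilde\Phi$ meets the regular values $c_0^{\pm}(0)\Gamma(1+\tfrac a2)$ and produces exactly $c_0^{\pm,a}(\Phi)$; on the line $\Re(u)=-2$ one has $|x^{-u/2}|=|x|$, so the shifted integral and the residue at $u=-2$ are $O(C^2D)$. Hence $\mathcal{A}_{h_a,0}^{\tau,\pm}(\Phi)(\pm C^2D)=c_0^{\pm,a}(\Phi)+O(C^2D)$.

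Substituting this back into the two theorems, the denominators $(\mp D)^{1+a/2}$ coming from theorem \ref{thmasymp} and $(\pm D)^{1+a/2}$ coming from theorem \ref{thmasymp2} combine through $(\mp D)^{1+a/2}=(-1)^{1-a/2}(\pm D)^{1+a/2}$ into the factor $1+(-1)^{1-a/2}$, while the $O(C^2D)$ remainders divided by $D^{1+a/2}$ give $O(C^2D^{-a/2})$ and the error terms of the two theorems give $O(D^{-(2+a/2)})$; this is \eqref{apcor1516}. The concluding remark is then immediate, since $a\equiv0\bmod4$ makes $1-\tfrac a2$ odd and hence $1+(-1)^{1-a/2}=0$.

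The step I expect to be most delicate is this last one: keeping the signs and the fractional powers $(\pm D)^{1+a/2}$, $(\pm C^2D)^{-u/2}$ straight across theorems \ref{thmasymp} and \ref{thmasymp2} so as to verify that the residue contributions of $\int_{-1}^1$ and of $\int_{|x|>1}$ \emph{add} into $1+(-1)^{1-a/2}$ rather than partially cancel, and carrying out the shift to $\Re(u)=-2$ cleanly when $\Gamma(1+\tfrac{a+u}{2})$ contributes a pole there — which, among the admissible integer values of $a$, happens only for $a=0$, a case in which the leading term vanishes anyway. The estimate \eqref{apcor1515} is, by contrast, routine once proposition \ref{lemasymp3} is in hand.
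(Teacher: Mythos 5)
Your proposal is correct and follows essentially the same route as the paper's proof: split the integral at $x=\pm1$, apply theorems \ref{thmasymp} and \ref{thmasymp2} with $M=0$, use proposition \ref{lemasymp3} (contour pushed right) for \eqref{apcor1515}, and for \eqref{apcor1516} take $\tau_1=0$ and shift the $u$-contour in $\mathcal{A}_{h_a,0}^{\tau,\pm}(\Phi)$ to $\Re(u)=-2$, picking up the residue at $u=0$ and an $O(C^2D)$ remainder, with the same sign bookkeeping producing the factor $1+(-1)^{1-\frac a2}$.
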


\begin{proof} Divide the integral according to $|x|<1$ and $|x|>1$. By theorems \ref{thmasymp} and \ref{thmasymp2} (taking $M=0$ in both), for any $\tau_1\geq0$ we have
\begin{align*}
\int_{-1}^1h(x)\Phi\left(\tfrac{C}{\sqrt{1-x^2}}\right)e(xD)dx&=\sum_{\substack{ \pm}}\tfrac{e(\pm D)\mathcal{A}_{h_a,0}^{\tau,\pm}(\Phi)(\mp C^2D)}{(\mp D)^{1+\frac{a}{2}}}+O((C^2D)^{-\tau_1}D^{-(2+\frac{a}{2})})\tag{$*$}\label{apcor15*},\\
\int_{|x|<1}h(x)\Phi\left(\tfrac{C}{\sqrt{x^2-1}}\right)e(xD)dx&=\sum_{\substack{\pm}}\tfrac{e(\pm D)\mathcal{A}_{h_a,0}^{\tau,\pm}(\Phi)(\pm C^2D)}{(\pm D)^{1+\frac{a}{2}}}+O((C^2D)^{-\tau_1}D^{-(2+\frac{a}{2})})\tag{$**$}\label{apcor15**}.
\end{align*}
By proposition \ref{lemasymp3} $\mathcal{A}_{h_a,0}^{\tau,\pm}(C^2D)=(C^2D)^{-\tau}$, and since $\tilde{F}(u)$ is holomorphic for $\Re(u)>0$ we can move the $u$-contour in the formula for $\mathcal{A}_{h_a,0}^{\tau,\pm}$ (cf. definition \ref{inttrans}) to $\Re(u)=\tau$ for any $\tau\in \mathbb{R}_{>0}$. In particular choosing $\tau=\tau_1=N$ gives \eqref{apcor1515}. 

For the second estimate assume that $\tilde{\Phi}(u)$ is holomorphic in $\Re(u)\geq-2$ with only possible poles at $u=0,2$, both simple. Then we shift the $u$-contour in the defintion of $\mathcal{A}_{h_a,0}^{\tau,\pm}(\Phi)(\mp C^2D)$ to $\Re(u)=-2$ (We take the principal part of the integral in case there is a pole at $u=-2$, but this does not effect the bound.). This picks up the residue at $u=0$ and gives,
\begin{align*}
\mathcal{A}_{h_a,0}^{\tau,\pm}(\Phi)(x)&=\tfrac{1}{2\pi i }\int_{(\tau)}\tilde{\Phi}(u)c_{0}^{\pm}\left(\tfrac{u}{2}\right)\Gamma\left(1+\tfrac{a+u}{2}\right)x^{-\frac{u}{2}}du\\
&=c_0^{\pm}(0)\Gamma\left(1+\tfrac{a}{2}\right)\text{Res}_{u=0}\tilde{\Phi}(u)+O(x)\tag{$\circ$}\label{apcor15o},
\end{align*}
where the implied constant depends only on $h(x)$ and $\Phi$. Substituting \eqref{apcor15o} into \eqref{apcor15*} and \eqref{apcor15**} gives
\[\int_{\mathbb{R}}h(x)\Phi\left(\tfrac{C}{\sqrt{|1-x^2|}}\right)e(xD)dx=\sum_{\pm}\tfrac{e(\pm D)c_0^{\pm,a}(\Phi)(1+(-1)^{1-\frac{a}{2}})}{(\pm D)^{1+\frac a2}}+O(C^2D^{-\frac{a}{2}}+O((C^2D)^{-\tau_1})D^{-(2+\frac a2)}).\]
Finally, choosing $\tau_1=0$ finishes the proof of \eqref{apcor1516}.
\end{proof}

\section{Analysis of character sums}\label{sectioncharsum}

\subsection{Notation.} Before starting the computations let us introduce some notation that will be used throughout the calculations. Let $q$ be a prime. For any integer $A\in\mathbb{Z}$ let $v_q(A)$ denote the $q$-adic valuation of $A$. In what follows we will denote the ``$q$-part'' and the ``prime to $q$-part'' of $A$ respectively by $A_{(q)}$ and $A^{(q)}$. They are defined by,
\[A_{(q)}:=q^{v_q(a)}\hspace{0.5in}and\hspace{0.5in}A^{(q)}:=\tfrac{A}{A_{(q)}}.\]
Let $a,b\in\mathbb{Z}$ with $b>0$. We define $\delta(a;b)$ by,
\[\delta(a;b):=\begin{cases}1& x^2\equiv a \bmod b\text{ has a solution}\\ 0 &\text{otherwise}\end{cases}.\]
We also recall the definition of Kloosterman sums as they will show up in the calculations. Let $a,b\in\mathbb{Z}$, then $S(a,b;q)$ is defined by,
\[S(a,b;q)=\sum_{x\in \mathbb{F}_q^{\times}}e\left(\tfrac{a x+b x^{-1}}{q}\right).\]
Finally, for an integer $\alpha\in\mathbb{Z}\backslash \{0\}$, let $rad(\alpha)$ denote the radical (or the square-free part of) of $\alpha$. i.e.
\[rad(\alpha)=\prod_{\substack{q\mid \alpha\\q-prime}}q.\]

\subsection{Analysis of $Kl_{l,f}(\xi,n)$}

$Kl_{l,f}(\xi,n)$ are close relatives of classical Kloosterman sums. Being such, they satisfy the same twisted multiplicative property that Kloosterman sums satisfy (c.f. equation (1.59) of \cite{Iwaniec:2004aa}).

\begin{lemma}\label{estlem1}Let $l,f\in\mathbb{Z}_{>0}$ and $\xi,n\in\mathbb{Z}$. Then,
\[Kl_{l,f}(\xi,n)=\prod_{q_\mid 4lf^2}Kl_{l_{(q)},f_{(q)}}(((4lf^2)^{(q)})^{-1}\xi,n),\]
where, by abuse of notation, $((4lf^2)^{(q)})^{-1}$ denotes the inverse of $(4lf^2)^{(q)}$ modulo $(4lf^2)_{(q)}$.
\end{lemma}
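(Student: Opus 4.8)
The plan is to establish the twisted multiplicativity of $Kl_{l,f}(\xi,n)$ directly from its definition by the Chinese Remainder Theorem, exactly as one does for classical Kloosterman sums. Recall that
\[Kl_{l,f}(\xi,n)=\sum_{\substack{a\bmod 4lf^2\\ a^2-4n\equiv 0\bmod f^2\\ (a^2-4n)/f^2\equiv0,1\bmod 4}}\left(\tfrac{(a^2-4n)/f^2}{l}\right)e\left(\tfrac{a\xi}{4lf^2}\right)\]
(with the appropriate sign convention built into $n=\pm p$). The modulus is $N:=4lf^2$. First I would factor $N=\prod_{q\mid N}N_{(q)}$ into its prime-power parts and invoke CRT to write $a\bmod N$ uniquely as a tuple $(a_q\bmod N_{(q)})_q$ via $a\equiv \sum_q a_q\, (N^{(q)})\,\overline{(N^{(q)})}\pmod{N}$, where $\overline{(N^{(q)})}$ is the inverse of $N^{(q)}$ modulo $N_{(q)}$. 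The point is that each of the three ingredients of the summand respects this decomposition.

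The key steps, in order: (i) The congruence conditions $a^2\equiv 4n\bmod f^2$ and $(a^2-4n)/f^2\equiv 0,1\bmod 4$ are conditions modulo divisors of $N=4lf^2$, hence by CRT they decompose into the corresponding congruence conditions on each $a_q$ modulo $N_{(q)}=4_{(q)}l_{(q)}f_{(q)}^2$; so the constraint set on the tuple $(a_q)$ is the product of the per-prime constraint sets. (ii) The Kronecker symbol is completely multiplicative in its lower argument: $\left(\tfrac{(a^2-4n)/f^2}{l}\right)=\prod_{q\mid l}\left(\tfrac{(a^2-4n)/f^2}{l_{(q)}}\right)$, and since $(a^2-4n)/f^2\bmod l_{(q)}$ depends only on $a\bmod l_{(q)} f_{(q)}^2$, i.e. only on $a_q$ (note $l_{(q)}f_{(q)}^2\mid N_{(q)}$), this factor is a function of $a_q$ alone. (iii) The additive character splits: $e\!\left(\tfrac{a\xi}{N}\right)=\prod_q e\!\left(\tfrac{a\xi \,\overline{(N^{(q)})}}{N_{(q)}}\right)$ after substituting the CRT expression for $a$, because $\sum_q N^{(q)}\overline{(N^{(q)})}\equiv 1\pmod N$; and modulo $N_{(q)}$ the contribution of $a_q$ to the exponent is $a_q\cdot \bigl(N^{(q)}\bigr)^{-1}\xi$, i.e. $\xi$ gets replaced by $((4lf^2)^{(q)})^{-1}\xi$ read mod $N_{(q)}$, which is precisely the parameter appearing in the claimed factor $Kl_{l_{(q)},f_{(q)}}(((4lf^2)^{(q)})^{-1}\xi,n)$. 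Multiplying the three observations and summing the product over the product of constraint sets gives the asserted factorization.

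I do not anticipate a serious obstacle here; this is the standard CRT/twisted-multiplicativity computation and the lemma is essentially bookkeeping. The one point that needs a little care is matching the conventions: one must check that $4lf^2/(4_{(q)}l_{(q)}f_{(q)}^2)$ really equals $(4lf^2)^{(q)}$ for each prime $q\mid 4lf^2$ (true by definition of the $q$-part and prime-to-$q$-part applied to the integer $4lf^2$), and that the quantity $((4lf^2)^{(q)})^{-1}\xi$ inside the factor $Kl_{l_{(q)},f_{(q)}}(\cdot,n)$ is only well-defined modulo $(4lf^2)_{(q)}=4_{(q)}l_{(q)}f_{(q)}^2$, which is exactly the modulus in that factor's definition, so no ambiguity arises. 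A secondary subtlety is confirming that the congruence condition $(a^2-4n)/f^2\equiv 0,1\bmod 4$ decomposes correctly: the ``$\bmod 4$'' lives entirely in the $2$-part $N_{(2)}=4_{(2)}l_{(2)}f_{(2)}^2$ of the modulus (since $4\mid N_{(2)}$), so it becomes a condition on $a_2$ only, consistent with its appearance in the factor $Kl_{l_{(2)},f_{(2)}}(\cdot,n)$. With these conventions pinned down, the proof is a direct one-line-per-factor verification.
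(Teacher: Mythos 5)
Your proposal is correct and follows essentially the same route as the paper: the paper's proof is exactly this Chinese Remainder Theorem computation, writing $a\equiv\sum_q a_q\,(4lf^2)^{(q)}((4lf^2)^{(q)})^{-1}\bmod 4lf^2$, splitting the congruence conditions and the Kronecker symbol prime by prime, and absorbing the cofactor inverse into the additive character to produce the twist $\xi\mapsto((4lf^2)^{(q)})^{-1}\xi$. Your attention to the conventions (the $\bmod 4$ condition living in the $2$-part, the modulus of the twisted $\xi$) matches the level of detail in the paper's own argument, so there is nothing further to add.
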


\begin{proof}
This is a straightforward consequence of Chinese remainder theorem. We give the details for completeness. Let $4lf^2=\prod_{j=1}^r(4lf^2)_{(q_j)}$ be the prime factorization of $4lf^2$. Then, by the Chinese remainder theorem, the map
\begin{align*}
\varphi&:\prod_{j=1}^r \mathbb{Z}/(4lf^2)_{(q_j)}\mathbb{Z}\rightarrow \mathbb{Z}/4lf^2\mathbb{Z}\\
&\hspace{-0.5in}(a_1,a_1,\cdots,a_r)\xrightarrow{\varphi} \sum_{j=1}^ra_{j}(4lf^2)^{(q_j)}((4lf^2)^{(q_j)})^{-1}
\end{align*}
is an isomorphism. Note also that $\varphi(a_{1},\cdots,a_r)\equiv a_{j}\bmod (4lf^2)_{(q_j)}$. Therefore,
\begin{align*}
Kl_{l,f}(\xi,n)&=\sum_{j=1}^r\sum_{\substack{a_{j}\bmod (4lf^2)_{(q_j)}\\ a_{j}^2-4n\equiv 0\bmod f_{(q_j)}^2\\ \frac{a_{j}^2-4n}{f_{(q_j)}^2}\equiv0,1\bmod 4_{(q_j)}}}e\left(\tfrac{\varphi(a_{1},\cdots,a_{r})\xi}{4lf^2}\right)\prod_{j=1}^r\left(\tfrac{(\varphi(a_{1},\cdots,a_{r})^2-4n)/f^2}{l_{(q_j)}}\right)\\
&=\sum_{j=1}^r\sum_{\substack{a_{j}\bmod (4lf^2)_{(q_j)}\\ a_{j}^2-4n\equiv 0\bmod f_{(q_j)}^2\\ \frac{a_{j}^2-4n}{f_{(q_j)}^2}\equiv0,1\bmod 4_{(q_j)}}}e\left(\tfrac{( \sum_{j=1}^ra_{j}(4lf^2)^{(q_j)}((4lf^2)^{(q_j)})^{-1})\xi}{4lf^2}\right)\prod_{j=1}^r\left(\tfrac{(a_{j}^2-4n)/f_{(q_j)}^2}{l_{(q_j)}}\right)
\end{align*}
\begin{align*}
&=\sum_{j=1}^r\sum_{\substack{a_{j}\bmod (4lf^2)_{(q_j)}\\ a_{j}^2-4n\equiv 0\bmod f_{(q_j)}^2\\ \frac{a_{j}^2-4n}{f_{(q_j)}^2}\equiv0,1\bmod 4_{(q_j)}}}\prod_{j=1}^r\left(\tfrac{(a_{j}^2-4n)/f_{(q_j)}^2}{l_{(q_j)}}\right)e\left(\tfrac{a_{j}((4lf^2)^{(q_j)})^{-1}\xi}{(4lf^2)_{(q_j)}}\right)\\
&=\prod_{j=1}^rKl_{l_{(q_j)},f_{(q_j)}}((4lf^2)^{(q_j)})^{-1}\xi,n).
\end{align*}

\end{proof}

By lemma \ref{estlem1}, we are reduced to analyzing $Kl_{q^{k_1},q^{k_2}}(\xi,n)$ for primes $q$.

\subsubsection{Local calculations}\label{apseclocal}

In this section we will explicitly calculate the local sums $Kl_{q^{k_1},q^{k_2}}(\xi,n)$. We will give complete details for odd primes $q$ below. The calculations for $q=2$ follow the same lines with more bookkeeping and we will leave the details of that case to the reader since we will not be needing the exact form of the answer (cf. corollary \ref{charsumcor2}). 

\begin{lemma}\label{estlem2}Let $q\neq 2$ be a prime. Then for any $\xi,n\in\mathbb{Z}$,
\begin{align*}
Kl_{q,1}(\xi,n)&=\begin{cases}q-1&\text{$v_q(\xi)\geq1$ and $v_q(4n)\geq1$ }\\ -1& \text{$v_q(\xi)\geq1$ and $v_q(4n)=0$}\\ S(\bar{2}\xi,2\xi n;q)& v_q(\xi)=0\end{cases}.
\end{align*}

\end{lemma}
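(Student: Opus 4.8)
The statement is an explicit evaluation of the local character sum $Kl_{q,1}(\xi,n)$ for an odd prime $q$, so the plan is to just write out the defining sum with $l=q$, $f=1$, and reduce it by hand. Recall from Theorem \ref{recallthm} that
\[
Kl_{q,1}(\xi,n)=\sum_{\substack{a\bmod 4q\\ a^2-4n\equiv 0\bmod 1\\ (a^2-4n)\equiv 0,1\bmod 4}}\left(\tfrac{a^2-4n}{q}\right)e\left(\tfrac{a\xi}{4q}\right).
\]
Since $f=1$, the congruence mod $f^2$ is vacuous; and the condition $a^2-4n\equiv 0,1\bmod 4$ holds automatically because $a^2\equiv 0$ or $1\bmod 4$ and $4n\equiv 0\bmod 4$. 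So the sum is really over all residues $a\bmod 4q$. First I would use the Chinese Remainder Theorem to split $a\bmod 4q$ into $a\bmod 4$ and $a\bmod q$; the character $\left(\tfrac{a^2-4n}{q}\right)$ depends only on $a\bmod q$, so the $a\bmod 4$ part only enters through $e(a\xi/(4q))$. This factorization is the same twisted-multiplicative principle as in Lemma \ref{estlem1}, but here it is cleaner to do it directly.

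**Key steps.** After CRT, write $a\equiv 4\bar 4 b + q\bar q c$ type decomposition (or more simply, use that $e(a\xi/4q)$ splits as a product of an $a\bmod q$ exponential and an $a\bmod 4$ exponential via Bézout $1 = 4u + qv$). The $a\bmod 4$ sum is $\sum_{a\bmod 4} e(\text{something}\cdot a/4)$, which is $4$ if $4\mid \xi$ and $0$ otherwise — wait, one must be careful: after the CRT twist the relevant divisibility condition becomes $q\mid \xi$ versus not. The honest way: the full sum factors as
\[
Kl_{q,1}(\xi,n) = \Big(\sum_{a_4\bmod 4} e\big(\tfrac{a_4 \xi \bar q}{4}\big)\Big)\Big(\sum_{a_q\bmod q}\left(\tfrac{a_q^2-4n}{q}\right) e\big(\tfrac{a_q\xi \bar 4}{q}\big)\Big)
\]
with suitable inverses $\bar q\bmod 4$, $\bar 4\bmod q$. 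The first factor is $4$ when $4\mid\xi$ and $0$ otherwise — but this can't be right since the claimed answer has no mod-$4$ condition on $\xi$. So I must recheck: actually the condition $a^2-4n\equiv 0,1\bmod 4$ is \emph{not} automatic — $a^2-4n\equiv a^2\bmod 4$, which is $0$ or $1$, so it genuinely is automatic, and there is no constraint linking $a\bmod 4$ to anything. Hmm — so the first factor really would force $4\mid\xi$. I think the resolution is that the intended sum for general $l$ has the argument $e(a\xi/(4lf^2))$ but for $l=q$, $f=1$ it is $e(a\xi/(4q))$, and the paper's convention (cf. the range of $\xi$ and the normalization $\sqrt p/(2lf^2)$ in $\Sigma(\xi\neq 0)$) is such that effectively one only sums $a\bmod q$ with argument $e(a\xi/q)$ — i.e., the "$4$" is a red herring absorbed elsewhere. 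I would reconcile this by carefully re-deriving from \cite{Altug:2015aa}'s definition; the main obstacle is pinning down exactly which normalization of the exponential argument is in force so the mod-$4$ factor disappears, leaving
\[
Kl_{q,1}(\xi,n) = \sum_{a\bmod q}\left(\tfrac{a^2-4n}{q}\right)e\left(\tfrac{a\xi}{q}\right).
\]

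**Finishing.** Granting that reduction, the three cases fall out from classical Gauss/Kloosterman sum identities. When $q\mid\xi$ the exponential is trivial, so $Kl_{q,1}(\xi,n)=\sum_{a\bmod q}\left(\tfrac{a^2-4n}{q}\right)$; this is a standard evaluation of a quadratic-polynomial character sum: it equals $q-1$ if $q\mid 4n$ (the polynomial is a perfect square $a^2$, so $\left(\tfrac{a^2}{q}\right)=1$ for $a\not\equiv0$ and $0$ at $a\equiv0$), and $-1$ if $q\nmid 4n$ (complete the square / use $\sum\left(\tfrac{a^2-c}{q}\right)=-1$ for $c\not\equiv 0$, which is the $t=1$ case of the well-known formula $\sum_a\left(\tfrac{a^2+ba+c}{q}\right)=-\left(\tfrac{1}{q}\right)$ when $b^2-4c\not\equiv 0$). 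When $q\nmid\xi$, substitute $a\mapsto a$ and complete the square: $\left(\tfrac{a^2-4n}{q}\right)$, together with the change of variables reducing $\sum_a \left(\tfrac{a^2-4n}{q}\right)e(a\xi/q)$ to a Kloosterman sum — one writes the quadratic character as a sum over square roots, i.e. $\#\{x: x^2\equiv a^2-4n\}-1$, and unfolds; the result is $S(\bar 2\xi, 2\xi n; q)$ after tracking the linear change of variable $a = \tfrac12(x - 4n x^{-1})$ which parametrizes solutions of $x^2 - a\cdot(2x) \equiv \ldots$. The main obstacle is bookkeeping the exact arguments $\bar2\xi$ and $2\xi n$ of the Kloosterman sum through this substitution; I expect to verify it by checking $a^2 - 4n \equiv \big(\tfrac{x+4nx^{-1}}{2}\big)^2 - 4n$ up to the right normalization, and matching $\sum_x e\big(\tfrac{\bar2\xi x + 2\xi n x^{-1}}{q}\big)$. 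The clean statement then follows, and the $q=2$ case (deferred to the reader) is analogous with more care about powers of $2$.
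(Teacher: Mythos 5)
Your central difficulty --- the apparent extra factor of $4$ in the modulus, which seemed to force $4\mid\xi$ --- is precisely the point that the paper settles \emph{before} this lemma, and your proposal leaves it unresolved (you explicitly ``grant that reduction''). Lemma \ref{estlem1} factors $Kl_{l,f}(\xi,n)$ over the primes dividing $4lf^2$, and the local factor at a prime $q$ is a sum over $a$ modulo the $q$-part $(4lf^2)_{(q)}$, with the congruence $\tfrac{a^2-4n}{f^2}\equiv0,1\bmod 4$ entering only through its $q$-part $4_{(q)}$. For odd $q$ one has $(4lf^2)_{(q)}=q^{k_1+2k_2}$ and $4_{(q)}=1$, so the mod-$4$ condition is vacuous and no factor of $4$ appears; that factor lives entirely in the local sum at $2$, which is treated separately (corollary \ref{charsumcor2}). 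Thus in the local-calculation subsection $Kl_{q,1}(\xi,n)$ means $\sum_{a\bmod q}\left(\tfrac{a^2-4n}{q}\right)e\!\left(\tfrac{a\xi}{q}\right)$, not the modulus-$4q$ sum of theorem \ref{recallthm}; your CRT computation is correct but is about a different object, and without this identification the statement you are proving is not even well-posed. This is a genuine gap, not bookkeeping, and it is resolved structurally by lemma \ref{estlem1} rather than by any renormalization of the exponential.

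Granting the correct local sum, your case $v_q(\xi)\geq1$ agrees with the paper (trivial exponential plus the standard evaluations of $\sum_a\left(\tfrac{a^2-c}{q}\right)$). For $v_q(\xi)=0$ the paper simply quotes equation (70) of \cite{Sarnak:2001aa} when $\gcd(n,q)=1$ and checks $q\mid n$ directly; your plan to reprove that identity by writing $\left(\tfrac{u}{q}\right)=\#\{x:x^2\equiv u\}-1$ and parametrizing $a^2-x^2=4n$ is the standard argument and would work, but as written it is unfinished: the substitution should be $a=\bar2\,(t+4nt^{-1})$ (your $a=\tfrac12(x-4nx^{-1})$ makes $a^2+4n$, not $a^2-4n$, a square), which then yields $\sum_{t\in\mathbb{F}_q^{\times}}e\!\left(\tfrac{\bar2\xi t+2n\xi t^{-1}}{q}\right)=S(\bar2\xi,2\xi n;q)$; and the parametrization degenerates when $q\mid n$ (the solutions of $x^2\equiv a^2$ are not traced out bijectively by $t\in\mathbb{F}_q^{\times}$), so that subcase needs the separate direct evaluation $\sum_{a\not\equiv0}e\!\left(\tfrac{a\xi}{q}\right)=-1=S(\bar2\xi,0;q)$, exactly as the paper does.
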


\begin{proof} 
There are two cases depending on $v_q(\xi)\geq1$ or $v_q(\xi)=0$.

\begin{itemize}
\item $v_q(\xi)\geq1$. In this case $e\left(\tfrac{a\xi}{q}\right)=1$, so the sum reduces to
\[\sum_{a\bmod q}\left(\tfrac{a^2-4n}{q}\right)=\begin{cases}q-1&\text{$v_q(4n)\geq1$} \\ -1&\text{$v_q(4n)=0$}\end{cases}.\]

\item $v_q(\xi)=0$. For $q$ such that $\gcd(n,q)=1$, this is the statement of \cite{Sarnak:2001aa} equation (70)). When $q\mid n$ the sum is $\displaystyle\sum_{a\bmod^{\times }q}e\left(\tfrac{a\xi}{q}\right)=-1=S(\bar{2}\xi,0;q)$.

\end{itemize}
\end{proof}

\begin{lemma}\label{estlem3}Let $q\neq 2$ be a prime. Then for any $\xi,n\in\mathbb{Z}$ and for any $k_1\in\mathbb{Z}_{\geq0}$, 
\begin{align*}
q^{1-k_1}Kl_{q^{k_1},1}(\xi,n)&=\begin{cases}q-\left(1+\left(\tfrac{4n}{q}\right)\right)& v_q(\xi)\geq k_1\text{ and }k_1\equiv0\bmod2\\ q-1& v_q(\xi)\geq k_1\,,\,v_q(4n)\geq1\text{ and }k_1\equiv1\bmod 2\\ -1 & v_q(\xi)\geq k_1\,,\,v_q(4n)=0\text{ and }k_1\equiv1\bmod 2\\ -\left(1+\left(\tfrac{4n}{q}\right)\right)\cos\left(\tfrac{2\pi \xi\sqrt{4n}}{q^{k_1}}\right)& v_q(\xi)=k_1-1\text{ and } k_1\equiv 0\bmod2 \\ S(\bar{2}\xi^{(q)} ,2\xi^{(q)} n;q)& v_q(\xi)=k_1-1\text{ and }k_1\equiv 1\bmod2\\ 0&otherwise\end{cases},
\end{align*}
where $\sqrt{4n}$ denotes a square root\footnote{Note that the expression is independent of the choice of the square root since cosine is an even function.} of $4n$ modulo $q$, when exists.

\end{lemma}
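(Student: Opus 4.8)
The plan is to evaluate $Kl_{q^{k_1},1}(\xi,n)$ directly from the definition given in Theorem \ref{recallthm}, specialized to $f=1$ (so $f^2\mid (a^2-4n)$ and the $\bmod\,4$ condition are automatic for odd $q$), namely
\[
Kl_{q^{k_1},1}(\xi,n)=\sum_{a\bmod 4q^{k_1}}\left(\tfrac{a^2-4n}{q^{k_1}}\right)e\left(\tfrac{a\xi}{4q^{k_1}}\right).
\]
First I would reduce modulo $q^{k_1}$: since $q$ is odd, $4$ is invertible, and one checks that both the Kronecker symbol $\left(\tfrac{a^2-4n}{q^{k_1}}\right)$ and the exponential (after absorbing $\bar 4$) depend only on $a\bmod q^{k_1}$ up to the standard rescaling, so the sum collapses to a sum over $a\bmod q^{k_1}$ of $\left(\tfrac{a^2-4n}{q}\right)^{k_1}$-type character against an additive character. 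The cleanest route is induction on $k_1$, peeling off one power of $q$ at a time: write $a=a_0+q^{k_1-1}t$ with $a_0$ running mod $q^{k_1-1}$ and $t$ mod $q$, expand $a^2-4n$, and separate the two regimes according to whether $q\mid a_0^2-4n$ or not. When $q\nmid a_0^2-4n$ the symbol is insensitive to $t$ and the inner sum over $t$ of the additive character vanishes unless $v_q(\xi)\geq k_1$; when $q\mid a_0^2-4n$ one gets a Gauss-sum/quadratic-exponential-sum evaluation in $t$. This bookkeeping naturally produces the case split on the parity of $k_1$ (a square symbol $\left(\tfrac{\cdot}{q}\right)^{k_1}$ is trivial for $k_1$ even, quadratic for $k_1$ odd) and on $v_q(\xi)$ versus $k_1$.

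For the base cases I would invoke Lemma \ref{estlem2} directly, which already handles $k_1=0$ (trivially) and $k_1=1$. The inductive step then explains the structure of the answer: in the range $v_q(\xi)\ge k_1$ the additive character is trivial and the sum is a pure count of points on $a^2-4n$ weighted by the power-residue symbol, giving $q-\bigl(1+(\tfrac{4n}{q})\bigr)$ for $k_1$ even and, for $k_1$ odd, the incomplete-Jacobi-sum values $q-1$ or $-1$ depending on whether $q\mid 4n$; in the range $v_q(\xi)=k_1-1$ one power of the additive character survives and a stationary-phase-type / completing-the-square computation in the last digit produces the cosine term $-(1+(\tfrac{4n}{q}))\cos(\tfrac{2\pi\xi\sqrt{4n}}{q^{k_1}})$ for $k_1$ even (the two square roots $\pm\sqrt{4n}$ of the critical point giving the two exponentials that assemble into a cosine), and the Kloosterman sum $S(\bar2\xi^{(q)},2\xi^{(q)}n;q)$ for $k_1$ odd (matching exactly the $v_q(\xi)=0$ case of Lemma \ref{estlem2} after the rescaling $\xi\mapsto\xi^{(q)}$); in all remaining ranges $v_q(\xi)<k_1-1$ the twisted character sum over the top two digits vanishes outright.

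I expect the main obstacle to be the $v_q(\xi)=k_1-1$, $k_1$ even case: there one must carefully complete the square in the variable $a$ modulo $q^{k_1}$ around the critical points where $a^2\equiv 4n$, track the quadratic Gauss sum that arises from the quadratic term and confirm it cancels against the $\left(\tfrac{\cdot}{q}\right)^{k_1}$-factor (which is why no extra Gauss-sign shows up), and check that the contributions of the two critical points $a\equiv\pm\sqrt{4n}$ combine into the real cosine rather than a single complex exponential — this is where the evenness of cosine, flagged in the footnote, is used. A secondary nuisance is keeping the normalization $q^{1-k_1}$ consistent throughout (each inductive step contributes exactly one factor of $q$ from the inner digit-sum in the non-vanishing ranges), and handling the edge interaction between the modulus $4q^{k_1}$ and the factor $4$ inside the symbol, but for odd $q$ this is purely cosmetic since $4$ is a unit and a square mod $q$.
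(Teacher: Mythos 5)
Your strategy is in essence the paper's: exploit that the Kronecker symbol $\left(\tfrac{a^2-4n}{q^{k_1}}\right)=\left(\tfrac{a^2-4n}{q}\right)^{k_1}$ depends only on $a\bmod q$, split off the remaining digits, note that the complete additive sum over them forces $v_q(\xi)\geq k_1-1$, and evaluate the surviving mod-$q$ sum according to the parity of $k_1$, quoting Lemma \ref{estlem2} in the odd case. The paper does this in one stroke, writing $a=a_0+qa_1$ with $a_0\bmod q$ and $a_1\bmod q^{k_1-1}$, so that $\sum_{a_1}e\left(\tfrac{a_1\xi}{q^{k_1-1}}\right)$ is $q^{k_1-1}$ or $0$; your top-digit peeling is an iterated version of the same factorization, not a genuinely different route. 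Two points in your write-up need correcting, though neither is fatal to the plan. First, the computation you single out as the main obstacle never occurs: there is no quadratic Gauss sum, no stationary phase, and no completing the square. Since the symbol sees only $a\bmod q$, it is constant in your top digit $t$, and on the terms with $q\mid a_0^2-4n$ it equals \emph{zero}, not one, so the ``critical points'' simply drop out; in particular $\left(\tfrac{\cdot}{q}\right)^{k_1}$ for even $k_1$ is not trivial but the indicator of $q\nmid a^2-4n$, and this exclusion is exactly what produces $q-\bigl(1+\left(\tfrac{4n}{q}\right)\bigr)$ as well as the cosine term: when $v_q(\xi)=k_1-1$ the mod-$q$ additive character is nontrivial, so $\sum_{a_0^2\not\equiv 4n}e\left(\tfrac{a_0\xi^{(q)}}{q}\right)=-\sum_{a_0^2\equiv 4n}e\left(\tfrac{a_0\xi^{(q)}}{q}\right)$, and the at most two roots $\pm\sqrt{4n}$ assemble into $-\bigl(1+\left(\tfrac{4n}{q}\right)\bigr)\cos\left(\tfrac{2\pi\xi\sqrt{4n}}{q^{k_1}}\right)$ --- your parenthetical remark about the two roots is already the whole mechanism. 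Second, a single peel's $t$-sum vanishes unless $v_q(\xi)\geq 1$, not $\geq k_1$; the cumulative requirement after all peels is $v_q(\xi)\geq k_1-1$, with $v_q(\xi)\geq k_1$ versus $=k_1-1$ only deciding which terminal mod-$q$ evaluation you face. Finally, a normalization caveat: the lemma is about the local factor furnished by Lemma \ref{estlem1}, i.e. $\sum_{a\bmod q^{k_1}}\left(\tfrac{a^2-4n}{q^{k_1}}\right)e\left(\tfrac{a\xi}{q^{k_1}}\right)$; if you start from the global sum modulo $4q^{k_1}$ and absorb $\bar4$, you replace $\xi$ by a unit multiple and will not land on the stated identity verbatim, so work with the local sum from the outset, as the paper does.
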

\begin{proof}The calculation is divided into two cases depending on the parity of $k_1$.

\begin{itemize}

\item $k_1\equiv0\bmod 2$. 
\begin{align*}
Kl_{q^{k_1,1}}(\xi,n)&=\sum_{a\bmod q^{k_1}}\left(\tfrac{a^2-4n}{q^{k_1}}\right)e\left(\tfrac{a\xi}{q^{k_1}}\right)\\
&=\sum_{\substack{a\bmod q^{k_1}\\ a^2\neq4n\bmod q}}e\left(\tfrac{a\xi}{q^{k_1}}\right)\\
&=\sum_{\substack{a_0\bmod q\\ a_0^2\neq4n\bmod q}}e\left(\tfrac{a_0\xi}{q^{k_1}}\right)\sum_{a_1\bmod q^{k_1-1}}e\left(\tfrac{a_1\xi}{q^{k_1-1}} \right)
\end{align*}
\begin{align*}
&=\sum_{\substack{a_0\bmod q\\ a^2\neq4n\bmod q}}e\left(\tfrac{a_0\xi}{q^{k_1}}\right)\begin{cases}q^{k_1-1}&\text{$v_q(\xi)\geq k_1-1$}\\0&\text{$v_q(\xi)<k_1-1$}\end{cases}\\
&=q^{k_1-1}\begin{cases}q-\left(1+\left(\tfrac{4n}{q}\right)\right)& v_q(\xi)\geq k_1\\ -\left(1+\left(\tfrac{4n}{q}\right)\right)\cos\left(\tfrac{2\pi \xi \sqrt{4n}}{q^{k_1}}\right)& v_q(\xi)=k_1-1\\ 0& otherwise\end{cases}.
\end{align*}

\item $k_1\equiv1\bmod 2$. 
\begin{align*}
Kl_{q^{k_1},1}(\xi,n)&=\sum_{a\bmod q^{k_1}}\left(\tfrac{a^2-4n}{q^{k_1}}\right)e\left(\tfrac{a\xi}{q^{k_1}}\right)\\
&=\sum_{a_0\bmod q}\left(\tfrac{a_0^2-4n}{q}\right)e\left(\tfrac{a_0\xi}{q^{k_1}}\right)\sum_{a_1\bmod q^{k_1-1}}e\left(\tfrac{a_1\xi}{q^{k_1-1}}\right)\\
&=\sum_{a_0\bmod q}\left(\tfrac{a_0^2-4n}{q}\right)e\left(\tfrac{a_0\xi}{q^{k_1}}\right)\begin{cases}q^{k_1-1}&\text{$v_q(\xi)\geq k_1-1$}\\0&\text{$v_q(\xi)<k_1-1$}\end{cases}\\
&=q^{k_1-1}\begin{cases}q-1& v_q(\xi)\geq k_1\text{ and } v_q(4n)\geq1 \\ -1 & v_q(\xi)\geq k_1\text{ and } v_q(4n)=0\\ S(\bar{2}\xi q^{1-k_1},2\xi q^{1-k_1}n;q)& v_q(\xi)=k_1-1\\ 0&otherwise
\end{cases},
\end{align*}
where we used lemma \ref{estlem2} to get the last equality. The lemma now follows from the equality $\xi^{(q)}=\xi q^{1-k_1}$ in the third case above.

\end{itemize}

\end{proof}

\begin{lemma}\label{estlem4}Let $q\neq2$ be a prime. Then, for any $\xi,n\in\mathbb{Z}$ and for any $k_2\in\mathbb{Z}_{\geq0}$
\begin{equation}
q^{-\frac{\min\{v_q(n),2k_2\}}{2}}Kl_{1,q^{k_2}}(\xi,n)=\delta(4n;q^{2k_2})\begin{cases}1&\text{$v_q(\xi)\geq k_2$, $v_q(n)\geq 2k_2$}\\ 2\cos\left(\tfrac{2\sqrt{n}\xi}{q^{2k_2}}\right)&\text{$2v_q(\xi)\geq v_q(n)$, $v_q(n)<2k_2$ }\\ 0&otherwise\end{cases}.
\end{equation}

\end{lemma}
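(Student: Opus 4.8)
The plan is to compute $Kl_{1,q^{k_2}}(\xi,n)$ directly from its definition, exactly as was done for $Kl_{q^{k_1},1}(\xi,n)$ in Lemma \ref{estlem3}, but now keeping track of the congruence conditions $a^2-4n\equiv 0\bmod q^{2k_2}$ and $\tfrac{a^2-4n}{q^{2k_2}}\equiv 0,1\bmod 4$ (the latter being automatic since $q$ is odd, so $4$ and $q^{2k_2}$ are coprime and the residue can be adjusted freely). Note first that in this sum $l=1$, so the Kronecker symbol $\left(\tfrac{(a^2-4n)/f^2}{1}\right)=1$ identically and the sum is purely exponential: $Kl_{1,q^{k_2}}(\xi,n)=\sum_{a\bmod 4q^{2k_2},\ a^2\equiv 4n\bmod q^{2k_2}}e\!\left(\tfrac{a\xi}{4q^{2k_2}}\right)$. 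Pulling out the modulus-$4$ part via CRT (exactly as in Lemma \ref{estlem1}) reduces this to $\sum_{a\bmod q^{2k_2},\ a^2\equiv 4n\bmod q^{2k_2}}e\!\left(\tfrac{a\,\overline{4}\,\xi}{q^{2k_2}}\right)$ up to the harmless constant factor from the $a\bmod 4$ sum; I will carry this normalization silently as was done elsewhere in the appendix, or state it explicitly.

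The heart of the matter is counting and parametrizing the solutions $a$ of $a^2\equiv 4n\bmod q^{2k_2}$. Write $v=v_q(n)$, so $v_q(4n)=v$. First I would dispose of the case where there is no solution: $a^2\equiv 4n\bmod q^{2k_2}$ is solvable iff $v_q(4n)\geq 2k_2$ (in which case $4n\equiv 0$ and $a\equiv 0\bmod q^{k_2}$ works), or $v_q(4n)=2j<2k_2$ is even and $4n/q^{2j}$ is a quadratic residue mod $q$; this is precisely the content of the factor $\delta(4n;q^{2k_2})$ together with the condition $v_q(n)<2k_2$ forcing $v$ even. In the solvable non-degenerate case $v=2j<2k_2$, Hensel's lemma gives exactly two solution families: $a=q^{j}(b+q^{2k_2-2j}t)$ with $b$ a fixed square root of $4n/q^{2j}$ mod $q^{2k_2-2j}$ and $t$ ranging mod $q^{j}$ — so $a$ runs over $q^{j}(\pm b_0+q^{k_2-j}\cdot(\text{anything}))$ for the two sign choices of the mod-$q$ root. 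Summing $e(a\overline4\xi/q^{2k_2})$ over each family: the inner sum over the $q^{j}$-indexed part is a complete character sum that vanishes unless $q^{2k_2-j}\mid \xi\overline{4}$, i.e. $v_q(\xi)\geq 2k_2-j$; wait — I should be careful with the exact exponent, and the cleaner way is to note the sum collapses to $q^{\min\{v_q(n),2k_2\}/2}$ (which is $q^{j}$) times $e(\pm q^{j}b_0\overline4\xi/q^{2k_2})$ summed over $\pm$, i.e. $2\cos(2\pi\sqrt n\,\xi/q^{2k_2})$ after identifying $q^{j}b_0$ with a square root of $4n$ and rewriting $2\pi\cdot\overline4\cdot 2=2\pi/2$ appropriately — this is exactly the stated shape $2\cos(2\sqrt n\,\xi/q^{2k_2})$. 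When instead $v\geq 2k_2$, every $a\equiv 0\bmod q^{k_2}$ solves the congruence, the sum is $q^{k_2}$ times an indicator that $q^{k_2}\mid \xi$ (condition $v_q(\xi)\geq k_2$), and the normalization $q^{-\min\{v,2k_2\}/2}=q^{-k_2}$ turns $q^{k_2}\cdot[\,q^{k_2}\mid\xi\,]$ into $1$.

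The main obstacle I anticipate is bookkeeping the precise support conditions on $v_q(\xi)$ and matching them to the clean conditions in the statement ($v_q(\xi)\geq k_2$ in the first branch, $2v_q(\xi)\geq v_q(n)$ in the second). In the second branch one must check that the vanishing threshold coming from the complete sum over the length-$q^{j}$ progression is exactly $v_q(\xi)\geq 2k_2-j=2k_2-v/2$, and then verify that this, combined with $\delta(4n;q^{2k_2})$ and $v_q(n)<2k_2$, is equivalent to what is written — here a careful accounting of how the modulus reduces, and of whether $\overline 4$ (a unit) affects valuations (it does not), is needed. I would handle the parity/residue subtleties by splitting on whether $v_q(n)$ is even or odd first (odd forces no solution hence $\delta=0$), and treating $q=2$ separately as the lemma's surrounding text already signals is routine but notation-heavy. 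The cosine normalization $2\sqrt n\,\xi/q^{2k_2}$ versus a naive $2\pi\cdot(\text{something})$ will require writing $e(x)=e^{2\pi i x}$ explicitly and confirming the factor of $\overline 4$ combined with the two square roots $\pm 2\sqrt n$ collapses the two exponentials to a single real cosine, independent of the choice of root exactly because cosine is even, as the footnote in the statement already notes.
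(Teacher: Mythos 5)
Your overall strategy is the same as the paper's: reduce to the sum $\sum_{a\bmod q^{2k_2},\,a^2\equiv 4n\bmod q^{2k_2}}e(a\xi/q^{2k_2})$ (the mod $4$ condition being vacuous at odd $q$), split on $v_q(4n)\geq 2k_2$ versus $v_q(4n)=2j<2k_2$, parametrize the solutions by Hensel as $a=q^{j}(\pm b+tq^{2k_2-2j})$ with $t$ running mod $q^{j}$, and peel off the complete exponential sum in $t$. The first branch and the $\delta(4n;q^{2k_2})$/parity bookkeeping are handled correctly, and the looseness about the exact argument of the cosine (the unit $\overline{4}$, the $2\pi$) is immaterial since only $|2\cos(\cdot)|\leq 2$ is used downstream.

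However, there is a concrete error in the key support condition of the second branch, and it is exactly the point your plan defers to a later check: you assert (twice, including in your final paragraph) that the $t$-sum vanishes unless $v_q(\xi)\geq 2k_2-j$. That threshold is wrong. Writing out the exponent for $a=q^{j}(\pm b+tq^{2k_2-2j})$ gives
\begin{equation*}
\frac{a\xi}{q^{2k_2}}=\frac{\pm b\,\xi}{q^{2k_2-j}}+\frac{t\,\xi}{q^{j}},
\end{equation*}
so the inner sum is $\sum_{t\bmod q^{j}}e\bigl(t\xi/q^{j}\bigr)$ (up to a unit twist, which does not affect valuations); it equals $q^{j}$ if $v_q(\xi)\geq j=v_q(n)/2$ and vanishes otherwise. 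This is precisely the condition $2v_q(\xi)\geq v_q(n)$ in the statement, whereas your threshold $v_q(\xi)\geq 2k_2-j$ is strictly stronger (since $j<k_2$) and is \emph{not} equivalent to what is written; carried out literally, your argument would produce a vanishing region larger than the lemma's ``otherwise $0$'' branch. Since the entire content of the case distinction rests on this threshold, this is a genuine gap, albeit one repaired by the one-line computation above (which is what the paper does).
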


\begin{proof}By definition,

\begin{equation*}
Kl_{1,q^{k_2}}(\xi,n)=\sum_{\substack{a\bmod q^{2k_2}\\ a^2\equiv4n\bmod q^{2k_2}}}e\left(\tfrac{a\xi}{q^{2k_2}}\right).
\end{equation*}
(Note that since $q\equiv 1\bmod 2$ the condition that $\tfrac{a^2-4n}{q^{2k_2}}\equiv 0,1\bmod 4$ is vacuous.) The sum is trivially $1$ when $k_2=0$. Assume that $k_2>0$, then the sum is $0$ unless $4n$ is a square $\bmod\,\, q^{2k_2}$. If $4n$ is a square we have two sub-cases depending on $v_q(4n) \geq2 k_2$ or not. 
\begin{itemize}
\item $v_q(4n)\geq 2k_2$. In this case $a^2\equiv 4n\equiv 0\bmod q^{2k_2}$ which implies (since $q\neq2$) $a\equiv 0\bmod q^{k_2}$. Therefore the sum is,
\begin{align*}
Kl_{1,q^{k_2}}&=\sum_{\substack{a\bmod q^{2k_2}\\ a\equiv0\bmod q^{k_2}}}e\left(\tfrac{a\xi}{q^{2k_2}}\right)\\
&=\sum_{\substack{a_0\bmod q^{k_2}}}e\left(\tfrac{a_0\xi}{q^{k_2}}\right)\\
&=\begin{cases}q^{k_2}&\text{$v_q(\xi)\geq k_2$}\\ 0&\text{$v_q(\xi)<k_2$} \end{cases}.\tag{i}\label{esti}
\end{align*}

\item $v_q(4n)<2k_2$. Let $n=q^{2r}n^{(q)}$ (Note that $v_q(n)$ is even since otherwise the sum is necessarily $0$.). Then, $a^2\equiv 2n\bmod q^{2k_2}$ if and only if $a=q^{r}a_0$ for some $a_0\bmod q^{2k_2-r}$ where $a_0^2\equiv 4n^{(q)}\bmod q^{2k_2-2r}$. Finally $a_0^2\equiv 4n^{(q)}\bmod q^{2k-2r}$ if and only if (note that $q\neq2$) $a_0=\pm2\sqrt{n^{(q)}}+a_1q^{2k-2r}$ for some $a_1\bmod q^r$, where by abuse of notation we use $\sqrt{n^{(q)}}$ to denote a square root of $n_0$ modulo $q^{2k_2-r}$. Hence,
\begin{align*}
Kl_{1,q^{k_2}}&=\sum_{\substack{a_0\bmod q^{2k_2-r}\\ a_0^2\equiv 4n^{(q)}\bmod q^{2k_2-2r}}}e\left(\tfrac{a_0\xi}{q^{2k_2-r}}\right)\\
&=2\cos\left(\tfrac{2\sqrt{n^{(q)}}\xi}{q^{2k_2-r}}\right)\sum_{\substack{a_1\bmod p^{r}}}e\left(\tfrac{a_1\xi}{q^r}\right)\\
&=2\cos\left(\tfrac{2\sqrt{n}\xi}{q^{2k_2}}\right)\begin{cases}q^{\frac{v_q(n)}{2}}&\text{$2v_q(\xi)\geq v_q(n)$}\\0&\text{$2v_q(\xi)<v_q(n)$}\tag{ii}\label{estii}\end{cases}.
\end{align*}

\end{itemize}

Combining \eqref{esti} and \eqref{estii} gives the result.

\end{proof}

\begin{lemma}\label{estlem5}Let $p\neq2$ be a prime. Then, for any $\xi,n\in\mathbb{Z}$ and for any $k_1,k_2\in\mathbb{Z}_{\geq1}$, the value of 
\[q_v^{1-k_1-\frac{\min\{2k_2,v_q(n)\}}{2}}Kl_{q^{k_1},q^{k_2}}(\xi,n)\]
is given by the following:
\begin{itemize}
\item If $v_q(4n)\geq 2k_2$,

\begin{equation*} 
\delta(4n;q^{2k_2})\begin{cases} q-\left(1+\left(\tfrac{4n^{(q)}}{q}\right)\right) & v_q(\xi)\geq k_1+k_2,\,\, k_1\equiv 0\bmod 2\\  q-1&\text{$v_q(\xi)\geq k_1+k_2$, $v_q(4n)=2k_2+1$, and $k_1\equiv1\bmod 2$}\\ -1& \text{$v_q(\xi)\geq k_1+k_2$, $v_q(4n)=2k_2$, and $k_1\equiv 1\bmod2$}\\ -\left(1+\left(\tfrac{4n^{(q)}}{q}\right)\right)\cos\left(\tfrac{2\pi \xi\sqrt{4n}}{q^{k_1+2k_2}}\right)&\text{$v_q(\xi)=k_1+k_2-1$, $k_1\equiv 0\bmod 2$ }\\ S(\bar{2}\xi^{(q)},2\xi^{(q)}nq^{-2k_2};q)& \text{$v_q(\xi)=k_1+k_2-1$, $k_1\equiv1\bmod2$}\\0&otherwise\end{cases}
\end{equation*}
\item If $v_q(4n)<2k_2$,
\[\delta(4n;q^{2k_2})\begin{cases}2(q-1)\cos\left(\tfrac{\sqrt{4n}\xi}{q^{k_1+2k_2}}\right)&\text{$v_q(\xi)\geq k_1+r$, $k_1\equiv0\bmod2$}\\0&\text{$v_q(\xi)\geq k_1+r$, $k_1\equiv 1\bmod 2$} \\ -2\cos\left(\tfrac{\sqrt{4n}\xi}{q^{k_1+2k_2}}\right) &\text{$v_q(\xi)=k_1+r-1$, $k_1\equiv 0\bmod2$}\\  2\sqrt{q}\left(\tfrac{4\xi^{(q)}\sqrt{n^{(q)}}}{q}\right)\cos\left(\tfrac{\sqrt{4n}\xi}{q^{k_1+2k_2}}\right)&\text{$v_q(\xi)=k_1+r-1$, $k_1\equiv1\bmod2$, and $q\equiv 1\bmod 4$}\\  -2\sqrt{q}\left(\tfrac{4\xi^{(q)}\sqrt{n^{(q)}}}{q}\right)\sin\left(\tfrac{\sqrt{4n}\xi}{q^{k_1+2k_2}}\right)&\text{$v_q(\xi)=k_1+r-1$, $k_1\equiv1\bmod2$, and $q\equiv 3\bmod 4$}\\ 0& otherwise\end{cases}\]

\end{itemize}

\end{lemma}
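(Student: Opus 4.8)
The plan is to evaluate $Kl_{q^{k_1},q^{k_2}}(\xi,n)$ directly from the definition in theorem \ref{recallthm} and reduce it, by successive substitutions, to the sums already computed in lemmas \ref{estlem2}, \ref{estlem3} and \ref{estlem4}, together with one evaluation of a quadratic Gauss sum. Since $q\neq 2$ the congruence $\tfrac{a^2-4n}{f^2}\equiv 0,1\bmod 4$ is vacuous, so
\[Kl_{q^{k_1},q^{k_2}}(\xi,n)=\sum_{\substack{a\bmod q^{k_1+2k_2}\\ a^2\equiv 4n\bmod q^{2k_2}}}\left(\tfrac{(a^2-4n)/q^{2k_2}}{q^{k_1}}\right)e\left(\tfrac{a\xi}{q^{k_1+2k_2}}\right).\]
First I would parametrize the solutions of $a^2\equiv 4n\bmod q^{2k_2}$ exactly as in the proof of lemma \ref{estlem4}: if $v_q(4n)\geq 2k_2$ the condition forces $q^{k_2}\mid a$, so $a=q^{k_2}b$ with $b$ running modulo $q^{k_1+k_2}$; if $v_q(4n)<2k_2$ then $v_q(n)=2r$ must be even (otherwise there are no solutions and the whole sum is $0$, recorded by the factor $\delta(4n;q^{2k_2})$), and every solution has the form $a=q^{r}\bigl(\pm 2\sqrt{n^{(q)}}+q^{2k_2-2r}a_1\bigr)$ with $a_1$ running modulo $q^{k_1+r}$. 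In both cases $\delta(4n;q^{2k_2})$ is precisely the indicator that a solution exists.

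Second, I would substitute the parametrization into the sum. In the first case $(a^2-4n)/q^{2k_2}=b^2-4n/q^{2k_2}$, which modulo $q$ is the quadratic $b^2-4n^{(q)}q^{v_q(4n)-2k_2}$; in the second case a short computation shows that $(a^2-4n)/q^{2k_2}$ is, modulo $q$, an \emph{affine} function of $a_1$ with unit leading coefficient (of size $\pm 4\sqrt{n^{(q)}}$). In either case the Kronecker symbol $\left(\tfrac{\cdot}{q^{k_1}}\right)=\left(\tfrac{\cdot}{q}\right)^{k_1}$ depends only on the residue of the running variable modulo $q$. Splitting that variable as (residue mod $q$) $+\,q\cdot$(rest) and separating the exponential, the sum over the ``rest'' is a complete exponential sum that vanishes unless $\xi$ is divisible by a suitable power of $q$; this is exactly what produces the case division on $v_q(\xi)$ in the statement, with threshold $k_1+k_2$ (resp.\ $k_1+r$) and boundary value $k_1+k_2-1$ (resp.\ $k_1+r-1$).

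Third, I would evaluate the remaining sum over the single residue modulo $q$. When $v_q(\xi)$ exceeds the threshold this residue sum loses its exponential and becomes a point count: for $k_1$ even it is $q$ minus the number of roots of the relevant quadratic modulo $q$, giving the factors $q-(1+(\tfrac{4n^{(q)}}{q}))$, $q-1$ or $-1$; for $k_1$ odd it is $\sum_{a_0}\left(\tfrac{a_0^2-4n'}{q}\right)$, handled by lemma \ref{estlem2} and yielding the Kloosterman sum $S(\bar 2\xi^{(q)},2\xi^{(q)}nq^{-2k_2};q)$. At the boundary value of $v_q(\xi)$ the exponential survives: the two sign choices in the square-root parametrization combine as $e(+)+e(-)=2\cos$, which accounts for every cosine in the statement with argument $\tfrac{2\pi\xi\sqrt{4n}}{q^{k_1+2k_2}}$; and in the case $v_q(4n)<2k_2$ with $k_1$ odd the residue sum is a quadratic Gauss sum $\sum_{a_1\bmod q}\left(\tfrac{a_1+\ast}{q}\right)e(a_1/q)$, whose classical value $\varepsilon_q\sqrt q$ with $\varepsilon_q=1$ for $q\equiv1\bmod 4$ and $\varepsilon_q=i$ for $q\equiv 3\bmod 4$ is exactly what turns the cosine into a sine in the $q\equiv3\bmod4$ line and introduces the factor $\sqrt q\left(\tfrac{4\xi^{(q)}\sqrt{n^{(q)}}}{q}\right)$.

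I expect the main obstacle to be purely the bookkeeping: matching the powers of $q$ generated by the substitutions with the normalizing factor $q^{1-k_1-\min\{2k_2,v_q(n)\}/2}$, carrying the precise arguments of the trigonometric functions through the chain of changes of variables, and isolating cleanly the boundary case $v_q(\xi)=k_1+k_2-1$ (resp.\ $k_1+r-1$) where the inner sum is an incomplete/Gauss sum rather than a complete one. No individual step is difficult, but there are many parity sub-cases ($k_1$ even/odd, $v_q(4n)$ against $2k_2$, $q\bmod 4$) to keep straight, and the cleanest presentation is to run the two main cases $v_q(4n)\geq 2k_2$ and $v_q(4n)<2k_2$ in parallel, importing the two parametrizations verbatim from lemma \ref{estlem4}.
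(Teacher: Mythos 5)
Your proposal is correct and follows essentially the same route as the paper: reduce the constraint $a^2\equiv 4n\bmod q^{2k_2}$ via the parametrizations of lemma \ref{estlem4}, strip off the complete exponential sums to produce the divisibility thresholds on $v_q(\xi)$, and evaluate the residual sum modulo $q$ by point counts, lemma \ref{estlem2}, and the explicit Gauss sum, with the $\pm\sqrt{n^{(q)}}$ branches combining into the cosines (and, via $\varepsilon_q=i$ for $q\equiv3\bmod4$, the sine). The only cosmetic difference is that in the case $v_q(4n)\geq 2k_2$ you reduce directly to lemma \ref{estlem2}, whereas the paper routes the same sum through lemma \ref{estlem3}; the bookkeeping is otherwise identical.
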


\begin{proof}

The sum vanishes unless $4n$ is a square $\bmod q^{2k_2}$. Assuming this let $a=a_0+a_1q^{2k_2+1}$. Then,
\begin{align*}
Kl_{q^{k_1},q^{k_2}}(\xi,n)&=\sum_{\substack{a_0\bmod q^{2k_2+1}\\ a_0^2\equiv 4n\bmod q^{2k_2}}}\left(\tfrac{(a_0^2-4n)/q^{2k_2}}{q^{k_1}}\right)e\left(\tfrac{a_0\xi}{q^{k_1+2k_2}}\right)\sum_{a_1\bmod q^{k_1-1}}e\left(\tfrac{a_1\xi}{q^{k_1-1}}\right)\\
&=\sum_{\substack{a_0\bmod q^{2k_2+1}\\ a_0^2\equiv 4n\bmod q^{2k_2}}}\left(\tfrac{(a_0^2-4n)/q^{2k_2}}{q^{k_1}}\right)e\left(\tfrac{a_0\xi}{q^{k_1+2k_2}}\right)\begin{cases}q^{k_1-1}&\text{$v_q(\xi)\geq k_1-1$}\\ 0&\text{$v_q(\xi)<k_1-1$}\end{cases}.\tag{i}\label{estiii}
\end{align*}
Therefore the sum vanishes unless $v_q(\xi)\geq k_1-1$. For the rest of the analysis we assume that the sum does not vanish. We have two cases according to $v_q(4n)\geq 2k_2$ or $v_q(4n)<2k_2$. 

\begin{itemize}
\item $v_q(4n)\geq 2k_2$. In this case we need to have $a_0^2\equiv 0\bmod q^{2k_2}$ therefore $a_0\equiv 0\bmod q^{k_2}$. Hence the sum in \eqref{estiii} is,
\[q^{k_1-1}\sum_{\substack{a_0\bmod q^{2k_2+1}\\ a_0\equiv 0\bmod q^{k_2}}}\left(\tfrac{(a_0^2-4n)/q^{2k_2}}{q^{k_1}}\right)e\left(\tfrac{a_0\xi}{q^{k_1+2k_2}}\right)=q^{k_1-1}\sum_{\substack{a_2\bmod q^{k_2+1}}}\left(\tfrac{a_2^2-4nq^{-2k_2}}{q^{k_1}}\right)e\left(\tfrac{a_2\xi}{q^{k_1+k_2}}\right)\]
Using lemma \ref{estlem3} on the last sum we get the result.

\item $v_q(4n)<2k_2$. We are summing over $a_0\in\mathbb{Z}/q^{2k_2+1}\mathbb{Z}$ that satisfy $a_0^2-4n\equiv0\bmod q^{2k_2}$. Note that the sum is $0$ if $v_q(n)\equiv 1\bmod 2$, and let $n=q^{2r}n^{(q)}$. Then $a_0^2\equiv 4n\bmod q^{2k_2}$ if and only if $a_0=2q^ra_1$ for some $a_1\in\mathbb{Z}/q^{2k_2-r+1}\mathbb{Z}$ which satisfies $a_1^2\equiv n^{(q)}\bmod q^{2k_2-2r}$. Therefore the sum in \eqref{estiii} can be written as
\begin{equation*}
\sum_{\substack{a_0\bmod q^{2k_2+1}\\ a_0^2\equiv 4n\bmod q^{2k_2}}}\left(\tfrac{(a_0^2-4n)/q^{2k_2}}{q^{k_1}}\right)e\left(\tfrac{a_0\xi}{q^{k_1+2k_2}}\right)
=\sum_{\substack{a_1\bmod q^{2k_2-r+1}\\ a_1^2\equiv 4n^{(q)}\bmod q^{2k_2-2r}}}\left(\tfrac{(a_1^2-4n^{(q)})/q^{2k_2-2r}}{p^{k_1}}\right)e\left(\tfrac{a_1\xi}{q^{k_1+2k_2-r}}\right)
\end{equation*}
Let $a_1=a_2+a_3q^{2k_2-2r+1}$, where $a_2\in\mathbb{Z}/q^{2k_2-2r+1}\mathbb{Z}$ and $a_3\in\mathbb{Z}/q^{r}\mathbb{Z}$. Substituting this in the above sum we rewrite it as,
\begin{align*}
&\sum_{\substack{a_2\bmod q^{2k_2-2r+1}\\ a_2^2\equiv 4n^{(q)}\bmod q^{2k_2-2r}}}\left(\tfrac{(a_2^2-4n^{(q)})/q^{2k_2-2r}}{p^{k_1}}\right)e\left(\tfrac{a_2\xi}{q^{k_1+2k_2-r}}\right)\sum_{a_3\bmod q^r}e\left(\tfrac{a_3\xi}{q^{k_1+r-1}}\right)\\
&=\sum_{\substack{a_2\bmod q^{2k_2-2r+1}\\ a_2^2\equiv 4n^{(q)}\bmod q^{2k_2-2r}}}\left(\tfrac{(a_2^2-4n^{(q)})/q^{2k_2-2r}}{p^{k_1}}\right)e\left(\tfrac{a_2\xi}{q^{k_1+2k_2-r}}\right)\begin{cases}q^{r}&\text{$v_q(\xi)\geq k_1+r-1$}\\ 0&\text{$v_q(\xi)<k_1+r-1$} \end{cases}\tag{ii}\label{estiv}
\end{align*}
Assuming that the sum doesn't vanish (i.e. $v_q(\xi)\geq k_1+r-1$), the only remaining part is to calculate the character sum above. To do so, note that the solutions to $a_0^2\equiv 4n^{(q)}\bmod q^{2k_2-2r}$ where $a_0\in\mathbb{Z}/q^{2k_2-2r+1}\mathbb{Z}$ are all of the form $\pm2\sqrt{n^{(q)}}+a_4q^{2k_2-2r}$, where by abuse of notation we denote\footnote{To keep uniformity, one can pick a $q$-adic solution (which exists by Hensel's lemma) at once and consider its reductions. Note that the sum is independent of  all the choices that are made.} a solution to $a_0^2\equiv4n^{(q)}\bmod q^{2k_2-2r+1}$ by $2\sqrt{n^{(q)}}$, and $a_4\in\mathbb{Z}/q\mathbb{Z}$. Using this observation, we rewrite the character sum above as,
\[\sum_{\substack{a_4\bmod q\\ \pm}}\left(\tfrac{\pm 4\sqrt{n^{(q)}}a_4}{q^{k_1}}\right)e\left(\tfrac{\pm2\sqrt{n^{(q)}}\xi}{q^{k_1+2k_2-r}}\right)e\left(\tfrac{a_4\xi}{q^{k_1+r}}\right).\]
Since $v_q(n^{(q)})=0\Rightarrow v_q(\sqrt{n^{(q)}}=0)$, and $q\neq2$ we can further rewrite this as
\[\left(\tfrac{4\xi^{(q)}\sqrt{n^{(q)}}}{q^{k_1}}\right)\left[e\left(\tfrac{\sqrt{4n}\xi}{q^{k_1+2k_2}}\right)+\left(\tfrac{-1}{q^{k_1}}\right)e\left(\tfrac{-\sqrt{4n}\xi}{q^{k_1+2k_2}}\right)\right]\sum_{\substack{a_4\bmod q}}\left(\tfrac{a_4}{q^{k_1}}\right)e\left(\tfrac{a_4\xi_{(q)}}{q^{k_1+r}}\right).\tag{iii}\label{estv}\]
Finally, 
\[\sum_{\substack{a_4\bmod q}}\left(\tfrac{a_4}{q^{k_1}}\right)e\left(\tfrac{a_4\xi_{(q)}}{q^{k_1+r}}\right)=\begin{cases}q-1&\text{$v_q(\xi)\geq k_1+r$, $k_1\equiv0\bmod2$}\\0&\text{$v_q(\xi)\geq k_1+r$, $k_1\equiv 1\bmod 2$} \\ -1 &\text{$v_q(\xi)=k_1+r-1$, $k_1\equiv 0\bmod2$}\\  \sqrt{q}&\text{$v_q(\xi)=k_1+r-1$, $k_1\equiv1\bmod2$, and $q\equiv 1\bmod 4$}\\   i\sqrt{q}&\text{$v_q(\xi)=k_1+r-1$, $k_1\equiv1\bmod2$, and $q\equiv 3\bmod 4$}\end{cases}.\]
Note that in the last two lines we used the explicit value of the Gauss sum (cf. (1.55) of \cite{Iwaniec:2004aa}). Substituting this in \eqref{estv} then gives,
\[\eqref{estv}=\begin{cases}2(q-1)\cos\left(\tfrac{\sqrt{4n}\xi}{q^{k_1+2k_2}}\right)&\text{$v_q(\xi)\geq k_1+r$, $k_1\equiv0\bmod2$}\\0&\text{$v_q(\xi)\geq k_1+r$, $k_1\equiv 1\bmod 2$} \\ -2\cos\left(\tfrac{\sqrt{4n}\xi}{q^{k_1+2k_2}}\right) &\text{$v_q(\xi)=k_1+r-1$, $k_1\equiv 0\bmod2$}\\  2\sqrt{q}\left(\tfrac{4\xi^{(q)}\sqrt{n^{(q)}}}{q}\right)\cos\left(\tfrac{\sqrt{4n}\xi}{q^{k_1+2k_2}}\right)&\text{$v_q(\xi)=k_1+r-1$, $k_1\equiv1\bmod2$, and $q\equiv 1\bmod 4$}\\  -2\sqrt{q}\left(\tfrac{4\xi^{(q)}\sqrt{n^{(q)}}}{q}\right)\sin\left(\tfrac{\sqrt{4n}\xi}{q^{k_1+2k_2}}\right)&\text{$v_q(\xi)=k_1+r-1$, $k_1\equiv1\bmod2$, and $q\equiv 3\bmod 4$}\end{cases}\]
Combining  \eqref{estiii}, \eqref{estiv}, and \eqref{estv} gives the result.

\end{itemize}

\end{proof}

\subsubsection{Bounds}

\begin{cor}\label{charsumcor1}Let $q\neq2$ be a prime. Then, for any $k_1,k_2\in\mathbb{Z}_{\geq0}$,
\[Kl_{q^{k_1},q^{k_2}}=\delta(4n;q^{2k_2})\begin{cases}O\left(q^{k_1}\sqrt{\gcd(q^{2k_2},n)}\right)& q^{2k_1}\gcd(q^{2k_2},n)\mid \xi^2\\O\left(\frac{q^{k_1}\sqrt{\gcd(q^{2k_2},n)}}{\sqrt{q}}\right)& q^{2(k_1-1)}\gcd(q^{2k_2},n)\parallel \xi^2 \\ 0& otherwise\end{cases},\]
where the implied constants are absolute.
\end{cor}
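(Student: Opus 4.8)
The plan is to read the bound off directly from the explicit evaluations in Lemmas \ref{estlem2}--\ref{estlem5}, combined with Weil's estimate for the Kloosterman sums appearing there. The bookkeeping rests on two observations. First, writing $c:=\min\{2k_2,v_q(n)\}$ we have $\gcd(q^{2k_2},n)=q^{c}$, hence $\sqrt{\gcd(q^{2k_2},n)}=q^{c/2}$, and this is exactly the reciprocal of the normalizing power $q^{1-k_1-c/2}$ of $Kl_{q^{k_1},q^{k_2}}(\xi,n)$ appearing on the left-hand side in Lemma \ref{estlem5} (and, in the cases $k_2=0$ or $k_1=0$, in Lemmas \ref{estlem3} and \ref{estlem4}). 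Second, since $q$ is odd we have $v_q(4n)=v_q(n)$, and the divisibility conditions on $v_q(\xi)$ in those lemmas translate into the conditions on $\xi^2$ of the corollary: $v_q(\xi)\ge k_1+c/2$ is equivalent to $q^{2k_1}\gcd(q^{2k_2},n)\mid\xi^2$, the equality $v_q(\xi)=k_1-1+c/2$ is equivalent to $q^{2(k_1-1)}\gcd(q^{2k_2},n)\parallel\xi^2$, and every remaining value of $v_q(\xi)$ is exactly one of the cases where Lemmas \ref{estlem3}--\ref{estlem5} return $0$.

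With these translations the corollary follows case by case. When $q^{2k_1}\gcd(q^{2k_2},n)\mid\xi^2$, each value produced, after normalization, by Lemmas \ref{estlem3}--\ref{estlem5} is $O(q)$: these are the entries $q-\left(1+\left(\tfrac{4n^{(q)}}{q}\right)\right)$, $q-1$, $-1$, $2(q-1)\cos(\cdots)$, $1$, and $2\cos(\cdots)$; hence $Kl_{q^{k_1},q^{k_2}}(\xi,n)=O(q^{k_1-1+c/2}\cdot q)=O(q^{k_1}\sqrt{\gcd(q^{2k_2},n)})$. When $q^{2(k_1-1)}\gcd(q^{2k_2},n)\parallel\xi^2$, the normalized value is either $O(1)$ (the pure $\cos$- and $\sin$-entries) or $O(\sqrt q)$ (the Gauss-sum entries $\pm2\sqrt q\left(\tfrac{4\xi^{(q)}\sqrt{n^{(q)}}}{q}\right)\cos(\cdots),\ \sin(\cdots)$ and the Kloosterman entry $S(\bar{2}\xi^{(q)},2\xi^{(q)}nq^{-2k_2};q)$); here we invoke Weil's bound, noting that $\xi^{(q)}$ is prime to $q$, so the first argument of this Kloosterman sum is a unit modulo $q$ and $|S(a,b;q)|\le 2\sqrt q$. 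Hence $Kl_{q^{k_1},q^{k_2}}(\xi,n)=O(q^{k_1-1+c/2}\cdot\sqrt q)=O(q^{k_1}\sqrt{\gcd(q^{2k_2},n)}/\sqrt q)$. In all remaining ranges of $v_q(\xi)$ the sum vanishes by Lemmas \ref{estlem3}--\ref{estlem5}. The degenerate cases $k_1=0$ (use Lemma \ref{estlem4}), $k_2=0$ (use Lemma \ref{estlem3}), and $k_1=k_2=0$ (where $Kl_{1,1}(\xi,n)=1$) are handled identically; one checks that there the putative ``middle'' condition is either already covered by those lemmas or, by the parity constraint forcing $c$ to be even whenever $\delta(4n;q^{2k_2})\neq0$, vacuous.

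The only obstacle is that this is a sizeable but entirely finite case analysis: the effort goes into making sure that no branch of Lemmas \ref{estlem3}--\ref{estlem5} is overlooked and that the factor $\delta(4n;q^{2k_2})$ and the parity conditions (which render several nominal branches vacuous) are tracked consistently. There is no analytic content beyond the single application of Weil's bound $|S(a,b;q)|\le 2\sqrt q$, valid here because $q\nmid\xi^{(q)}$.
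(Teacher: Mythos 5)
Your proposal is correct and follows essentially the same route as the paper: read the bounds off the explicit local evaluations in Lemmas \ref{estlem2}--\ref{estlem5}, apply Weil's bound $|S(a,b;q)|\le 2\sqrt{q}$ to the Kloosterman sums in the boundary cases $v_q(\xi)=k_1-1+\tfrac{c}{2}$ (legitimate since $\xi^{(q)}$ is a unit mod $q$), and bound the remaining entries trivially, with the valuation-to-divisibility bookkeeping you describe. The only blemish is the phrase claiming $\sqrt{\gcd(q^{2k_2},n)}$ is ``exactly the reciprocal'' of $q^{1-k_1-c/2}$ (the reciprocal is $q^{k_1-1}\sqrt{\gcd(q^{2k_2},n)}$), but your subsequent computations use the correct factor, so this is a wording slip rather than a gap.
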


\begin{proof}The only non-trivial input we use is the Weil bound on Kloosterman sums (\cite{Weil:1948aa}), which comes in for the second line above. The bound states that (cf. (1.60) of \cite{Iwaniec:2004aa} for the statement we are using), 
\[|S(a,b;q)|\leq 2\sqrt{\gcd(a,b,q)}\sqrt{q}.\]
Since $\xi^{(q)}$ is relatively prime to $q$, this bound implies,
\[|S(\bar{2}\xi^{(q)},2\xi^{(q)}nq^{-2k_2};q)|\leq 2\sqrt{q}.\]
We then use this bound on the Kloosterman sums that appear in $v_q(\xi)=0$ of lemmas \ref{estlem2}, \ref{estlem3}, and \ref{estlem5}. Bounding the rest of the terms trivially proves the corollary.

\end{proof}

We finally remark that similar calculations in lemmas \ref{estlem2} to \ref{estlem5} leads to the same bound for $q=2$. Since we will not be needing the exact form of $Kl_{2^{k_1},2^{k_2}}(\xi,n)$, we only state the relevant bound.

\begin{cor}\label{charsumcor2}For any $k_1,k_2\in\mathbb{Z}_{\geq0}$,
\[Kl_{2^{k_1},2^{k_2}}=\delta(4n;2^{2k_2})\begin{cases}O\left(2^{k_1}\sqrt{\gcd(2^{2k_2},n)}\right)& 2^{2k_1}\gcd(2^{2k_2},n)\mid \xi^2\\O\left(\frac{2^{k_1}\sqrt{\gcd(2^{2k_2},n)}}{\sqrt{2}}\right)& 2^{2(k_1-1)}\gcd(2^{2k_2},n)\parallel \xi^2 \\ 0& otherwise\end{cases},\]
where the implied constants are absolute.
\end{cor}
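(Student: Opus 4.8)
The plan is to repeat, for the prime $q=2$, the local computations carried out in Lemmas \ref{estlem2}--\ref{estlem5} for odd $q$, and then to apply the same trivial and Weil-type bounds used in the proof of Corollary \ref{charsumcor1}. The passage from $Kl_{l,f}$ to prime powers is already supplied by Lemma \ref{estlem1}, whose proof makes no parity assumption, so it suffices to estimate $Kl_{2^{k_1},2^{k_2}}(\xi,n)$ directly from the definition in Theorem \ref{recallthm}, now with $4lf^2=2^{2+k_1+2k_2}$ and with the congruence $\tfrac{a^2-4n}{2^{2k_2}}\equiv0,1\bmod4$ no longer vacuous.

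First I would separate the summation variable $a$ into a ``low'' part $a_0$, on which the Kronecker symbol $\big(\tfrac{(a^2-4n)/2^{2k_2}}{2^{k_1}}\big)$ and the two congruence conditions depend, and a free ``high'' part $a_1$, exactly as in Lemma \ref{estlem5}. The inner sum over $a_1$ is then a complete additive character sum equal to the full (power of $2$) modulus when $v_2(\xi)$ is large enough and to $0$ otherwise; this is what produces the trichotomy ``$2^{2k_1}\gcd(2^{2k_2},n)\mid\xi^2$'' / ``$2^{2(k_1-1)}\gcd(2^{2k_2},n)\parallel\xi^2$'' / otherwise, and the $\delta(4n;2^{2k_2})$ prefactor records that the sum is empty unless $a^2\equiv 4n$ is solvable modulo $2^{2k_2}$.

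Next I would treat the remaining ``low'' sum by the same two-case split used for odd $q$. If $v_2(4n)\ge 2k_2$ then $a_0$ must be divisible by $2^{k_2}$, which reduces the sum to a $Kl_{2^{k_1},1}$-type sum (the analog of Lemma \ref{estlem3}) and ultimately to the base Kloosterman sum $S(\,\cdot\,,\,\cdot\,;2)$; the Weil bound $|S(a,b;2)|\le 2\sqrt{\gcd(a,b,2)}\sqrt2$, which holds for every prime (cf. (1.60) of \cite{Iwaniec:2004aa}), supplies the $2^{-1/2}$ saving in the middle line. If $v_2(4n)<2k_2$, write $n=2^{2r}n^{(2)}$ and extract the forced power of $2$ from $a_0$; the square roots of $4n^{(2)}$ modulo a power of $2$ form a set of the shape $\{\pm\sqrt{4n^{(2)}},\,2^{\bullet}\pm\sqrt{4n^{(2)}}\}$, and summing the phases over them reduces the character sum to a quadratic Gauss sum modulo a power of $2$, whose modulus is $0$ or a power of $\sqrt2$ by the classical evaluation (cf. (1.55) of \cite{Iwaniec:2004aa}), again accounting for the $\sqrt2$-saving. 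Bounding every root of unity and every Kronecker symbol by $1$, counting the $O(\sqrt{\gcd(2^{2k_2},n)})$ surviving values of $a_0$ as in Corollary \ref{charsumcor1}, and multiplying by the contribution of the $a_1$-sum then gives the claimed $O(2^{k_1}\sqrt{\gcd(2^{2k_2},n)})$.

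The main obstacle is bookkeeping, not analysis. For $q=2$ the symbol $\big(\tfrac{\cdot}{2}\big)$ depends on residues modulo $8$ rather than modulo $2$, the condition $\tfrac{a^2-4n}{2^{2k_2}}\equiv0,1\bmod4$ is now active, and a $2$-adic unit has four square roots modulo $2^k$ (for $k\ge3$) rather than two, all of which multiply the number of cases in the digit-peeling argument. No new estimate is required, however: the only nontrivial inputs remain the Weil bound on Kloosterman sums and the classical formula for quadratic Gauss sums modulo prime powers, so the final bound has the same shape as Corollary \ref{charsumcor1} and one records only it, as in the statement.
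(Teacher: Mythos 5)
Your proposal is correct and follows essentially the same route as the paper: reduce to the $2$-local sums via Lemma \ref{estlem1}, redo the digit-peeling computations of Lemmas \ref{estlem2}--\ref{estlem5} while tracking the now-active condition $\tfrac{a^2-4n}{2^{2k_2}}\equiv0,1\bmod4$, the mod-$8$ periodicity of $\left(\tfrac{\cdot}{2}\right)$, and the fact that a unit has $1$, $2$, or $4$ square roots modulo $2^k$, and then bound case by case. The paper's only additional remark is that at $q=2$ one need not invoke the Weil bound at all, since the base sums $Kl_{2^{k_1},1}$ for $k_1=1,2,3$ can be evaluated explicitly (and indeed $S(a,b;2)=\pm1$), but this does not change the shape of the argument.
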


\begin{proof}
The explicit calculations for the character sum are identical to the ones given for lemmas \ref{estlem2} to \ref{estlem5}. The only difference is that one now needs to take into account the requirement $\frac{a^2-4n}{q^{2k_2}}\equiv 0,1\bmod4$. We also need to recall that the Kronecker symbol, $\left(\frac{\alpha}{2}\right)$, is periodic in $\alpha$ modulo $8$, and that for $\beta\in \left(\mathbb{Z}/2^k\mathbb{Z}\right)^{\times}$ the number of solutions to $x^2\equiv \beta^2 \bmod\,\, 2^k$ are $1,2$, or $4$ depending on $k=1,2$, or $k\geq3$. The result then follows from a case by case analysis of $v_2(n)\geq 2k_2$, $v_q(n)=2k_2-2,2k_2-4$, and $v_2(n)\leq 2k_2-6$ and bookkeeping. We also note that in this case we do not even need to appeal to the Weil bound since we can explicitly calculate the sums $Kl_{2^{k_1},1}(\xi,4n)$, for $k_1=1,2,3$, in this case. We leave the details to the reader.

\end{proof}

\begin{cor}\label{charsumcor3} Let $l,f\in\mathbb{Z}_{\geq1}$. For any $n,\xi\in\mathbb{Z}$,
\[Kl_{l,f}(\xi,n)\ll\delta(n;f^2)\begin{cases}\log(lf^2)\sqrt{l\gcd(n,f^2)}\sqrt{\gcd\left(\tfrac{\xi}{\sqrt{\gcd(n,f^2)}},l\right)}&\tfrac{l\sqrt{\gcd(n,f^2)}}{rad(l)}\mid \xi\\ 0&otherwise\end{cases},\]
where $rad(l)=\prod_{q\mid l}q$ denotes the radical of $l$.
\end{cor}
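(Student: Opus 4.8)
The plan is to reduce the general sum $Kl_{l,f}(\xi,n)$ to a product of local sums via Lemma \ref{estlem1}, bound each local factor using Corollaries \ref{charsumcor1} and \ref{charsumcor2}, and then reassemble the local bounds into the stated global estimate. First I would write $l = \prod_q q^{k_1(q)}$ and $f = \prod_q q^{k_2(q)}$, so that by Lemma \ref{estlem1} we have $|Kl_{l,f}(\xi,n)| = \prod_{q \mid 4lf^2} |Kl_{q^{k_1(q)}, q^{k_2(q)}}((\cdot)^{-1}\xi, n)|$, where the twist by a unit mod $q^{v_q(4lf^2)}$ does not affect any of the local bounds (the Kloosterman/Gauss sum bounds and the divisibility conditions are all insensitive to multiplication of $\xi$ by a $q$-adic unit, since $v_q(\xi)$ and $\xi^{(q)}$ mod $q$ are what enter). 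The factor $\delta(n;f^2)$ appears because $\prod_q \delta(4n; q^{2k_2(q)}) = \delta(4n; f^2) = \delta(n;f^2)$ (using that $4$ is coprime to odd $f$ and handling the $2$-part via Corollary \ref{charsumcor2}); if this vanishes the whole product vanishes, matching the statement.

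Next I would multiply the local bounds from Corollaries \ref{charsumcor1} and \ref{charsumcor2}. At each prime $q$, the local factor is $O\!\left(q^{k_1(q)} \sqrt{\gcd(q^{2k_2(q)}, n)}\right)$ when $q^{2k_1(q)}\gcd(q^{2k_2(q)},n) \mid \xi^2$, and is smaller by a factor $q^{-1/2}$ when only $q^{2(k_1(q)-1)}\gcd(q^{2k_2(q)},n) \parallel \xi^2$, and is $0$ otherwise. Taking the product over $q \mid lf^2$: the main term $\prod_q q^{k_1(q)}\sqrt{\gcd(q^{2k_2(q)},n)}$ multiplies to $l \sqrt{\gcd(n,f^2)}$ (here I use $\prod_q \gcd(q^{2k_2(q)},n) = \gcd(f^2, n)$). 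The primes $q$ contributing the ``$\parallel$'' case each pull out a saving of $q^{-1/2}$; collecting them, the set of such primes is exactly those $q$ with $v_q(\xi) = k_1(q) + r_q - 1$ (in the notation of Lemma \ref{estlem5}), and this gives a factor $\sqrt{\gcd(\xi/\sqrt{\gcd(n,f^2)}, l)}$ after carefully matching $q$-adic valuations: the product of $q^{-1/2}$ over these primes is $1/\sqrt{\prod_q q}$, and $\prod_q q$ over the $\parallel$-primes equals $l \sqrt{\gcd(n,f^2)} / \gcd(\xi/\sqrt{\gcd(n,f^2)}, l) \cdot (\text{something})$ — this bookkeeping is the place that needs the most care. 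The divisibility constraint ``product of local conditions holds'' translates into $\frac{l\sqrt{\gcd(n,f^2)}}{\mathrm{rad}(l)} \mid \xi$: indeed the weakest surviving local condition at $q$ is $q^{2(k_1(q)-1)}\gcd(q^{2k_2(q)},n) \mid \xi^2$, i.e. $q^{k_1(q)-1}\sqrt{\gcd(q^{2k_2(q)},n)} \mid \xi$, and taking the product over $q\mid l$ gives precisely $\frac{l}{\mathrm{rad}(l)}\sqrt{\gcd(n,f^2)} \mid \xi$; outside this range the product is $0$.

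Finally the $\log(lf^2)$ factor: the number of distinct prime divisors of $lf^2$ is $O(\log(lf^2)/\log\log(lf^2)) = O(\log(lf^2))$, but more to the point the implied constants in Corollaries \ref{charsumcor1} and \ref{charsumcor2} are absolute, say bounded by some $K$; the product over $\omega(lf^2)$ primes of the absolute constant is $K^{\omega(lf^2)} = (lf^2)^{o(1)}$, which is absorbed by $\log(lf^2)$ once we note that the divisor-type sums we will later apply this to only need a power of $\log$. (Alternatively, one tracks that at most $O(1)$ primes can lie in the ``$\parallel$'' case with a genuinely non-trivial Kloosterman contribution for fixed $\xi$, keeping the constant genuinely bounded and the $\log$ purely from the number-of-prime-factors accounting.) I expect the main obstacle to be the valuation bookkeeping in the second paragraph — precisely reconciling, prime by prime, the three-case structure of the local bounds with the closed-form global quantities $\sqrt{l\gcd(n,f^2)}$, $\sqrt{\gcd(\xi/\sqrt{\gcd(n,f^2)},l)}$, and the divisibility condition $\frac{l\sqrt{\gcd(n,f^2)}}{\mathrm{rad}(l)}\mid\xi$ — since one must check that the exponents of each $q$ match on both sides in every case, including the subtle interaction between the $\gcd(q^{2k_2},n)$ contribution and the $\xi^2$-divisibility threshold.
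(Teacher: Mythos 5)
Your proposal follows essentially the same route as the paper: reduce via the twisted multiplicativity of Lemma \ref{estlem1} (noting that twisting $\xi$ by a unit does not change the absolute values of the local factors), bound each local factor by Corollaries \ref{charsumcor1} and \ref{charsumcor2}, and multiply, absorbing the accumulated absolute constants into the $\log(lf^2)$ factor via $\prod_{q\mid lf^2}O(1)$. The paper's proof is exactly this and states the prime-by-prime valuation bookkeeping even more tersely than you do, so the two places you flag as delicate --- reconciling the local three-case bounds with the global quantities, and justifying the $\log(lf^2)$ absorption --- are handled no more explicitly in the published argument than in yours.
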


\begin{proof}Let $4lf^2\prod_{q}q^{k_1+2k_2}$. Note that by corollaries \ref{charsumcor1} and \ref{charsumcor2}, for any $\alpha\in\mathbb{Z}$ with $\gcd(\alpha,lf^2)=1$ we have $|Kl_{q^{k_1},q^{k_2}}(\alpha \xi,n)|=|Kl_{q^{k_1},q^{k_2}}(\xi,n)|$. Then, by lemma \ref{estlem1} $|Kl_{l,f}(\xi,n)|=\prod_q|Kl_{q^{k_1},q^{k_2}}(\xi,n)|$. The result now follows from corollaries \ref{charsumcor1} and \ref{charsumcor2}, and the observation that $\prod_{q\mid lf^2}O(1)=O(\log(lf^2))$.

\end{proof}

\bibliographystyle{alpha}

\end{document}